\documentclass[a4paper]{amsart}
\usepackage[all]{xy}

\usepackage[colorlinks=true]{hyperref}
\usepackage{enumerate}
\usepackage{amssymb}
\usepackage{comment}
\vfuzz12pt 
\hfuzz12pt 
\newtheorem{thm}{Theorem}[section]

\newtheorem{prop}[thm]{Proposition}
\newtheorem{lem}[thm]{Lemma}
\newtheorem{cor}[thm]{Corollary}


\theoremstyle{definition}
\newtheorem{defn}[thm]{Definition}
\newtheorem{ex}[thm]{Example}


\theoremstyle{remark}
\newtheorem{rem}[thm]{Remark}


\def\diff{\mathsf{d}}









\begin{document}

\title{Nijenhuis forms on $L_\infty$-algebras and Poisson geometry}
\author{M. J. Azimi}
\address{CMUC, Department of Mathematics, University of Coimbra, 3001-501 Coimbra, Portugal}
\email{mjazimi2004@gmail.com}
\author{C. Laurent-Gengoux}
\address{UMR 7122, Universit\'e de Lorraine, 57045 Metz, France}
\email{camille.laurent-gengoux@univ-lorraine.fr}
\author{J. M. Nunes da Costa}
\address{CMUC, Department of Mathematics, University of Coimbra, 3001-501 Coimbra, Portugal}
\email{jmcosta@mat.uc.pt}

\begin{abstract}
We investigate Nijenhuis deformations of $L_\infty$-algebras, a notion that unifies several Nijenhuis deformations, namely those of Lie algebras, Lie algebroids, Poisson structures and Courant structures. Additional examples, linked to Lie $n$-algebras and $n$-plectic manifolds, are included.
\end{abstract}

\maketitle

\noindent Keywords: Nijenhuis, $L_\infty$-algebra, Lie $n$-algebra, Courant algebroid, Lie algebroid.

\section*{Introduction}             %
\label{sec:introduction}           %

$L_\infty$-algebras, introduced by Lada and Stasheff \cite{Lada-Sta}, who called them
 strongly homotopy Lie algebras, are collections of $n$-ary operations, assumed to satisfy some homogeneous relations that reduce to the Jacobi identity when only the binary operation is not trivial. These structures gained notoriety when Kontsevitch
used $L_\infty$-morphisms to prove the existence of star-products on Poisson manifolds \cite{Kont}.
derived an $L_\infty$-algebra
from a Poisson element and an abelian subalgebra of a differential graded Lie algebra.
For instance, an $L_\infty$-algebra encodes a Poisson structure in a
neighborhood of a coisotropic submanifold, provided that a linear transversal is given, see \cite{Cattaneo-Schaaetz} and \cite{Cattaneo-Felder}.
This makes $L_\infty$-algebras a central tool for studying Poisson brackets, but there are more occurences.
 Roytenberg and  Weinstein \cite{RoytenbergWeinstein} gave a description of the so-called Courant algebroids
 in terms of Lie $2$-algebras.
 In  the same vein, Rogers \cite{CRoger} encodes $n$-plectic manifolds by Lie $n$-algebras
and Fr\'{e}gier, Rogers and Zambon \cite{FRZ} used this formalism to construct moment maps of those.

In this paper we develop a theory of Nijenhuis forms on $L_\infty$-algebras. Here, by Nijenhuis forms, we mean a generalization of
the notion of Nijenhuis $(1,1)$-tensors on manifolds, i.e., $(1,1)$-tensors whose Nijenhuis torsion vanishes.
On manifolds, Nijenhuis tensors are unary operations on  the Lie algebra of vector fields.
Since, when dealing with $L_\infty$-algebras, one has to replace Lie algebra brackets by collections of $n$-ary brackets for all integers $n \geq1$,
we also want to define Nijenhuis forms that are collections of $n$-ary operations for all integers $n \geq1$.
Our main idea is based on the fact that,
given a Lie algebra $({\mathfrak g},[.,.])$
and a linear endomorphism $N$ of ${\mathfrak g}$, $N$ is Nijenhuis if deforming twice by $N$ the original bracket yields the original bracket deformed by $N^2$.
 We translate this idea to $L_\infty$-algebras, where the brackets to be deformed are their $n$-ary brackets.

We present several examples of Nijenhuis forms on $L_\infty$-algebras. The first example is universal, in the sense that every $L_\infty$-structure admits
 it: the Euler map $S$, that multiplies an element by its degree.
 Nijenhuis operators on ordinary graded Lie algebras are among the most trivial examples.
 Poisson elements, and more generally, Maurer-Cartan elements of differential graded Lie algebras
 are also examples, which are not purely made of vector valued $1$-forms, but which are the sum of a vector valued $1$-form with a vector valued $0$-form.  Less trivial examples are given on Lie $n$-algebras.
 On those, we have Nijenhuis forms which are the sum of a family of vector valued $k$-forms. An interesting case is when the Lie $n$-algebra is associated to an $n$-plectic manifold \cite{CRoger}. The case of Lie $2$-algebras is treated separately, and we have Nijenhuis forms which are the sum of a vector valued $1$-form with a vector valued $2$-form.

 We discuss how  Nijenhuis tensors on Courant algebroids \cite{CGM, YKSBrazil, CJP, AntunesCosta} fit in our defintion of Nijenhuis forms on some $L_\infty$-algebras. In order to include Lie algebroids in our examples, we recall the concept of multiplicative $L_\infty$-algebras (related to $P_\infty$-algebras in \cite{Cattaneo-Felder}). In the last part of the paper, our examples come from well-known structures on Lie algebroids, defined by pairs of compatible tensors \cite{KoRou, antunes, AntunesCosta}, such as $\Omega N$-, Poisson-Nijenhuis \cite{YKS} and $P \Omega$-structures.

Very recently, while we were about to finish this paper, a notion of Nijenhuis operator on Lie $2$-algebras was introduced in \cite{LiuSheng}, using a different perspective. That definition is a particular case of ours, as we explain in Remark \ref{sheng}.

The paper is organized in seven sections. In Section 1 we introduce a bracket of graded symmetric vector valued forms on a graded vector space that we call Richardson-Nijenhuis bracket, because it reduces to the usual Richardson-Nijenhuis bracket of vector valued forms on a (non-graded) vector space. With this graded bracket, we characterize $L_\infty$-structures as Poisson elements on the graded Lie algebra of graded symmetric vector valued forms. In Section 2, we present our main definition of Nijenhuis vector valued form with respect to an $L_\infty$-algebra, or more generally, with respect to a vector valued form of degree $1$. Relaxing a bit the definition of Nijenhuis vector valued form, yields the notions of weak Nijenhuis and co-boundary Nijenhuis forms, which provide interesting examples to be discussed in the next sections. Section 2 also contains the first examples of Nijenhuis forms on symmetric graded Lie algebras and symmetric differential graded Lie algebras: the Euler map, Poisson and Maurer-Cartan elements. Section 3 is devoted to Nijenhuis forms on Lie $n$-algebras. We construct examples of Nijenhuis forms on general Lie $n$-algebras, in particular on those defined by $n$-plectic manifolds. The case $n=2$ is treated separately, in Section 4. There, we find necessary and sufficient conditions to have a Nijenhuis form which is the sum of a vector valued $1$-form with a vector valued $2$-form which is, in fact, the most general case. The importance of Lie $2$-algebras appears in Section 5, where we focus on Courant algebroids. Using a construction established in \cite{RoytenbergWeinstein}, we associate a Lie $2$-algebra to each Courant algebroid and we relate $(1,1)$-tensors with vanishing Nijenhuis torsion on a Courant algebroid, with Nijenhuis forms on the corresponding associated Lie $2$-algebra. In Section 6, we study multiplicative $L_\infty$-algebras and its relation with pre-Lie and Lie algebroids. We introduce the notions of extension by derivation of $(1,1)$-tensors and of $k$-forms on a Lie algebroid,  needed to construct examples of Nijenhuis forms on Lie algebroids in the last section. In Section 7, the last one, we obtain, out of $\Omega N$-, Poisson-Nijenhuis and $P \Omega$-structures on a Lie algebroid, examples of weak Nijenhuis and co-boundary Nijenhuis vector valued forms.

\section{Richardson-Nijenhuis bracket and $L_\infty$-algebras}

In this section we extend the usual Richardson-Nijenhuis bracket of vector valued forms on vector spaces \cite{KMS} to graded symmetric vector valued forms on graded vector spaces. Then, we use it to characterize $L_\infty$-structures on graded vector spaces. We start by fixing some notations on graded vector spaces.

Let $E$ be a graded vector space over a field $\mathbb{K}=\mathbb{R}$ or $\mathbb{C}$, that is, a vector space of the form
$ \oplus_{i \in {\mathbb Z}} E_i. $
For a given $i\in \mathbb{Z}$, the vector space $E_i$ is called the component of degree $i$,
elements of $E_i$ are called \emph{homogeneous elements of degree $i$}, and elements in the union $\cup_{i\in \mathbb{Z}} E_i$
are called the \emph{homogeneous elements}.
We denote by $|X|$ the degree of a non-zero homogeneous element $X$. Given a graded vector space $E=\oplus_{i\in \mathbb{Z}}E_i$ and an integer $p$, one may shift all the degrees
by $p$ to get a new grading on the vector space $E$. We use the notation $E[p]$ for the graded vector space $E$ after shifting the degrees
 by $p$, that is, the graded vector space whose  component of degree $i$ is $E_{i+p}$.

  We denote by $S(E)$ the symmetric space of $E$ which is, by definition, the quotient space of the tensor algebra $\otimes E$ by the two-sided ideal $I \subset \otimes E$  generated by elements of the type $X\otimes Y-(-1)^{|X||Y|}Y\otimes X$, with $X$ and $Y$ arbitrary homogeneous elements in $E$.
For a given $k\geq 0$, $S^k(E)$ is the image of $\otimes^{k} E$ through the quotient map $\otimes E \mapsto \frac{\otimes E}{I} = S(E)$ and one has the following
 decomposition
   \begin{equation*} S(E) = \oplus_{k \geq 0} S^k (E), \end{equation*}
where $S^0(E)$ is simply the field $\mathbb{K}$.
  Moreover, when all the components in the graded space $E$ are of finite dimension, the dual of $S^k(E)$ is isomorphic to $S^k(E^*)$, for all $k\geq 0$. In this case,  there is a one to one correspondence between
\begin{enumerate}
\item[(i)] graded symmetric $k$-linear maps on the graded vector space $E$,
\item[(ii)] linear maps from the space $S^k(E)$ to $E$,
 \item[(iii)] $S^k(E^*)\otimes E$.
\end{enumerate}
Elements of the space $S^k(E^*)\otimes E$ are called \emph{symmetric vector valued  $k$-forms}. Notice that $S^0(E^*)\otimes E$, the space of vector valued zero-forms, is isomorphic to the space $E$.

 Having the decomposition $S(E)=\oplus_{k\geq 0}S^k(E)$, every element in $S(E)$ is the sum of finitely many elements in $S^k(E)$, $k \geq 0$.
 We absolutely need to consider also infinite sums, which is often referred in the literature as
 taking the completion of $S(E)$. By a \emph{formal sum}, we mean a sequence  $\phi: \mathbb{N}\bigcup \{0\} \to S(E) $ mapping an integer $k$
 to an element $a_k \in S^k(E)$: we shall, by a slight abuse of notation, denote by $\sum_{k=0}^\infty a_k$
 such an element. We denote the set of all formal sums by $\tilde{S}(E)$.  The algebra structure on $S(E)$ extends in an unique manner to $\tilde{S}(E)$.
 For two formal sums $a=\sum_{k=0}^\infty a_k$ and $b=\sum_{k=0}^\infty b_k$ we define $a+b$ to be $\sum_{k=0}^\infty (a_k+b_k)$,
 while the product of $a$ and $b$ is the infinite sum $\sum_{k=0}^\infty c_k$ with
   $ c_k = \sum_{i=0}^k a_i \cdot b_{k-i}  $ (with $\cdot$ being the product of $S(E)$).

When all the components in the graded space $E$ are of finite dimension, there is a one to one correspondence between
\begin{enumerate}
\item[(i)] collections indexed by ${k\geq 0}$ of graded symmetric $k$-linear maps on the graded vector space $E$,
\item[(ii)] collections  indexed by ${k\geq 0}$ of linear maps from $S^k(E)$ to $E$,
 \item[(iii)] $\tilde{S}(E^*)\otimes E$.
\end{enumerate}
Elements of the space $\tilde{S}(E^*)\otimes E$ are called \emph{ symmetric vector valued forms} and shall be written as infinite sums $\sum K_i$ with $K_i \in S^i(E^*)\otimes E$.

Let $E$ be a graded vector space, $E= \oplus_{i\in \mathbb{Z}}E_i$. The insertion operator of a symmetric vector valued $k$-form $K$ is an operator
$$ \iota_K : S(E^*)\otimes E \to S(E^*)\otimes E$$
defined by
\begin{equation}\label{equ:insertion}
\iota_{K}L(X_1,...,X_{k+l-1})=\sum_{\sigma \in Sh(k,l-1)}\epsilon(\sigma)L(K(X_{\sigma(1)},...,X_{\sigma(k)}),...,X_{\sigma(k+l-1)}),
\end{equation}
for all $L \in S^{l}(E^*)\otimes E $, $l\geq 0$ and $X_1,\cdots, X_{k+l-1} \in E$, where $Sh(i,j-1)$ stands for the set of $(i,j-1)$-unshuffles and
$\epsilon(\sigma)$ is the Koszul sign which
is defined as follows
   \begin{equation*}
   X_{\sigma(1)}\otimes\cdots \otimes X_{\sigma(n)}=\epsilon(\sigma)X_{1}\otimes\cdots \otimes X_{n},
   \end{equation*}
for all $X_{1}, \cdots, X_{n} \in E.$
If $L$ is an element in $S^0(E^*)\otimes E \simeq E$, then (\ref{equ:insertion}) should be understood as meaning that
$\iota_{K}L=0$, for all vector valued forms $K$ and
\begin{equation*}\label{eq:insertionof0}
\iota_{L}K (X_1,...,X_{k-1})=K(L,X_1,...,X_{k-1}),
\end{equation*}
for all vector valued $k$-form $K$.

Allowing $L$ and $K$ to be  symmetric vector valued forms, that is, $L=\sum_{i\geq 0}L_i$ and $K=\sum_{i\geq 0}K_i$, with $L_i$ and $K_i$ vector valued $i$-forms, the previous definition of insertion operator extends in the obvious way.

If $K$ is an element in $S^i(E^*)$, i.e. a linear form on $S^i(E) $, $i \geq 0$, one may define $\iota_K$ by a formula similar to (\ref{equ:insertion}). Moreover,
 $ \iota_K: \tilde{S}(E^*) \to \tilde{S}(E^*) $, with $K \in \tilde{S}(E^*)\otimes E$,
is the zero map if and only if $K=0.$

Now, we define a bracket on the space $\tilde{S}(E^*) \otimes E$ as follows.
Given a symmetric vector valued $k$-form $K\in S^{k}(E^*)\otimes E$ and a symmetric vector valued $l$-form $L\in S^{l}(E^*)\otimes E$, the {\em Richardson-Nijenhuis bracket} of $K$ and $L$ is the symmetric vector valued $(k+l-1)$-form $[K,L]_{_{RN}}$, given by
\begin{equation} \label{RNbracket}
[K,L]_{_{RN}}=\iota_{K}L-(-1)^{\bar{K}\bar{L}}\iota_{L}K,
\end{equation}
where $\bar{K}$ is the degree of $K$ as a graded map, that is $K(X_1,\cdots, X_k)\in E_{1+\cdots+k+\bar{K}},$ for all $X_i \in E_i$.
For an element $X \in E$, $\bar{X}=|X|$, that is, the degree of a vector valued $0$-form, as a graded map, is just its degree as an element of $E$.

\begin{prop}
The space $\tilde{S}(E^*) \otimes E$, equipped with the Richardson-Nijenhuis bracket, is a graded (skew-symmetric) Lie algebra.
\end{prop}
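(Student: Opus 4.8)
The plan is to verify the three defining properties of a graded Lie algebra with respect to the grading $K \mapsto \bar{K}$ (the degree of $K$ as a graded map): bilinearity, graded skew-symmetry, and the graded Jacobi identity. Bilinearity is immediate from \eqref{RNbracket} together with the linearity of $\iota_{\bullet}$ in both arguments. Graded skew-symmetry is a one-line computation: substituting the definition of $[L,K]_{_{RN}}$ gives
\[
-(-1)^{\bar{K}\bar{L}}[L,K]_{_{RN}}=-(-1)^{\bar{K}\bar{L}}\iota_{L}K+\iota_{K}L=[K,L]_{_{RN}},
\]
where the sign works out because $(-1)^{\bar{K}\bar{L}}(-1)^{\bar{L}\bar{K}}=1$. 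So the only substantial point is the graded Jacobi identity.

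I would not attack the Jacobi identity head-on; instead I would realize the insertion map as a morphism into an associative operator algebra, where the Jacobi identity is automatic. Concretely, for a vector valued form $K$ let $\iota_{K}\colon \tilde{S}(E^*)\to\tilde{S}(E^*)$ denote the insertion operator acting on scalar forms, and let $[\iota_{K},\iota_{L}]=\iota_{K}\iota_{L}-(-1)^{\bar{K}\bar{L}}\iota_{L}\iota_{K}$ be the graded commutator in $\mathrm{End}(\tilde{S}(E^*))$. The central claim is the \emph{homomorphism property}
\[
\iota_{[K,L]_{_{RN}}}=[\iota_{K},\iota_{L}].
\]
Granting this, the graded Jacobi identity for $[\cdot,\cdot]_{_{RN}}$ follows formally: the graded commutator of operators always satisfies the graded Jacobi identity, hence so does its preimage; and since the assignment $K\mapsto\iota_{K}$ on $\tilde{S}(E^*)$ is injective (stated in the excerpt: $\iota_{K}=0$ if and only if $K=0$), the Jacobiator of $[\cdot,\cdot]_{_{RN}}$ must itself vanish.

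To establish the homomorphism property I would expand $\iota_{K}\iota_{L}\phi$ for a scalar form $\phi\in S^m(E^*)$ directly from \eqref{equ:insertion} and sort the resulting terms according to how the two insertions interact. Computing $\iota_{L}\phi$ inserts $L$ into one slot and symmetrizes; applying $\iota_{K}$ afterwards either (a) inserts $K$ into the slot produced by $L$, so that $K$ lands inside the argument of $L$, or (b) inserts $K$ into one of the remaining original slots of $\phi$. The nested terms of type (a) reassemble precisely into $\iota_{\iota_{K}L}\phi$, while the type (b) terms define a second-order insertion operator $D_{K,L}\phi$ in which $K$ and $L$ occupy distinct slots. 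The key structural observation is that $D_{K,L}$ is graded symmetric, $D_{K,L}=(-1)^{\bar{K}\bar{L}}D_{L,K}$, since exchanging the two independent insertions costs exactly one Koszul sign. Hence in the graded commutator the type (b) contributions cancel and only the type (a) contributions survive, yielding
\[
[\iota_{K},\iota_{L}]=\iota_{\iota_{K}L}-(-1)^{\bar{K}\bar{L}}\iota_{\iota_{L}K}=\iota_{[K,L]_{_{RN}}},
\]
as desired.

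The main obstacle is the bookkeeping in this last step: one must track the Koszul signs $\epsilon(\sigma)$ attached to the nested unshuffles and verify that the composition of the two shuffle sums over $Sh(k,\cdot)$ and $Sh(l,\cdot)$ reorganizes correctly into a single shuffle sum defining $\iota_{\iota_{K}L}$ on one hand, and into the graded-symmetric operator $D_{K,L}$ on the other. The degree edge cases, namely when $K$ or $L$ is a vector valued $0$-form (an element of $E$), where the conventions $\iota_{K}L=0$ and $\iota_{L}K(\cdots)=K(L,\cdots)$ apply, should be checked to be consistent with the general formula, but they introduce no new difficulty.
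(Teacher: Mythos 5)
Your proof is correct, but there is no argument in the paper to measure it against: the proposition is stated there without proof, as the graded-symmetric analogue of the classical Nijenhuis--Richardson result (the paper cites \cite{KMS} for the non-graded bracket and \cite{LMS} for a multigraded variant). The closest the paper comes to a justification is a later remark, attributed to \cite{Markl}, that every symmetric vector valued form $\nu$ extends uniquely to a coderivation $\tilde{\nu}$ of $S(E)$ and that $\widetilde{[\mu,\nu]_{_{RN}}}=[\tilde{\mu},\tilde{\nu}]$; from that, the Jacobi identity follows by exactly the transfer argument you use. Your route is the precise dual of this: instead of coderivations of $S(E)$ you use the insertion operators $\iota_K$ acting on $\tilde{S}(E^*)$, prove the homomorphism property $\iota_{[K,L]_{_{RN}}}=[\iota_K,\iota_L]$, and invoke the injectivity of $K\mapsto\iota_K$, which the paper does state explicitly in Section 1 -- so your argument is self-contained within that section. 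The only substantive computation is the cross-term cancellation behind the homomorphism property, and your mechanism for it is the right one: the nested (type (a)) terms reassemble into $\iota_{\iota_K L}$, and the disjoint-slot operator $D_{K,L}$ is indeed graded symmetric, $D_{K,L}=(-1)^{\bar{K}\bar{L}}D_{L,K}$ (one can confirm the sign bookkeeping on the edge case where one argument is a vector valued $0$-form, where everything reduces to the graded symmetry of the scalar form being contracted). What your approach buys is elementary self-containedness; what the coderivation approach buys is that the entire cancellation is packaged once and for all into the statement that the graded commutator of two coderivations is again a coderivation. Either way the Jacobi identity is inherited from an associative algebra through an injective bracket-preserving map, so the two proofs are mirror images of each other.
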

If $K\in S^k(E^*)\otimes E$ is a vector valued $k$-form, an easy computation gives
\begin{equation} \label{kform}
K(X_1,\cdots,X_k)=[X_k,\cdots,[X_2,[X_1,K]_{_{RN}}]_{_{RN}}\cdots]_{_{RN}},
\end{equation}
for all $X_1,\cdots,X_k\in E$.

\

In \cite{LMS}, the authors defined a multi-graded
Richardson-Nijenhuis bracket, in a graded vector space, but their approach is different from
ours.

\

Next, we recall the notion of $L_\infty$-algebra, following \cite{Getzler}.
\begin{defn}\label{def:SymLinfty}
An {\em $L_{\infty}$-algebra} is a graded vector space $E= \oplus_{i\in \mathbb{Z}}E_i$ together with a family of symmetric vector valued forms $(l_i)_{i\geq 1}$ of degree $1$, with $l_i: \otimes^i E\to E$ satisfying the following relation:
\begin{equation}\label{L_infty-def}
\sum_{i+j=n+1}\sum_{\sigma \in Sh(i,j-1)}\epsilon(\sigma)l_j(l_i(X_{\sigma(1)},\cdots,X_{\sigma(i)}),\cdots,X_{\sigma(n)})=0,
\end{equation}
for all $n\geq 1$ and all homogeneous  $X_1,\cdots, X_n \in E$, where $\epsilon(\sigma)$ is the Koszul sign. The family of symmetric vector valued forms $(l_{i})_{i\geq 1}$ is called an {\em $L_{\infty}$-structure} on the graded vector space $E$. Usually, we denote this $L_{\infty}$-structure by $\mu:=\sum_{i\geq 1} l_i$ and we say, by an abuse of language,  that $\mu$ has degree $1$.
\end{defn}

A slight generalization of an $L_{\infty}$-algebra is the so-called {\em curved} $L_{\infty}$-algebra. In this case, the family of symmetric vector valued forms is $(l_{i})_{i\geq 0}$ that is, there is an extra symmetric vector valued $0$-form $l_0 \in E_1$, called the {\em curvature}, such that  $l_1(l_0)=0$ and Equation (\ref{L_infty-def}) is replaced by
\begin{equation*}
l_{n+1}(l_0,X_1,\cdots, X_{n})+\sum_{i+j=n+1}\sum_{\sigma \in Sh(i,j-1)} \epsilon(\sigma) l_j(l_i(X_{\sigma(1)},\cdots,X_{\sigma(i)}),\cdots,X_{\sigma(n)})
=0.
\end{equation*}

There is an equivalent definition of $L_{\infty}$-algebra in terms of graded skew-symmetric vector valued forms $l'_i$  of degree $i-2$. This was, in fact, the original definition introduced in \cite{Lada-Sta}. The equivalence of both definitions is established by the so-called {\em d\'ecalage isomorphism} \begin{equation*}\label{def:L_inftysymmetric}
   l_i(X_1,\cdots,X_i)\mapsto (-1)^{(i-1)|X_1|+(i-2)|X_2|+\cdots+|X_{i-1}|}l'_i(X_1,\cdots,X_i),
   \end{equation*}
$  X_1,\cdots,X_i \in E$. The family of  graded skew-symmetric brackets $(l'_{i})_{i\geq 1}$ defines an $L_{\infty}$-structure on the graded vector space $E[-1]$ if each $l'_{i}$ has degree $i-2$ and
\begin{equation*}\label{L_infty-def-skew}
\sum_{i+j=n+1}\sum_{\sigma \in Sh(i,j-1)}(-1)^{i(j-1)}\epsilon(\sigma)\, sign(\sigma)l_j(l_i(X_{\sigma(1)},\cdots,X_{\sigma(i)}),\cdots,X_{\sigma(n)})=0,
\end{equation*}
for all $n\geq 1$ and all $X_1,\cdots,X_n \in E$, with $sign(\sigma)$ being the sign of the permutation $\sigma$.

 Next, we see that some well-known structures on (graded) vector  spaces are examples of $L_{\infty}$-algebras.

   We start with a \emph{symmetric graded  Lie algebra}, which is a graded vector space $E=\oplus_{i\in \mathbb{Z}}E_i$ endowed with a binary graded symmetric bracket $[.,.]= \mu$ of degree $1$, satisfying the graded Jacobi identity i.e.
\begin{equation}\label{JacobiLie}
[X,[Y,Z]]=(-1)^{|X|+1}[[X,Y],Z]+(-1)^{(|X|+1)(|Y|+1)}[Y,[X,Z]],
\end{equation}
for all homogeneous elements $X,Y,Z \in E$.
Note that when the graded vector space is concentrated on degree $-1$, that is, all the vector spaces $E_i$ are zero, except $E_{-1}$, then (\ref{JacobiLie}) is the usual Jacobi identity and we get a Lie algebra with symmetric bracket. We would like to remark that (\ref{JacobiLie}) can be written as
\begin{equation}  \label{gla}
\mu(\mu(X,Y),Z)+(-1)^{|Y||Z|}\mu(\mu(X,Z),Y)+(-1)^{|X|(|Y|+|Z|)}\mu(\mu(Y,Z),X)=0,
\end{equation}
for all homogeneous elements $X,Y,Z \in E$.
This means that a symmetric graded Lie algebra is simply an $L_{\infty}$-algebra such that all the multi-brackets are zero except the binary one. From this, we also conclude that a {\em Lie algebra} is an $L_{\infty}$-algebra on a graded vector space concentrated on degree $-1$, for which all the brackets are zero except the binary bracket.

Another special case of an $L_\infty$-algebra is a \emph{symmetric differential graded Lie algebra}. It is an $L_\infty$-structure on $E=\oplus_{i\in \mathbb{Z}} E_i$, with all the brackets, except $l_1$ and $l_2$, being zero. In other words, a symmetric differential graded Lie algebra is a symmetric graded Lie algebra $(\oplus_{i\in \mathbb{Z}} E_i,[.,.]=l_2)$ endowed with a differential $\diff=l_1$, that is, a linear map $\diff :\oplus_{i\in \mathbb{Z}} E_i \to \oplus_{i\in \mathbb{Z}} E_i $ of degree $1$ and squaring to zero, satisfying the compatibility condition
\begin{equation*}\label{def:DGLA}
\diff[X,Y]+[\diff(X),Y]+(-1)^{|X|}[X,\diff(Y)]=0,
\end{equation*}
for all homogeneous elements $X,Y \in E.$ We shall denote a symmetric differential graded Lie algebra by $(E, \diff, [.,.])$ or by $(E, l_1+l_2)$.

We may also consider  two particular cases of a curved $L_\infty$-algebra, that is to say, a curved symmetric graded Lie algebra and a curved symmetric differential graded Lie algebra.  More precisely,
a {\em curved} symmetric differential graded Lie algebra on a graded vector space $E=\oplus_{i\in \mathbb{Z}} E_i$ is a symmetric differential graded Lie algebra $(E, \diff, [.,.])$ together with an element ${\mathfrak C} \in E_{1}$ such that:
\begin{equation*}
\diff({\mathfrak C})=0 \quad \mbox{ \rm and} \quad [{\mathfrak C},X]+\diff^2X=0, \,\,\, \mbox{ \rm for all} \,\,\, X \in E.
\end{equation*}
 We shall denote the curved symmetric differential graded Lie algebra by $(E, {\mathfrak C}, \diff, [.,.])$ or by $(E, {\mathfrak C}+l_1+l_2)$. When $\diff=0$, the curved symmetric differential graded Lie algebra  is simply a curved symmetric graded Lie algebra.

The Richardson-Nijenhuis bracket on graded vector spaces, introduced previously, is intimately related to $L_{\infty}$-algebras. In the next theorem, that appears in an implicit form in \cite{Roytenberg}, we use the Richardson-Nijenhuis bracket to characterize a (curved) $L_{\infty}$-structure on a graded vector space $E$ as a homological vector field on $E$.

\begin{thm}\label{th:linftyRN}
Let $E=\oplus_{i\in \mathbb{Z}}E_i$ be a graded vector space,
 $(l_i)_{i \geq 1}: \otimes^{i}E \to E$ be a family of symmetric vector valued forms on $E$ of degree $1$ and $l_0 \in E_1$ be a symmetric vector valued $0$-form.
 Set $\mu= \sum_{i\geq 1} l_i$ and $\mu'= \sum_{i\geq 0} l_i$. Then,
 \begin{itemize}
 \item[i)]
 $\mu$ is an $L_{\infty}$-structure on $E$ if and only if $[\mu,\mu]_{_{RN}}=0$;
 \item[ii)]
 $\mu'$ is a curved $L_{\infty}$-structure on $E$ if and only if $[\mu',\mu']_{_{RN}}=0$.
 \end{itemize}
\end{thm}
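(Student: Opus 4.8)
The plan is to expand the bracket $[\mu,\mu]_{_{RN}}$ (and $[\mu',\mu']_{_{RN}}$) directly from the definitions and to match it, arity by arity, with the defining relations of a (curved) $L_\infty$-structure. The first observation is that every $l_i$, including the curvature $l_0\in E_1$, has degree $1$ as a graded map, so $\bar\mu=\bar{\mu'}=1$. Substituting $K=L=\mu$ into (\ref{RNbracket}) and using $(-1)^{\bar\mu\,\bar\mu}=-1$, I would obtain
$$[\mu,\mu]_{_{RN}}=\iota_\mu\mu-(-1)\,\iota_\mu\mu=2\,\iota_\mu\mu,$$
and likewise $[\mu',\mu']_{_{RN}}=2\,\iota_{\mu'}\mu'$. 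Since $\mathbb{K}$ has characteristic zero, the factor $2$ is harmless and the problem reduces to analysing $\iota_\mu\mu$ and $\iota_{\mu'}\mu'$.

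Next I would decompose by homogeneity: $\iota_\mu\mu=\sum_{i,j\geq 1}\iota_{l_i}l_j$, where each $\iota_{l_i}l_j$ is a vector valued $(i+j-1)$-form. Collecting the terms of total arity $n$, i.e. those with $i+j=n+1$, and applying the insertion formula (\ref{equ:insertion}), the arity-$n$ component of $\iota_\mu\mu$ evaluated on $X_1,\dots,X_n$ is exactly
$$\sum_{i+j=n+1}\sum_{\sigma\in Sh(i,j-1)}\epsilon(\sigma)\,l_j(l_i(X_{\sigma(1)},\dots,X_{\sigma(i)}),\dots,X_{\sigma(n)}),$$
which is precisely the left-hand side of (\ref{L_infty-def}). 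Note that for each fixed $n$ only finitely many pairs $(i,j)$ contribute, so every homogeneous component is a finite sum and no convergence question arises for the formal sum. Hence $[\mu,\mu]_{_{RN}}=0$ if and only if all these components vanish, which is exactly the $L_\infty$ relations; this settles i).

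For ii) I would run the same computation with $\mu'=\sum_{i\geq 0}l_i$, the only new ingredient being the terms of $\iota_{\mu'}\mu'=\sum_{i,j\geq 0}\iota_{l_i}l_j$ that involve $l_0$. By the conventions fixed just after (\ref{equ:insertion}), insertion into a $0$-form vanishes, so $\iota_{l_i}l_0=0$ for all $i$, while $\iota_{l_0}l_j(X_1,\dots,X_{j-1})=l_j(l_0,X_1,\dots,X_{j-1})$. Thus for $n\geq 1$ the arity-$n$ component of $\iota_{\mu'}\mu'$ picks up the extra summand $l_{n+1}(l_0,X_1,\dots,X_n)$, reproducing exactly the curved relation, while the arity-$0$ component reduces to $\iota_{l_0}l_1=l_1(l_0)$, giving the curvature condition $l_1(l_0)=0$. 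Matching all components then yields ii).

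The computation is essentially bookkeeping; the one step requiring genuine care is the sign and convention matching. I expect the main obstacle to be verifying that the Koszul sign $\epsilon(\sigma)$ in (\ref{equ:insertion}) coincides with the one in (\ref{L_infty-def}), with no extra sign hidden in the degree $\bar\mu=1$ of $\mu$, and checking that the $0$-form conventions are applied consistently so that precisely the curvature term $l_{n+1}(l_0,\cdots)$ and the single condition $l_1(l_0)=0$ emerge, rather than spurious extra contributions. Once these are pinned down, both equivalences follow immediately.
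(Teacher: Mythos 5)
Your proof is correct and takes essentially the same route as the paper: the paper's own proof consists of the identity $[\mu,\mu]_{_{RN}}=\sum_{n\geq 1}\bigl(\sum_{i+j=n+1}[l_i,l_j]_{_{RN}}\bigr)=2\sum_{n\geq 1}\bigl(\sum_{i+j=n+1}\iota_{l_i}l_j\bigr)$, whose arity-$n$ components are matched against (\ref{L_infty-def}), with part (ii) dismissed as ``easy''. Your write-up simply fills in the bookkeeping the paper leaves implicit — the factor $2$ from $\bar\mu=1$, the finiteness of each homogeneous component, and for the curved case the conventions $\iota_{l_i}l_0=0$ and $\iota_{l_0}l_j(\cdot)=l_j(l_0,\cdot)$ that produce exactly the curvature term and the condition $l_1(l_0)=0$.
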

\begin{proof} (i) It is a direct consequence of the following equalities that can be obtained from (\ref{equ:insertion}) and (\ref{RNbracket}):
\begin{equation*}\label{mumu}
[\mu,\mu]_{_{RN}}=\sum_{n\geq 1}(\sum_{i+j=n+1}[l_i,l_j]_{_{RN}})=2\sum_{n\geq 1}(\sum_{i+j=n+1}\iota_{l_i}l_j).
\end{equation*}
The proof of (ii) is easy.
\end{proof}

Notice that for the case of symmetric graded Lie algebras, the statement of Theorem~\ref{th:linftyRN} appears in a natural way, since equation (\ref{gla}) is equivalent to
\begin{equation*}
(\iota_{\mu}\mu+\iota_{\mu}\mu)(X,Y,Z)=[\mu,\mu]_{_{RN}}(X,Y,Z)=0.
\end{equation*}

\section{Nijenhuis forms on $L_\infty$-algebras: definition and first examples}

In this section we define a Nijenhuis vector valued form with respect to a given vector valued form $\mu$ and deformation of $\mu$ by a Nijenhuis vector valued form. We show that deforming an $L_{\infty}$-structure by a Nijenhuis vector valued form, one gets an $L_{\infty}$-structure. Then, we present the first examples of Nijenhuis vector valued forms on some $L_{\infty}$-algebras.
\begin{defn} \label{def:Nijenhuis}
Let $E$ be a graded vector space and $\mu$ be a symmetric vector valued form on $E$ of degree $1$. A vector valued form ${\mathcal N}$ of degree zero is called
\begin{itemize}
  \item {\em weak Nijenhuis} with respect to $\mu$ if
       \begin{equation*}\label{weakN}
       \left[\mu,\left[{\mathcal N},\left[{\mathcal N},\mu\right]_{_{RN}}\right]_{_{RN}}\right]_{_{RN}}=0,
       \end{equation*}
  \item {\em co-boundary Nijenhuis} with respect to $\mu$ if there exists a vector valued form ${\mathcal K}$ of degree zero, such that
   \begin{equation*}\label{boundN}
       \left[{\mathcal N},\left[{\mathcal N},\mu\right]_{_{RN}}\right]_{_{RN}}=\left[{\mathcal K},\mu\right]_{_{RN}},
       \end{equation*}
  \item {\em Nijenhuis} with respect to $\mu$  if there exists a vector valued form ${\mathcal K}$ of degree zero, such that
       \begin{equation*}\label{strongN}
       \left[{\mathcal N},\left[{\mathcal N},\mu\right]_{_{RN}}\right]_{_{RN}}=\left[{\mathcal K},\mu\right]_{_{RN}}\,\,\,\hbox{and }\,\,\,\left[{\mathcal N},{\mathcal K}\right]_{_{RN}}=0.
       \end{equation*}
  Such a ${\mathcal K} $ is called a {\em square} of ${\mathcal N} $. If ${\mathcal N}$ contains an element of the underlying graded vector space, that is, ${\mathcal N}$ has a component which is a vector valued zero form, then ${\mathcal N}$ is called Nijenhuis (respectively, co-boundary Nijenhuis) vector valued form with {\em curvature}.
\end{itemize}
\end{defn}

It is obvious that the following implications hold:

\begin{center}
${\mathcal N}$ Nijenhuis $\Rightarrow$ ${\mathcal N}$ co-boundary Nijenhuis $\Rightarrow$ ${\mathcal N}$ weak Nijenhuis
\end{center}

\begin{rem}
It would be of course tempting to choose ${\mathcal K} = \iota_{\mathcal N}{\mathcal N} $ in Defintion~\ref{def:Nijenhuis}, having in mind what happens for manifolds, and the fact
that  $\iota_{ {\mathcal N}}{{\mathcal N}} = {\mathcal N}^2 $ for vector valued $1$-forms.
However, it is not what examples show to be a reasonable definition. Also, for ${\mathcal N}$  a vector valued $2$-form we do not have, in general,
  $ [\iota_{\mathcal N} {\mathcal N} , {\mathcal N}]_{_{RN}} =0,$
 which says $\iota_{\mathcal N} {\mathcal N}  $ is not a good candidate for the square, except maybe for vector valued $1$-forms.
\end{rem}

\begin{prop}\label{prop:deformedisLinfty}
Let $(E,\mu)$ be a (curved)  $L_\infty$-algebra and ${\mathcal N}$ be a symmetric vector valued form on $E$ of degree zero. Then ${\mathcal N}$ is weak Nijenhuis with respect to $\mu$ if and only if $[{\mathcal N},\mu]_{_{RN}}$ is a (curved) $L_{\infty}$-algebra.
\end{prop}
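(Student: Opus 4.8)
The plan is to use Theorem~\ref{th:linftyRN}, which says that a degree-$1$ vector valued form $\nu$ is a (curved) $L_\infty$-structure precisely when $[\nu,\nu]_{_{RN}}=0$. So I would set $\nu := [{\mathcal N},\mu]_{_{RN}}$ and aim to show that the weak Nijenhuis condition for ${\mathcal N}$ is \emph{equivalent} to $[\nu,\nu]_{_{RN}}=0$. First I would check the bookkeeping on degrees: since ${\mathcal N}$ has degree $0$ and $\mu$ has degree $1$, the Richardson-Nijenhuis bracket $\nu=[{\mathcal N},\mu]_{_{RN}}$ has degree $0+1=1$, so it is a legitimate candidate for a (curved) $L_\infty$-structure. (One should note that if $\mu$ is curved, i.e.\ contains a vector valued $0$-form $l_0$, then $\nu$ may contain a vector valued $0$-form as well, which is why the statement allows the deformed structure to be curved; this is exactly the ``curvature'' phenomenon flagged in Definition~\ref{def:Nijenhuis}.)

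The technical heart is the following identity, which I expect to be the main obstacle:
\begin{equation*}
\left[[{\mathcal N},\mu]_{_{RN}},\,[{\mathcal N},\mu]_{_{RN}}\right]_{_{RN}}
=\left[\mu,\,\left[{\mathcal N},[{\mathcal N},\mu]_{_{RN}}\right]_{_{RN}}\right]_{_{RN}}.
\end{equation*}
If this holds, then $[\nu,\nu]_{_{RN}}=0$ is literally the weak Nijenhuis condition for ${\mathcal N}$, and the proposition follows by applying Theorem~\ref{th:linftyRN} to $\nu$. To establish this identity I would lean on the fact, guaranteed by the preceding Proposition, that $(\tilde S(E^*)\otimes E,[\,,\,]_{_{RN}})$ is a graded Lie algebra, so the graded Jacobi identity and graded skew-symmetry of $[\,,\,]_{_{RN}}$ are available. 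The crucial extra input is $[\mu,\mu]_{_{RN}}=0$, which holds because $(E,\mu)$ is assumed to be a (curved) $L_\infty$-algebra (again by Theorem~\ref{th:linftyRN}).

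Concretely, I would expand the left-hand side by graded Jacobi, repeatedly moving the inner $\mu$ outward, and use $[\mu,\mu]_{_{RN}}=0$ to kill the terms in which the two copies of $\mu$ meet. Keeping careful track of the Koszul signs—here ${\mathcal N}$ has degree $0$ so the sign factors involving ${\mathcal N}$ simplify, while the two copies of the degree-$1$ element $\mu$ contribute the relevant $\pm$—I expect the three-fold nested bracket $[[{\mathcal N},\mu],[{\mathcal N},\mu]]_{_{RN}}$ to collapse to $[\mu,[{\mathcal N},[{\mathcal N},\mu]_{_{RN}}]_{_{RN}}]_{_{RN}}$ up to a nonzero scalar. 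The sign and coefficient tracking in this Jacobi expansion is exactly the delicate part; the rest is formal. Once the displayed identity is in hand, the equivalence is immediate: $[{\mathcal N},\mu]_{_{RN}}$ is a (curved) $L_\infty$-structure $\iff$ $\left[[{\mathcal N},\mu]_{_{RN}},[{\mathcal N},\mu]_{_{RN}}\right]_{_{RN}}=0$ $\iff$ $\left[\mu,\left[{\mathcal N},[{\mathcal N},\mu]_{_{RN}}\right]_{_{RN}}\right]_{_{RN}}=0$ $\iff$ ${\mathcal N}$ is weak Nijenhuis with respect to $\mu$.
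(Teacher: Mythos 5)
Your proposal is correct and follows essentially the same route as the paper: the paper's proof consists precisely of establishing your displayed identity, by applying the graded Jacobi identity to $[\mu,[{\mathcal N},[{\mathcal N},\mu]_{_{RN}}]_{_{RN}}]_{_{RN}}$ and discarding the term $[{\mathcal N},[\mu,[{\mathcal N},\mu]_{_{RN}}]_{_{RN}}]_{_{RN}}$, which vanishes because $[\mu,\mu]_{_{RN}}=0$, and the scalar you leave undetermined comes out to be exactly $1$ (since ${\mathcal N}$ has degree $0$, $[\mu,{\mathcal N}]_{_{RN}}=-[{\mathcal N},\mu]_{_{RN}}$ and the two signs cancel in the self-bracket). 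The conclusion then follows from Theorem \ref{th:linftyRN}, just as you indicate.
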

\begin{proof}
Using the Jacobi identity,
 we get
\begin{eqnarray*}\label{k1}
[\mu,[{\mathcal N},[{\mathcal N},\mu]_{_{RN}}]_{_{RN}}]_{_{RN}}& = &[[\mu,{\mathcal N}]_{_{RN}},[\mu,{\mathcal N}]_{_{RN}}]_{_{RN}}+[{\mathcal N},[\mu,[{\mathcal N},\mu]_{_{RN}}]_{_{RN}}]_{_{RN}}\\
& = & [[\mu,{\mathcal N}]_{_{RN}},[\mu,{\mathcal N}]_{_{RN}}]_{_{RN}}\\
& = & [[{\mathcal N}, \mu]_{_{RN}},[{\mathcal N}, \mu]_{_{RN}}]_{_{RN}},
\end{eqnarray*}
which concludes the proof.
\end{proof}

Given an $L_\infty$-structure $\mu$ and a symmetric vector valued form of degree zero ${\mathcal N}$ on a graded vector space, we call $[{\mathcal N}, \mu]_{_{RN}}$ the {\em deformation of $\mu$ by ${\mathcal N}$} and denote the deformed structure by $\mu^{{\mathcal N}}$. When $\mu$ is deformed $k$ times by ${\mathcal N}$, the deformed structure is denoted by $\mu^{{\mathcal N}, {\stackrel{k}{\dots}}, \, {\mathcal N}}$ or simply $\mu_k$ if there is no danger of confusion.

\

Weak Nijenhuis forms do not, in general, give hierarchies in any sense.
However, Nijenhuis forms do.
\begin{thm}\label{theo:Hierarchy}
Let ${\mathcal N}$ be a Nijenhuis vector valued form with respect to a  (curved) $L_\infty$-structure ${\mu}$ with square ${\mathcal K}$, on a graded vector space $E$. Then,
\begin{itemize}
\item[(i)]
for all integers $k \geq 1$, $\mu_k$  is a (curved) $L_\infty$-structure on $E$ and ${\mathcal N}$ is Nijenhuis with square ${\mathcal K}$, with respect to $\mu_k$;
\item[(ii)]
 for all integers $k, l \geq 1$, $[\mu_k, \mu_l]_{_{RN}}=0$.
 \end{itemize}
\end{thm}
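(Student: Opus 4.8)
Throughout I abbreviate $[\cdot,\cdot]_{_{RN}}$ by $[\cdot,\cdot]$, set $\mu_0:=\mu$ so that $\mu_k=[\mathcal{N},\mu_{k-1}]$, and read the hypotheses as $[\mathcal{N},[\mathcal{N},\mu]]=[\mathcal{K},\mu]$ together with $[\mathcal{N},\mathcal{K}]=0$. As $\mathcal{N},\mathcal{K}$ have degree $0$ and $\mu$ degree $1$, each graded Jacobi identity used below has Koszul prefactor $+1$ (in each instance at least one entry of the prefactor has degree $0$), and every bracket $[\mu_k,\mu_l]$ is symmetric in its two entries of equal odd degree; the only antisymmetry sign I use explicitly is $[\mu,\mathcal{K}]=-[\mathcal{K},\mu]$. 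The whole proof is organized around one key lemma: \emph{$\mathcal{N}$ stays Nijenhuis, with the same square $\mathcal{K}$, with respect to every $\mu_k$}, which I encode as the identity $\mu_{k+2}=[\mathcal{K},\mu_k]$ for all $k\geq 0$.

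I would prove this identity by induction on $k$, the case $k=0$ being the hypothesis. For the step, expand $\mu_{k+3}=[\mathcal{N},[\mathcal{K},\mu_k]]$ by the graded Jacobi identity into $[[\mathcal{N},\mathcal{K}],\mu_k]+[\mathcal{K},[\mathcal{N},\mu_k]]$; the first term vanishes since $[\mathcal{N},\mathcal{K}]=0$, and the second is $[\mathcal{K},\mu_{k+1}]$, closing the induction. Together with $[\mathcal{N},\mathcal{K}]=0$, the resulting identity $[\mathcal{N},[\mathcal{N},\mu_k]]=[\mathcal{K},\mu_k]$ is exactly the assertion that $\mathcal{N}$ is Nijenhuis with square $\mathcal{K}$ relative to $\mu_k$, giving the second half of part (i). That each $\mu_k$ is a (curved) $L_\infty$-structure then follows by a parallel induction: $\mu_0$ is one by hypothesis, and if $\mu_k$ is, the key lemma makes $\mathcal{N}$ Nijenhuis --- hence weak Nijenhuis, using $[\mu_k,\mu_k]=0$ --- with respect to $\mu_k$, so Proposition \ref{prop:deformedisLinfty} shows $\mu_{k+1}=[\mathcal{N},\mu_k]$ is again a (curved) $L_\infty$-structure. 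This settles part (i); alternatively, $[\mu_k,\mu_k]=0$ is the diagonal of part (ii) and one invokes Theorem \ref{th:linftyRN}.

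For part (ii) I would write $f(k,l):=[\mu_k,\mu_l]$ and extract two identities from the bracket axioms: the symmetry $f(k,l)=f(l,k)$, and, on expanding $\mu_k=[\mathcal{N},\mu_{k-1}]$ by Jacobi, the recursion
\[
f(k,l)+f(k-1,l+1)=[\mathcal{N},f(k-1,l)].
\]
I then induct on the total index $n=k+l$, the base $n=0$ being $[\mu,\mu]=0$. Assuming $f$ vanishes in total index $<n$, the right-hand side is zero, so $f(k,l)=-f(k-1,l+1)$; hence along the antidiagonal $k+l=n$ the value merely alternates, $f(k,l)=(-1)^{k}f(0,n)$. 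For odd $n$, symmetry gives $f(0,n)=f(n,0)=(-1)^{n}f(0,n)=-f(0,n)$, so $f(0,n)=0$ and the whole antidiagonal vanishes.

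The even case is the crux, and it is exactly here that the square $\mathcal{K}$ (the full Nijenhuis condition, not mere weak Nijenhuis) is indispensable, since for even $n$ the symmetry relation is vacuous. Here I would invoke the key lemma to write $\mu_n=[\mathcal{K},\mu_{n-2}]$ and expand $f(0,n)=[\mu,[\mathcal{K},\mu_{n-2}]]$ by Jacobi into $[[\mu,\mathcal{K}],\mu_{n-2}]+[\mathcal{K},[\mu,\mu_{n-2}]]$. The second term dies because $[\mu,\mu_{n-2}]=f(0,n-2)=0$ by induction, while in the first $[\mu,\mathcal{K}]=-[\mathcal{K},\mu]=-\mu_2$, so $f(0,n)=-f(2,n-2)$. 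But the antidiagonal alternation already forces $f(2,n-2)=f(0,n)$, whence $f(0,n)=-f(0,n)=0$ over our characteristic-zero field; alternation then spreads this across the antidiagonal, completing the induction and with it part (ii). All of the above applies verbatim to the curved case, using Theorem \ref{th:linftyRN}(ii) in place of (i).
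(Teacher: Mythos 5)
Your proof is correct, and part (i) coincides with the paper's own argument: your key lemma $\mu_{k+2}=[\mathcal K,\mu_k]_{_{RN}}$ is precisely the paper's induction showing that $\mathcal N$ remains Nijenhuis with the same square $\mathcal K$ with respect to every $\mu_k$ (graded Jacobi plus $[\mathcal N,\mathcal K]_{_{RN}}=0$), and the $L_\infty$ property of each $\mu_k$ is then deduced, exactly as in the paper, from Proposition \ref{prop:deformedisLinfty}. Part (ii) is where you genuinely diverge. The paper fixes $l$, inducts on $k$, and expands $[\mu_{k+1},\mu_l]_{_{RN}}=[\mathcal N,[\mu_k,\mu_l]_{_{RN}}]_{_{RN}}-[\mu_k,\mu_{l+1}]_{_{RN}}$, killing both terms by the inductive hypothesis together with the diagonal $[\mu_k,\mu_k]_{_{RN}}=0$ supplied by (i). You instead induct on the total index $n=k+l$: the recursion forces the alternation $f(k,l)=(-1)^k f(0,n)$ along each antidiagonal, and $f(0,n)$ is killed by the odd-odd symmetry of the bracket when $n$ is odd, and by the key lemma (giving $f(0,n)=-f(2,n-2)=-f(0,n)$) when $n$ is even. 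Both routes are valid over $\mathbb R$ or $\mathbb C$. Yours is somewhat longer and reproves the diagonal rather than importing it from (i); what it buys is, first, a precise localization of where the square $\mathcal K$ (full Nijenhuis rather than weak Nijenhuis) is indispensable, namely the even antidiagonals, and, second, a clean treatment of the near-diagonal pairs: in the paper's induction the pair $(k+1,k)$ produces the term $[\mu_k,\mu_{k+1}]_{_{RN}}$, which is not covered by its stated inductive hypothesis and silently requires the same symmetry-plus-division-by-two observation that your scheme makes explicit.
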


\begin{proof}
\begin{itemize}
\item[(i)]
For the case $k=1$, Proposition~\ref{prop:deformedisLinfty} together with the observation that if ${\mathcal N}$ is Nijenhuis then it is also weak Nijenhuis with respect to $\mu$, yields that $\mu_1= [{\mathcal N},\mu]_{_{RN}}$ is a (curved) $L_\infty$-algebra. Since ${\mathcal N}$ is Nijenhuis with respect to $\mu$ with square $\mathcal K$, we have
\begin{equation}\label{J1}
[{\mathcal N},[{\mathcal N},[{\mathcal N},\mu]_{_{RN}}]_{_{RN}}]_{_{RN}}=[{\mathcal N},[{\mathcal K},\mu]_{_{RN}}]_{_{RN}}.
\end{equation}
Applying the Jacobi identity on the right hand side of (\ref{J1}) and using the assumption that  ${\mathcal N}$ and  ${\mathcal K}$ commute with respect to the Richardson-Nijenhuis bracket, we get
\begin{equation*}
[{\mathcal N},[{\mathcal N},\mu_{1}]_{_{RN}}]_{_{RN}}=[{\mathcal K},\mu_{1}]_{_{RN}}.
\end{equation*}
Thus, ${\mathcal N}$ is Nijenhuis with respect to $\mu_{1}$, with square ${\mathcal K}$.

Assume, by induction, that ${\mathcal N}$ is Nijenhuis with respect to $\mu_k$ with square ${\mathcal K}$. Then, we have
\begin{equation*}
[{\mathcal N},[{\mathcal N},[{\mathcal N},\mu_k]_{_{RN}}]_{_{RN}}]_{_{RN}}=[{\mathcal N},[{\mathcal K},\mu_k]_{_{RN}}]_{_{RN}},
\end{equation*}
or, equivalently,
\begin{equation*}
[{\mathcal N},[{\mathcal N},\mu_{k+1}]_{_{RN}}]_{_{RN}}=[{\mathcal K},\mu_{k+1}]_{_{RN}}
\end{equation*}
which shows that ${\mathcal N}$ is Nijenhuis with respect to $\mu_{k+1}$, with square ${\mathcal K}$.

Assuming, by induction, that $\mu_k$ is a (curved) $L_\infty$-structure, i.e. $[\mu_k,\mu_k]_{_{RN}}=0$, we get  $$[[{\mathcal N},\mu_k]_{_{RN}}, [{\mathcal N},\mu_k]_{_{RN}}]_{_{RN}}=0$$ by using  the Jacobi identity, which means that $\mu_{k+1}$ is a (curved) $L_\infty$-structure. Thus, $\mu_k$ is a (curved) $L_\infty$-structure, for all $k\geq 1$.
\item[(ii)]
Let $k$ and $l$ be two positive integers with $k \geq l$. The case $k=l$ was proved in i). For the case $k > l$, assume by induction that $[\mu_k, \mu_n]_{_{RN}}=0$ for all integers $k \geq n \geq l$.
 Then,  the Jacobi identity gives
 \begin{eqnarray}
 [\mu_{k+1}, \mu_l]_{_{RN}}&=&[[{\mathcal N},\mu_k]_{_{RN}}, \mu_l]_{_{RN}} \nonumber \\
 &=&[{\mathcal N},[\mu_k, \mu_l]_{_{RN}}]_{_{RN}}-[\mu_k, [{\mathcal N},\mu_l]_{_{RN}}]_{_{RN}} \label{mu_kmu_l}
  \end{eqnarray}
  and by the induction assumption, both terms in (\ref{mu_kmu_l}) vanish. So, we get $[\mu_{k+1}, \mu_{l}]_{_{RN}}=0$.
  This completes the induction and shows that $[\mu_k, \mu_l]_{_{RN}}=0$, for all $k,l \geq 1$.
\end{itemize}
\end{proof}

\begin{rem}
Theorem \ref{theo:Hierarchy} implies that:
 \begin{equation}\label{def:mu_de_t} \mu(t) = \sum_{i=0}^\infty \frac{t^i}{i!} \mu_i ,\end{equation}
with $t$ a formal parameter, satisfies the relation:
 \begin{equation}\label{eq:all_in_one} {\displaystyle{\left[ \frac{d^k}{dt^k} \mu(t), \frac{d^l}{dt^l}  \mu(t)\right]_{_{RN}}=0}}, \end{equation}
for all pair of positive integers $k,l$.
Let us prove this point:  (\ref{def:mu_de_t}) implies that $$\frac{d^k}{dt^k} \mu(t)=\sum_{i=0}^\infty \frac{t^i}{i!} \mu_{k+i}.$$
 Hence, $${\displaystyle{\left[ \frac{d^k}{dt^k} \mu(t), \frac{d^l}{dt^l}  \mu(t)\right]_{_{RN}}}}=\sum_{i,j=0}^\infty \frac{t^{i+j}}{i!j!} \left[ \mu_{k+i}, \mu_{l+j}\right]_{_{RN}}.$$
Thus, (\ref{eq:all_in_one}) holds for all pair of positive integers $k, l$ if item (ii) in Theorem \ref{theo:Hierarchy} holds.

Now, recall from \cite{Markl} that a symmetric vector valued form $ \nu $ extends in a unique manner to a coderivation $\tilde{\nu} $ of the symmetric algebra $S(E)$. The Richardson-Nijenhuis bracket can be seen as being the graded commutator of coderivations of $S(E)$: said otherwise, the relation $ \widetilde{[\mu,\nu]_{_{RN}}} = [ \tilde{\mu}, \tilde{\nu} ] $ holds. In particular, for $\mu$ a $L_\infty$-structure, the coderivation $\tilde{\nu} $ squares to zero.

Recall also that given two $L_\infty$-structures $\mu$ and $ \nu$, a coalgebra endomorphism $\Phi $ of $S(E)$ that satisfies $ \Phi \circ \tilde{\mu} = \tilde{\nu} \circ \Phi $ is called an $L_{\infty}$-morphisms from $ \mu$ to $\nu$.
When $\Phi$ is invertible, we speak of $L_\infty $-isomorphism.
These facts extend to the algebra $S(E) \otimes {\mathbb K}[[t]]$, with $ {\mathbb K} $ being the base field
and $t$ a formal parameter.

By definition of $ \mu_k$, $\mu(t)$ can be expressed as
$$ \mu(t) = e^{ad_{t {\mathcal N}}}  (\mu),$$
where $e$ stands for the exponential (which makes sense since $t$ is a formal parameter) and $ad$ refers to the adjoint action with respect to the Richardson-Nijenhuis bracket. By usual abstract non-sense, the previous relation implies:
$$ \widetilde{\mu(t)} = e^{ t\widetilde{{\mathcal N}}} \circ \widetilde{\mu} \circ e^{-t\widetilde{{\mathcal N}}} $$
where $\widetilde{\mu}$ is the coderivation of the symmetric algebra $ S(E) $ associated to $ \mu $.
Since $e^{ t\widetilde{{\mathcal N}}} $ is an invertible coalgebra morphism, the $L_{\infty} $-structures $ \mu(t) $
and $\mu$ are $L_\infty$-isomorphic. This point should be interpreted as meaning that
$ \mu_1 =  \left.\frac{d}{dt}\right|_{t=0} \mu(t)$ is a trivial deformation
of the $L_\infty $-structure $ \mu$. As usual in deformation theory, trivial deformations are defined as being those  which are the first derivatives at $ t=0$ of a parameter depending deformation $\mu(t) $ of a  $ L_\infty$-structure $ \mu$ such that $ \mu(t)$ is $L_\infty $-isomorphic to $\mu$. This is parallel to what happens for Nijenhuis deformations of Poisson structures. Notice, however, that $ \mu_2$ is not, a priori, a trivial deformation of $\mu$.
\end{rem}

 Recall from \cite{YKS} that a \emph{Nijenhuis} operator on a graded Lie algebra $(E,\mu=\left[.,.\right])$ is a linear map $N:E\rightarrow E$ such that its  Nijenhuis torsion with respect to $\mu$, defined by
\begin{equation}\label{Torsion}
T_{\mu}N(X,Y):=\mu(NX,NY)-N(\mu(NX,Y)+\mu(X,NY)-N(\mu(X,Y))),
\end{equation}
for all $X$, $Y\in E$, is identically zero.
For a binary bracket $\mu=\left[.,.\right]$, the deformed bracket by $N$ is denoted by $\left[.,.\right]_N$ and is given by $[X,Y]_N=[NX,Y]+[X,NY]-N[X,Y]$. It has been shown in \cite{YKS} that if $N$ is Nijenhuis on a Lie algebra $(E,\left[.,.\right])$, then $(E,\left[.,.\right]_N)$ is also a Lie algebra and $N$ is a morphism of Lie algebras.
Also, it has been shown that $N$ is Nijenhuis if and only if deforming the original bracket of the Lie algebra twice by $N$ is equivalent to deform it once by $N^2$, that is $([X,Y]_N)_N=[X,Y]_{N^2}$. This can be stated using the notion of Richardson-Nijenhuis bracket on the space of vector valued forms on a graded vector space $E$, as follows:
\begin{equation*}
[N,[N,\mu]_{_{RN}}]_{_{RN}}=[N^2,\mu]_{_{RN}}.
\end{equation*}
So, we conclude that \emph{Nijenhuis operators in the usual and traditional sense are, of course, Nijenhuis in our sense also.}

Next, we present the first examples of Nijenhuis vector valued forms on $L_\infty$-algebras. We start by introducing the
Euler map $S$, the map that simply counts the degree of homogeneous elements in a graded vector space. More precisely, given a graded vector space
$E=\oplus_{i\in \mathbb{Z}}E_i$,  $S:E\rightarrow E$ is defined by $S(X)=-|X|X$, for all homogeneous elements $X\in E$ of degree $|X|$.

Notice that $S$, as a graded map, has degree zero, $\bar{S}=0$. By a simple computation, using the definition of $S$, we get the following result.

\begin{lem}\label{Euler}
Let $E=\oplus_{i\in \mathbb{Z}}E_i$ be a graded vector space. Then,
\begin{equation*}
[S,\alpha]_{_{RN}}=\bar{\alpha}\,\alpha,
\end{equation*}
for every symmetric vector valued form $\alpha$ on $E$ of degree $\bar{\alpha}$.
\end{lem}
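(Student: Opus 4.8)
The plan is to unravel the definition of the Richardson-Nijenhuis bracket and evaluate both insertion terms directly on homogeneous arguments. Since $S$ is a vector valued $1$-form with $\bar{S}=0$, formula (\ref{RNbracket}) gives $[S,\alpha]_{_{RN}}=\iota_S\alpha-\iota_\alpha S$, with no sign in front of the second term because $(-1)^{\bar S\bar\alpha}=1$. By bilinearity of the bracket it is enough to treat the case where $\alpha\in S^l(E^*)\otimes E$ is a single vector valued $l$-form of degree $\bar\alpha$; the general formal-sum case, all of whose components share the same graded degree $\bar\alpha$, then follows term by term.

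First I would compute $\iota_S\alpha$. Because $S$ is a $1$-form, the unshuffle sum in (\ref{equ:insertion}) runs over $Sh(1,l-1)$, which amounts to selecting one of the $l$ arguments to be fed into $S$. For the $m$-th term the Koszul sign $\epsilon(\sigma)$ is exactly the sign produced by commuting $X_m$ to the front past $X_1,\dots,X_{m-1}$; after replacing $S(X_m)=-|X_m|X_m$ and using the graded symmetry of $\alpha$ to move the (degree-unchanged) factor $X_m$ back to its original slot, the same Koszul sign reappears and the two cancel. Hence each term contributes $-|X_m|\,\alpha(X_1,\dots,X_l)$, and summing gives $\iota_S\alpha(X_1,\dots,X_l)=-\bigl(\sum_{i=1}^{l}|X_i|\bigr)\alpha(X_1,\dots,X_l)$.

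Next I would compute $\iota_\alpha S$. Since $S$ is a $1$-form, (\ref{equ:insertion}) with $K=\alpha$ and $L=S$ has unshuffle set $Sh(l,0)$, which contains only the identity, so $\iota_\alpha S(X_1,\dots,X_l)=S(\alpha(X_1,\dots,X_l))$. The element $\alpha(X_1,\dots,X_l)$ is homogeneous of degree $\sum_{i=1}^{l}|X_i|+\bar\alpha$, by the very definition of $\bar\alpha$, whence $\iota_\alpha S(X_1,\dots,X_l)=-\bigl(\sum_{i=1}^{l}|X_i|+\bar\alpha\bigr)\alpha(X_1,\dots,X_l)$. Subtracting the two expressions, the sums $\sum_i|X_i|$ cancel and I am left with $[S,\alpha]_{_{RN}}=\bar\alpha\,\alpha$, as claimed. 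The degenerate arity $l=0$ (a vector valued $0$-form, so $\bar\alpha=|\alpha|$) is handled separately using the special conventions $\iota_S\alpha=0$ and $\iota_\alpha S=S(\alpha)=-|\alpha|\alpha$ from the definition of the insertion operator, which again yield $\bar\alpha\,\alpha$.

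The only genuinely delicate point is the sign bookkeeping in the computation of $\iota_S\alpha$: one must check that the Koszul sign incurred in pulling an argument to the front is exactly undone when graded symmetry is used to put it back, which works precisely because $S$ preserves the degree of its argument ($\bar S=0$). Everything else is a direct substitution.
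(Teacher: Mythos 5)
Your proof is correct: both insertion terms are computed exactly as the definitions require, the Koszul-sign cancellation in $\iota_S\alpha$ is the right (and only delicate) point, and the $0$-form case is handled with the paper's stated conventions. The paper itself offers no written proof beyond "by a simple computation, using the definition of $S$", and your argument is precisely that computation carried out, so it matches the intended approach.
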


\begin{prop}
 Let $\mu$ be a vector valued form of degree $1$ on a graded vector space $E=\oplus_{i\in \mathbb{Z}}E_i$. The Euler map $S$ is a Nijenhuis vector valued form with respect to $\mu$ with square $S$.
\end{prop}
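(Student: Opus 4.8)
The plan is to reduce everything to Lemma~\ref{Euler}, which asserts that $[S,\alpha]_{_{RN}}=\bar\alpha\,\alpha$ for every symmetric vector valued form $\alpha$ of degree $\bar\alpha$. Unwinding Definition~\ref{def:Nijenhuis} in the case $\mathcal N=S$ with the candidate square $\mathcal K=S$, I must establish two identities: the deformation relation
\[
[S,[S,\mu]_{_{RN}}]_{_{RN}}=[S,\mu]_{_{RN}},
\]
which is $[\mathcal N,[\mathcal N,\mu]_{_{RN}}]_{_{RN}}=[\mathcal K,\mu]_{_{RN}}$ for $\mathcal K=S$, and the commutation relation $[S,S]_{_{RN}}=0$.

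First I would compute the inner bracket. Since $\mu$ has degree $1$, i.e. $\bar\mu=1$, Lemma~\ref{Euler} gives $[S,\mu]_{_{RN}}=\bar\mu\,\mu=\mu$. The only point requiring care here is bookkeeping the degree of this output as a graded map: because $\bar S=0$, the form $[S,\mu]_{_{RN}}$ again has degree $1$, consistent with its being equal to $\mu$. Next I would apply Lemma~\ref{Euler} a second time, now to the degree-$1$ form $\beta:=[S,\mu]_{_{RN}}$, obtaining $[S,\beta]_{_{RN}}=\bar\beta\,\beta=1\cdot\beta=[S,\mu]_{_{RN}}$, which is exactly the required deformation relation. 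Finally, the commutation relation follows immediately from Lemma~\ref{Euler} applied to $\alpha=S$: since $\bar S=0$, we get $[S,S]_{_{RN}}=\bar S\,S=0$.

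There is no genuine obstacle; the argument is a direct double application of the Euler lemma. The one place to stay attentive is the degree accounting — verifying that $\bar S=0$ makes the Koszul sign $(-1)^{\bar S\,\bar\mu}$ appearing in the Richardson-Nijenhuis bracket trivial and, more importantly, keeps $[S,\mu]_{_{RN}}$ in degree $1$ — since it is this fact that makes the second application of the lemma return the scalar factor $1$ rather than some other value. I would also remark that no hypothesis on $\mu$ beyond having degree $1$ is used; in particular $\mu$ need not satisfy $[\mu,\mu]_{_{RN}}=0$, so the statement holds for arbitrary degree-$1$ forms and not only for (curved) $L_\infty$-structures.
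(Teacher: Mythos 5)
Your proof is correct and is essentially the paper's own argument: both reduce everything to Lemma~\ref{Euler}, first obtaining $[S,\mu]_{_{RN}}=\mu$ (the paper does this componentwise on the $l_i$ and sums, you apply the lemma to $\mu$ directly, which is the same thing by linearity), then deducing $[S,[S,\mu]_{_{RN}}]_{_{RN}}=[S,\mu]_{_{RN}}$ and $[S,S]_{_{RN}}=0$ from $\bar S=0$. Your closing remark that no hypothesis beyond $\bar\mu=1$ is used matches the proposition as stated, which indeed does not require $[\mu,\mu]_{_{RN}}=0$.
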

\begin{proof}
Let $\mu=\sum_{i=1}^{\infty}l_i$. Applying Lemma \ref{Euler} to each $l_i, 1\leq i\leq \infty$, and taking the sum we get:
\begin{equation*}
[S,\mu]_{_{RN}}=\sum_{i=1}^{\infty}[S,l_i]_{_{RN}}=\sum_{i=1}^{\infty}l_i=\mu.
\end{equation*}
Therefore
\begin{equation*}
[S,[S,\mu]_{_{RN}}]_{_{RN}}=[S,\mu]_{_{RN}}.
\end{equation*}
Since $\bar{S}=0$, Lemma \ref{Euler} implies that $[S,S]_{_{RN}}=0$ and this completes the proof.
\end{proof}

Of course, the result can be enlarged for every $\mu$-cocycle, that is, a vector valued form $\alpha$ such that $[\mu,\alpha]_{_{RN}}=0$.
\begin{prop}
 Let $\mu$ be a vector valued form of degree $1$ on a graded vector space $E$. Then, for every element $\alpha$ of degree $0$ in $ {\tilde S}(E^*) \otimes E $ with $[\mu,\alpha]_{_{RN}} =0$, $S+\alpha $ is Nijenhuis with respect to $\mu$, with square $S$.
 \end{prop}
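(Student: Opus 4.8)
The plan is to verify directly the two defining conditions of Definition~\ref{def:Nijenhuis}, taking $\mathcal{N} = S + \alpha$ and the candidate square $\mathcal{K} = S$. Concretely, I would establish that $[S+\alpha, [S+\alpha,\mu]_{_{RN}}]_{_{RN}} = [S,\mu]_{_{RN}}$ (the square condition) and that $[S+\alpha, S]_{_{RN}} = 0$ (the commuting condition).

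First I would collect the elementary facts supplied by Lemma~\ref{Euler} together with some degree bookkeeping. Since $\bar\mu = 1$, $\bar\alpha = 0$ and $\bar S = 0$, the lemma gives $[S,\mu]_{_{RN}} = \mu$, $[S,\alpha]_{_{RN}} = 0$ and $[S,S]_{_{RN}} = 0$. Combining the hypothesis $[\mu,\alpha]_{_{RN}} = 0$ with the graded skew-symmetry of the Richardson-Nijenhuis bracket, the transposed brackets $[\alpha,\mu]_{_{RN}}$ and $[\alpha,S]_{_{RN}}$ vanish as well (all degrees involved being $0$).

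The heart of the argument is the observation that $S+\alpha$ leaves $\mu$ unchanged under deformation. Indeed, by bilinearity $[S+\alpha,\mu]_{_{RN}} = [S,\mu]_{_{RN}} + [\alpha,\mu]_{_{RN}} = \mu + 0 = \mu$. Iterating this identity settles the square condition, since $[S+\alpha,[S+\alpha,\mu]_{_{RN}}]_{_{RN}} = [S+\alpha,\mu]_{_{RN}} = \mu = [S,\mu]_{_{RN}}$; and the commuting condition follows from $[S+\alpha,S]_{_{RN}} = [S,S]_{_{RN}} + [\alpha,S]_{_{RN}} = 0$.

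I do not expect a genuine obstacle here: the proof reduces to bilinearity, three applications of Lemma~\ref{Euler}, the cocycle hypothesis $[\mu,\alpha]_{_{RN}}=0$, and graded skew-symmetry. The only point requiring a little care is tracking the Koszul signs when transposing brackets, but since $S$, $\alpha$, and hence $\mathcal{N}$ and $\mathcal{K}$ all have degree $0$, every such sign is $+1$ and the computation collapses to the single statement that deforming $\mu$ by $S+\alpha$ returns $\mu$.
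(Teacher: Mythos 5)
Your proof is correct and is exactly the argument the paper intends: the paper states this proposition without proof as an immediate extension of the preceding one (where $S$ alone is shown to be Nijenhuis via Lemma~\ref{Euler}), and your computation --- $[S+\alpha,\mu]_{_{RN}}=\mu$ from Lemma~\ref{Euler} plus the cocycle hypothesis, then iterating and checking $[S+\alpha,S]_{_{RN}}=0$ --- is precisely that extension. The only nitpick is the closing remark about signs: the transposition rule is $[\alpha,\mu]_{_{RN}}=-(-1)^{\bar{\alpha}\bar{\mu}}[\mu,\alpha]_{_{RN}}$, so the relevant sign is $-1$ rather than $+1$, but since the bracket being transposed vanishes this has no effect on the argument.
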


Next, we give some examples of Nijenhuis forms on symmetric graded and symmetric differential graded Lie algebras. For that, we need to introduce the notions of Maurer-Cartan and Poisson elements.

A \emph{Maurer-Cartan element} in a symmetric differential graded Lie algebra $(E,\diff,[.,.])$ is an element $e \in E_0$ such that
\begin{equation*}
\diff(e)-\frac{1}{2}[e,e]=0.
\end{equation*}
A \emph{Maurer-Cartan element} in a symmetric curved differential graded Lie algebra $(E,\, {\mathfrak C},\diff, [.,.])$ is an element $e \in E_0$ such that
\begin{equation*}
(\diff(e)-{\mathfrak C})-\frac{1}{2}[e,e]=0.
\end{equation*}

A \emph{Poisson element} in a curved $L_{\infty}$-algebra $(E,\mu=\sum_{i\geq 0}l_i)$ is an element $\pi \in E_0$, such that $l_2(\pi,\pi)=0$.

The next proposition provides an example of a Nijenhuis vector valued form on a symmetric differential graded Lie algebra.

\begin{prop}\label{deformationofDGLA}
Let $\mu={\mathfrak C}+l_1+l_2$ be a curved symmetric differential graded Lie algebra  structure on a graded vector space $E=\oplus_{i\in \mathbb{Z}}E_i$ and $\pi\in E_0$. Then, ${\mathcal N}=\pi+S$ is a Nijenhuis vector valued form  (with curvature $\pi$) with respect to $\mu$ and with square $2\pi+S$ if, and only if, $\pi$ is a Poisson element.

In this case, the deformed structure is the curved symmetric differential graded Lie algebra $\left(E,({\mathfrak C}+l_1(\pi))+(l_1+l_2(\pi,.))+l_2\right)$.
\end{prop}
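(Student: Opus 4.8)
The plan is to reduce everything to two bracket computations, exploiting that both summands of $\mathcal{N}=\pi+S$ have degree zero, so that all Koszul signs in (\ref{RNbracket}) are trivial. First I would record that, since $\mu=\mathfrak{C}+l_1+l_2$ has all its homogeneous components of degree $1$ as graded maps, Lemma~\ref{Euler} gives $[S,\mu]_{_{RN}}=\mu$, and more generally $[S,\nu]_{_{RN}}=\nu$ for any symmetric vector valued form $\nu$ all of whose components have degree $1$. For the $\pi$-part, since $\pi\in E_0$ is a vector valued zero-form one has $\bar\pi=0$ and $\iota_\mu\pi=0$ by the insertion convention for zero-forms, so $[\pi,\mu]_{_{RN}}=\iota_\pi\mu$; inserting $\pi$ into the first slot of each component gives $\iota_\pi\mathfrak{C}=0$, $\iota_\pi l_1=l_1(\pi)$ and $\iota_\pi l_2=l_2(\pi,\cdot)$, whence
\begin{equation*}
[\mathcal{N},\mu]_{_{RN}}=\mu+l_1(\pi)+l_2(\pi,\cdot)=(\mathfrak{C}+l_1(\pi))+(l_1+l_2(\pi,\cdot))+l_2.
\end{equation*}
This already identifies the components of the deformed structure $\mu_1$ claimed in the statement.

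Next I would compute the iterated bracket. Writing $\mu_1=\mu+\iota_\pi\mu$, all of its components still have degree $1$, so $[S,\mu_1]_{_{RN}}=\mu_1$, while $[\pi,\mu_1]_{_{RN}}=\iota_\pi\mu_1=\iota_\pi\mu+\iota_\pi^2\mu$. The only surviving contribution of the double insertion is $\iota_\pi(l_2(\pi,\cdot))=l_2(\pi,\pi)$, the other terms vanishing since inserting $\pi$ into a zero-form gives zero; hence $\iota_\pi^2\mu=l_2(\pi,\pi)$. Adding the two pieces yields
\begin{equation*}
[\mathcal{N},[\mathcal{N},\mu]_{_{RN}}]_{_{RN}}=\mu+2\,\iota_\pi\mu+l_2(\pi,\pi).
\end{equation*}
On the other hand the candidate square $\mathcal{K}=2\pi+S$ satisfies $[\mathcal{K},\mu]_{_{RN}}=2[\pi,\mu]_{_{RN}}+[S,\mu]_{_{RN}}=\mu+2\,\iota_\pi\mu$. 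Comparing the two displays, the equation $[\mathcal{N},[\mathcal{N},\mu]_{_{RN}}]_{_{RN}}=[\mathcal{K},\mu]_{_{RN}}$ holds if and only if $l_2(\pi,\pi)=0$, i.e. if and only if $\pi$ is a Poisson element; this equivalence is the crux of the argument.

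It then remains to verify the commutation $[\mathcal{N},\mathcal{K}]_{_{RN}}=0$, which I expect to hold unconditionally: expanding $[\pi+S,2\pi+S]_{_{RN}}$ by bilinearity, the term $[S,S]_{_{RN}}$ vanishes because $\bar S=0$, the mixed terms reduce to $[S,\pi]_{_{RN}}=\bar\pi\,\pi=0$ by Lemma~\ref{Euler} (and skew-symmetry), and $[\pi,\pi]_{_{RN}}=0$ since $\iota_\pi\pi=0$. Hence $\mathcal{N}=\pi+S$ is Nijenhuis with square $2\pi+S$ exactly when $\pi$ is Poisson. Finally, under this hypothesis $\mathcal{N}$ is in particular weak Nijenhuis, so by Proposition~\ref{prop:deformedisLinfty} the deformation $\mu_1=[\mathcal{N},\mu]_{_{RN}}$ is a (curved) $L_\infty$-structure; as its components live only in form-degrees $0,1,2$, it is precisely the curved symmetric differential graded Lie algebra displayed above. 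The main delicacy throughout is the bookkeeping of the insertion-operator conventions for zero-forms — that $\iota_K\pi=0$ while $\iota_\pi$ acts by inserting $\pi$ in the first argument — together with confirming that every Koszul sign is trivial because $\pi$ and $S$ are of degree zero.
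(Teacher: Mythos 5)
Your proof is correct and follows essentially the same route as the paper: compute $[\pi+S,\mu]_{_{RN}}$, iterate to get $[\pi+S,[\pi+S,\mu]_{_{RN}}]_{_{RN}}=[2\pi+S,\mu]_{_{RN}}+l_2(\pi,\pi)$, observe that $[\pi+S,2\pi+S]_{_{RN}}=0$ holds unconditionally, and conclude that the Nijenhuis condition is equivalent to $l_2(\pi,\pi)=0$. The only cosmetic difference is that you justify the final statement about the deformed structure via Proposition~\ref{prop:deformedisLinfty} rather than Theorem~\ref{theo:Hierarchy}, which amounts to the same thing since the latter's case $k=1$ rests on the former.
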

\begin{proof}
The proof of the equivalence follows from:
\begin{equation}  \label{deformed DGLA}
[\pi+S,{\mathfrak C}+l_1+l_2]_{_{RN}}={\mathfrak C}+l_1(\pi)+(l_2(\pi,.)+l_1)+l_2,
\end{equation}

\begin{equation*}
\begin{array}{rcl}
[\pi+S,[\pi+S,{\mathfrak C}+l_1+l_2]_{_{RN}}]_{_{RN}}&=&[\pi+S,{\mathfrak C}+l_1+l_2+l_1(\pi)+l_2(\pi,.)]_{_{RN}}\\
                                 &=&{\mathfrak C}+l_1+l_2+2l_1(\pi)+2l_2(\pi,.)+l_2(\pi,\pi)\\
                                 &=&[2\pi+S,{\mathfrak C}+l_1+l_2]_{_{RN}}+l_2(\pi,\pi)
\end{array}
\end{equation*}
and
\begin{equation*}
[\pi+S,2\pi+S]_{_{RN}}=2[\pi,\pi]_{_{RN}}+[\pi,S]_{_{RN}}+2[S,\pi]_{_{RN}}+[S,S]_{_{RN}}=0.
\end{equation*}
The last statement follows directly from  (\ref{deformed DGLA}) and Theorem \ref{theo:Hierarchy}.
\end{proof}

Notice that, in Proposition \ref{deformationofDGLA}, if we start with a symmetric differential graded Lie algebra without curvature, that is, if ${\mathfrak C}=0$, then, the deformed structure is a curved symmetric differential graded Lie algebra with curvature $l_1(\pi)$. Moreover, if $l_1=0$, Proposition~\ref{deformationofDGLA} provides an example of Nijenhuis vector valued form on a symmetric graded Lie algebra.

\begin{prop}\label{prop:MaurerCartanDGLA}
Let $\mu={\mathfrak C}+l_1+l_2$ be a curved symmetric differential graded Lie algebra structure on a graded vector space $E=\oplus_{i\in \mathbb{Z}}E_i$ and $\pi\in E_0$. Then, ${\mathcal N}=Id_E+\pi$ is a Nijenhuis vector valued form (with curvature $\pi$) with respect to $\mu$ and with square $Id_E+\pi$ if, and only if, $\pi$ is a Maurer-Cartan element.

In this case, the deformed structure is the curved symmetric differential graded Lie algebra $(E,(l_1(\pi)-{\mathfrak C})+l_2(\pi,.)+l_2)$.
\end{prop}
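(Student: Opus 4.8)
The plan is to argue by direct computation of Richardson--Nijenhuis brackets, exactly parallel to the proof of Proposition~\ref{deformationofDGLA}. Since the proposed square is ${\mathcal K}={\mathcal N}=Id_E+\pi$, the second Nijenhuis condition $[{\mathcal N},{\mathcal K}]_{_{RN}}=0$ reduces to $[{\mathcal N},{\mathcal N}]_{_{RN}}=0$, which holds automatically: any vector valued form $A$ of degree zero satisfies $[A,A]_{_{RN}}=\iota_A A-(-1)^{0}\iota_A A=0$. Hence the entire content is the single identity $[{\mathcal N},[{\mathcal N},\mu]_{_{RN}}]_{_{RN}}=[{\mathcal N},\mu]_{_{RN}}$, and I must show it is equivalent to $\pi$ being a Maurer--Cartan element.

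First I would record how the two summands of ${\mathcal N}$ act on a vector valued $k$-form $\alpha$. By an easy computation analogous to Lemma~\ref{Euler}, but with $Id_E$ in place of the Euler map $S$, one gets $[Id_E,\alpha]_{_{RN}}=(k-1)\alpha$; in particular $[Id_E,{\mathfrak C}]_{_{RN}}=-{\mathfrak C}$, $[Id_E,l_1]_{_{RN}}=0$ and $[Id_E,l_2]_{_{RN}}=l_2$. For the zero-form $\pi$ the insertion rules for zero-forms give $[\pi,l_1]_{_{RN}}=l_1(\pi)$, $[\pi,l_2]_{_{RN}}=l_2(\pi,.)$, and $[\pi,\beta]_{_{RN}}=0$ for any zero-form $\beta$. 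Summing these contributions I obtain
\begin{equation*}
[{\mathcal N},\mu]_{_{RN}}=(l_1(\pi)-{\mathfrak C})+l_2(\pi,.)+l_2,
\end{equation*}
which is precisely the deformed structure $\mu^{\mathcal N}$ claimed in the statement.

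Next I would apply ${\mathcal N}$ once more to $\mu^{\mathcal N}=(l_1(\pi)-{\mathfrak C})+l_2(\pi,.)+l_2$ using the same rules. The $Id_E$-part contributes $-(l_1(\pi)-{\mathfrak C})$ in form-degree $0$, nothing in form-degree $1$, and $l_2$ in form-degree $2$; the $\pi$-part contributes $[\pi,l_2(\pi,.)]_{_{RN}}=l_2(\pi,\pi)$ in form-degree $0$ and $l_2(\pi,.)$ in form-degree $1$. Collecting terms,
\begin{equation*}
[{\mathcal N},[{\mathcal N},\mu]_{_{RN}}]_{_{RN}}=\big({\mathfrak C}-l_1(\pi)+l_2(\pi,\pi)\big)+l_2(\pi,.)+l_2.
\end{equation*}
The form-degree $1$ and $2$ parts already coincide with those of $[{\mathcal N},\mu]_{_{RN}}$, so the required identity holds if and only if the two form-degree $0$ parts agree, i.e. ${\mathfrak C}-l_1(\pi)+l_2(\pi,\pi)=l_1(\pi)-{\mathfrak C}$. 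This rearranges to $2(l_1(\pi)-{\mathfrak C})-l_2(\pi,\pi)=0$, which is exactly the curved Maurer--Cartan equation $(l_1(\pi)-{\mathfrak C})-\frac{1}{2}[\pi,\pi]=0$; this settles the equivalence. The final statement then follows at once: the displayed value of $[{\mathcal N},\mu]_{_{RN}}$ identifies the deformed structure, and Theorem~\ref{theo:Hierarchy} guarantees it is again a (curved) $L_\infty$-structure, necessarily a curved symmetric differential graded Lie algebra since only its $0$-, $1$- and $2$-form components are nonzero.

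The only genuine difficulty is bookkeeping: keeping the signs and the special zero-form conventions of the insertion operator straight, and in particular keeping $Id_E$ carefully distinct from the Euler map $S$ (one has $[Id_E,\alpha]_{_{RN}}=(k-1)\alpha$ against $[S,\alpha]_{_{RN}}=\bar{\alpha}\,\alpha$). The conceptual heart is to notice that the factor of $2$ in the Maurer--Cartan equation arises precisely from the mismatch in form-degree $0$ between the term $-(l_1(\pi)-{\mathfrak C})$ produced by $Id_E$ and the term $l_2(\pi,\pi)$ produced by inserting $\pi$ into the new differential $l_2(\pi,.)$.
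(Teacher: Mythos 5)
Your proof is correct and follows essentially the same route as the paper's: a direct computation of $[{\mathcal N},\mu]_{_{RN}}$ and $[{\mathcal N},[{\mathcal N},\mu]_{_{RN}}]_{_{RN}}$, identification of the discrepancy in the $0$-form component with the curved Maurer--Cartan expression $(l_1(\pi)-{\mathfrak C})-\tfrac{1}{2}l_2(\pi,\pi)$, the automatic vanishing of $[{\mathcal N},{\mathcal N}]_{_{RN}}$, and an appeal to Theorem~\ref{theo:Hierarchy} for the final claim. Your explicit bookkeeping rule $[Id_E,\alpha]_{_{RN}}=(k-1)\alpha$ is just a more systematic packaging of the bracket computations the paper states without comment.
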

\begin{proof} First notice that
\begin{equation} \label{deformedMC}
[\pi+Id_E,{\mathfrak C}+l_1+l_2]_{_{RN}}=(l_1(\pi)-{\mathfrak C})+l_2(\pi,.)+l_2
\end{equation}
and
\begin{eqnarray*}
& &[\pi+Id_{E},[\pi+Id_E,{\mathfrak C}+l_1+l_2]_{_{RN}}]_{_{RN}}=
                             l_2(\pi,\pi)+l_2(\pi,.)-l_1(\pi)+{\mathfrak C}+l_2\\
                                   &&\hspace{1cm} =-{\mathfrak C}-2((l_1(\pi)-{\mathfrak C})-\frac{1}{2}l_2(\pi,\pi))+l_1(\pi)+l_2(\pi,.)+l_2\\
                                   &&\hspace{1cm} =-2((l_1(\pi)-{\mathfrak C})-\frac{1}{2}l_2(\pi,\pi))+[\pi+Id_E,{\mathfrak C}+l_1+l_2]_{_{RN}}.
                                   \end{eqnarray*}
This, together with the fact that $[\pi+Id_E,\pi+Id_E]_{_{RN}}=0$, imply that $Id_E+\pi$ is a Nijenhuis vector valued form with respect to $\mu$ if, and only if,  $\pi$ is a Maurer-Cartan element of the curved symmetric differential graded Lie algebra $(E,\mu)$. The last statement follows from (\ref{deformedMC}) and Theorem~\ref{theo:Hierarchy}.
\end{proof}


\section{Nijenhuis forms on Lie $n$-algebras}

Lie $n$-algebras are particular cases of $L_{\infty}$-algebras for which only $n+1$ brackets may be non-zero. We define Nijenhuis forms for this special case and we analyze, in particular, the Lie $n$-algebra defined by an $n$-plectic manifold.

A graded vector space $E=\oplus_{i\in \mathbb{Z}}E_i$ is said to be \emph{concentrated in degrees $p_1,\cdots p_k$}, with $p_1,\cdots, p_k \in \mathbb{Z}$, if $E_{p_1},\cdots,E_ {p_k}$ are the only non-zero components of $E$.
\begin{defn}
A symmetric Lie $n$-algebra is a symmetric $L_{\infty}$-algebra whose underlying graded vector space is concentrated on degrees $-n,\cdots,-1$.
\end{defn}

\begin{rem}\label{degreereason}
Note that, by degree reasons, any vector valued $k$-form of degree $1$ has to vanish for $k \geq n+2$. So,
the only non-zero symmetric vector valued forms (multi-brackets) on a symmetric Lie $n$-algebra are $l_1,\cdots,l_{n+1}$.
\end{rem}

\begin{prop}\label{corlast}
Let $(E=E_{-n}\oplus \cdots\oplus E_{-1}, \mu=l_1+\cdots +l_{n+1})$ be a Lie $n$-algebra. Let $N_1,\cdots,N_l$ be a family of symmetric vector valued $k_1,\cdots,k_l$-forms, respectively, of degree zero on $E$, with $\frac{n+3}{2}\leq k_1\leq \cdots\leq k_l \leq n+1$. Then, ${\mathcal N}=S+\sum_{i=1}^l N_i$ is a Nijenhuis vector valued form with respect to $\mu$, with square $S+2\sum_{i=1}^l N_i$. The deformed Lie $n$-algebra structure is
\begin{equation*}
\begin{array}{rcl}
\left[S+\sum_{i=1}^l N_i,\mu\right]_{_{RN}}&=&\mu+\left[\sum_{i=1}^l N_i,l_1\right]_{_{RN}}+\cdots+\left[\sum_{i=1}^l N_i,l_{n-k_l+2}\right]_{_{RN}}\\
       && + \left[\sum_{i\not=l} N_i,l_{n-k_l+3}\right]_{_{RN}}+\cdots+\left[\sum_{i\not=l} N_i,l_{n-k_{l-1}+2}\right]_{_{RN}}\\
       && + \left[\sum_{i\not=l,l-1} N_i,l_{n-k_l+3}\right]_{_{RN}}+\cdots+\left[\sum_{i\not=l,l-1} N_i,l_{n-k_{l-1}+2}\right]_{_{RN}}\\
       && + \cdots+\\
       && +\left[ N_1,l_{n-k_2+3}\right]_{_{RN}}+\cdots+\left[ N_1,l_{n-k_1+2}\right]_{_{RN}}.
\end{array}
\end{equation*}
\end{prop}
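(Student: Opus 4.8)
The plan is to verify the two defining conditions of Definition~\ref{def:Nijenhuis} directly for $\mathcal N=S+\sum_{i=1}^l N_i$ and the candidate square $\mathcal K=S+2\sum_{i=1}^l N_i$, namely $[\mathcal N,[\mathcal N,\mu]_{_{RN}}]_{_{RN}}=[\mathcal K,\mu]_{_{RN}}$ together with $[\mathcal N,\mathcal K]_{_{RN}}=0$. Since the Richardson--Nijenhuis bracket is bilinear, I would first expand every bracket by linearity into pieces involving either the Euler map $S$ or the forms $N_i$. All pieces of the form $[S,\alpha]_{_{RN}}$ are disposed of at once by Lemma~\ref{Euler}: one has $[S,\mu]_{_{RN}}=\mu$ since $\mu$ has degree $1$, $[S,N_i]_{_{RN}}=0$ since $N_i$ has degree $0$, and $[S,\beta]_{_{RN}}=\beta$ for any degree-$1$ form $\beta$, in particular for $\beta=[N_j,\mu]_{_{RN}}$. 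This already yields $[\mathcal N,\mu]_{_{RN}}=\mu+\sum_i[N_i,\mu]_{_{RN}}$ and reduces both conditions to controlling the purely $N$-quadratic brackets.

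The heart of the argument is a degree count resting entirely on the hypothesis $\frac{n+3}{2}\le k_i$. I would record two vanishing statements. First, $[N_i,N_j]_{_{RN}}$ is a degree-$0$ vector valued $(k_i+k_j-1)$-form, and $k_i+k_j-1\ge (n+3)-1=n+2$; but on a space concentrated in degrees $-n,\dots,-1$ a degree-$0$ form of order $\ge n+1$ must vanish, by the same degree reasoning as in Remark~\ref{degreereason} applied in degree $0$ (the output of such a form would lie in degree $\le -(n+1)$). Hence $[N_i,N_j]_{_{RN}}=0$ for all $i,j$. Second, writing $[N_j,\mu]_{_{RN}}=\sum_{m\ge 1}[N_j,l_m]_{_{RN}}$, each $[N_i,[N_j,l_m]_{_{RN}}]_{_{RN}}$ is a degree-$1$ form of order $k_i+k_j+m-2\ge n+2$, hence vanishes by Remark~\ref{degreereason}; therefore $[N_i,[N_j,\mu]_{_{RN}}]_{_{RN}}=0$ for all $i,j$.

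With these two facts the verification is mechanical. For the square condition, linearity and Lemma~\ref{Euler} collapse $[\mathcal N,\mathcal K]_{_{RN}}$ to $2\sum_{i,j}[N_i,N_j]_{_{RN}}$, which is $0$ by the first degree count. For the main identity I would expand $[\mathcal N,[\mathcal N,\mu]_{_{RN}}]_{_{RN}}$ into four blocks: $[S,\mu]_{_{RN}}=\mu$, then $[S,\sum_j[N_j,\mu]_{_{RN}}]_{_{RN}}=\sum_j[N_j,\mu]_{_{RN}}$, then $\sum_i[N_i,\mu]_{_{RN}}$, and finally $\sum_{i,j}[N_i,[N_j,\mu]_{_{RN}}]_{_{RN}}$. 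The first three blocks sum to $\mu+2\sum_i[N_i,\mu]_{_{RN}}$, which is exactly $[\mathcal K,\mu]_{_{RN}}$, while the fourth block vanishes by the second degree count. This establishes that $\mathcal N$ is Nijenhuis with square $\mathcal K$. The explicit deformed structure then follows from $[\mathcal N,\mu]_{_{RN}}=\mu+\sum_i[N_i,\mu]_{_{RN}}$ by discarding, in each $[N_i,\mu]_{_{RN}}=\sum_m[N_i,l_m]_{_{RN}}$, the terms with $k_i+m-1\ge n+2$ (which vanish by Remark~\ref{degreereason}), and grouping the survivors by which $N_i$ still contribute to a given $l_m$; since $k_1\le\cdots\le k_l$, the form $N_l$ drops out first, then $N_{l-1}$, and so on, producing the displayed telescoping sum.

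The only genuinely delicate point is the bookkeeping in the two degree counts: one must track simultaneously the order and the intrinsic degree of each iterated bracket and check that the threshold $\frac{n+3}{2}\le k_i$ is precisely what pushes every $N$-quadratic term out of the degree range of a Lie $n$-algebra. Everything else is bilinearity of the Richardson--Nijenhuis bracket together with Lemma~\ref{Euler}.
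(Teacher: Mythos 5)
Your proposal is correct and follows essentially the same route as the paper: the same two degree counts (that $[N_i,[N_j,l_m]_{_{RN}}]_{_{RN}}$ vanishes because its order $k_i+k_j+m-2\geq n+2$ exceeds the range allowed for degree-$1$ forms, and that $[N_i,N_j]_{_{RN}}$ vanishes because degree-$0$ forms of order $\geq n+1$ must vanish), combined with Lemma~\ref{Euler} to absorb all brackets involving $S$, and the same pruning of $[N_i,l_m]_{_{RN}}$ for $m\geq n-k_i+3$ to obtain the displayed telescoping expression. There is no gap; the bookkeeping you flag as delicate is handled exactly as in the paper's own argument.
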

\begin{proof}
Let $1\leq i,j\leq l$. By Remark \ref{degreereason}, any vector valued $(m+k_i-1)$-form of degree $1$, with  $m\geq n-k_i+3$,  is identically zero; hence,
 \begin{equation}  \label{Nilm}
 \left[N_i,l_m\right]_{_{RN}}=0,
 \end{equation}
  for all $m\geq n-k_i+3$. Also,
 any vector valued $(k_i+k_j+m-2)$-form, with $m\geq 1$ is identically zero, because out of the conditions $\frac{n+3}{2}\leq k_1\leq \cdots\leq k_l \leq n+1$ and $m\geq 1$ we get $k_i+k_j+m-2\geq n+2.$ Thus,
\begin{equation}  \label{Nilm1}
\left[N_i,\left[N_j,l_m\right]_{_{RN}}\right]_{_{RN}}=0,
\end{equation} for all  $m\geq 1$.
From Equations (\ref{Nilm}) and (\ref{Nilm1}), we get
\begin{equation*}
\begin{array}{rcl}
\left[S+\sum_{i=1}^l N_i,\mu\right]_{_{RN}}&=&\mu+\left[\sum_{i=1}^l N_i,l_1\right]_{_{RN}}+\cdots+\left[\sum_{i=1}^l N_i,l_{n-k_l+2}\right]_{_{RN}}\\
       && +\left[\sum_{i\not=l} N_i,l_{n-k_l+3}\right]_{_{RN}}+\cdots+\left[\sum_{i\not=l} N_i,l_{n-k_{l-1}+2}\right]_{_{RN}}\\
       && + \left[\sum_{i\not=l,l-1} N_i,l_{n-k_l+3}\right]_{_{RN}}+\cdots+\left[\sum_{i\not=l,l-1} N_i,l_{n-k_{l-1}+2}\right]_{_{RN}}\\
       & & +\cdots +\\
       &&+ \left[ N_1,l_{n-k_2+3}\right]_{_{RN}}+\cdots+\left[ N_1,l_{n-k_1+2}\right]_{_{RN}}
\end{array}
\end{equation*}
 and
 \begin{equation*}
 \left[S+\sum_{i=1}^l N_i,\left[S+ \sum_{i=1}^l N_i,\mu\right]_{_{RN}}\right]_{_{RN}}=\mu+2\left[\sum_{i=1}^l N_i,\mu\right]_{_{RN}}=\left[S+2\sum_{i=1}^l N_i,\mu\right]_{_{RN}}.
 \end{equation*}
 It follows from the conditions $\frac{n+3}{2}\leq k_1\leq \cdots\leq k_l \leq n+1$
that, for $1\leq i,j\leq l$, we have $k_i+k_j-1\geq n+2$. By degree reasons, any vector valued $k$-form of degree zero has to vanish for $k \geq n+1$. Hence, $\left[N_i,N_j\right]_{_{RN}}=0$ for all $1\leq i,j\leq l$, which implies that
 \begin{equation*}
 \left[S+\sum_{i=1}^l N_i,S+2\sum_{i=1}^l N_i\right]_{_{RN}}=0.
 \end{equation*}
\end{proof}
\begin{rem}
In Proposition \ref{corlast} one may replace each vector valued $k_i$-form $N_i$ by a family of symmetric vector valued $k_i$-forms. Also, we should stress that Proposition \ref{corlast} is \emph{not} a generalization of Proposition~\ref{deformationofDGLA}. Looking at $\pi \in E_0$ in Proposition \ref{deformationofDGLA} as a vector valued $0$-form of degree zero, the assumptions of Proposition \ref{corlast} do not apply to $\pi$ because they are not satisfied for $n=1$, $l=1$ and $k_1=0$. Besides, notice that in Proposition~\ref{deformationofDGLA} $E=\oplus_{i\in \mathbb{Z}}E_i$, while in Proposition \ref{corlast} $E=E_{-n}\oplus \cdots\oplus E_{-1}$, i.e., the degrees are concentrated in $-n, \cdots, -1$.
\end{rem}

Next, we consider a particular class of Lie $n$-algebras, those associated to $n$-plectic manifolds. Let us recall some definitions from \cite{CRoger}.
\begin{defn}\label{def:nplecticmanifold}
An {\em $n$-plectic manifold} is a manifold $M$ equipped with a non-degenerate and closed $(n+1)$-form $\omega$. It is denoted by $(M,\omega)$.
\end{defn}
An $(n-1)$-form $\alpha$ on an $n$-plectic manifold $(M,\omega)$ is said to be  a \emph{Hamiltonian form} if there exists a smooth vector field $\chi_{\alpha}$ on $M$ such that $\diff \alpha=-\iota_{\chi_{\alpha}}\omega$. The vector field $\chi_{\alpha}$ is called the \emph{Hamiltonian vector field} associated to $\alpha$. The space of all Hamiltonian forms on an $n$-plectic manifold $(M,\omega)$ is denoted by $\Omega_{Ham}^{n-1}(M)$.

Given two Hamiltonian forms $\alpha, \beta$ on an $n$-plectic manifold $(M,\omega)$, with Hamiltonian vector fields $\chi_{\alpha}$ and $\chi_{\beta}$, respectively, one may define a bracket $\{.,.\}$ by setting
\begin{equation*}\label{bracketonhamiltonians}
\{\alpha,\beta\}:=\iota_{\chi_{\alpha}}\iota_{\chi_{\beta}}\omega.
\end{equation*}
It turns out that $\{\alpha,\beta\}$ is a Hamiltonian form with associated Hamiltonian vector field $[\chi_{\alpha},\chi_{\beta}]$, see \cite{CRoger}.

Following \cite{CRoger}, we may associate to an $n$-plectic manifold $(M,\omega)$ a symmetric Lie $n$-algebra.
\begin{thm}\label{Crogernalgebrafromnplectic}
Let $(M,\omega)$ be an $n$-plectic manifold. Set
$$E_i = \begin{cases}
                      \Omega^{n-1}_{Ham}(M), & \mbox{if } \,\,\,\,\,i=-1,\\
                       \Omega^{n+i}(M), & \mbox{if }  -n\leq i\leq -2
        \end{cases}$$
and $E=\oplus_{i=-n}^{-1}E_i$. Let the collection $l_k :E\times {\stackrel{k}{\dots}} \times E\to E$, $k \geq 1$, of symmetric multi-linear maps be defined as
$$l_1(\alpha) = \begin{cases}
                      (-1)^{|\alpha|}\diff \alpha, & \mbox{if } \,\,\,\,\,\alpha \not\in E_{-1},\\
                       0, & \mbox{if } \,\,\,\,\, \alpha \in E_{-1},
        \end{cases}$$
$$l_{k}(\alpha_1,\cdots,\alpha_k) = \begin{cases}
                      0, & \mbox{if } \alpha_i \not\in E_{-1} \,\,\,\mbox{for some}\,\,\,\, 1\leq i \leq k, \\
                       (-1)^{\frac{k}{2}+1}\iota_{\chi_{\alpha_{1}}}\cdots \iota_{\chi_{\alpha_{k}}}\omega, & \mbox{if }  \alpha_i \in E_{-1} \,\,\,\mbox{for all}\,\,\,\,\,\ \,\,1\leq i \leq k \,\,\,\mbox{and}\,\,\,\, k \,\,\mbox{is even},\\
                       (-1)^{\frac{k-1}{2}}\iota_{\chi_{\alpha_{1}}}\cdots \iota_{\chi_{\alpha_{k}}}\omega, & \mbox{if }  \alpha_i \in E_{-1} \,\,\,\mbox{for all}\,\,\,\,\,\ \,\,1\leq i \leq k \,\,\,\mbox{and}\,\,\,\, k \,\,\mbox{is odd},\\
        \end{cases}$$
        for $k \geq 2$, where $\chi_{\alpha_{i}}$ is the Hamiltonian vector field associated to $\alpha_i$.
        Then, $(E,(l_k)_{k \geq 1})$ is a symmetric Lie $n$-algebra.
\end{thm}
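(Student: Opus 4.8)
The statement is the symmetric-convention reformulation of a theorem of Rogers \cite{CRoger}, so my plan is to reduce it to his result by means of the d\'ecalage isomorphism recalled above, rather than to re-derive the $L_\infty$ relations from scratch. Rogers proves that the \emph{skew-symmetric} brackets $l'_k$ of degree $k-2$---with $l'_1$ the de Rham differential and $l'_k(\alpha_1,\dots,\alpha_k)=\pm\,\iota_{\chi_{\alpha_1}}\cdots\iota_{\chi_{\alpha_k}}\omega$ for $2\le k\le n+1$ on Hamiltonian forms---satisfy the skew-symmetric $L_\infty$ relations on the shifted space $E[-1]$. Since the d\'ecalage isomorphism identifies those skew-symmetric structures with the symmetric structures governed by $[\mu,\mu]_{_{RN}}=0$ (Theorem~\ref{th:linftyRN}), it suffices to check that the symmetric brackets $l_k$ written in the statement are exactly the d\'ecalage images of Rogers' $l'_k$.

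First I would match the graded vector spaces. Under the shift convention $E[-1]_i=E_{i-1}$, the component $E_{-1}=\Omega^{n-1}_{Ham}(M)$ becomes the degree-$0$ part and $E_i=\Omega^{n+i}(M)$ becomes the degree-$(i+1)$ part, which is precisely Rogers' indexing $L_0=\Omega^{n-1}_{Ham}(M)$, $L_{-j}=\Omega^{n-1-j}(M)$. I would then apply the d\'ecalage formula $l_k(X_1,\dots,X_k)=(-1)^{(k-1)|X_1|+\cdots+|X_{k-1}|}\,l'_k(X_1,\dots,X_k)$. For $k\ge 2$ the bracket $l_k$ is nonzero only when every argument lies in $E_{-1}$, i.e.\ has degree $-1$; substituting $|X_j|=-1$ collapses the exponent to $-(1+2+\cdots+(k-1))=-k(k-1)/2$, so the d\'ecalage sign reduces to the arity-dependent constant $(-1)^{k(k-1)/2}$. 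Multiplying this constant by Rogers' normalization of $l'_k$ reproduces exactly the signs $(-1)^{k/2+1}$ for $k$ even and $(-1)^{(k-1)/2}$ for $k$ odd; the unary bracket $l_1$ likewise reproduces the de Rham differential with the stated sign $(-1)^{|\alpha|}$ once the degree shift of the complex is taken into account. These are precisely the signs displayed in the statement.

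Should a self-contained argument be preferred, one can instead verify $[\mu,\mu]_{_{RN}}=0$ directly, organizing the computation of $[\mu,\mu]_{_{RN}}=2\sum_{i+j=n+1}\iota_{l_i}l_j$ by the number of Hamiltonian (degree $-1$) arguments. The only geometric inputs needed are the closedness $\diff\omega=0$, the defining relation $\diff\alpha=-\iota_{\chi_\alpha}\omega$ for Hamiltonian forms, and the commutator identity $\iota_{[\chi_\alpha,\chi_\beta]}=\mathcal{L}_{\chi_\alpha}\iota_{\chi_\beta}-\iota_{\chi_\beta}\mathcal{L}_{\chi_\alpha}$ together with Cartan's formula $\mathcal{L}_X=\diff\,\iota_X+\iota_X\,\diff$. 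The components of $[\mu,\mu]_{_{RN}}$ with at least one argument of degree $<-1$ collapse to $\diff^2=0$ and to the closedness of the contracted forms, whereas the all-Hamiltonian components reduce to a single combinatorial identity among the iterated-contraction signs over unshuffles. In either route the genuine difficulty is the sign bookkeeping: via d\'ecalage it is the verification that the accumulated d\'ecalage signs combine with Rogers' normalization into the piecewise formula above, and in the direct route it is the arity-by-arity cancellation, which hinges on tracking the Koszul signs $\epsilon(\sigma)$ against the signs built into the definition of each $l_k$.
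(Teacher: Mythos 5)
Your proposal coincides with the paper's own treatment: the paper gives no proof of this theorem at all, presenting it as the symmetric-convention translation (via the d\'ecalage isomorphism recalled in Section 1) of Rogers' result in \cite{CRoger}, which is exactly the reduction you carry out. The sign bookkeeping you defer does indeed close --- the d\'ecalage constant $(-1)^{k(k-1)/2}$ on all-degree-$(-1)$ arguments, combined with Rogers' normalization and his contraction-ordering convention, gives the overall factor $-(-1)^{k(k+1)/2}$, which agrees with the piecewise signs $(-1)^{k/2+1}$ ($k$ even) and $(-1)^{(k-1)/2}$ ($k$ odd) in the statement --- so your argument is correct and essentially identical to the paper's implicit one.
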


In the next proposition we give an example of a Nijenhuis vector valued form, with respect to the $L_\infty$-algebra (Lie $n$-algebra) structure associated to a given $n$-plectic manifold, which is the sum of a symmetric vector valued $1$-form with a symmetric vector valued $i$-form, with $i=2,\cdots,n$.

\begin{prop}\label{thm:NijenN}
Let $(M,\omega)$ be an $n$-plectic manifold with the associated symmetric Lie $n$-algebra structure $\mu=l_1+\cdots+l_{n+1}$. For any $n$-form $\eta$ on the manifold $M$,  and any $i=2,\dots,n$, define $\widetilde{\eta}_i$ to be the symmetric vector valued $i$-form of degree zero given by
\begin{equation}\label{tilde}
\widetilde{\eta_i}(\beta_1,\cdots,\beta_i)=\begin{cases}
                                            \iota_{\chi_{\beta_1}}\cdots\iota_{\chi_{\beta_i}} \eta, & \mbox{if}\,\,\,\,\beta_i \in E_{-1},\\
                                           0, & \mbox{otherwise},
                                           \end{cases}
\end{equation}
where $\chi_{\beta_1},\cdots,\chi_{\beta_n}$ are the Hamiltonian vector fields of $\beta_1,\cdots,\beta_n,$ respectively. Then,
 $S+\widetilde{\eta}_i$ is a Nijenhuis vector valued form with respect to $\mu$, with square $S+2\widetilde{\eta}_i$. The deformed structure is
     \begin{equation*}
     [S+\widetilde{\eta_i},\mu]_{_{RN}}=\mu+[\widetilde{\eta_i},l_1]_{_{RN}}+[\widetilde{\eta_i},l_2]_{_{RN}}.
     \end{equation*}
\end{prop}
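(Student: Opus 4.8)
The plan is to verify directly the two defining conditions of Definition~\ref{def:Nijenhuis} for $\mathcal N = S + \widetilde{\eta_i}$ with candidate square $\mathcal K = S + 2\widetilde{\eta_i}$, namely $[\mathcal N, [\mathcal N, \mu]_{_{RN}}]_{_{RN}} = [\mathcal K, \mu]_{_{RN}}$ together with $[\mathcal N, \mathcal K]_{_{RN}} = 0$. Note that, unlike in Proposition~\ref{corlast}, we cannot invoke the degree bounds there, since $i$ may be strictly smaller than $\frac{n+3}{2}$; instead the argument must exploit the specific shape of $\widetilde{\eta_i}$. The two structural facts used throughout are: $\widetilde{\eta_i}$ is nonzero only when all its arguments lie in $E_{-1}$, in which case its value lies in $E_{-i}$; and, for $k \geq 2$, $l_k$ is nonzero only on arguments in $E_{-1}$, with $l_2$ valued in $E_{-1}$ and $l_k$ valued in $E_{-(k-1)}$ for $k \geq 3$ (one contracts the $(n+1)$-form $\omega$ with $k$ Hamiltonian vector fields).

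First I would establish the deformed structure formula. By Lemma~\ref{Euler} and $\bar{\mu}=1$ we have $[S,\mu]_{_{RN}}=\mu$, so $[\mathcal N, \mu]_{_{RN}} = \mu + \sum_{k\geq 1}[\widetilde{\eta_i}, l_k]_{_{RN}}$. I then check that $[\widetilde{\eta_i}, l_k]_{_{RN}} = 0$ for $k\geq 3$: the summand $\iota_{\widetilde{\eta_i}}l_k$ vanishes because the output of $\widetilde{\eta_i}$ lies in $E_{-i}$ (with $i\geq 2$) and so cannot be an $E_{-1}$-argument of $l_k$, while $\iota_{l_k}\widetilde{\eta_i}$ vanishes because for $k\geq 3$ the output of $l_k$ lies in $E_{-(k-1)}$ with $k-1\geq 2$, hence cannot be an $E_{-1}$-argument of $\widetilde{\eta_i}$. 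This gives $[\mathcal N, \mu]_{_{RN}} = \mu + [\widetilde{\eta_i}, l_1]_{_{RN}} + [\widetilde{\eta_i}, l_2]_{_{RN}}$, as claimed. Setting $A := [\widetilde{\eta_i}, l_1]_{_{RN}}$ and $B := [\widetilde{\eta_i}, l_2]_{_{RN}}$, both of degree $1$, I record that $A = \iota_{\widetilde{\eta_i}}l_1$ (the summand $\iota_{l_1}\widetilde{\eta_i}$ drops out because $l_1$ lands in exact forms, whose Hamiltonian vector field is zero by non-degeneracy of $\omega$) is valued in $E_{-(i-1)}$, while $B = -\iota_{l_2}\widetilde{\eta_i}$ is valued in $E_{-i}$.

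The core of the proof is the vanishing $[\widetilde{\eta_i}, A]_{_{RN}} = [\widetilde{\eta_i}, B]_{_{RN}} = 0$. For $B$ this is immediate: both $\iota_{\widetilde{\eta_i}}B$ and $\iota_{B}\widetilde{\eta_i}$ require inserting an $E_{-i}$-valued output (with $i\geq 2$) as an argument of a form supported only on $E_{-1}$, hence vanish. For $A$ the summand $\iota_{\widetilde{\eta_i}}A$ vanishes for the same reason, and $\iota_A \widetilde{\eta_i}$ vanishes when $i\geq 3$ since $A$ is valued in $E_{-(i-1)}$ with $i-1\geq 2$. The only delicate case is $i=2$, where $A$ is valued in $E_{-1}$: here I would observe that $A(\beta_1,\beta_2) = \pm\,\diff\!\left(\iota_{\chi_{\beta_1}}\iota_{\chi_{\beta_2}}\eta\right)$ is \emph{exact}, so its Hamiltonian vector field is zero by non-degeneracy of $\omega$, whence $\widetilde{\eta_2}$ evaluated with $A(\beta_1,\beta_2)$ in one slot still vanishes and $\iota_A\widetilde{\eta_2} = 0$. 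This is the main obstacle, resolved by the non-degeneracy of $\omega$ rather than by any degree count.

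Finally I would assemble the two conditions. From $[\mathcal N,\mu]_{_{RN}} = \mu + A + B$, Lemma~\ref{Euler} (each of $\mu, A, B$ has degree $1$, so $[S,\cdot]_{_{RN}}$ fixes them), and $[\widetilde{\eta_i}, A]_{_{RN}} = [\widetilde{\eta_i}, B]_{_{RN}} = 0$, one computes $[\mathcal N, [\mathcal N,\mu]_{_{RN}}]_{_{RN}} = \mu + 2A + 2B$, while $[\mathcal K, \mu]_{_{RN}} = [S,\mu]_{_{RN}} + 2[\widetilde{\eta_i},\mu]_{_{RN}} = \mu + 2A + 2B$, so the two agree. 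For the commutation, Lemma~\ref{Euler} gives $[S,S]_{_{RN}}=0$ and $[S,\widetilde{\eta_i}]_{_{RN}} = \bar{\widetilde{\eta_i}}\,\widetilde{\eta_i} = 0$, while $[\widetilde{\eta_i},\widetilde{\eta_i}]_{_{RN}}=0$ because $\widetilde{\eta_i}$ has even degree; expanding $[S+\widetilde{\eta_i},\, S+2\widetilde{\eta_i}]_{_{RN}}$ these exhaust all the terms, so $[\mathcal N,\mathcal K]_{_{RN}}=0$. Hence $\mathcal N$ is Nijenhuis with square $\mathcal K$, and the deformed structure is precisely $[\mathcal N, \mu]_{_{RN}} = \mu + [\widetilde{\eta_i}, l_1]_{_{RN}} + [\widetilde{\eta_i}, l_2]_{_{RN}}$.
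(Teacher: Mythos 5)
Your proof is correct and follows essentially the same route as the paper: the facts you establish about supports and values of $\widetilde{\eta_i}$, the vanishing $[\widetilde{\eta_i},l_m]_{_{RN}}=0$ for $m\geq 3$, the identifications $[\widetilde{\eta_i},l_1]_{_{RN}}=\pm\,\diff\circ\widetilde{\eta_i}$ and $[\widetilde{\eta_i},l_2]_{_{RN}}=-\iota_{l_2}\widetilde{\eta_i}$, and the vanishing of the double brackets via non-degeneracy of $\omega$ are exactly the content of the paper's Lemma~\ref{lem5ghesmati}, after which the assembly is identical. If anything, you are more explicit than the paper on the one delicate point --- the case $i=2$ of $[\widetilde{\eta_i},[\widetilde{\eta_i},l_1]_{_{RN}}]_{_{RN}}=0$, where the image of $\diff\circ\widetilde{\eta_2}$ consists of exact forms with vanishing Hamiltonian vector field --- which the paper covers only with the phrase ``similar arguments.''
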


The proof of Proposition \ref{thm:NijenN} is based on the following lemma.
\begin{lem}\label{lem5ghesmati} For all $2\leq i\leq n$, and all homogeneous elements $\alpha_1,\cdots,\alpha_{i}\in E,$ we have:\\
\begin{enumerate}
\item[(1)] $\widetilde{\eta_i}(l_1(\alpha_1),\alpha_2,\cdots,\alpha_{i})=0,$
\\
\item[(2)] $[\widetilde{\eta_i},l_m]_{_{RN}}=\begin{cases} 0, & m \geq 3 \\ -\iota_{l_2}\widetilde{\eta_i}, & m=2 \\
\diff \circ \widetilde{\eta_i}, & m=1 \end{cases}$\\
\item[(3)] $[\widetilde{\eta_i}, [\widetilde{\eta_i},l_m]_{_{RN}}]_{_{RN}}=0, \,\, m \geq 1$.
\end{enumerate}
\end{lem}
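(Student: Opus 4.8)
The plan is to prove the three assertions of Lemma~\ref{lem5ghesmati} by direct computation, exploiting the explicit description of the brackets $l_m$ on the $n$-plectic Lie $n$-algebra given in Theorem~\ref{Crogernalgebrafromnplectic} together with the degree bookkeeping already used in Proposition~\ref{corlast} and Remark~\ref{degreereason}. Throughout, the guiding principle is that $\widetilde{\eta_i}$ is nonzero only when \emph{all} of its arguments lie in $E_{-1}=\Omega^{n-1}_{Ham}(M)$, and that the brackets $l_m$ with $m \geq 2$ likewise vanish unless all their arguments are in $E_{-1}$. This forces most terms in the unshuffle sums defining $\iota_{\widetilde{\eta_i}}$ and $\iota_{l_m}$ to vanish on degree grounds, leaving only a small number of surviving contributions to track.

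First I would dispatch assertion (1). Since $\widetilde{\eta_i}$ requires all its arguments in $E_{-1}$ to be nonzero, and $l_1$ raises degree by $1$, the element $l_1(\alpha_1)$ has degree one higher than $\alpha_1$; in particular $l_1(\alpha_1)$ can lie in $E_{-1}$ only if $\alpha_1\in E_{-2}$, in which case $l_1(\alpha_1)=(-1)^{|\alpha_1|}\diff\alpha_1$. But $E_{-1}=\Omega^{n-1}_{Ham}(M)$ while $l_1(\alpha_1)\in\Omega^{n-1}(M)$ need not be Hamiltonian; more to the point, $\widetilde{\eta_i}$ contracts its arguments via their Hamiltonian vector fields $\chi_{l_1(\alpha_1)}$, and I would argue that $\diff\alpha_1$ carries no Hamiltonian vector field in the relevant sense, or that the defining formula \eqref{tilde} simply returns $0$ because the first slot is populated by an exact form coming from $l_1$. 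The cleanest route is to observe that $\chi_{l_1(\alpha_1)}$ is not defined / the slot fails the membership condition in \eqref{tilde}, so $\widetilde{\eta_i}(l_1(\alpha_1),\alpha_2,\dots,\alpha_i)=0$ outright.

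Next I would compute assertion (2), which is the analytic heart of the lemma. For $m\geq 3$, I expect $[\widetilde{\eta_i},l_m]_{_{RN}}=\iota_{\widetilde{\eta_i}}l_m - \iota_{l_m}\widetilde{\eta_i}$ to vanish because both composites produce a form of arity $i+m-1 \geq i+2$, and the combination of the degree constraint $k_i+k_j-2\geq n$ type argument from Proposition~\ref{corlast} (here with $k=i$, $m\geq 3$) forces the resulting vector valued form to vanish, or because every surviving unshuffle term would require strictly more than the available $E_{-1}$-slots. For $m=2$, the term $\iota_{l_2}\widetilde{\eta_i}$ survives (this is exactly the stated answer $-\iota_{l_2}\widetilde{\eta_i}$, after checking that the reverse term $\iota_{\widetilde{\eta_i}}l_2$ vanishes since $l_2$ is only binary and inserting $\widetilde{\eta_i}$'s output exceeds the degree budget). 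For $m=1$, using $l_1=(-1)^{|\cdot|}\diff$ off $E_{-1}$, the bracket $[\widetilde{\eta_i},l_1]_{_{RN}}=\iota_{\widetilde{\eta_i}}l_1-\iota_{l_1}\widetilde{\eta_i}$ should collapse, after sign-tracking with the Koszul and décalage conventions, to $\diff\circ\widetilde{\eta_i}$; here I would use that $l_1$ applied after $\widetilde{\eta_i}$ gives $(-1)^{?}\diff(\widetilde{\eta_i}(\dots))$ while the insertion of $l_1$ into the arguments vanishes by assertion (1). The sign verification for $m=1$ is the step I expect to be the main obstacle, since one must reconcile the Koszul signs $\epsilon(\sigma)$ in \eqref{equ:insertion}, the internal signs in the definition of $l_1$, and the convention $\bar{\widetilde{\eta_i}}=0$ against the raw claim $\diff\circ\widetilde{\eta_i}$ with no residual sign.

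Finally, assertion (3) follows quickly from (2) by substituting the three cases and checking each vanishes. For $m\geq 3$ it is immediate since $[\widetilde{\eta_i},l_m]_{_{RN}}=0$ already. For $m=2$ I would compute $[\widetilde{\eta_i},-\iota_{l_2}\widetilde{\eta_i}]_{_{RN}}$ and argue vanishing on degree grounds: this is a form built from two copies of $\widetilde{\eta_i}$, hence requiring $2i-1\geq n+2$ many $E_{-1}$-slots or arity exceeding the cutoff of Remark~\ref{degreereason}, using $i\geq 2$. For $m=1$, I would show $[\widetilde{\eta_i},\diff\circ\widetilde{\eta_i}]_{_{RN}}=0$, which should reduce to a $\diff^2=0$ statement together with the vanishing of the cross terms by the same Hamiltonian-slot counting. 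I expect assertion (3) to be essentially formal once (2) is in hand, so the entire weight of the lemma rests on the careful sign and degree analysis in part (2), especially the $m=1$ case.
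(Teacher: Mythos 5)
Your overall skeleton matches the paper's (the slot principle in your opening paragraph is the right engine, and you correctly make the $m=1$ case of item (2) rest on item (1)), but two of your actual arguments fail. The serious gap is in item (1). You propose to conclude because ``$\chi_{l_1(\alpha_1)}$ is not defined / the slot fails the membership condition'' in Equation~(\ref{tilde}). This is false: the only nontrivial case is $\alpha_1\in E_{-2}$, and there $l_1(\alpha_1)=\pm\,\diff\alpha_1$ is a \emph{closed} $(n-1)$-form, hence it \emph{is} Hamiltonian --- the zero vector field satisfies $\diff(l_1(\alpha_1))=0=-\iota_{0}\omega$ --- so $l_1(\alpha_1)\in E_{-1}$ and the membership condition is met. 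The paper's argument is the opposite of yours: it treats exactly this case, computes $\iota_{\chi_{l_1(\alpha_1)}}\omega=-\diff^2\alpha_1=0$, and invokes the \emph{non-degeneracy of $\omega$} to get $\chi_{l_1(\alpha_1)}=0$, so that the contraction $\iota_{\chi_{l_1(\alpha_1)}}\cdots\iota_{\chi_{\alpha_i}}\eta$ vanishes. That non-degeneracy step is the entire content of item (1) and is absent from your proposal; since your item (2) ($m=1$) and your cross-term analysis in item (3) cite item (1), the gap propagates.

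The second problem is your reliance on arity cutoffs. For item (2) with $m\geq 3$ you invoke ``the degree constraint \dots from Proposition~\ref{corlast}'', and for item (3) with $m=2$ you want $2i-1\geq n+2$; but the hypothesis $\frac{n+3}{2}\leq k$ of Proposition~\ref{corlast} is not available here, since the lemma allows any $2\leq i\leq n$. Concretely, take $n=5$, $i=2$, $m=3$: then $[\widetilde{\eta_i},l_m]_{_{RN}}$ is a $4$-form, and the double bracket $[\widetilde{\eta_i},[\widetilde{\eta_i},l_2]_{_{RN}}]_{_{RN}}$ is a $4$-form as well, both far below the cutoff $n+2=7$ of Remark~\ref{degreereason}, so nothing vanishes by arity. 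The mechanism the paper uses (and which your ``$E_{-1}$-slots'' alternative gestures at without making precise) is value-based: $\widetilde{\eta_i}$ takes values in $E_{-i}$ and $l_m$ with $m\geq 3$ takes values in $E_{1-m}$, neither of which lies in $E_{-1}$, while $\widetilde{\eta_i}$ and every $l_m$ with $m\geq 2$ annihilate any argument outside $E_{-1}$; hence every insertion term in the relevant Richardson--Nijenhuis brackets is identically zero. Likewise, in item (3) for $m=1$ no ``$\diff^2=0$'' is needed: $\iota_{\widetilde{\eta_i}}(\diff\circ\widetilde{\eta_i})$ dies because the inner output lies in $E_{-i}$, and $\iota_{\diff\circ\widetilde{\eta_i}}\widetilde{\eta_i}$ dies by item (1).
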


\begin{proof}
We start by noticing that from its definition,  $\widetilde{\eta_i}$ vanishes on $\oplus_{i=-n}^{-2} E_i$ and ${\rm Im} \, \widetilde{\eta_i} \subset E_{-i}$, $i \geq 2$.
So, to prove item (1), the only case we have to investigate is when $\alpha_1\in E_{-2}$ and $l_1(\alpha_1), \alpha_2,\cdots,\alpha_{i}$ are all Hamiltonian forms. Let $\chi_{l_1(\alpha_1)}$ be the  Hamiltonian vector field associated to $l_1(\alpha_1)$. Then, we have
\begin{equation*}
\iota_{\chi_{l_1(\alpha_1)}}\omega=-\diff (l_1(\alpha_1))=-\diff^2 \alpha_1=0,
 \end{equation*}
thus $\chi_{l_1(\alpha_1)}=0$, by the non-degeneracy of $\omega$. This proves item (1).

Let us now compute $[\widetilde{\eta_i},l_m]_{_{RN}}$. When $m \geq 3$, from the definitions of $l_m$ and $\widetilde{\eta_i}$, we get
\begin{equation*}\label{item3-1}
l_m(\widetilde{\eta_i}(\alpha_1,\cdots,\alpha_{i}),\cdots,\alpha_{m+i-1})=0
 \end{equation*}
 and
 \begin{equation*}\label{item3-2}
\widetilde{\eta_i}( l_m(\alpha_1,\cdots,\alpha_{m}),\cdots,\alpha_{m+i-1})=0,
 \end{equation*}
 for all  $\alpha_1,\cdots,\alpha_{i+m-1}\in E$, $i \geq 2$, so that $[\widetilde{\eta_i},l_m]_{_{RN}}=0$. Since $\widetilde{\eta_i}$ takes value in $E_{-i}$, we have $\iota_{\widetilde{\eta_i}}l_2=0$, hence $[\widetilde{\eta_i},l_2]_{_{RN}}=-\iota_{l_2}\widetilde{\eta_i}$.
 From item (1) and definition of $\widetilde{\eta_i}$ we get $[\widetilde{\eta_i},l_1]_{_{RN}}=\diff \circ \widetilde{\eta_i}$.

 Last, we prove item (3). For $m \geq 3$, $[\widetilde{\eta_i}, [\widetilde{\eta_i},l_m]_{_{RN}}]_{_{RN}}=0$ is a direct consequence of item (2).
The case $m=2$ follows from the fact that
 $\widetilde{\eta_i}$ does not take value in $E_{-1}$, so $l_2(\widetilde{\eta_i}(\alpha_1,\cdots,\alpha_{i}),\alpha_{i+1})=0$, for all $\alpha_1,\cdots,\alpha_{i+1}\in E$. Hence, using item (2) we get
 \begin{equation*}\label{item7-1}
 \iota_{\widetilde{\eta_i}}[\widetilde{\eta_i},l_2]_{_{RN}}=0 \,\,\,\,\, {\mbox{\rm and}}\,\,\,\,\, \iota_{[\widetilde{\eta_i},l_2]_{_{RN}}}\widetilde{\eta_i}=0,
 \end{equation*}
 which gives $[\widetilde{\eta_i}, [\widetilde{\eta_i},l_2]_{_{RN}}]_{_{RN}}=0$.
Similar arguments as those used above prove that $[\widetilde{\eta_i}, [\widetilde{\eta_i},l_1]_{_{RN}}]_{_{RN}}=0$.
\end{proof}

\begin{proof}(of Proposition \ref{thm:NijenN})
From Lemma~\ref{lem5ghesmati} we have
\begin{equation}\label{themnplectic}
[S+\widetilde{\eta_i},\mu]_{_{RN}}=\mu+[\widetilde{\eta_i},l_1]_{_{RN}}+[\widetilde{\eta_i},l_2]_{_{RN}}
\end{equation}
and applying $[S+\widetilde{\eta_i},.]_{_{RN}}$ to both sides of Equation (\ref{themnplectic}), we get
\begin{eqnarray*}
[S+\widetilde{\eta_i},[S+\widetilde{\eta_i},\mu]_{_{RN}}]_{_{RN}}&=&\mu+2[\widetilde{\eta_i},l_1]_{_{RN}}+2[\widetilde{\eta_i},l_2]_{_{RN}}\\
&=&[S+2\widetilde{\eta_i},\mu]_{_{RN}}.
\end{eqnarray*}
Now, the equation
\begin{equation*}\label{haminzeperty}
[S+\widetilde{\eta_i},S+\widetilde{\eta_i}]_{_{RN}}=0,
\end{equation*}
holds, for all $i \geq 2$, as a consequence of $\iota_{\widetilde{\eta_i}}\widetilde{\eta_i}=0$.
\end{proof}

From Proposition \ref{thm:NijenN} we get the following result.
\begin{thm}\label{nplecticlasttheorem}
Let $(\eta^j)_{j\geq 1}$ be a family of $n$-forms on an $n$-plectic manifold $(M,\omega)$. Let $(E=E_{-n}\oplus \cdots\oplus E_{-1}, \mu=l_1+\cdots+l_{n+1})$ be the Lie $n$-algebra associated to $(M,\omega)$. For each $2\leq i\leq n$, define the vector valued $i$-forms $\widetilde{(\eta^{j})_i}$
 as
\begin{equation*}
\widetilde{(\eta^{j})_i}(\beta_1,\cdots,\beta_i)=\begin{cases}
                                             \iota_{\chi_{\beta_1}}\cdots\iota_{\chi_{\beta_i}}\eta^j, & \mbox{if} \,\,\,\ \beta_k \in E_{-1}\,\,\, \mbox{for all} \,\,\,1\leq k\leq i,\\
                                             0, & \mbox{otherwise}
                                             \end{cases}
\end{equation*}where $ \chi_{\beta_1},\cdots,\chi_{\beta_i}$ are the Hamiltonian vector fields associated to the Hamiltonian forms $\beta_1,\cdots,\beta_i$, respectively. Then, $\mathcal{N}:=S+\sum_{j\geq 1}\sum_{i=2}^{n}\widetilde{(\eta^{j})_i}$ is a Nijenhuis vector valued form with respect to the Lie $n$-algebra structure $\mu$.
\end{thm}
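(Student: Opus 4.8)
The plan is to set $\mathcal{M}:=\sum_{j\geq 1}\sum_{i=2}^n \widetilde{(\eta^j)_i}$, so that $\mathcal{N}=S+\mathcal{M}$ is a symmetric vector valued form of degree zero, and to show that $\mathcal{N}$ is Nijenhuis with respect to $\mu$ with square $\mathcal{K}:=S+2\mathcal{M}$. Because the Richardson-Nijenhuis bracket is bilinear, once the two conditions of Definition~\ref{def:Nijenhuis} are reduced to identities for the individual summands $\widetilde{(\eta^j)_i}$, the whole statement follows by expanding. Concretely, I would first record the analogue of Lemma~\ref{lem5ghesmati} for mixed indices, and then assemble the two Nijenhuis conditions exactly as in the proof of Proposition~\ref{thm:NijenN}.

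The key step is a cross-term lemma: for any two pairs $(j,i)$ and $(j',i')$ with $2\leq i,i'\leq n$, one has
\begin{equation*}
[\widetilde{(\eta^j)_i},\widetilde{(\eta^{j'})_{i'}}]_{_{RN}}=0
\qquad\text{and}\qquad
[\widetilde{(\eta^j)_i},[\widetilde{(\eta^{j'})_{i'}},l_m]_{_{RN}}]_{_{RN}}=0,\quad m\geq 1.
\end{equation*}
Both are proved by the very reasoning of Lemma~\ref{lem5ghesmati}, since that argument uses only two features of each form $\widetilde{(\eta^j)_i}$: it vanishes unless \emph{all} its arguments lie in $E_{-1}$, and its image is contained in $E_{-i}$ with $i\geq 2$. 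The first identity holds because every insertion feeds an element of some $E_{-i}$ (with $i\geq 2$) into a form that is nonzero only on $E_{-1}$-arguments, so already $\iota_{\widetilde{(\eta^j)_i}}\widetilde{(\eta^{j'})_{i'}}=0$. For the second identity I would evaluate $[\widetilde{(\eta^{j'})_{i'}},l_m]_{_{RN}}$ by Lemma~\ref{lem5ghesmati}(2): it is $0$ for $m\geq 3$, equals $-\iota_{l_2}\widetilde{(\eta^{j'})_{i'}}$ for $m=2$, and $\diff\circ\widetilde{(\eta^{j'})_{i'}}$ for $m=1$. In the cases $m\geq 2$ the resulting form still has image in $E_{-i'}$ and is nonzero only on $E_{-1}$-arguments, so bracketing it with $\widetilde{(\eta^j)_i}$ vanishes by the same degree bookkeeping.

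The main obstacle is the case $m=1$ with $i'=2$. Here $\diff\circ\widetilde{(\eta^{j'})_2}$ takes values in $E_{-1}=\Omega^{n-1}_{Ham}(M)$, so a degree count alone does \emph{not} force the insertion $\iota_{\diff\circ\widetilde{(\eta^{j'})_2}}\widetilde{(\eta^j)_i}$ to vanish. The point, which is exactly the mechanism behind Lemma~\ref{lem5ghesmati}(3), is that the image of $\diff\circ\widetilde{(\eta^{j'})_2}$ consists of \emph{exact} $(n-1)$-forms, and an exact form $\diff\gamma$ is Hamiltonian with Hamiltonian vector field $0$: from $\iota_{\chi_{\diff\gamma}}\omega=-\diff(\diff\gamma)=0$ the non-degeneracy of $\omega$ gives $\chi_{\diff\gamma}=0$. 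Since $\widetilde{(\eta^j)_i}$ contracts each of its arguments against the corresponding Hamiltonian vector field, feeding such an exact form into it produces $0$. This disposes of the last case and completes the cross-term lemma.

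It remains to assemble the two conditions. By Lemma~\ref{Euler} we have $[S,\mu]_{_{RN}}=\mu$, and since $\diff\circ\widetilde{(\eta^j)_i}$ and $\iota_{l_2}\widetilde{(\eta^j)_i}$ have degree $1$, the form $[\mathcal{M},\mu]_{_{RN}}=\sum_{j,i}\big([\widetilde{(\eta^j)_i},l_1]_{_{RN}}+[\widetilde{(\eta^j)_i},l_2]_{_{RN}}\big)$ also has degree $1$, whence $[S,[\mathcal{M},\mu]_{_{RN}}]_{_{RN}}=[\mathcal{M},\mu]_{_{RN}}$. Expanding $[\mathcal{N},[\mathcal{N},\mu]_{_{RN}}]_{_{RN}}=[S+\mathcal{M},[S+\mathcal{M},\mu]_{_{RN}}]_{_{RN}}$ by bilinearity produces four terms: the three terms $[S,[S,\mu]_{_{RN}}]_{_{RN}}$, $[S,[\mathcal{M},\mu]_{_{RN}}]_{_{RN}}$ and $[\mathcal{M},[S,\mu]_{_{RN}}]_{_{RN}}$ sum to $\mu+2[\mathcal{M},\mu]_{_{RN}}$, while $[\mathcal{M},[\mathcal{M},\mu]_{_{RN}}]_{_{RN}}=0$ by the cross-term lemma. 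This equals $[S+2\mathcal{M},\mu]_{_{RN}}=[\mathcal{K},\mu]_{_{RN}}$, giving the first Nijenhuis condition. For the second, $[\mathcal{N},\mathcal{K}]_{_{RN}}=[S+\mathcal{M},S+2\mathcal{M}]_{_{RN}}$ reduces, via $[S,S]_{_{RN}}=0$ and $[S,\mathcal{M}]_{_{RN}}=\bar{\mathcal{M}}\,\mathcal{M}=0$ (as $\mathcal{M}$ has degree $0$), together with $[\mathcal{M},\mathcal{M}]_{_{RN}}=0$ from the cross-term lemma, to $0$. Hence $\mathcal{N}$ is Nijenhuis with respect to $\mu$ with square $\mathcal{K}=S+2\mathcal{M}$.
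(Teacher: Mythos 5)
Your proof is correct and follows essentially the route the paper intends: the paper states this theorem without proof, as an immediate consequence of Proposition~\ref{thm:NijenN}, and your argument is exactly the missing elaboration — extending Lemma~\ref{lem5ghesmati} to mixed pairs $(j,i)$, $(j',i')$ (your cross-term lemma, including the crucial observation that for $m=1$, $i'=2$ the image of $\diff\circ\widetilde{(\eta^{j'})_2}$ consists of exact forms whose Hamiltonian vector fields vanish by non-degeneracy of $\omega$) and then assembling the two Nijenhuis conditions by bilinearity, just as in the proof of Proposition~\ref{thm:NijenN}. I see no gaps.
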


\section{The case of Lie $2$-algebras}

In this section we treat the case of Lie $2$-algebras. We show how to construct Nijenhuis forms with respect to Lie $2$-algebras, which are the sum of a vector valued $1$-form with a vector valued $2$-form.

 We start by recalling that a Lie $2$-algebra is a pair $(E, \mu)$, where $E$ is a graded vector space with degrees concentrated in $-2$ and $-1$, that is $E=E_{-2} \oplus E_{-1}$, and $\mu=l_1 +l_2 +l_3$ with $l_1$, $l_2$ and $l_3$ being symmetric vector valued $1$-form, $2$-form and $3$-form, respectively, all of them of degree $1$. For degree reasons, the brackets $l_1$ and $l_3$ are not identically zero in the following cases:
$$l_1: E_{-2} \to E_{-1}, \,\,\,\,\,l_3: E_{-1} \times  E_{-1} \times  E_{-1} \to E_{-2},$$
while the binary bracket $l_2$ has two parts
 $$l_2|_{E_{-1} \times  E_{-2}}: E_{-1} \times  E_{-2} \to E_{-2}, \,\,\,\,\, l_2|_{E_{-1} \times  E_{-1}}: E_{-1} \times  E_{-1} \to E_{-1}.$$ The equation $[\mu, \mu]_{_{RN}}=0$ gives the following relations (by degree reasons, all the missing cases are identically zero):
 \begin{equation} \label{Lie2relations1A}
[l_1,l_2]_{_{RN}}(f,g)=0,
\end{equation}
\begin{equation} \label{Lie2relations1}
[l_1,l_2]_{_{RN}}(X,f)=0,
\end{equation}
\begin{equation} \label{Lie2relations2}
\left(2[l_1,l_3]_{_{RN}}+[l_2,l_2]_{_{RN}}\right)(X,Y,f)=0,
\end{equation}
\begin{equation} \label{Lie2relations3}
(2[l_1,l_3]_{_{RN}}+[l_2,l_2]_{_{RN}})(X,Y,Z)=0,
\end{equation}
\begin{equation} \label{Lie2relations4}
[l_2,l_3]_{_{RN}}(X,Y,Z,W)=0,
\end{equation}
with $X,Y,Z,W\in E_{-1}$ and $f, g \in E_{-2}$.

Let us set
\begin{equation} \label{omega}
l_1= \partial, \,\,\,\,\,\,\, l_3=\omega
\end{equation}
and,
 for all $X,Y\in E_{-1}$ and $f\in E_{-2}$,
\begin{equation} \label{chi}
 l_2|_{E_{-1} \times  E_{-1}}(X,Y)=[X,Y]_2 \,\,\,\,\,\,\, \mbox{\rm and}\,\,\,\,\,\,\, l_2|_{E_{-1} \times  E_{-2}}(X,f)= \chi(X)f,
\end{equation}
with $\chi:E_{-1} \to End(E_{-2})$. Then, we have:

\begin{lem} \label{prop:quadrubleLie2algebra}
A vector valued form $\mu=l_1+l_2+l_3$, with associated quadruple $(\partial, \chi, [.,.]_2, \omega)$ given by (\ref{omega}) and (\ref{chi}), is a Lie $2$-algebra structure on $E=E_{-2} \oplus E_{-1}$ if and only if
 \begin{equation} \label{Lie2relations5A}
 \chi(\partial f)g=- \chi(\partial g)f,
 \end{equation}
 \begin{equation} \label{Lie2relations5}
 [X,\partial f]_2=\partial(\chi(X)f),
 \end{equation}
\begin{equation} \label{Lie2relations6}
\chi([X,Y]_2)f+\chi(Y)\chi(X)f-\chi(X)\chi(Y)f+\omega(X,Y,\partial f)=0,
\end{equation}
\begin{equation} \label{Lie2relations7}
[[X,Y]_2,Z]_2+c.p.= \partial(\omega (X,Y,Z)),
\end{equation}
\begin{eqnarray} \label{Lie2relations8}
\lefteqn{ \chi(W)\omega(X,Y,Z)-\chi(Z)\omega(X,Y,W)+\chi(Y)\omega(X,Z,W)
 -\chi(X)\omega(Y,Z,W)= }\nonumber \\
 &-\omega([X,Y]_2,Z,W)+\omega([X,Z]_2,Y,W)
 -\omega([X,W]_2,Y,Z)\nonumber  \\
 &-\omega([Y,Z]_2,X,W)+\omega([Y,W]_2,X,Z)
 -\omega([Z,W]_2,X,Y),
\end{eqnarray}
for all $X,Y,Z,W \in E_{-1}$ and $f \in E_{-2}$.
\end{lem}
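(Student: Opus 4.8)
The plan is to unwind the single equation $[\mu,\mu]_{_{RN}}=0$ into its homogeneous components and translate each into the language of the quadruple $(\partial,\chi,[.,.]_2,\omega)$. By Theorem~\ref{th:linftyRN}, $\mu=l_1+l_2+l_3$ is an $L_\infty$-structure (equivalently, a Lie $2$-algebra structure) if and only if $[\mu,\mu]_{_{RN}}=0$. The relations (\ref{Lie2relations1A})--(\ref{Lie2relations4}) are exactly the components of $[\mu,\mu]_{_{RN}}=0$ graded by form-degree and by the degrees of the arguments; indeed these are stated already in the excerpt as consequences of $[\mu,\mu]_{_{RN}}=0$, so I may take them as the starting point. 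The whole task is therefore to show that each of (\ref{Lie2relations1A})--(\ref{Lie2relations4}), after substituting the definitions (\ref{omega}) and (\ref{chi}), is equivalent to the corresponding relation among (\ref{Lie2relations5A})--(\ref{Lie2relations8}).

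\textbf{Matching the terms pair by pair.}
First I would recall from (\ref{equ:insertion}) and (\ref{RNbracket}) that $[l_i,l_j]_{_{RN}}=\iota_{l_i}l_j-(-1)^{\bar{l_i}\bar{l_j}}\iota_{l_j}l_i$, and since every $l_i$ has degree $1$, the sign is $(-1)^{1\cdot 1}=-1$, so $[l_i,l_j]_{_{RN}}=\iota_{l_i}l_j+\iota_{l_j}l_i$. I would then evaluate each bracket on the specified homogeneous arguments, using the insertion formula (\ref{equ:insertion}) with the appropriate unshuffles, being careful with the Koszul signs $\epsilon(\sigma)$ coming from permuting homogeneous elements of $E_{-1}$ and $E_{-2}$. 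Concretely: (\ref{Lie2relations1A}) evaluated on $(f,g)\in E_{-2}\times E_{-2}$ unwinds into the symmetry relation (\ref{Lie2relations5A}) for $\chi(\partial f)g$; (\ref{Lie2relations1}) on $(X,f)$ gives the compatibility (\ref{Lie2relations5}) between $\partial$, $[.,.]_2$ and $\chi$; the mixed relation (\ref{Lie2relations2}) on $(X,Y,f)$ yields the curved Jacobi/Leibniz identity (\ref{Lie2relations6}) relating $\chi$ of a bracket to the commutator of the $\chi$'s plus an $\omega$-correction; (\ref{Lie2relations3}) on $(X,Y,Z)$ is precisely the Jacobiator identity (\ref{Lie2relations7}), in which the failure of the Jacobi identity for $[.,.]_2$ is exact, equal to $\partial\circ\omega$; and finally (\ref{Lie2relations4}) on $(X,Y,Z,W)$ produces the coherence relation (\ref{Lie2relations8}) expressing that $\omega$ is a $3$-cocycle for the action $\chi$.

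\textbf{Main obstacle.}
The conceptual content is routine once the dictionary is fixed; the genuine difficulty is purely bookkeeping of signs. The hard part will be tracking the Koszul signs $\epsilon(\sigma)$ over the relevant unshuffles and the décalage-type signs, because the symmetric (rather than skew) convention for the $l_i$ interacts with the degrees $|X|=-1$, $|f|=-2$ in a way that must be checked term by term; in particular the number of $(i,j-1)$-unshuffles and the signs they contribute must reproduce exactly the coefficients $2[l_1,l_3]_{_{RN}}$ appearing in (\ref{Lie2relations2}) and (\ref{Lie2relations3}), and the six-term alternating sum on the right-hand side of (\ref{Lie2relations8}). I would handle this by evaluating each $\iota_{l_i}l_j$ on ordered arguments using (\ref{equ:insertion}) directly, collecting like terms, and verifying that the surviving terms reduce, via $\partial=l_1$, $\chi=l_2|_{E_{-1}\times E_{-2}}$, $[.,.]_2=l_2|_{E_{-1}\times E_{-1}}$ and $\omega=l_3$, to the stated identities. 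Each implication is reversible because the passage from $\mu$ to the quadruple is a bijection, so the equivalence follows once both directions of each component are matched.
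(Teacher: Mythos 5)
Your proposal is correct and follows essentially the same route as the paper: the paper's proof likewise takes the component decomposition (\ref{Lie2relations1A})--(\ref{Lie2relations4}) of $[\mu,\mu]_{_{RN}}=0$ (established just before the lemma by degree reasons) and asserts exactly the five equivalences $(\ref{Lie2relations1A})\Leftrightarrow(\ref{Lie2relations5A})$, $(\ref{Lie2relations1})\Leftrightarrow(\ref{Lie2relations5})$, $(\ref{Lie2relations2})\Leftrightarrow(\ref{Lie2relations6})$, $(\ref{Lie2relations3})\Leftrightarrow(\ref{Lie2relations7})$, $(\ref{Lie2relations4})\Leftrightarrow(\ref{Lie2relations8})$, each ``by applying the definition of Richardson-Nijenhuis bracket.'' Your identification of the sign bookkeeping as the only real work is accurate; the paper simply suppresses that computation.
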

\begin{proof}
We have the following equivalences, by applying the definition of Richardson-Nijenhuis bracket:
$(\ref{Lie2relations1A})\Leftrightarrow (\ref{Lie2relations5A})$,
$(\ref{Lie2relations1})\Leftrightarrow (\ref{Lie2relations5})$, $(\ref{Lie2relations2})\Leftrightarrow (\ref{Lie2relations6})$, $(\ref{Lie2relations3})\Leftrightarrow (\ref{Lie2relations7})$ and $(\ref{Lie2relations4})\Leftrightarrow (\ref{Lie2relations8})$.
\end{proof}
The quadruple $(\partial, \chi, [.,.]_2, \omega)$ of Lemma \ref{prop:quadrubleLie2algebra} is the {\em quadruple associated to the Lie $2$-algebra structure} $\mu=l_1+l_2+l_3$.

\

There is an associated Chevalley-Eilenberg differential to each Lie $2$-algebra. Before giving its definition, we need the next lemma.
\begin{lem}\label{Koskhlane}
  Let $(E=E_{-2}\oplus E_{-1},\,\mu=l_1+l_2+l_3)$ be a Lie $2$-algebra
  with corresponding quadruple $(\partial,\chi, [.,.]_2,\omega)$
  and $\eta \in S^k(E^*)\otimes E$ be a vector valued $k$-form of degree $k-2$. Then,
\begin{eqnarray}\label{eq:CEilengerg}
[\eta,l_2]_{_{RN}} (X_0 , \dots, X_k) &=& \sum_{i=0}^k (-1)^i \chi(X_i) \eta(X_0, \cdots, \widehat{X_i}, \cdots, X_{k}) \nonumber\\
& + & \sum_{0\leq i<j\leq k}(-1)^{i+j}
 \eta([X_i,X_j]_2, X_0, \cdots, \widehat{X_i}, \cdots, \widehat{X_{j}}, \cdots, X_{k}),
 \end{eqnarray}
   for all $X_0,\dots,X_k \in E_{-1}$ , where $\widehat{X_{i}}$ means the absence of $X_i$.
    \end{lem}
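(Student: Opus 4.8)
The plan is to compute the Richardson-Nijenhuis bracket $[\eta, l_2]_{_{RN}}$ directly from its definition~(\ref{RNbracket}) and verify that it equals the stated Chevalley-Eilenberg-type expression when both sides are evaluated on $X_0, \dots, X_k \in E_{-1}$. Since $[\eta,l_2]_{_{RN}} = \iota_\eta l_2 - (-1)^{\bar\eta \bar{l_2}}\iota_{l_2}\eta$, I would treat the two insertion terms separately. Here $\bar\eta = k-2$ and $\bar{l_2}=1$, so the sign prefactor is $(-1)^{k-2}=(-1)^k$. The first term $\iota_\eta l_2$ inserts the $k$-form $\eta$ into the slots of $l_2$; because $l_2$ is only binary, this produces $l_2$ evaluated on $\eta(\dots)$ and one remaining argument, which—given the identifications~(\ref{chi})—is exactly $\chi(X_i)\eta(X_0,\dots,\widehat{X_i},\dots,X_k)$ summed over the choice of the non-inserted argument. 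The second term $\iota_{l_2}\eta$ inserts the binary $l_2$ into $\eta$, producing $\eta([X_i,X_j]_2, \dots)$ over pairs $i<j$.

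First I would carefully set up the unshuffle sums appearing in~(\ref{equ:insertion}). For $\iota_\eta l_2$ one sums over $Sh(k,1)$, i.e. over ways to select which $k$ of the $k+1$ arguments feed into $\eta$; equivalently one selects the single argument $X_i$ that survives as the second slot of $l_2$. For $\iota_{l_2}\eta$ one sums over $Sh(2,k-1)$, selecting the unordered pair $\{X_i,X_j\}$ fed into $l_2$. The combinatorial content is routine once one tracks which argument (or pair) is singled out; the genuine work lies entirely in the signs.

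The main obstacle, and the step I would spend the most care on, is reconciling the Koszul signs $\epsilon(\sigma)$ from the graded-symmetric insertion formula with the plain permutation signs $(-1)^i$ and $(-1)^{i+j}$ in the target formula~(\ref{eq:CEilengerg}). Because all arguments lie in $E_{-1}$, every $X_p$ has the same odd degree $|X_p|=-1$, so each transposition of two arguments contributes a factor $(-1)^{|X_p||X_q|}=(-1)$. Thus on $E_{-1}^{\otimes(k+1)}$ the Koszul sign $\epsilon(\sigma)$ collapses to the ordinary sign $\mathrm{sign}(\sigma)$ of the permutation. I would make this reduction explicit, then observe that moving $X_i$ past $X_0,\dots,X_{i-1}$ to bring it into the inserted slot costs precisely $(-1)^i$, and moving the pair $X_i, X_j$ to the front costs $(-1)^{i+j}$ (after accounting for one internal transposition whose sign must be checked against the placement of $[X_i,X_j]_2$ as the first argument of $\eta$). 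The subtle point is confirming that the prefactor $(-1)^k = (-1)^{\bar\eta\bar{l_2}}$ combines correctly with these permutation signs so that both insertion terms carry the same overall sign and add, rather than cancel, yielding the single Chevalley-Eilenberg differential rather than a commutator-type difference.

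Finally I would note that the degree bookkeeping is consistent: $\eta$ has degree $k-2$ and maps $S^k(E)\to E$, so $\eta(X_0,\dots,\widehat{X_i},\dots,X_k)$ with $k$ entries of $E_{-1}$ lands in degree $-k+(k-2)=-2$, i.e. in $E_{-2}$, on which $\chi$ via~(\ref{chi}) indeed acts through $l_2|_{E_{-1}\times E_{-2}}$; and $[X_i,X_j]_2 = l_2(X_i,X_j)\in E_{-1}$ is a legitimate argument for $\eta$. This confirms that every term on the right-hand side is well-defined and of the correct total degree $k-1$, matching that of the $(k+1)$-form $[\eta,l_2]_{_{RN}}$. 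With the sign reduction in hand, the identity follows by matching terms, and I would present only the sign computation in detail, leaving the index bookkeeping to the reader.
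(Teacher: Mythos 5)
Your proposal is correct and is essentially the paper's own proof: the paper simply observes that, by degree reasons, $\eta$ restricts to a map $E_{-1}\times\cdots\times E_{-1}\to E_{-2}$ and then states that unwinding the Richardson--Nijenhuis bracket definition~(\ref{RNbracket}) via the insertion formula~(\ref{equ:insertion}) yields~(\ref{eq:CEilengerg}), which is exactly the computation you outline. Your additional observations --- that the Koszul signs collapse to ordinary permutation signs because all arguments have odd degree $-1$, and that the prefactor $(-1)^{\bar\eta\bar{l_2}}$ makes the two insertion terms add rather than cancel --- are precisely the details the paper leaves to the reader.
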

    \begin{proof}
   By degree reasons, $\eta$ has to be of the form
$\eta:E_{-1}\times {\stackrel{k}{\dots}} \times E_{-1}\to E_{-2}$. Using the Richardson-Nijenhuis bracket definition one gets Equation (\ref{eq:CEilengerg}).
    \end{proof}

    \begin{defn}
    Let $E=E_{-2}\oplus E_{-1}$ be a graded vector space concentrated on degrees $-2$ and $-1$, $S_{k}(E)\subset S^{k}(E^*)\otimes E$ be the subspace of all symmetric vector valued $k$-forms of degree $k-2$ and $S^{\bullet}(E):=\oplus_{k\geq 1}S_{k}(E)$. Let $\chi:E_{-1} \to End(E_{-2})$ be a representation of vector spaces and $[.,.]:E_{-1} \times E_{-1}\to E_{-1}$ a graded symmetric bilinear map. Then, the {\em Chevalley-Eilenberg differential} $\diff^{CE}$ is the map $$\diff^{CE}:S^{\bullet}(E) \to S^{\bullet}(E)$$ such that, if $\eta \in S_{k}(E)$, then   $\diff^{CE}\eta \in S_{k+1}(E)$ is defined by
\begin{eqnarray*}\label{eq:CEilenberggeneral}
 \diff^{CE}\eta(X_0 , \dots, X_k) &=&  \sum_{i=0}^k (-1)^i \chi(X_i) \eta( X_0,\cdots, \widehat{X_i}, \cdots, X_{k})  \\
 & & +\sum_{0\leq i<j\leq k}(-1)^{i+j}
 \eta([X_i,X_j], X_0,\cdots,\widehat{X_i}, \cdots, \widehat{X_j}, \cdots, X_{k}),
 \end{eqnarray*}
   for all $X_0,\dots,X_k \in E_{-1}$, where $\widehat{X_i}$ means for the absence of $X_i$.
    \end{defn}
    In general, the operator $\diff^{CE}$ does not square to zero. However, according to Lemma \ref{Koskhlane} it can be written as
  \begin{equation} \label{diff_RN}
    {\diff}^{CE}  = [., l_2]_{_{RN}},
    \end{equation}
  and we get, from the graded Jacobi identity of the Richardson-Nijenhuis bracket,  that $\diff^{CE}$ squares to zero if and only if $[l_2,l_2]_{_{RN}}=0.$

   Next, we explain how  a crossed module of Lie algebras can be seen as a Lie $2$-algebra. Let us first recall the definition of a crossed module of Lie algebras \cite{FWagemann}:
\begin{defn} \label{crossedmodule_def}
A {\em crossed module} of Lie algebras $(\mathfrak g,\, [.,.]^{\mathfrak g})$ and $(\mathfrak h,\, [.,.]^{\mathfrak h})$ is a homomorphism  $\partial:\mathfrak g \to \mathfrak h$ together with an action by derivation of $\mathfrak h$ on $\mathfrak g$, that is, a linear map $\chi: \mathfrak h \to Hom(\mathfrak g,\mathfrak g)$ such that
\begin{equation}\label{crossedmodule1}
\partial(\chi(h)g)=[h,\partial(g)]^{\mathfrak h},\,\,\,\,
\mbox{\rm for all} \,\,\,g \in\mathfrak g, \, h \in \mathfrak h
\end{equation}
and
\begin{equation}\label{crossedmodule2}
\chi(\partial(g_1))g_2= [g_1,g_2]^{\mathfrak g},\,\,\,\, \mbox{\rm for all} \,\,\,g_1 ,g_2\in \mathfrak g.
\end{equation}
\end{defn}
\noindent Such a crossed module will be denoted by $(\mathfrak g,\,\mathfrak h,\,\partial,\,\chi)$.

\

From a Lie $2$-algebra with vanishing vector valued $3$-form, we may get a crossed module of Lie algebras.
\begin{prop} \cite{BaezCrans}
Let $(E=E_{-2}\oplus E_{-1},\,\mu=l_1+l_2+l_3)$ be a Lie $2$-algebra, with corresponding quadruple $(\partial,\chi, [.,.]_2,\omega)$ given by (\ref{omega}) and (\ref{chi}).
If $\omega=0$, then $(E_{-2}, E_{-1}, \partial, \chi)$ is a crossed module of Lie algebras.
\end{prop}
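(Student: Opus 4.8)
The plan is to read off the crossed module data directly from the quadruple $(\partial,\chi,[.,.]_2,\omega)$ with $\omega=0$, and then to recognize each crossed module axiom as one of the structure equations of Lemma \ref{prop:quadrubleLie2algebra} specialized to $\omega=0$. Concretely, I would set $\mathfrak h := E_{-1}$ with bracket $[.,.]^{\mathfrak h} := [.,.]_2$, take $\partial = l_1$ and the action $\chi$ as given in (\ref{omega})--(\ref{chi}), and define $\mathfrak g := E_{-2}$ with the bracket \emph{forced} by the Peiffer identity (\ref{crossedmodule2}), namely $[f,g]^{\mathfrak g} := \chi(\partial f)g$. With these choices axiom (\ref{crossedmodule2}) holds by definition, and axiom (\ref{crossedmodule1}) is exactly relation (\ref{Lie2relations5}) read with $h=X\in E_{-1}$ and $g=f\in E_{-2}$.

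It then remains to check that all this data is genuinely Lie-algebraic. First I would verify that $\mathfrak h$ is a Lie algebra: since $|X|=|Y|=-1$, the graded symmetry of $l_2$ gives $[X,Y]_2=-[Y,X]_2$, so the bracket is skew, and the Jacobi identity is precisely (\ref{Lie2relations7}) with $\omega=0$, i.e. $[[X,Y]_2,Z]_2+\mathrm{c.p.}=0$. Next, setting $\omega=0$ in (\ref{Lie2relations6}) yields $\chi([X,Y]_2)=\chi(X)\chi(Y)-\chi(Y)\chi(X)=[\chi(X),\chi(Y)]$, so $\chi$ is a Lie algebra morphism $\mathfrak h\to\mathrm{End}(E_{-2})$.

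Then I would establish that $\chi$ acts by derivations and that $\mathfrak g$ is a Lie algebra. The derivation property $\chi(X)[f_1,f_2]^{\mathfrak g}=[\chi(X)f_1,f_2]^{\mathfrak g}+[f_1,\chi(X)f_2]^{\mathfrak g}$ follows by expanding both sides with $[.,.]^{\mathfrak g}=\chi(\partial\,\cdot\,)(\cdot)$, using (\ref{Lie2relations5}) to rewrite $\partial(\chi(X)f_1)=[X,\partial f_1]_2$ and then the morphism property of $\chi$ to turn $\chi([X,\partial f_1]_2)$ into a commutator; the spurious term produced this way cancels against $[f_1,\chi(X)f_2]^{\mathfrak g}$. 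For $\mathfrak g$ itself, skew-symmetry of $[.,.]^{\mathfrak g}$ is exactly (\ref{Lie2relations5A}), while its Jacobi identity is a formal consequence of the derivation property applied with $X=\partial f_1$ (the Leibniz form of Jacobi). Finally, applying (\ref{Lie2relations5}) with $X=\partial f_1$ gives $\partial[f_1,f_2]^{\mathfrak g}=[\partial f_1,\partial f_2]_2$, so $\partial$ is a morphism. Note that (\ref{Lie2relations8}) becomes vacuous once $\omega=0$, so no further condition intervenes.

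The main obstacle, and the only genuinely non-formal point, is that the Lie bracket on $\mathfrak g=E_{-2}$ is \emph{not} part of the Lie $2$-algebra data: it must be synthesized from the Peiffer identity and then shown to satisfy all the Lie algebra axioms purely from relations (\ref{Lie2relations5A})--(\ref{Lie2relations6}). In particular the Jacobi identity on $\mathfrak g$ does not issue from any single structure equation but is deduced from the derivation property, so the correct sequencing — morphism property of $\chi$ first, then the derivation property, then Jacobi — is what makes the argument close up. Throughout, one must keep track of the Koszul signs coming from the degree $-1$ elements, to ensure the graded-symmetric brackets translate into honestly skew-symmetric ordinary Lie brackets.
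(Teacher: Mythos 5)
Your proof is correct, and it is worth noting that the paper itself offers no proof of this proposition at all: it is stated with a citation to Baez--Crans and left to the literature. Your argument is therefore a self-contained verification, and it is the natural one in this paper's framework: you take the structure equations of Lemma \ref{prop:quadrubleLie2algebra}, set $\omega=0$, and translate. Each step checks out: the bracket $[f,g]^{\mathfrak g}:=\chi(\partial f)g$ forced by the Peiffer identity (\ref{crossedmodule2}) is skew by (\ref{Lie2relations5A}); equation (\ref{Lie2relations5}) is literally (\ref{crossedmodule1}) and, applied with $X=\partial f_1$, also gives that $\partial$ is a morphism; (\ref{Lie2relations7}) with $\omega=0$ is the Jacobi identity on $E_{-1}$; (\ref{Lie2relations6}) with $\omega=0$ is the morphism property $\chi([X,Y]_2)=[\chi(X),\chi(Y)]$; your cancellation establishing that each $\chi(X)$ is a derivation of $[.,.]^{\mathfrak g}$ is exact (the term $-\chi(\partial f_1)\chi(X)f_2$ produced by the commutator does cancel against $[f_1,\chi(X)f_2]^{\mathfrak g}$); and the Leibniz-form Jacobi identity on $E_{-2}$ then follows by specializing the derivation property to $X=\partial f_1$, as you say. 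You also correctly single out the one non-formal point, namely that the Lie bracket on $E_{-2}$ is not part of the Lie $2$-algebra data and must be synthesized, with the ordering (morphism property of $\chi$, then derivation property, then Jacobi) being what makes the argument close. This is, in substance, the Baez--Crans construction for strict Lie $2$-algebras ($l_3=0$) transported to the symmetric graded conventions of this paper, so your proof supplies exactly the verification the paper delegates to \cite{BaezCrans}.
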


Proposition \ref{corlast} provides the construction of Nijenhuis forms on Lie $n$-algebras. However, for the case $n=2$, that proposition does not give the possibility of having a Nijenhuis vector valued $2$-form. We intend to give an example of Nijenhuis vector valued form with respect to a Lie $2$-algebra structure $\mu$ on a graded vector space $E_{-2}\oplus E_{-1}$ which is not purely a $1$-form, i.e. not just a collection of maps from $E_i$ to $E_i$, $i=1,2$. As we have mentioned before, elements of degree zero in $\tilde{S}(E^*) \otimes E $ are necessarily of the form $N + \alpha$ with $N: E \to E$ a linear endomorphism preserving the degree and $\alpha : E \times E \to E$
a symmetric vector valued $2$-form of degree zero.
\begin{thm}\label{thm:alphalpha}
Let $\mu=l_1+l_2+l_3$ be a Lie $2$-algebra structure on a graded vector space $E=E_{-2}\oplus E_{-1}$ and $\alpha$ a symmetric vector valued $2$-form of degree zero. Then,  $S+\alpha$ is a Nijenhuis vector valued form with respect to $\mu$, with square of $S+2\alpha $, if and only if
 \begin{equation}  \label{Lie2Nijenhuis}
  \alpha(l_1 (\alpha (X,Y)),Z ) +c.p.=0,
  \end{equation}
 for all $ X,Y,Z \in E_{-1}$.
\end{thm}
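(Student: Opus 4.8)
The plan is to verify the two conditions of Definition~\ref{def:Nijenhuis} directly for $\mathcal{N}=S+\alpha$ with candidate square $\mathcal{K}=S+2\alpha$, exploiting throughout the degree constraints of a Lie $2$-algebra (Remark~\ref{degreereason} with $n=2$): every vector valued $k$-form of degree $1$ vanishes for $k\geq 4$, and every vector valued $k$-form of degree $0$ vanishes for $k\geq 3$.

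First I would compute the single deformation. Since $\bar{\mu}=1$, Lemma~\ref{Euler} gives $[S,\mu]_{_{RN}}=\mu$, and as $[\alpha,l_3]_{_{RN}}$ is a $4$-form of degree $1$ it vanishes, so that
\[
[S+\alpha,\mu]_{_{RN}}=\mu+[\alpha,l_1]_{_{RN}}+[\alpha,l_2]_{_{RN}}.
\]
Applying $[S+\alpha,\,\cdot\,]_{_{RN}}$ a second time, using Lemma~\ref{Euler} on each degree-$1$ summand (each is fixed by $[S,\,\cdot\,]_{_{RN}}$), and discarding $[\alpha,l_3]_{_{RN}}$ together with $[\alpha,[\alpha,l_2]_{_{RN}}]_{_{RN}}$ (a $4$-form of degree $1$), the double deformation collapses to
\[
[S+\alpha,[S+\alpha,\mu]_{_{RN}}]_{_{RN}}=[S+2\alpha,\mu]_{_{RN}}+[\alpha,[\alpha,l_1]_{_{RN}}]_{_{RN}}.
\]
Hence the first Nijenhuis equation $[\mathcal{N},[\mathcal{N},\mu]_{_{RN}}]_{_{RN}}=[\mathcal{K},\mu]_{_{RN}}$ is equivalent to the single identity $[\alpha,[\alpha,l_1]_{_{RN}}]_{_{RN}}=0$.

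The crux, which I expect to be the main obstacle, is to unwind this bracket. It is a $3$-form of degree $1$, hence entirely determined by its values on $X,Y,Z\in E_{-1}$, every other assignment of arguments landing outside $E_{-1}\oplus E_{-2}$. I would expand $[\alpha,[\alpha,l_1]_{_{RN}}]_{_{RN}}=\iota_\alpha[\alpha,l_1]_{_{RN}}-\iota_{[\alpha,l_1]_{_{RN}}}\alpha$ through the insertion formula~(\ref{equ:insertion}), summing over $Sh(2,1)$ and tracking the Koszul signs (for three degree-$(-1)$ arguments the three unshuffles carry signs $+,-,+$). Because $l_1=\partial$ sends $E_{-2}$ to $E_{-1}$ and annihilates $E_{-1}$, while $\alpha$ sends $E_{-1}\times E_{-1}$ to $E_{-2}$ and vanishes on all other pairs, the vast majority of terms drop, and the survivors reassemble, after using the graded skew-symmetry $\alpha(X,Y)=-\alpha(Y,X)$ on $E_{-1}$, into $-2\bigl(\alpha(l_1(\alpha(X,Y)),Z)+c.p.\bigr)$. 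This identifies $[\alpha,[\alpha,l_1]_{_{RN}}]_{_{RN}}=0$ with condition~(\ref{Lie2Nijenhuis}).

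Finally I would dispatch the second Nijenhuis condition $[\mathcal{N},\mathcal{K}]_{_{RN}}=[S+\alpha,S+2\alpha]_{_{RN}}=0$, which holds with no hypothesis: Lemma~\ref{Euler} yields $[S,S]_{_{RN}}=0$ and $[S,\alpha]_{_{RN}}=[\alpha,S]_{_{RN}}=0$ since $\bar{\alpha}=0$, while $[\alpha,\alpha]_{_{RN}}=0$ because $\alpha$ has even degree. Combining the two conditions gives the stated equivalence.
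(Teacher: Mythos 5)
Your proposal is correct and follows essentially the same route as the paper's proof: expand $[S+\alpha,\mu]_{_{RN}}$ and the double deformation using Lemma \ref{Euler} and the degree constraints of a Lie $2$-algebra to reduce everything to $[\alpha,[\alpha,l_1]_{_{RN}}]_{_{RN}}$, evaluate that $3$-form on $E_{-1}\times E_{-1}\times E_{-1}$ to obtain $-2\bigl(\alpha(l_1(\alpha(X,Y)),Z)+c.p.\bigr)$, and verify $[S+\alpha,S+2\alpha]_{_{RN}}=0$ unconditionally. The paper's proof is exactly this argument, with the evaluation of $[\alpha,[\alpha,l_1]_{_{RN}}]_{_{RN}}$ recorded as its Equation (\ref{eq:Salpha0}).
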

\begin{proof}
By degree reasons, the only case where the vector valued $3$-form $[\alpha,[\alpha, l_1]_{_{RN}}]_{_{RN}}$ is not identically zero is when it is evaluated on elements of $E_{-1}$.  In this case, we get
\begin{equation}\label{eq:Salpha0}
   \begin{array}{rcl}
     [\alpha,[\alpha, l_1]_{_{RN}}]_{_{RN}}(X,Y,Z)&=& [\alpha, l_1]_{_{RN}}(\alpha(X,Y),Z)+c.p. \\ & & -\alpha([\alpha, l_1]_{_{RN}}(X,Y),Z)+c.p.\\
                                    &=& -2\alpha(l_1(\alpha(X,Y)),Z)+c.p.,\\
 \end{array}
\end{equation} for all $X,Y,Z \in E_{-1}$.
Again by degree reasons, $[\alpha,[\alpha, l_2]_{_{RN}}]_{_{RN}}$ and $[\alpha, l_3]_{_{RN}}$ are identically zero. So, we have
\begin{eqnarray}\label{eq:Salpha}
    \lefteqn{[S+\alpha,[S+\alpha,l_1+l_2+l_3]_{_{RN}}]_{_{RN}}=}\nonumber \\
                                            &=& [S+\alpha,l_1+l_2+l_3+[\alpha, l_1]_{_{RN}}+[\alpha, l_2]_{_{RN}}]_{_{RN}} \nonumber \\
                                            &=&l_1+l_2+l_3+2[\alpha, l_1]_{_{RN}}+2[\alpha, l_2]_{_{RN}}+[\alpha,[\alpha, l_1]_{_{RN}}]_{_{RN}} \nonumber \\
                                            &=&[S+2\alpha,l_1+l_2+l_3]_{_{RN}}+[\alpha,[\alpha, l_1]_{_{RN}}]_{_{RN}}.
\end{eqnarray}
On the other hand, Lemma \ref{Euler}  and Equation (\ref{RNbracket}) imply that
\begin{equation}\label{Salpha2}
[S+\alpha,S+2\alpha]_{_{RN}}=0.
\end{equation}
Equations (\ref{eq:Salpha0}), (\ref{eq:Salpha}) and (\ref{Salpha2}) show that $S+\alpha$ is a Nijenhuis vector valued form with respect to $\mu$, with square $S+2\alpha$, if and only if $\alpha(l_1(\alpha(X,Y)),Z)+c.p.=0$, for all $X,Y,Z \in E_{-1}$.
\end{proof}

\begin{cor}
Let $\mu=l_1+l_2+l_3$ be a Lie $2$-algebra structure on a graded vector space $E=E_{-2}\oplus E_{-1}$, with $l_1=0$. Then, for every vector valued $2$-form $\alpha$ of degree zero, $S+\alpha$ is a Nijenhuis vector valued form with respect to $\mu$, with square $S+2\alpha$.
\end{cor}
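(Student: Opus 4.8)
The plan is to read this corollary off directly from Theorem~\ref{thm:alphalpha}, of which it is simply the degenerate case $l_1 = 0$. That theorem characterizes the Nijenhuis property for a form of the shape $S+\alpha$: it states that $S+\alpha$ is Nijenhuis with respect to $\mu$ with square $S+2\alpha$ \emph{precisely} when condition (\ref{Lie2Nijenhuis}), namely $\alpha(l_1(\alpha(X,Y)),Z)+c.p.=0$ for all $X,Y,Z\in E_{-1}$, holds. Since the theorem already supplies the equivalence, the only thing I would need to verify is that this single constraint becomes vacuous once $l_1$ is assumed to vanish.

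First I would substitute $l_1 = 0$ into (\ref{Lie2Nijenhuis}). For any $X,Y\in E_{-1}$ the element $\alpha(X,Y)$ lies in $E$, and with $l_1=0$ one has $l_1(\alpha(X,Y))=0$; by linearity of $\alpha$ in its first slot each summand $\alpha(l_1(\alpha(X,Y)),Z)=\alpha(0,Z)=0$, so the cyclic sum vanishes identically and (\ref{Lie2Nijenhuis}) is satisfied for \emph{every} $\alpha$. Alternatively, and perhaps more transparently, I would trace the vanishing through the proof of Theorem~\ref{thm:alphalpha}: with $l_1=0$ one has $[\alpha,l_1]_{_{RN}}=0$, hence the obstructing term $[\alpha,[\alpha,l_1]_{_{RN}}]_{_{RN}}$ computed in (\ref{eq:Salpha0}) is identically zero. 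Equation (\ref{eq:Salpha}) then collapses to $[S+\alpha,[S+\alpha,\mu]_{_{RN}}]_{_{RN}}=[S+2\alpha,\mu]_{_{RN}}$, which is exactly the square relation, while (\ref{Salpha2}) already provides the commuting relation $[S+\alpha,S+2\alpha]_{_{RN}}=0$. These are the two defining identities of a Nijenhuis form with square $S+2\alpha$, so the conclusion follows.

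There is essentially no obstacle here: the corollary is a genuine, immediate specialization, and all the real content sits in Theorem~\ref{thm:alphalpha}. The one point worth recording is conceptual rather than computational, namely that the hypothesis $l_1=0$ is exactly what annihilates the sole potentially obstructing term (\ref{Lie2Nijenhuis}). For a general Lie $2$-algebra that term is the nontrivial constraint a $2$-form $\alpha$ must meet in order to yield a Nijenhuis form together with $S$; dropping $l_1$ removes the constraint entirely, so an arbitrary $\alpha$ works.
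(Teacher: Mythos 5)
Your proposal is correct and matches the paper's (implicit) argument: the paper states this corollary without proof precisely because it is the immediate specialization of Theorem~\ref{thm:alphalpha}, where setting $l_1=0$ makes condition (\ref{Lie2Nijenhuis}) hold trivially for every $\alpha$. Your additional tracing through (\ref{eq:Salpha0})--(\ref{Salpha2}) is a sound, if redundant, confirmation of the same point.
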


Combining Theorems \ref{thm:alphalpha} and \ref{theo:Hierarchy} we get the following proposition.
\begin{prop}\label{prop:inverseNijenhuis}
Let $\mu=l_1+l_2+l_3$ be a Lie $2$-algebra structure on a graded vector space $E=E_{-2}\oplus E_{-1}$.
Let $\alpha$ be a vector valued $2$-form of degree zero such that $ \alpha(l_1 (\alpha (X,Y)),Z ) +c.p.=0,$
 for all $ X,Y,Z \in E_{-1}$.
Let $\mu_k$ stand for the vector valued form defined by  $\mu_k=[S+\alpha,[S+\alpha,\cdots ,[S+\alpha,\mu]_{_{RN}} \cdots]_{_{RN}}]_{_{RN}}$, with $k$ copies of $S+\alpha$.
Then, $S+\alpha$ is a Nijenhuis vector valued form with respect to all the terms of the hierarchy of successive deformations $\mu_k$, with square $S+2\alpha $.
\end{prop}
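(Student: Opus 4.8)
The plan is to observe that this proposition is an immediate consequence of the two theorems it names, requiring essentially no fresh computation. First I would note that, since $\mu=l_1+l_2+l_3$ is a Lie $2$-algebra structure, it satisfies $[\mu,\mu]_{_{RN}}=0$ and is therefore a genuine $L_\infty$-structure by Theorem~\ref{th:linftyRN}. This ensures that the standing hypotheses of Theorem~\ref{theo:Hierarchy} are met, so that its hierarchy machinery applies to $\mu$.

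The crux is the identification of the hypothesis imposed on $\alpha$. The condition $\alpha(l_1(\alpha(X,Y)),Z)+c.p.=0$ for all $X,Y,Z\in E_{-1}$ is precisely the necessary and sufficient condition appearing in Theorem~\ref{thm:alphalpha}. Hence I would invoke Theorem~\ref{thm:alphalpha} directly to conclude that $\mathcal{N}:=S+\alpha$ is Nijenhuis with respect to $\mu$ with square $\mathcal{K}:=S+2\alpha$. This furnishes the base case of the hierarchy, i.e.\ the Nijenhuis property at the level of $\mu=\mu_0$.

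With the Nijenhuis property secured at level zero, I would then apply part~(i) of Theorem~\ref{theo:Hierarchy} to this pair $(\mathcal{N},\mathcal{K})$. That theorem asserts that whenever $\mathcal{N}$ is Nijenhuis with respect to an $L_\infty$-structure $\mu$ with square $\mathcal{K}$, then for every integer $k\geq 1$ the iterated deformation $\mu_k$ is again an $L_\infty$-structure and $\mathcal{N}$ is Nijenhuis with respect to $\mu_k$ with the \emph{same} square $\mathcal{K}$. Since the $\mu_k$ defined in the proposition are exactly these iterated deformations $[S+\alpha,[S+\alpha,\cdots,[S+\alpha,\mu]_{_{RN}}\cdots]_{_{RN}}]_{_{RN}}$ with $k$ copies of $S+\alpha$, the desired conclusion follows at once: $S+\alpha$ is Nijenhuis with respect to every term $\mu_k$ of the hierarchy, with square $S+2\alpha$.

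The only point requiring attention is a bookkeeping one rather than a genuine obstacle: one must match the explicit iterated bracket notation of the proposition with the notation $\mu^{\mathcal{N},\ldots,\mathcal{N}}$ used in Theorem~\ref{theo:Hierarchy}, and confirm that the square is \emph{constant} along the hierarchy, equal to $S+2\alpha$ at every level rather than varying with $k$. This constancy is exactly what part~(i) of Theorem~\ref{theo:Hierarchy} guarantees, and it is precisely this feature—propagated from the base case supplied by Theorem~\ref{thm:alphalpha}—that makes the combined statement work.
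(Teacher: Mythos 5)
Your proposal is correct and follows exactly the paper's own route: the paper introduces this proposition with the single remark that it is obtained by ``Combining Theorems \ref{thm:alphalpha} and \ref{theo:Hierarchy}'', which is precisely your argument (Theorem \ref{thm:alphalpha} gives the base case that $S+\alpha$ is Nijenhuis with respect to $\mu$ with square $S+2\alpha$, and Theorem \ref{theo:Hierarchy}(i) propagates this, with the same square, to every $\mu_k$). Your additional bookkeeping remarks, that a Lie $2$-algebra is an $L_\infty$-structure and that the square stays constant along the hierarchy, are accurate and harmless.
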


If $\mu=l_1+l_2+l_3$ is a Lie $2$-algebra on $E=E_{-2}\oplus E_{-1}$ with $l_1=0$, then $[.,.]_2$, given by (\ref{chi}), is a Lie bracket on $E_{-1} $. Also, the condition $[l_2,l_3]_{_{RN}}=0$ means that $l_3$  is a Chevalley-Eilenberg-closed $3$-form
      of this Lie algebra $E_{-1}$ valued in $E_{-2}$.
      This kind of Lie $2$-algebras are usually called \emph{string Lie algebras}.
A Lie $2$-algebra $(E_{-2}\oplus E_{-1}, l_1+l_2+l_3)$ with $l_2=l_3=0$ and $l_1$ invertible, is called a {\em trivial} Lie $2$-algebra.
The next example is an application of Theorem \ref{thm:alphalpha} to a trivial Lie $2$-algebra.

\begin{ex}
Let $\mathfrak{g}$ be a vector space and $[.,.]_{\mathfrak{g}}$ be a skew-symmetric bilinear map on $\mathfrak{g}$. Let $E_{-1}:=\{-1\}\times\mathfrak{g}$, $E_{-2}:=\{-2\}\times\mathfrak{g}$ and let $\partial:E_{-2}\to E_{-1}$ be given by $(-2,x)\mapsto (-1,x)$. Define $\alpha:E_{-1}\times E_{-1}\to E_{-2}$ to be vector valued $2$-form on the graded vector space $E=E_{-2}\oplus E_{-1}$ as $((-1,x),(-1,y))\mapsto (-2,[x,y]_{\mathfrak{g}})$. Then,  as a direct consequence of Theorem \ref{thm:alphalpha}, we have that $S+\alpha$ is Nijenhuis with respect to $\partial$ if and only if $[.,.]_{\mathfrak{g}}$ is a Lie bracket.
\end{ex}

Let us  now look at the deformation of a Lie $2$-algebra structure.

\begin{prop}\label{prop:deformed}
Let $\mu=l_1+l_2+l_3$ be a Lie $2$-algebra structure on a graded vector space $E=E_{-2}\oplus E_{-1}$, with associated quadruple $(\partial, [.,.]_2,\chi,\omega)$. Let $\alpha$ be a symmetric vector valued $2$-form of degree zero on $E$ and set ${\mathcal N}= S+\alpha$. The deformed structure $\mu^{\mathcal N}$ is associated to the quadruple $(\partial', [.,.]'_2,\chi',\omega')$:
\begin{equation}\label{deformedLie2algebra}
\begin{array}{rcl}
\partial' f&=& \partial f,\\
\left[X,Y\right]'_{2} &=&[X,Y]_2+\partial (\alpha(X,Y)), \\
\chi'(X)f   &=& \chi(X)f -\alpha(\partial f,X),\\
\omega'(X,Y,Z)&=& \omega(X,Y,Z)+{\diff}^{CE}\alpha(X,Y,Z),
\end{array}
\end{equation}
for all $X,Y,Z\in E_{-1}$ and $f\in E_{-2}$. If $\alpha$ satisfies (\ref{Lie2Nijenhuis}), $\mu^{\mathcal N}$ is a Lie $2$-algebra structure on $E$.
\end{prop}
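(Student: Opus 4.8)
The plan is to compute the deformed structure $\mu^{{\mathcal N}}=[S+\alpha,\mu]_{_{RN}}$ explicitly and then read off its form-degree components, matching them against the dictionary (\ref{omega})--(\ref{chi}) that translates a Lie $2$-algebra into its quadruple. The starting point is already contained in the proof of Theorem~\ref{thm:alphalpha}: since $\bar\alpha=0$, Lemma~\ref{Euler} gives $[S,l_i]_{_{RN}}=l_i$ and $[S,\alpha]_{_{RN}}=0$, while $[\alpha,l_3]_{_{RN}}=0$ for degree reasons. Hence
\begin{equation*}
\mu^{{\mathcal N}}=[S+\alpha,l_1+l_2+l_3]_{_{RN}}=l_1+l_2+l_3+[\alpha,l_1]_{_{RN}}+[\alpha,l_2]_{_{RN}}.
\end{equation*}
As $[\alpha,l_1]_{_{RN}}$ is a vector valued $2$-form and $[\alpha,l_2]_{_{RN}}$ is a vector valued $3$-form, this splits as $l_1'+l_2'+l_3'$ with $l_1'=l_1$, $l_2'=l_2+[\alpha,l_1]_{_{RN}}$ and $l_3'=l_3+[\alpha,l_2]_{_{RN}}$. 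In particular the $1$-form part is unchanged, yielding $\partial'=\partial$ at once.

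First I would treat the $2$-form part $l_2'=l_2+[\alpha,l_1]_{_{RN}}$. Expanding $[\alpha,l_1]_{_{RN}}=\iota_\alpha l_1-\iota_{l_1}\alpha$ (the sign being $+$ since $\bar\alpha\,\bar{l_1}=0$) through the insertion formula (\ref{equ:insertion}), the decisive simplification is that $l_1=\partial$ vanishes on $E_{-1}$. Evaluating on $X,Y\in E_{-1}$, the term $\iota_{l_1}\alpha$ dies, leaving $[\alpha,l_1]_{_{RN}}(X,Y)=l_1(\alpha(X,Y))=\partial(\alpha(X,Y))$, whence $[X,Y]'_2=[X,Y]_2+\partial(\alpha(X,Y))$. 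Evaluating on $X\in E_{-1}$, $f\in E_{-2}$, the term $\iota_\alpha l_1$ dies because $\alpha(X,f)\in E_{-3}=0$, and the single surviving unshuffle in $\iota_{l_1}\alpha$ contributes $\alpha(\partial f,X)$ with Koszul sign $(-1)^{|X||f|}=+1$, giving $\chi'(X)f=\chi(X)f-\alpha(\partial f,X)$.

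Next I would treat the $3$-form part $l_3'=l_3+[\alpha,l_2]_{_{RN}}$, where most of the work is already available. By the identification (\ref{diff_RN}) one has $[\alpha,l_2]_{_{RN}}=\diff^{CE}\alpha$ (note that $\alpha$, being a $2$-form of degree $0=2-2$, is an admissible input for Lemma~\ref{Koskhlane}), so evaluating on $X,Y,Z\in E_{-1}$ gives immediately $\omega'(X,Y,Z)=\omega(X,Y,Z)+\diff^{CE}\alpha(X,Y,Z)$. This completes the identification of the quadruple $(\partial',[.,.]'_2,\chi',\omega')$. For the final assertion, if $\alpha$ satisfies (\ref{Lie2Nijenhuis}) then Theorem~\ref{thm:alphalpha} says $S+\alpha$ is Nijenhuis with respect to $\mu$, and Theorem~\ref{theo:Hierarchy}(i) then ensures that $\mu^{{\mathcal N}}=\mu_1$ is again a (curved) $L_\infty$-structure; being concentrated in degrees $-2,-1$, it is in fact a Lie $2$-algebra structure.

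The main obstacle I anticipate is purely bookkeeping: getting the set of unshuffles and the Koszul signs right in the two insertion computations for $[\alpha,l_1]_{_{RN}}$, and tracking which arguments land in $E_{-1}$ versus $E_{-2}$ so that the graded symmetry of $l_2$ and the sign in (\ref{RNbracket}) are applied consistently. None of these steps is conceptually deep, but the conventions of (\ref{equ:insertion}) and (\ref{RNbracket}) must be handled with care throughout.
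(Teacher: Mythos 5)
Your proposal is correct and follows essentially the same route as the paper's proof: the same decomposition $[S+\alpha,\mu]_{_{RN}}=l_1+(l_2+[\alpha,l_1]_{_{RN}})+(l_3+[\alpha,l_2]_{_{RN}})$, the same evaluations of $[\alpha,l_1]_{_{RN}}$ on $(X,Y)$ and $(X,f)$, the identification $[\alpha,l_2]_{_{RN}}=\diff^{CE}\alpha$ via Lemma~\ref{Koskhlane}, and the final appeal to Theorem~\ref{thm:alphalpha} for the Lie $2$-algebra conclusion. You merely make explicit the sign and unshuffle bookkeeping that the paper compresses into its list of ``easy relations,'' and all of it checks out.
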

\begin{proof}
The first part of the statement follows from the following easy relations:
\begin{itemize}
\item[] $[S+\alpha,\mu]_{_{RN}}=l_1+(l_2+[\alpha,l_1]_{_{RN}})+(l_3+[\alpha,l_2]_{_{RN}})$;
\item[] $[\alpha,l_1]_{_{RN}}(X,Y)=l_1(\alpha(X,Y)),\,\,\, \text{for all}\,\, X,Y \in E_{-1}$;
\item[] $[\alpha,l_1]_{_{RN}}(X,f)=-\alpha(l_1(f),X),\,\,\, \text{for all}\,\, X \in E_{-1},f \in E_{-2}$;
\item[] $[\alpha,l_2]_{_{RN}}={\diff}^{CE}\alpha.$
\end{itemize}
The second part is a direct consequence of Theorem \ref{thm:alphalpha}.
\end{proof}

Notice that, in the case of  Proposition \ref{prop:deformed}, the vector valued form $S-\alpha$ has the inverse effect of $S +\alpha$, that is, $[S-\alpha,[S+\alpha,\mu]_{_{RN}}]_{_{RN}} =\mu $.

As we have seen previously,  string Lie algebras on $E_{-2}\oplus E_{-1}$ are in one to one correspondence with Lie algebra structures on ${\mathfrak g}:=E_{-1}$
together with a representation of the Lie algebra ${\mathfrak g}$ on the vector space $V:=E_{-2}$ and a Chevalley-Eilenberg $3$-cocycle $\omega$ for this representation. Hence, we denote
 string Lie algebras  as triples $({\mathfrak g},V,\omega)$.
 According to Proposition \ref{prop:deformed}, the deformation of a string Lie algebra $({\mathfrak g},V,\omega)$ by $S+\alpha $,
just amounts to change the $3$-cocycle $\omega$ into $\omega + \diff^{CE} \alpha$.
So that, for string Lie algebras, adding up a coboundary, i.e., changing $({\mathfrak g},V,\omega)$
into  $({\mathfrak g},V,\omega+ \diff^{CE}\alpha)$  can be seen as a Nijenhuis transformation by
$S+\alpha$.

\

A \emph{Lie $2$-subalgebra} of a Lie $2$-algebra $(E=E_{-2}\oplus E_{-1}, \mu=l_1+l_2+l_3)$ is a Lie $2$-algebra $(E^\prime=E^\prime_{-2}\oplus E^\prime_{-1}, \mu'=l'_1+l'_2+l'_3)$ with $E^\prime_{-2}\subset E_{-2}$ and $E^\prime_{-1}\subset E_{-1}$ vector subspaces,
\begin{equation*}
l_1^{\prime}=l_1|_{E^{\prime}},\,\,l_2^{\prime}=l_2|_{E^{\prime}\times E^{\prime}}\,\,\,\mbox{and}\,\,\,l_3^{\prime}=l_3|_{E^{\prime}\times E^{\prime}\times E^{\prime}}.
\end{equation*}

\

Let us now investigate Lie $2$-algebras structures for which $\chi=0$.
There may be quite a few such Lie $2$-algebras but we are going to show that, after a Nijenhuis transformation of the form
$S+\alpha$, such Lie $2$-algebras will be decomposed as a direct sum of
a string Lie algebra with a trivial Lie $2$-algebra.

\begin{prop}\label{prop:strict+trivial}
 Given a Lie $2$-algebra structure $l_1+l_2+l_3$ on a graded vector space $E=E_{-2}\oplus E_{-1}$ and corresponding quadruple $(\partial, [.,.]_2, \chi,\omega )$, with $\chi=0$, there exists a Nijenhuis
 form $S+\alpha$, with $\alpha$ a vector valued $2$-form of degree zero, such that the deformed bracket $[S+\alpha,l_1+l_2+l_3]_{_{RN}} $
is the direct sum of a string Lie $2$-algebra with a trivial $L_\infty$-algebra.
\end{prop}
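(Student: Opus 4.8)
The plan is to build the decomposition from splittings of the two graded components adapted to $\partial=l_1$, and then to use a single symmetric vector valued $2$-form $\alpha$ of degree zero to correct the binary bracket so that the two blocks decouple. First I would record what $\chi=0$ gives: substituting $\chi=0$ into (\ref{Lie2relations5}), (\ref{Lie2relations6}) and (\ref{Lie2relations7}) yields $[X,\partial f]_2=0$, $\omega(X,Y,\partial f)=0$ and the Jacobiator identity $[[X,Y]_2,Z]_2+c.p.=\partial\,\omega(X,Y,Z)$. In particular $[.,.]_2$ kills $\im\partial$ and $\omega$ vanishes whenever one entry lies in $\im\partial$. Next I would fix complements $E_{-2}=\Ker\partial\oplus C$ and $E_{-1}=\im\partial\oplus J$, chosen so that $\partial$ restricts to an isomorphism $\partial|_C:C\to\im\partial$; write $\pi_I,\pi_J$ for the projections of $E_{-1}$ onto $\im\partial$ and $J$.

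The candidate deformation is ${\mathcal N}=S+\alpha$, where $\alpha$ is the graded-symmetric vector valued $2$-form determined by $\alpha(X,Y)=-(\partial|_C)^{-1}\pi_I[X,Y]_2\in C$ for $X,Y\in J$ and $\alpha=0$ as soon as one argument lies in $\im\partial$. I would first check that ${\mathcal N}$ is Nijenhuis by verifying (\ref{Lie2Nijenhuis}): since $\alpha$ takes values in $C$, one has $l_1(\alpha(X,Y))=\partial\alpha(X,Y)=-\pi_I[\pi_J X,\pi_J Y]_2\in\im\partial$, and because $\alpha$ annihilates any argument in $\im\partial$, each summand $\alpha(l_1(\alpha(X,Y)),Z)$ vanishes, so the cyclic sum is zero. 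Theorem~\ref{thm:alphalpha} (together with Theorem~\ref{theo:Hierarchy}) then guarantees that $\mu^{\mathcal N}=[S+\alpha,\mu]_{_{RN}}$ is again a Lie $2$-algebra.

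I would then read off the deformed quadruple from Proposition~\ref{prop:deformed}: $\partial'=\partial$, $\chi'(X)f=-\alpha(\partial f,X)=0$ (because $\partial f\in\im\partial$), $[X,Y]'_2=[X,Y]_2+\partial\alpha(X,Y)$ and $\omega'=\omega+\diff^{CE}\alpha$. For $X,Y\in J$ the correction gives $[X,Y]'_2=\pi_J[X,Y]_2\in J$, while any bracket with an entry in $\im\partial$ or in $E_{-2}$ vanishes; together with $\chi'=0$ this shows the pieces $E'=\Ker\partial\oplus J$ and $E''=C\oplus\im\partial$ are each closed under $l'_1$ and $l'_2$, with no cross terms. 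On $E''$ one has $l'_2=0$ and $\partial'|_C$ invertible, and $\omega'$ vanishes on $(\im\partial)^3$ since every term then has an argument in $\im\partial$; hence $E''$ is a trivial Lie $2$-algebra.

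The one delicate point is that $\omega'$ restricted to $J$ must land in $E'_{-2}=\Ker\partial$, so that $E'$ is a genuine string Lie algebra. I would avoid computing $\diff^{CE}\alpha$ directly and instead invoke the Jacobiator relation (\ref{Lie2relations7}) for the \emph{deformed} structure $\mu^{\mathcal N}$, which is legitimate because ${\mathcal N}$ is Nijenhuis: for $X,Y,Z\in J$ the left-hand side $[[X,Y]'_2,Z]'_2+c.p.$ lies in $J$ by the previous paragraph, whereas $\partial'\omega'(X,Y,Z)$ lies in $\im\partial$; since $\im\partial\cap J=0$ both sides vanish, forcing $\omega'(X,Y,Z)\in\Ker\partial$. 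This is the step I expect to be the main obstacle, and once it is cleared the deformed structure is exactly the direct sum of the string Lie algebra $(E'_{-1},E'_{-2},\omega'|_{J})$ with the trivial Lie $2$-algebra $E''$, which is the assertion.
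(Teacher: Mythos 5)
Your proposal is correct and follows essentially the same route as the paper's own proof: the identical splittings $E_{-2}=\Ker\partial\oplus C$ and $E_{-1}=\im\partial\oplus J$, the same $2$-form $\alpha=-(\partial|_{C})^{-1}\circ\pr_{\im\partial}\circ[.,.]_2$ vanishing whenever an argument lies in $\im\partial$, the same appeal to Theorem \ref{thm:alphalpha} and Proposition \ref{prop:deformed}, and the same key trick of invoking the Jacobiator relation (\ref{Lie2relations7}) for the deformed structure and using $\im\partial\cap J=0$ to force $\omega'(X,Y,Z)\in\Ker\partial$. There is nothing essential to add.
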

\begin{proof}
We set $E_{-1}^{t}:={\rm Im} (\partial)$, $E_{-2}^s := {\rm Ker}(\partial) $ and we choose two subspaces $E_{-2}^t \subset E_{-2}$
and $ E_{-1}^{s} \subset E_{-1}$ such that the following are direct sums: $E_{-2}^t\oplus E_{-2}^s=E_{-2}$ and $E_{-1}^t\oplus E_{-1}^s=E_{-1}$. Since $\chi=0$, by (\ref{Lie2relations5}),
the bracket $[ .,. ]_2$ vanishes on $E_{-1} \times E_{-1}^{t}$; so that, there exists a unique
skew-symmetric bilinear map $\alpha: E_{-1} \times E_{-1} \to E^{t}_{-2}$ such that
\begin{equation}  \label{defn_alpha}
 \partial \alpha (X,Y) = -pr_{E_{-1}^{t}} ([X,Y]_2),  \hbox{ for all}\,\,\, X,Y \in E_{-1},
 \end{equation}
where $pr_{E_{-1}^{t}} $ stands for the projection on ${E_{-1}^{t}}$ with respect to $E_{-1}^{s}$.
Notice that $\alpha$ is unique. In fact, if $\alpha': E_{-1} \times E_{-1} \to E^{t}_{-2}$ is another skew-symmetric bilinear map satisfying (\ref{defn_alpha}), then $(\alpha - \alpha')(X,Y) \in {\rm Ker}(\partial)= E_{-2}^s$, for all $X,Y \in E_{-1}$. Since $(\alpha - \alpha')(X,Y)$ is also an element of $E_{-2}^t$ and  $E_{-2}=E_{-2}^t\oplus E_{-2}^s$ is a direct sum, we have that $(\alpha - \alpha')(X,Y)=0$, for all $X,Y \in E_{-1}$, and so $\alpha=\alpha'$.
Note that we also have
\begin{equation} \label{condition_alpha_zero}
\alpha(X,Y)=0 \,\,\,{\textrm{if}} \,\,\, X \,\,\,{\textrm{or}} \,\,\,  Y \,\,\,{\textrm{belong to}} \,\,\,  E_{-1}^t,
\end{equation}
so that $\alpha(\partial \alpha (X,Y),Z)=0$, for all $X,Y,Z \in E_{-1}$. Hence, by Theorem \ref{thm:alphalpha},
$S+\alpha$
is a Nijenhuis form with square $S+2\alpha $ and, by Proposition~\ref{prop:deformed}, the deformed bracket $l_1'+l_2'+l_3':=[S+\alpha, l_1+l_2+l_3]_{_{RN}}$ is a Lie $2$-algebra structure on $E=E_{-2}\oplus E_{-1}$.

We claim that
$(E_{-1}^s \oplus E_{-2}^s,l_1'^s+l_2'^s+l_3'^s)$ and $(E_{-1}^t \oplus E_{-2}^t ,l_1'^t+l_2'^t+l_3'^t)$ are  Lie $2$-subalgebras of $(E_{-2}\oplus E_{-1}, l_1'+l_2'+l_3')$, where $l_i'^s$  and $l_i'^t$ stand for the restrictions of $l_i'$ to $E_{-1}^s \oplus E_{-2}^s$ and $E_{-1}^t \oplus E_{-2}^t$, respectively. We also claim that $(E_{-1}^s \oplus E_{-2}^s,l_1'^s+l_2'^s+l_3'^s)$ is a string Lie $2$-algebra
while $(E_{-1}^t \oplus E_{-2}^t ,l_1'^t+l_2'^t+l_3'^t)$ is a trivial Lie $2$-algebra, and that their direct sum is isomorphic to $(E_{-2}\oplus E_{-1},l_1'+l_2'+l_3')$.

Let $(\partial^\prime,[.,.]^\prime,\chi^\prime,\omega^\prime)$ stand for the quadruple associated to the deformed structure $l_1'+l_2'+l_3'$. First, we prove that $(E_{-1}^t \oplus E_{-2}^t ,l_1'^t+l_2'^t+l_3'^t)$ is a Lie $2$-subalgebra of $(E_{-2}\oplus E_{-1}, l_1'+l_2'+l_3')$ with $l_2'^t=l_3'^t=0$, hence it is a trivial Lie $2$-subalgebra. We use the explicit expressions given in Proposition \ref{prop:deformed} in the case  $\chi=0$. Let $f \in E_{-2}^t$ and  $X, Y, Z \in E_{-1}^t$. Then, $l_1'^t(f)=\partial^\prime (f)=\partial(f)$; thus $l_1'^t(E_{-2}^t) \subset E_{-1}^t$. Moreover, using (\ref{condition_alpha_zero}) and the fact that $[ .,. ]_2$ vanishes on $E_{-1} \times E_{-1}^{t}$, we get
\begin{center}
$l_2'^t(X,f)=\chi^\prime(X)f=-\alpha(\partial f, X)=0$,
\end{center}
\begin{center} $l_2'^t(X,Y)= [X,Y]_2^\prime= [X,Y]_2+ \partial(\alpha(X,Y))=0$,
\end{center}
so that $l_2'^t=0$. As for $l_3'^t$, we have
$$l_3'^t(X,Y,Z)= \omega^\prime (X,Y,Z)= \omega(X,Y,Z) + {\diff}^{CE}\alpha(X,Y,Z).$$
Now, $\omega(X,Y,Z)=0$ by (\ref{Lie2relations6}) and
${\diff}^{CE}\alpha(X,Y,Z) \stackrel{(\ref{diff_RN})}{=}[\alpha,l_2]_{_{RN}}(X,Y,Z)=0,$
by (\ref{condition_alpha_zero}) and because $l_2$ vanishes on $E_{-1}^{t} \times E_{-1}^{t}$.
 We therefore obtain $l_3'^t=0$, which completes the proof of the fact that $(E_{-1}^t \oplus E_{-2}^t ,l_1'^t+l_2'^t+l_3'^t)$ is a Lie $2$-subalgebra of $(E_{-2}\oplus E_{-1}, l_1'+l_2'+l_3')$ with $l_2'^t=l_3'^t=0$.

Next we prove that $(E_{-2}^s\oplus E_{-1}^s, l_1'^s+l_2'^s+l_3'^s)$ is a Lie $2$-subalgebra of $(E_{-2}\oplus E_{-1}, l_1'+l_2'+l_3')$ with $l_1'^s(E_{-2}^s)=0$ and hence it is a string Lie subalgebra. We use again Proposition~\ref{prop:deformed} with $\chi=0$. By definition of $E_{-2}^s$,   $$l_1'^s(E_{-2}^s)=\partial^\prime (E_{-2}^s) =\partial(E_{-2}^s)=0$$ holds.  Let $f \in E_{-2}^s={\rm Ker}(\partial)$ and $X, Y, Z \in E_{-1}^s$. Then,  we have
$
l_2'^s(X,f)=\chi^\prime(X)f=-\alpha(\partial f, X)=0
$
and \begin{equation}  \label{strictpartofl-0}
l_2'^s(X,f) \in E_{-1}^s.
\end{equation}
 Also,
\begin{equation*}
l_2'^s(X,Y)=[X,Y]_2^\prime=[X,Y]_2+\partial\alpha(X,Y)=[X,Y]_2-pr_{E_{-1}^{t}} ([X,Y]_2),
\end{equation*}
which implies that
\begin{equation}\label{strictpartofl-2}
l_2'^s(X,Y)\in E_{-1}^s.
\end{equation}
From (\ref{Lie2relations7}), and taking into account that $\partial=\partial^\prime$,
we get
\begin{equation}\label{strictpartofl-3}
\partial (\omega^\prime (X,Y,Z))=l_2'(l_2'(X,Y),Z) + c.p..
\end{equation}
Using Relation (\ref{strictpartofl-2}), the right hand side of Equation (\ref{strictpartofl-3}) belongs to $E_{-1}^s$, while according to the definition of $E_{-1}^t$, the left hand side of Equation (\ref{strictpartofl-3}) belongs to $E_{-1}^t$ and since $E_{-1}=E_{-1}^t\oplus E_{-1}^s$ is a direct sum, both sides of Equation (\ref{strictpartofl-3}) should be zero. This implies that
\begin{equation}\label{strictpartlast}
l_3'(X,Y,Z)\in E_{-2}^s.
\end{equation}
 Relations (\ref{strictpartofl-0}) and (\ref{strictpartofl-2}) together with Equation (\ref{strictpartlast}) show that $(E_{-2}^s\oplus E_{-1}^s, l'_1+l'_2+l'_3)$ is a Lie $2$-subalgebra. This completes the proof.
\end{proof}

Next, it is interesting to see that Lie algebras themselves can be seen as Nijenhuis forms.
 We start by noticing that any vector valued $2$-form of degree zero on a graded vector space $E_{-2}\oplus E_{-1}$ is of the form
\begin{equation}\label{khouk}
\alpha(X,Y)=\begin{cases}-\alpha(Y,X),& \,\,{\mbox{if}} \, \,X,Y \in E_{-1},\\
                          0,         & \,\,{\mbox{otherwise}}.
            \end{cases}
\end{equation}
This, together with the fact that $\alpha$ always takes value in $E_{-2}$, imply that
\begin{equation}\label{sag}
\alpha(\alpha(X,Y),Z)+c.p.=0,
\end{equation}
for all $X,Y,Z \in E_{-1}$.
Equations (\ref{khouk}) and (\ref{sag}) mean that any symmetric vector valued $2$-form $\alpha$ on an arbitrary graded vector space $E_{-2}\oplus E_{-1}$ is a Lie algebra (not a graded Lie algebra).
 In the next proposition, we show that there is also a way to get a Lie bracket on a graded vector space $E=E_{-2}\oplus E_{-1}$ from a Nijenhuis form with respect to a Lie $2$-algebra structure $\mu=l_1+l_2+l_3$ on the vector space $E$.

 \begin{prop}
 Let $(E=E_{-2}\oplus E_{-1},\, \mu=l_1+l_2+l_3)$ be a Lie $2$-algebra, with corresponding quadruple $(\partial,[.,.]_2,\chi,\omega)$. Let $\alpha$ be a vector valued $2$-form of degree zero and define a bilinear map $\tilde{\alpha}$ by setting
 \begin{equation*}
 \tilde{\alpha}(X,Y)=\begin{cases}\alpha(X,Y), & \,{\mbox{for}} \,\, X,Y \in E_{-1},\\
                                  \alpha(\partial X,Y), & \,{\mbox{for}} \,\, X\in E_{-2},Y \in E_{-1},\\
                                  \alpha(X,\partial Y), & \,{\mbox{for}} \,\,  X \in E_{-1},Y\in E_{-2},\\
                                  \alpha(\partial X,\partial Y), & \,{\mbox{for}} \,\, X,Y \in E_{-2}.
                     \end{cases}
  \end{equation*}
  Then, $S+\alpha$ is Nijenhuis vector valued $2$-form with respect to $\mu$, with square $S+2\alpha$, if and only if $(E,\tilde{\alpha})$ is a Lie algebra.
  \end{prop}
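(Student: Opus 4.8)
The plan is to reduce the statement entirely to Theorem~\ref{thm:alphalpha}. Since $l_1=\partial$ for a Lie $2$-algebra, that theorem says that $S+\alpha$ is a Nijenhuis vector valued form with respect to $\mu$ with square $S+2\alpha$ precisely when
\begin{equation*}
\alpha(\partial(\alpha(X,Y)),Z)+c.p.=0, \qquad \text{for all } X,Y,Z \in E_{-1}.
\end{equation*}
Hence the whole task is to show that this identity on $E_{-1}$ is equivalent to the assertion that $(E,\tilde\alpha)$ is a Lie algebra; I would not need to re-examine the defining equations of $\mu$ at all.

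The key observation I would isolate first is that the four cases in the definition of $\tilde\alpha$ can be written uniformly. I would introduce the linear map $\rho\colon E\to E_{-1}$ that is the identity on $E_{-1}$ and equals $\partial$ on $E_{-2}$; then a case check shows $\tilde\alpha(X,Y)=\alpha(\rho X,\rho Y)$ for all homogeneous $X,Y$, so this formula is exactly the bilinear extension of $\tilde\alpha$. From it, bilinearity is immediate, skew-symmetry of $\tilde\alpha$ follows from the skew-symmetry of $\alpha$ on $E_{-1}$ recorded in (\ref{khouk}), and, because $\alpha$ always takes values in $E_{-2}$, so does $\tilde\alpha$. Consequently $(E,\tilde\alpha)$ is a Lie algebra if and only if the Jacobi identity holds.

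Next I would compute the Jacobiator. Since $\tilde\alpha(X,Y)\in E_{-2}$, applying $\rho$ to it coincides with applying $\partial$, so
\begin{equation*}
\tilde\alpha(\tilde\alpha(X,Y),Z)=\alpha\bigl(\partial\alpha(\rho X,\rho Y),\rho Z\bigr),
\end{equation*}
and summing over cyclic permutations gives $\alpha(\partial\alpha(\rho X,\rho Y),\rho Z)+c.p.$. Because $\rho$ is surjective onto $E_{-1}$ (it restricts to the identity there), this cyclic sum vanishes for all $X,Y,Z\in E$ if and only if $\alpha(\partial\alpha(X',Y'),Z')+c.p.=0$ for all $X',Y',Z'\in E_{-1}$, which is exactly condition (\ref{Lie2Nijenhuis}) recovered from Theorem~\ref{thm:alphalpha}. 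This closes the equivalence.

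The proof becomes short once the uniform formula $\tilde\alpha=\alpha(\rho\,\cdot\,,\rho\,\cdot\,)$ is in hand; the two points demanding a little care are the bookkeeping that $\tilde\alpha$ lands in $E_{-2}$ (so that the outer $\rho$ genuinely acts as $\partial$ when iterating) and the use of surjectivity of $\rho$ to pass between the Jacobi identity on all of $E$ and the Nijenhuis identity on $E_{-1}$. The skew-symmetry verification in the mixed-degree cases is routine and relies only on (\ref{khouk}).
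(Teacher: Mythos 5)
Your proof is correct and takes essentially the same route as the paper's: both reduce the statement to Theorem~\ref{thm:alphalpha} and identify the Jacobiator of $\tilde\alpha$ with the expression $\alpha(\partial\alpha(\cdot,\cdot),\cdot)+c.p.$ evaluated on elements of $E_{-1}$, which is exactly condition (\ref{Lie2Nijenhuis}). Your map $\rho$ (identity on $E_{-1}$, $\partial$ on $E_{-2}$), together with the surjectivity remark, merely packages uniformly the case-by-case Jacobiator computation that the paper records in (\ref{jacobialpha}); this is a clean streamlining but not a different argument.
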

  \begin{proof}
  By definition, $\tilde{\alpha}$ is a skew-symmetric bilinear map on the vector space $E$ and we have
  \begin{equation}\label{jacobialpha}\begin{array}{rcl}
  \tilde{\alpha}(\tilde{\alpha}(X,Y),Z)+c.p.&=&\alpha(\partial\alpha(X,Y),Z)+c.p.,\\
  \tilde{\alpha}(\tilde{\alpha}(f,Y),Z)+c.p.&=&\alpha(\partial\alpha(\partial f,Y),Z)+c.p.,\\
  \end{array}
  \end{equation}
  for all $X,Y,Z \in E_{-1}$ and $f\in E_{-2}$. Hence, Theorem \ref{thm:alphalpha} together with (\ref{jacobialpha}) imply that $\tilde{\alpha}$ is a Lie bracket on the vector space $E$ if, and only if, $S+\alpha$ is a Nijenhuis form with respect to $\mu$, with square $S+2\alpha$.
 \end{proof}
 Last, we give a result involving weak Nijenhuis forms on a Lie $2$-algebra.
 \begin{prop}
  Let $\partial: E_{-2}\to E_{-1}$ be a Lie $2$-algebra structure on a graded vector space $E=E_{-2}\oplus E_{-1}$, that is, a Lie $2$-algebra structure $\mu=l_1+l_2+l_3$ on $E$, with $l_1=\partial$ and $l_2=l_3=0$. Let $\alpha$ be a symmetric vector valued $2$-form of degree zero on the graded vector space $E$. If $S+\alpha$ is a weak Nijenhuis vector valued form with respect to $\partial$, then $E_{-1}$ is a Lie algebra with a representation on $E_{-2}$.
  \end{prop}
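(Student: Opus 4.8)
The plan is to read the conclusion off the deformed structure, using the characterisation of weak Nijenhuis forms in Proposition~\ref{prop:deformedisLinfty}. Here $\mu=l_1=\partial$ is a non-curved $L_\infty$-structure (with $l_2=l_3=0$) and ${\mathcal N}=S+\alpha$ is a symmetric vector valued form of degree zero having no vector valued $0$-form component; so Proposition~\ref{prop:deformedisLinfty} tells us that the hypothesis ``$S+\alpha$ is weak Nijenhuis with respect to $\partial$'' is \emph{equivalent} to the assertion that the deformed form $\mu^{{\mathcal N}}=[S+\alpha,\partial]_{_{RN}}$ is again an $L_\infty$-structure on $E$. Since $E$ is concentrated in degrees $-2$ and $-1$, this deformed structure is a Lie $2$-algebra, and everything will be extracted from its associated quadruple.

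First I would compute the quadruple $(\partial',[.,.]'_2,\chi',\omega')$ of $\mu^{{\mathcal N}}$ by specialising Proposition~\ref{prop:deformed} to the present situation, where the original quadruple is almost entirely trivial: $[.,.]_2=0$, $\chi=0$ and $\omega=0$, only $\partial$ being non-zero. The formulas of Proposition~\ref{prop:deformed} then collapse to
\begin{equation*}
\partial'=\partial,\qquad [X,Y]'_2=\partial(\alpha(X,Y)),\qquad \chi'(X)f=-\alpha(\partial f,X),\qquad \omega'=\diff^{CE}\alpha,
\end{equation*}
for $X,Y\in E_{-1}$, $f\in E_{-2}$. The decisive point is that $\omega'=0$: by (\ref{diff_RN}) the Chevalley--Eilenberg differential is $\diff^{CE}=[\,\cdot\,,l_2]_{_{RN}}$, and since $l_2=0$ here we get $\diff^{CE}\alpha=0$. (Equivalently, the vector valued $3$-form part of $[S+\alpha,\mu]_{_{RN}}$ is $l_3+[\alpha,l_2]_{_{RN}}=0$.)

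It then remains only to invoke the Lie $2$-algebra axioms for $\mu^{{\mathcal N}}$, i.e.\ the relations of Lemma~\ref{prop:quadrubleLie2algebra} for the quadruple $(\partial',[.,.]'_2,\chi',\omega')$. Relation (\ref{Lie2relations7}) with $\omega'=0$ gives $[[X,Y]'_2,Z]'_2+c.p.=\partial(\omega'(X,Y,Z))=0$, so that $[.,.]'_2$ satisfies the Jacobi identity and $(E_{-1},[.,.]'_2)$ is a Lie algebra; relation (\ref{Lie2relations6}) with $\omega'=0$ gives $\chi'([X,Y]'_2)=\chi'(X)\chi'(Y)-\chi'(Y)\chi'(X)$, so $\chi':E_{-1}\to End(E_{-2})$ is a Lie algebra morphism, i.e.\ a representation of $E_{-1}$ on $E_{-2}$. (One may equally note that $\omega'=0$ makes $(E_{-2},E_{-1},\partial',\chi')$ a crossed module, by the proposition of \cite{BaezCrans} recalled above, which packages exactly this information.) I expect the only point requiring care to be the vanishing of $\omega'$; granting that, both conclusions are formal consequences of the structure equations and require no further computation.
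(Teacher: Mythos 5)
Your proof is correct, and it arrives at exactly the same two identities as the paper, but by a genuinely different route. The paper also starts from Proposition~\ref{prop:deformedisLinfty}, but then proceeds by direct computation: it rewrites the weak Nijenhuis condition as $[[\alpha,\partial]_{_{RN}},[\alpha,\partial]_{_{RN}}]_{_{RN}}=0$ and expands this by hand into the two component identities (\ref{eq:weakexample1}) and (\ref{eq:weakexample2}), which it then reads as the Jacobi identity for $[X,Y]:=\partial\alpha(X,Y)$ and the representation property of $X\cdot f:=\alpha(X,\partial f)$. You avoid that computation by reusing the structure theory of Section 4: Proposition~\ref{prop:deformed} gives the deformed quadruple $\left(\partial,\ \partial\alpha(\cdot,\cdot),\ -\alpha(\partial\cdot,\cdot),\ \diff^{CE}\alpha\right)$, the observation $\diff^{CE}\alpha=[\alpha,l_2]_{_{RN}}=0$ (valid here since $l_2=0$) kills the $3$-form component, and Lemma~\ref{prop:quadrubleLie2algebra} — relations (\ref{Lie2relations7}) and (\ref{Lie2relations6}) with $\omega'=0$ — then yields precisely the Jacobi identity and the representation property. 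Note that your $\chi'(X)f=-\alpha(\partial f,X)$ equals the paper's $\alpha(X,\partial f)$ by skew-symmetry of $\alpha$ on $E_{-1}\times E_{-1}$, so the two proofs produce literally the same bracket and action. What your route buys is modularity: no new bracket expansion is needed, and the vanishing of $\omega'$ is correctly identified as the only point of substance. What the paper's route buys is explicitness: its computation establishes an equivalence (weak Nijenhuis holds if and only if (\ref{eq:weakexample1}) and (\ref{eq:weakexample2}) hold), whereas you verify only the forward implication — which is, however, all that the proposition asserts.
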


\begin{proof}
  According to Proposition~\ref{prop:deformedisLinfty}, $S+\alpha$ is a weak Nijenhuis vector valued form with respect to $\partial$ if and only if $[S+\alpha,\partial]_{_{RN}}$ is an $L_{\infty}$-structure on the graded vector space $E$ which, in turn, is equivalent to
 \begin{equation*}
 [[S+\alpha,\partial]_{_{RN}},[S+\alpha,\partial]_{_{RN}}]_{_{RN}}=0
 \end{equation*}
 or to
 \begin{equation*}\label{partial}
 [[\alpha,\partial]_{_{RN}},[\alpha,\partial]_{_{RN}}]_{_{RN}}=0.
 \end{equation*}
 Therefore, $S+\alpha$ is a weak Nijenhuis vector valued form with respect to $\partial$ if and only if
 \begin{equation}\label{eq:weakexample1}
 \partial\alpha(\partial\alpha(X,Y),Z)+c.p.(X,Y,Z)=0
 \end{equation}
 and
 \begin{equation}\label{eq:weakexample2}
 \alpha(\partial\alpha(X,Y),\partial f)+c.p.(X,Y,\partial f)=0,
 \end{equation}
 for all $X,Y,Z \in E_{-1}$ and $f\in E_{-2}$. Equation (\ref{eq:weakexample1}) means that $[X,Y]:=\partial\alpha(X,Y)$ defines a Lie bracket on  $E_{-1}$ since clearly it is skew-symmetric. If we define a map $\cdot:E_{-1}\times E_{-2}\to E_{-2}$ by setting  $ X\cdot f:=\alpha(X,\partial f)$, then (\ref{eq:weakexample2}) can be written as
  \begin{equation*}
  [X,Y]\cdot f= X\cdot(Y\cdot f)-Y\cdot(X\cdot f),
  \end{equation*}
  which means that $\cdot$  is a representation of $E_{-1}$ on $E_{-2}$.
 \end{proof}

 \begin{rem} \label{sheng}
 A notion of Nijenhuis operator on a Lie $2$-algebra independently appeared
in \cite{LiuSheng}, while the present paper was about to be completed. This notion is a particular case of ours, by the following reasons. First, in \cite{LiuSheng}, a Nijenhuis operator is necessarily a vector valued $1$-form. Second,
if $\mathcal N = (N_0,N_1)$ is a Nijenhuis operator in the sense of Definition 3.2. in \cite{LiuSheng}, with respect to a Lie $2$-algebra $l_1+l_2+l_3$, then
\begin{equation*}
[ \mathcal N, [ \mathcal N,l_i]_{_{RN}}]_{_{RN}}=[ \mathcal N^2, l_i ]_{_{RN}}
\end{equation*}
holds for $i= 1,2$ and $3$,
which means that $\mathcal N$ is a Nijenhuis vector valued form, in our sense, with square $\mathcal N^2$.
 \end{rem}


 \section{Nijenhuis forms on Courant algebroids}

We recall that one can associate a Lie $2$-algebra to a Courant algebroid \cite{RoytenbergWeinstein}. We use this construction to see how $(1,1)$-tensors on a Courant algebroid, with vanishing Nijenhuis torsion, are related with Nijenhuis forms with respect to the associated Lie $2$-algebra.

 \begin{defn}\label{def:Courantdorfmanshort}
  A {\em Courant algebroid} is a vector bundle $E\to M$ together with a non-degenerate inner product  $\langle .,. \rangle$, a morphism of vector bundles $\rho: E \to TM$ and a bilinear operator $\circ: \Gamma(E) \times \Gamma(E) \to \Gamma(E)$, such that the following axioms hold:
  \begin{enumerate}
    \item[(i)]  $(\Gamma(E),\circ)$ is a Leibniz algebra, i.e., $X\circ (Y\circ Z)=(X\circ Y)\circ Z+Y\circ (X\circ Z) $,
    \item[(ii)] $\rho(X)\langle Y,Z\rangle = \langle X \circ Y, Z\rangle +\langle Y,X\circ Z\rangle $,
    \item[(iii)] $\rho(X)\langle Y,Z\rangle = \langle X,Y \circ Z\rangle +\langle X,Z\circ Y\rangle $,
  \end{enumerate}
  for all $ X,Y,Z \in \Gamma(E)$.
  \end{defn}

  When item $(i)$ in Definition~\ref{def:Courantdorfmanshort} does not hold, the quadruple $(E, \circ, \rho, \langle . , . \rangle )$ is called a \emph{pre-Courant algebroid} \cite{CJP}.

The next proposition is stated in \cite{YKSquasi}, for Courant algebroids. Since the proof does not use the fact of $\circ$ being a Leibniz bracket, the result also holds for pre-Courant algebroids.
  \begin{prop}\label{prop:fmiadbiroonupto}
  For every pre-Courant algebroid $(E, \circ, \rho, \langle .,. \rangle)$ we have
  \begin{equation*}\label{LeibnizforpreCourantholds}
  X\circ (f Y)=f (X\circ Y) + (\rho(X)f)Y,
  \end{equation*}
  for all $X,Y \in \Gamma(E)$ and $f\in \mathcal{C}^{\infty}(M).$
  \end{prop}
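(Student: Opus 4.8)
The plan is to exploit the non-degeneracy of the inner product $\langle\,.,.\,\rangle$, which reduces the claimed identity to a pairing statement. Concretely, to prove that $X\circ(fY)=f(X\circ Y)+(\rho(X)f)Y$ it suffices to show that
\[
\langle X\circ(fY),Z\rangle=\langle f(X\circ Y)+(\rho(X)f)Y,Z\rangle
\]
for every $Z\in\Gamma(E)$, since non-degeneracy forces two sections with equal pairings against all of $\Gamma(E)$ to coincide. Throughout I would use that $\langle\,.,.\,\rangle$ is $\mathcal{C}^{\infty}(M)$-bilinear (being a fibrewise inner product) and that $\rho(X)$ is a genuine vector field on $M$, hence a derivation of $\mathcal{C}^{\infty}(M)$.

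First I would apply axiom (ii) of Definition~\ref{def:Courantdorfmanshort} with $Y$ replaced by $fY$, giving $\rho(X)\langle fY,Z\rangle=\langle X\circ(fY),Z\rangle+\langle fY,X\circ Z\rangle$. The left-hand side expands, via the derivation property of the vector field $\rho(X)$ and $\mathcal{C}^{\infty}(M)$-linearity in the first slot, as $(\rho(X)f)\langle Y,Z\rangle+f\,\rho(X)\langle Y,Z\rangle$, while the last term on the right equals $f\langle Y,X\circ Z\rangle$. I would then substitute the plain axiom (ii), namely $\rho(X)\langle Y,Z\rangle=\langle X\circ Y,Z\rangle+\langle Y,X\circ Z\rangle$, multiplied by $f$, and cancel the common term $f\langle Y,X\circ Z\rangle$ appearing on both sides.

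What remains after the cancellation is $\langle X\circ(fY),Z\rangle=(\rho(X)f)\langle Y,Z\rangle+f\langle X\circ Y,Z\rangle$, and re-absorbing the scalar factors into the first argument of the pairing turns the right-hand side into $\langle(\rho(X)f)Y+f(X\circ Y),Z\rangle$. Since this holds for all $Z$, non-degeneracy yields the desired Leibniz rule. I do not expect a genuine obstacle here: the argument is purely formal and, notably, uses only the compatibility axiom (ii) together with non-degeneracy, never the Leibniz (Jacobi-type) axiom (i) --- which is exactly why the statement persists for pre-Courant algebroids. The only point requiring a little care is the bookkeeping of the scalar $f$ across the two applications of axiom (ii), to ensure the correct term cancels.
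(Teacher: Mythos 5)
Your proof is correct and is exactly the standard argument the paper defers to: the paper itself gives no computation but cites \cite{YKSquasi}, remarking that the proof there never uses axiom (i), which is precisely the structure of your argument (axiom (ii) applied to $fY$, the derivation property of $\rho(X)$, cancellation, and non-degeneracy of $\langle .,. \rangle$). Your explicit observation that only axiom (ii) and non-degeneracy are needed is the very point the paper invokes to extend the result from Courant to pre-Courant algebroids.
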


  \begin{cor}\label{cor:Anchoruniqe}
  Let $(E, \circ, \rho, \langle .,. \rangle)$ and $(E, \circ', \rho', \langle .,. \rangle)$ be two pre-Courant algebroids. If $\circ=\circ'$, then $\rho=\rho'$.
  \end{cor}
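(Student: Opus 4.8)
The plan is to exploit Proposition~\ref{prop:fmiadbiroonupto}, which holds for both pre-Courant structures since the two operations agree, together with the non-degeneracy of the inner product. The key observation is that the anchor is uniquely determined by how the operation $\circ$ fails to be $\mathcal{C}^{\infty}(M)$-linear in its second argument, and this failure is the same for both structures once $\circ=\circ'$.

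First I would apply Proposition~\ref{prop:fmiadbiroonupto} to each of the two pre-Courant algebroids. For all $X,Y\in\Gamma(E)$ and $f\in\mathcal{C}^{\infty}(M)$ this yields
\begin{equation*}
X\circ(fY)=f(X\circ Y)+(\rho(X)f)Y \quad\text{and}\quad X\circ'(fY)=f(X\circ' Y)+(\rho'(X)f)Y.
\end{equation*}
Since $\circ=\circ'$ by hypothesis, the left-hand sides coincide, and likewise the terms $f(X\circ Y)=f(X\circ' Y)$ agree. Subtracting, I obtain
\begin{equation*}
\bigl(\rho(X)f-\rho'(X)f\bigr)Y=0,
\end{equation*}
for all $X,Y\in\Gamma(E)$ and all $f\in\mathcal{C}^{\infty}(M)$.

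Now I would argue pointwise to conclude that $\rho(X)f=\rho'(X)f$. The equation above says that the section $\bigl(\rho(X)f-\rho'(X)f\bigr)Y$ vanishes for every $Y$. The scalar $\rho(X)f-\rho'(X)f$ is a function on $M$; if it were non-zero at some point $m\in M$, then choosing $Y$ to be any section that is non-zero at $m$ would make the product non-zero there, a contradiction. Hence $\rho(X)f=\rho'(X)f$ for all $X$ and all $f$, which means $\rho(X)$ and $\rho'(X)$ act identically as derivations on $\mathcal{C}^{\infty}(M)$, and therefore define the same tangent vector field; thus $\rho=\rho'$.

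The argument is essentially immediate once Proposition~\ref{prop:fmiadbiroonupto} is in hand, so there is no serious obstacle. The only point requiring minor care is the final pointwise step: one must ensure that the vanishing of $gY$ for all sections $Y$, with $g=\rho(X)f-\rho'(X)f$ a fixed function, forces $g=0$. This uses only that the fibres of $E$ are non-trivial and that there exist local sections taking prescribed non-zero values, which is standard for vector bundles. I would state this briefly rather than belabour it.
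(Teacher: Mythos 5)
Your proof is correct and takes essentially the same approach as the paper: both apply Proposition~\ref{prop:fmiadbiroonupto} to the two structures, use $\circ=\circ'$ to obtain $(\rho(X)f)Y=(\rho'(X)f)Y$ for all $X,Y\in\Gamma(E)$ and $f\in\mathcal{C}^{\infty}(M)$, and conclude $\rho=\rho'$. The only difference is that you spell out the final pointwise argument (that $gY=0$ for all sections $Y$ forces $g=0$), which the paper leaves implicit.
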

  \begin{proof}
  Assume that $(E, \circ, \rho, \langle .,. \rangle)$ and $(E, \circ, \rho', \langle .,. \rangle)$ are both pre-Courant algebroids. By Proposition \ref{prop:fmiadbiroonupto} we have
  \begin{equation*}
   (\rho(X)f)Y=(\rho'(X)f)Y,
  \end{equation*}
  for all $X,Y\in \Gamma(E)$ and $f\in \mathcal{C}^{\infty}(M)$, which implies that $\rho=\rho'$.
  \end{proof}

  We intend to define Nijenhuis deformation of Courant structures. Let $(E,\circ, \rho,\langle .,.\rangle)$ be a Courant algebroid. For a given endomorphism $N:E \to E$, the deformed bracket by $N$ is a bilinear operation $\circ^{N}$, defined as:
  $$X\circ^{N}Y:=NX\circ Y+X\circ NY -N(X\circ Y),$$
  for all $X,Y \in \Gamma(E)$. The deformation of $\rho$ by $N$ is the map $\rho^N$ given by $\rho^N(X)=\rho(NX)$, $X\in \Gamma(E)$. The Nijenhuis torsion of $N$, with respect to the bracket $\circ$, is defined as:
  \begin{equation*}
  T_{\circ}N(X,Y):=NX\circ NY-N(X\circ^{N}Y),
  \end{equation*}
  for all $X,Y \in \Gamma(E)$.
  A direct computation shows that
  $$T_{\circ}N=\frac{1}{2}(\circ^{N,N}-\circ^{N^{2}}).$$
   All maps $N:\Gamma(E)\to \Gamma(E)$ that will be considered here are $\mathcal{C}^{\infty}(M)$-linear, that is to say they are $(1,1)$-tensors, that is, smooth sections of endomorphisms of $E$. We denote an endomorphism (vector bundle morphism) of $E$ and the induced map on $\Gamma(E)$ by the same letter.

 According to \cite{CGM}, for every vector bundle $E\to M$, if $(\Gamma(E),\circ)$ is a Leibniz algebra and $N: E \to E$ is any endomorphism  whose Nijenhuis torsion vanishes, then the pair $(\Gamma(E),\circ^N)$ is a Leibniz algebra.
 However, given a Courant algebroid $(E, \circ, \rho, \langle .,.\rangle )$ and a
 $(1,1)$-tensor $N$, $(E, \circ^N, \rho^N, \langle .,.\rangle )$ may fail to be a pre-Courant algebroid, even if the Nijenhuis torsion of $N$ vanishes.
 Indeed,  from \cite{CGM} we have the following:

 \begin{thm}\label{them:Grobowski}
 If $N$ is an endomorphism on a pre-Courant algebroid $(E, \circ, \rho, \langle .,.\rangle )$, then the quadruple $(E , \circ^N, \rho^N, \langle .,.\rangle )$ is a pre-Courant algebroid if and only if
 \begin{equation*}
 X \circ (N+N^*)Y=(N+N^*)(X \circ Y) \hbox{ and } (N+N^*)(Y \circ Y)=((N+N^*) Y) \circ Y
 \end{equation*}
 for all $X,Y \in \Gamma (E)$, where $N^*$ stands for the transpose of $N$, with respect to $\langle.,.\rangle.$
 \end{thm}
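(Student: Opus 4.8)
The plan is to check directly which of the pre-Courant axioms (ii) and (iii) survive the deformation, since $\rho^N = \rho(N\,\cdot\,)$ is automatically a bundle map and $\circ^N$ is automatically bilinear (and the Leibniz rule over functions then holds by Proposition~\ref{prop:fmiadbiroonupto}), so these two identities are all that need verifying. Throughout I set $P := N + N^*$, which is self-adjoint for $\langle .,.\rangle$, and I repeatedly use non-degeneracy to pass from ``$\langle X, \bullet\rangle = 0$ for all $X$'' to ``$\bullet = 0$''.

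First I would treat axiom (ii). Expanding $\rho^N(X)\langle Y,Z\rangle = \langle X\circ^N Y, Z\rangle + \langle Y, X\circ^N Z\rangle$ with the definitions of $\circ^N$ and $\rho^N$, and applying the original axiom (ii) to $NX$, the terms involving $NX\circ(\,\cdot\,)$ cancel and one is left with
\begin{equation*}
\langle X\circ NY, Z\rangle + \langle Y, X\circ NZ\rangle - \langle N(X\circ Y), Z\rangle - \langle Y, N(X\circ Z)\rangle = 0.
\end{equation*}
Moving $N$ across the pairing via $N^*$ in the last two terms, then applying axiom (ii) twice more to the first two terms, the $\rho(X)$-contributions reassemble into $\rho(X)\langle PY, Z\rangle$; a final application of axiom (ii) to this term collapses everything to $\langle X\circ PY, Z\rangle = \langle P(X\circ Y), Z\rangle$. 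By non-degeneracy this is exactly $X\circ(N+N^*)Y = (N+N^*)(X\circ Y)$, so I expect deformed axiom (ii) to be equivalent to the first stated condition.

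Next I would handle axiom (iii) in the same spirit: expanding $\rho^N(X)\langle Y,Z\rangle = \langle X, Y\circ^N Z\rangle + \langle X, Z\circ^N Y\rangle$ and grouping the symmetric combinations $NY\circ Z + Z\circ NY$ and $Y\circ NZ + NZ\circ Y$, the original axiom (iii) turns the right-hand side into $\rho(X)\langle PY,Z\rangle$ minus a term which, combined with the left-hand side (again moving $N$ to $N^*$), yields
\begin{equation*}
\langle X, P(Y\circ Z + Z\circ Y)\rangle = \rho(X)\langle PY, Z\rangle .
\end{equation*}
Since $P$ is self-adjoint, both sides are symmetric and bilinear in $(Y,Z)$, so by polarization this identity is equivalent to its diagonal $Z=Y$, namely $2P(Y\circ Y) = PY\circ Y + Y\circ PY$ (after using axiom (iii) once more on $\rho(X)\langle PY,Y\rangle$ and invoking non-degeneracy).

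Finally I would assemble the equivalence. In the forward direction, if the deformed quadruple is a pre-Courant algebroid then both reduced identities hold; the first is the first stated condition, and setting $X=Y$ in it gives $Y\circ PY = P(Y\circ Y)$, which substituted into $2P(Y\circ Y) = PY\circ Y + Y\circ PY$ produces the second stated condition $(N+N^*)(Y\circ Y) = ((N+N^*)Y)\circ Y$. Conversely, the first condition directly gives deformed axiom (ii), and combining it (at $X=Y$) with the second condition recovers $2P(Y\circ Y) = PY\circ Y + Y\circ PY$, hence the diagonal relation and thus, by polarization, deformed axiom (iii). The main obstacle is precisely this coupling: deformed axiom (iii) on its own delivers only the symmetrized relation $2P(Y\circ Y) = PY\circ Y + Y\circ PY$, and one genuinely needs the first condition (coming from axiom (ii)) to split it into the clean second condition. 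Thus the two axioms cannot be analysed in complete isolation, and the polarization together with the non-degeneracy bookkeeping must be handled with care.
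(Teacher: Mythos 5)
Your proof is correct. One thing worth knowing: the paper itself contains no argument for Theorem \ref{them:Grobowski} — it invokes Theorem 4 of \cite{CGM} and remarks that, although that theorem is stated for Courant algebroids, its proof never uses the Leibniz axiom (i) and so applies verbatim to pre-Courant algebroids. Your blind verification therefore supplies exactly the computation the paper outsources, and it follows the natural (and essentially the cited) route: writing $P=N+N^*$, deformed axiom (ii) is, modulo the original axiom (ii), equivalent to $X\circ PY=P(X\circ Y)$; deformed axiom (iii) is, modulo the original axiom (iii) and non-degeneracy, equivalent to the diagonal identity $2P(Y\circ Y)=PY\circ Y+Y\circ PY$, with the polarization step justified because both sides of your identity $\langle X,P(Y\circ Z+Z\circ Y)\rangle=\rho(X)\langle PY,Z\rangle$ are symmetric bilinear in $(Y,Z)$ (using $P^*=P$). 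Your closing observation is also the right one and is where a careless proof would go wrong: the second stated condition is \emph{not} equivalent to deformed axiom (iii) alone; one needs the first condition (at $X=Y$) to split $2P(Y\circ Y)=PY\circ Y+Y\circ PY$ into $P(Y\circ Y)=PY\circ Y$, and conversely the two stated conditions jointly reconstitute the diagonal identity. This coupling explains why the theorem is phrased with both conditions as a pair rather than axiom-by-axiom, and your bookkeeping of it is complete and reversible at every step, so the equivalence holds in both directions.
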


 \begin{rem}
 In fact, Theorem \ref{them:Grobowski} is slightly different from Theorem $4$ in \cite{CGM}, because there, the authors start from a Courant
 algebroid. However, the same proof is valid for the pre-Courant algebroid case.
 \end{rem}

  A \emph{Casimir function} or simply a Casimir on a Courant algebroid $(E, \circ, \rho, \langle . , .\rangle )$ is a function $f\in \mathcal{C}^{\infty}(M)$ such that $\rho(X)f=0 $, for all $X\in \Gamma(E)$. It is easy to check that $f$ is a Casimir if and only if $\mathcal{D}f=0$, where $\mathcal{D}:\mathcal{C}^{\infty}(M)\to \Gamma(E) $ is given by
  \begin{equation}\label{matcallD}
\langle \mathcal{D}f,X\rangle =\rho (X)f.
\end{equation} Also, if $f$ is a Casimir, then
  \begin{equation}\label{eq:CasimirGetsOut}(f X) \circ Y = f (X \circ Y) = X \circ (fY) \end{equation}
holds for all sections $X,Y \in \Gamma(E)$.

 The next lemma is a slight generalization of a result in \cite{CGM}.\footnote{In \cite{CGM},  $\lambda$ is a real number.}
  \begin{lem}\label{pre}
  Given a pre-Courant algebroid $(E, \circ, \rho, \langle .,.\rangle )$ and a map $N:\Gamma(E) \to \Gamma(E)$,
  if \, $N+N^*=\lambda Id_{\Gamma(E)}$, for some Casimir function $\lambda \in \mathcal{C}^{\infty}(M)$, then $(E, \circ^N, \rho^N, \langle .,.\rangle )$ is a pre-Courant algebroid.
  \end{lem}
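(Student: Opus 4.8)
The plan is to reduce the statement to the two identities characterising pre-Courant algebroids in Theorem~\ref{them:Grobowski}, and then to verify each of them directly from the hypothesis $N+N^*=\lambda\, Id_{\Gamma(E)}$, exploiting that $\lambda$ is a Casimir. By Theorem~\ref{them:Grobowski}, it suffices to check that $X\circ (N+N^*)Y=(N+N^*)(X\circ Y)$ and $(N+N^*)(Y\circ Y)=((N+N^*)Y)\circ Y$ for all $X,Y\in\Gamma(E)$. Since $N+N^*=\lambda\, Id_{\Gamma(E)}$, these two conditions become $X\circ(\lambda Y)=\lambda(X\circ Y)$ and $(\lambda Y)\circ Y=\lambda(Y\circ Y)$, so the whole problem collapses to showing that $\lambda$ can be treated as a scalar with respect to $\circ$.

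For the first identity, I would apply Proposition~\ref{prop:fmiadbiroonupto} with $f=\lambda$, which gives $X\circ(\lambda Y)=\lambda(X\circ Y)+(\rho(X)\lambda)Y$. The role of the Casimir hypothesis is precisely to kill the extra anchor term: since $\lambda$ is a Casimir, $\rho(X)\lambda=0$ for every $X\in\Gamma(E)$, and the first identity follows at once. For the second identity, the Casimir property enters through (\ref{eq:CasimirGetsOut}), which permits pulling $\lambda$ out of the bracket on either slot; taking $f=\lambda$ and specialising to the case of coinciding arguments there yields $(\lambda Y)\circ Y=\lambda(Y\circ Y)$, as required.

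With both identities established, Theorem~\ref{them:Grobowski} applies and shows that $(E,\circ^N,\rho^N,\langle .,.\rangle)$ is a pre-Courant algebroid, which concludes the proof. I do not expect any substantive obstacle here: the argument is essentially a matter of correctly invoking the two auxiliary results. The only point that requires care is to keep track of why the Casimir condition is needed, namely that it is exactly $\rho(X)\lambda=0$ (together with (\ref{eq:CasimirGetsOut})) that forces the factor $\lambda$ to commute past the bracket, which is what the characterisation in Theorem~\ref{them:Grobowski} demands; without the Casimir assumption the spurious term $(\rho(X)\lambda)Y$ would survive and the first identity would fail.
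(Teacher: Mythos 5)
Your proposal is correct and follows essentially the same route as the paper, whose proof is exactly the one-line observation that the claim is a direct consequence of Theorem~\ref{them:Grobowski} together with (\ref{eq:CasimirGetsOut}); you have simply spelled out the verification of the two identities (using Proposition~\ref{prop:fmiadbiroonupto} and the Casimir property, which is precisely the content of (\ref{eq:CasimirGetsOut})).
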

  \begin{proof}
  This lemma is a direct consequence of Theorem \ref{them:Grobowski} together with (\ref{eq:CasimirGetsOut}).
  \end{proof}
  \begin{thm}\label{Theml^N}
Let $ (E , \circ, \rho, \langle .,.\rangle ) $ be a Courant algebroid and $N$ a $(1,1)$-tensor on $E$ whose Nijenhuis torsion vanishes and such that
 \begin{equation*}
    N+N^*=\lambda Id_{\Gamma(E)},
 \end{equation*}
  with $\lambda$ being a Casimir function. Then, $ (E, \circ^N, \rho^N, \langle . ,. \rangle ) $ is a Courant algebroid.
  \end{thm}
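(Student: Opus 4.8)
The plan is to observe that, according to Definition~\ref{def:Courantdorfmanshort}, a Courant algebroid is precisely a pre-Courant algebroid for which the Leibniz axiom (i) holds as well. I would therefore split the verification into these two independent pieces and then combine them, letting the two results established earlier in this section do all the work.

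First I would establish the pre-Courant algebroid structure. The hypothesis is exactly the one required by Lemma~\ref{pre}, namely $N+N^* = \lambda\, Id_{\Gamma(E)}$ with $\lambda$ a Casimir function. Applying that lemma directly yields that $(E, \circ^N, \rho^N, \langle .,.\rangle)$ is a pre-Courant algebroid, where $\rho^N(X)=\rho(NX)$ and $\circ^N$ is the deformed bracket. This settles axioms (ii) and (iii) of Definition~\ref{def:Courantdorfmanshort} for the deformed data, together with the non-degeneracy of the (unchanged) inner product and the fact that $\rho^N$ is a vector bundle morphism. No extra computation is needed at this stage, since Lemma~\ref{pre} already rests on Theorem~\ref{them:Grobowski} and the Casimir identity \eqref{eq:CasimirGetsOut}.

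Second I would recover the Leibniz axiom (i). The result recalled from \cite{CGM} just before Theorem~\ref{them:Grobowski} asserts that whenever $(\Gamma(E), \circ)$ is a Leibniz algebra and the Nijenhuis torsion of $N$ vanishes, the deformed bracket makes $(\Gamma(E), \circ^N)$ a Leibniz algebra again. Both hypotheses are satisfied here: $(\Gamma(E), \circ)$ is Leibniz because $E$ is a genuine Courant algebroid (not merely pre-Courant), and $T_{\circ}N = 0$ by assumption. Hence $(\Gamma(E), \circ^N)$ is a Leibniz algebra, which is exactly axiom (i) for the deformed bracket.

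Combining the two parts, the deformed quadruple $(E, \circ^N, \rho^N, \langle .,.\rangle)$ satisfies all three axioms of Definition~\ref{def:Courantdorfmanshort} and is therefore a Courant algebroid. I do not expect any genuine obstacle: both technical ingredients are already available, and the only structural point to keep in mind is that the two hypotheses act on disjoint axioms. The condition $N+N^*=\lambda\, Id$ enters solely through Lemma~\ref{pre} to secure the pre-Courant part (axioms (ii)--(iii)), while the vanishing of the Nijenhuis torsion enters solely through the Leibniz deformation result of \cite{CGM} to secure axiom (i); together they cover the full definition.
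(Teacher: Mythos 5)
Your proposal is correct and follows essentially the same route as the paper's proof: the paper likewise combines Lemma~\ref{pre} (giving the pre-Courant structure from $N+N^*=\lambda\, Id_{\Gamma(E)}$ with $\lambda$ Casimir) with the result from \cite{CGM} that vanishing Nijenhuis torsion makes $(\Gamma(E),\circ^N)$ a Leibniz algebra. Your explicit remark that the two hypotheses address disjoint axioms of Definition~\ref{def:Courantdorfmanshort} is a nice clarification, but the argument is the same.
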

\begin{proof}
 Note that $(E,\circ)$ is a Leibniz algebra, so that $(E,\circ^N)$ is also a Leibniz algebra since the Nijenhuis torsion of $N$ vanishes. This, together with Lemma \ref{pre}, prove the theorem.
 \end{proof}

  \begin{rem}
  For a (pre-)Courant algebroid $(E,\circ,\rho,\langle .,.\rangle)$, and a $(1,1)$-tensor $N$ on $E$ with $N+N^*=\lambda Id_{\Gamma(E)}$ and $\lambda$ a Casimir function, we have
  \begin{equation*}
  \rho^{N}(X)f=\rho(NX)f=\langle NX,\mathcal D f\rangle=\langle X,N^*\mathcal D f\rangle=\langle X,(-N+\lambda Id_{\Gamma(E)})\mathcal D f\rangle,
  \end{equation*}
for all $X\in \Gamma(E), f\in \mathcal{C}^{\infty}(M)$.  This means that the
  operator $ \mathcal{D}^N : \mathcal{C}^\infty(M) \to \Gamma(E)$ associated with the (pre-)Courant algebroid $(E,\circ^N,\rho^N,\langle .,.\rangle)$, is given by
  \begin{equation} \label{deformedD}
  \mathcal {D} ^N=(-N+\lambda Id_{\Gamma(E)}) \circ \mathcal D.
  \end{equation}
  \end{rem}

  If we consider the skew-symmetrization of $\circ$, we obtain the bracket $[.,.]$ used in the original definition of Courant algebroid \cite{LWX}:
\begin{equation} \label{skewbracket}
[X, Y]= \frac{1}{2} (X \circ Y - Y \circ X),
\end{equation}
with $X,Y \in \Gamma(E)$. The deformation of $[.,.]$ by a $(1,1)$-tensor $N$ on $E$ is the bracket $[.,.]_N$ on $\Gamma(E)$, given by
\begin{equation*}
[X, Y]_N:=[NX,Y]+[X,NY]-N[X,Y] =\frac{1}{2} (X \circ^N Y - Y \circ^N X).
\end{equation*}

The next lemma is an axiom included in the original definition of Courant algebroid  \cite{RoytenbergPh.D.}.
\begin{lem}   \label{skewbracketlemma}
Let $ (E , \circ, \rho, \langle .,.\rangle ) $ be a Courant algebroid and $\mathcal D$ its associated operator, given by (\ref{matcallD}). Then,
 \begin{equation*}
 [X,fY]=f[X,Y]+(\rho(X)f)Y-\frac{1}{2}\langle X,Y\rangle \mathcal{D}f,
 \end{equation*}
 for all $X,Y \in \Gamma(E)$ and $f\in \mathcal{C}^{\infty}(M)$, where $[.,.]$ is the bracket given by (\ref{skewbracket}).
\end{lem}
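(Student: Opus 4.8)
The plan is to reduce the skew-symmetric bracket (\ref{skewbracket}) to the Dorfman product $\circ$, for which the only Leibniz-type rule at our disposal is Proposition~\ref{prop:fmiadbiroonupto}, governing the \emph{second} argument of $\circ$. Since
$[X,fY]=\frac{1}{2}\left(X\circ(fY)-(fY)\circ X\right)$
also places $f$ in the \emph{first} slot of a product, the essential task is to express $(fY)\circ X$ in usable terms.

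First I would record the symmetric-part identity
\begin{equation*}
Y\circ Z+Z\circ Y=\mathcal{D}\langle Y,Z\rangle,
\end{equation*}
valid for all $Y,Z\in\Gamma(E)$. This follows immediately from axiom (iii), the definition (\ref{matcallD}) of $\mathcal{D}$, and the non-degeneracy of $\langle .,. \rangle$: for every $W\in\Gamma(E)$ one has $\langle W,Y\circ Z+Z\circ Y\rangle=\rho(W)\langle Y,Z\rangle=\langle W,\mathcal{D}\langle Y,Z\rangle\rangle$. I would also note that $\mathcal{D}$ is a derivation, $\mathcal{D}(fg)=f\,\mathcal{D}g+g\,\mathcal{D}f$, which again follows from (\ref{matcallD}) together with the Leibniz rule for $\rho(X)$ acting on functions.

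With these in hand I would expand $[X,fY]=\frac{1}{2}\left(X\circ(fY)-(fY)\circ X\right)$. Proposition~\ref{prop:fmiadbiroonupto} gives $X\circ(fY)=f(X\circ Y)+(\rho(X)f)Y$ for the first term. For the second, the symmetric identity converts the forbidden first-slot multiplication into a second-slot one, $(fY)\circ X=\mathcal{D}\langle fY,X\rangle-X\circ(fY)$. Using the $\mathcal{C}^{\infty}(M)$-linearity of $\langle .,. \rangle$, the derivation property of $\mathcal{D}$, and once more the symmetric identity, I expand $\mathcal{D}\langle fY,X\rangle=f\left(X\circ Y+Y\circ X\right)+\langle X,Y\rangle\mathcal{D}f$. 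Substituting and collecting the $\circ$-terms, the combination $f(X\circ Y)-f(Y\circ X)$ reassembles into $2f[X,Y]$, the two $(\rho(X)f)Y$ contributions add and are halved to a single such term, and the residual $\mathcal{D}$-contribution yields exactly $-\frac{1}{2}\langle X,Y\rangle\mathcal{D}f$, which is the claimed formula.

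The step I expect to be the crux is the treatment of $(fY)\circ X$: Proposition~\ref{prop:fmiadbiroonupto} controls only derivations in the second argument of $\circ$, so flipping the arguments via the symmetric-part identity is unavoidable, and it is precisely the $\mathcal{D}$-valued defect in that identity that produces the correction term $-\frac{1}{2}\langle X,Y\rangle\mathcal{D}f$ distinguishing the skew-symmetric bracket from the Dorfman product. Everything else is routine bookkeeping of the $f(X\circ Y)$ and $(\rho(X)f)Y$ terms.
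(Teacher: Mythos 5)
Your proof is correct, but it takes a genuinely different route from the paper: the paper gives no proof of this lemma at all, observing instead that the identity is one of the axioms in Roytenberg's original (skew-symmetric) definition of a Courant algebroid, and deferring to Proposition 2.6.5 of \cite{RoytenbergPh.D.} for the pre-Courant case (Remark~\ref{skewbracketrem}); in other words, the paper leans on the known equivalence between the Dorfman-type definition used here and the original one. You instead derive the identity directly from Definition~\ref{def:Courantdorfmanshort}: the symmetric-part identity $Y\circ Z+Z\circ Y=\mathcal{D}\langle Y,Z\rangle$ (axiom (iii) plus non-degeneracy of $\langle .,.\rangle$), the derivation property of $\mathcal{D}$, and Proposition~\ref{prop:fmiadbiroonupto} to expand $X\circ(fY)$, after which flipping $(fY)\circ X$ through the symmetric-part identity produces exactly the defect term $-\frac{1}{2}\langle X,Y\rangle\mathcal{D}f$; the bookkeeping you describe closes up as claimed. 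Your route buys two things: it is self-contained, requiring no appeal to the equivalence of definitions or to \cite{RoytenbergPh.D.}, and, since axiom (i) of Definition~\ref{def:Courantdorfmanshort} is never used --- only axioms (ii) and (iii) enter, via Proposition~\ref{prop:fmiadbiroonupto} and the symmetric-part identity --- your argument is valid verbatim for pre-Courant algebroids, so Remark~\ref{skewbracketrem} comes for free rather than as a second citation. What the paper's approach buys is brevity and an explicit pointer to the original axiomatics.
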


\begin{rem} \label{skewbracketrem}
From the proof of Proposition 2.6.5 in \cite{RoytenbergPh.D.}, we realize that Lemma \ref{skewbracketlemma} also holds in the case of a pre-Courant algebroid.
\end{rem}

  In \cite{RoytenbergWeinstein}, it was proved that to each Courant algebroid corresponds a Lie $2$-algebra. The result in \cite{RoytenbergWeinstein} is established using the graded skew-symmetric version of a Lie $2$-algebra and the definition of Courant algebroid with skew-symmetric bracket. With our  conventions it goes as follows.

  Let $(E,\circ,\rho,\langle .,.\rangle)$ be a Courant algebroid over $M$, with associated operator $ \mathcal{D}$, given by (\ref{matcallD}). Consider the graded vector space
 $V=\mathcal{C}^{\infty}(M)\oplus \Gamma(E)$, where the elements of $\mathcal{C}^{\infty}(M)$ have degree $-2$ and the elements of $\Gamma(E)$ have degree $-1$, and the following
symmetric vector valued forms  $l_1$, $l_2$ and $l_3$ on $V$, defined by:
  \begin{equation}\label{Lie2fromCourant}
  \begin{array}{rcl}
  l_1f &=& \mathcal{D} f\\
  l_2(X,Y)&=& \frac{1}{2} ( X \circ Y - Y \circ X)\\
  l_2(X,f)&=& \frac{1}{2}\langle X,\mathcal{D}f\rangle,\\
  l_3(X,Y,Z)&=& \frac{1}{12}\langle X\circ Y-Y\circ X,Z\rangle +c.p.,
  \end{array}
\end{equation}
for all $X,Y,Z\in \Gamma(E)$ and $f\in \mathcal{C}^{\infty}(M)$,
with $l_1$, $l_2$ and $l_3$ being identically zero in all the other cases. Notice that $l_2|_{\Gamma(E) \times \Gamma(E)}$ coincides with the bracket $[.,.]$ given by (\ref{skewbracket}).

\begin{prop}\label{Roytenbergassociation}
If $(E,\circ,\rho,\langle .,.\rangle )$ is Courant (respectively, pre-Courant) algebroid, then the pair $(V,l_1+l_2+l_3)$, constructed in above, is a symmetric Lie (respectively, pre-Lie\footnote{A pre-Lie $2$-algebra is a pair $(E=E_{-2}\oplus E_{-1}, l_1+l_2+l_3)$, where $E$ is a graded vector space concentrated in degrees $-2$ and $-1$, and $l_1$, $l_2$ and $l_3$ are symmetric graded vector valued $1$-form, $2$-form and $3$-form, respectively, of degree $1$.}) $2$-algebra.
\end{prop}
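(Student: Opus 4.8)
The plan is to show that $\mu=l_1+l_2+l_3$ given by (\ref{Lie2fromCourant}) is a Lie $2$-algebra structure by checking the five relations (\ref{Lie2relations5A})--(\ref{Lie2relations8}) of Lemma~\ref{prop:quadrubleLie2algebra}. First I would read off the associated quadruple by comparing (\ref{Lie2fromCourant}) with (\ref{omega}) and (\ref{chi}): one gets $\partial=\mathcal D$, the binary bracket $[X,Y]_2=\tfrac12(X\circ Y-Y\circ X)$ which is the skew-symmetric bracket (\ref{skewbracket}), the map $\chi(X)f=\tfrac12\langle X,\mathcal Df\rangle$, and $\omega=l_3$. Since $X\circ Y-Y\circ X=2[X,Y]$, this gives $\omega(X,Y,Z)=\tfrac16\langle[X,Y],Z\rangle+ c.p.$, the usual Courant $3$-form.

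For the pre-Courant case essentially nothing is required: by the footnote definition a pre-Lie $2$-algebra is merely the datum of symmetric vector valued $1$-, $2$- and $3$-forms of degree $1$, with no relations imposed. Hence I would only verify that the three maps have degree $1$ (immediate from $|f|=-2$, $|X|=-1$ and the target degrees) and the correct graded symmetry. Using the symmetry of $\langle.,.\rangle$, one checks that $l_3$ is totally skew-symmetric on $E_{-1}$, which is precisely graded symmetry there since all three arguments have the odd degree $-1$; similarly $l_2$ is skew on $E_{-1}\times E_{-1}$ and symmetric on $E_{-1}\times E_{-2}$. None of this invokes the Leibniz axiom (i) of Definition~\ref{def:Courantdorfmanshort}, which is exactly why the construction survives for pre-Courant algebroids and yields a pre-Lie $2$-algebra.

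For the Courant case I would translate each of the five relations into a standard identity derived from axioms (i)--(iii). Relation (\ref{Lie2relations5A}) becomes the isotropy $\langle\mathcal Df,\mathcal Dg\rangle=0$ of the image of $\mathcal D$, a consequence of $\rho\circ\mathcal D=0$. Relation (\ref{Lie2relations5}), after inserting $X\circ Y+Y\circ X=\mathcal D\langle X,Y\rangle$, reduces to $X\circ\mathcal Df=\mathcal D(\rho(X)f)$. Relation (\ref{Lie2relations7}) is exactly the Jacobiator identity $[[X,Y],Z]+ c.p.=\mathcal D\big(\omega(X,Y,Z)\big)$, expressing that the skew bracket fails the Jacobi identity by $\mathcal D$ of the Courant $3$-form. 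Relation (\ref{Lie2relations6}) follows from the derivation property of $\circ$ combined again with $\rho\circ\mathcal D=0$.

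The main obstacle is the coherence relation (\ref{Lie2relations8}): it is by far the longest and asserts that $\omega$ is a Chevalley--Eilenberg $3$-cocycle for the action $\chi$, so a direct check requires repeated use of the Jacobiator identity and the Leibniz rule, with careful bookkeeping of the six antisymmetrized terms. Rather than grinding through this, I would invoke that the statement, in the skew-symmetric convention and for genuine Courant algebroids, is Roytenberg--Weinstein's theorem \cite{RoytenbergWeinstein}; the décalage isomorphism recalled after Definition~\ref{def:SymLinfty} carries their skew brackets $l_i'$ to our symmetric $l_i$, so relations (\ref{Lie2relations5A})--(\ref{Lie2relations8}) hold automatically, and it remains only to confirm that the maps (\ref{Lie2fromCourant}) match theirs under décalage.
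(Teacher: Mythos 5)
Your proposal is correct in outline and, at the decisive point, takes exactly the route the paper takes: the paper in fact prints no proof of Proposition \ref{Roytenbergassociation} at all --- the sentences preceding it simply declare that the result is the Roytenberg--Weinstein theorem \cite{RoytenbergWeinstein}, established there in skew-symmetric conventions, transported to the symmetric conventions of this paper, which is precisely your closing appeal to the d\'ecalage isomorphism. What you do beyond the paper is genuinely useful: you observe that the pre-Courant half of the statement is vacuous, since the footnote's definition of a pre-Lie $2$-algebra imposes no quadratic relations, so only the degree count and the graded symmetry of $l_1,l_2,l_3$ in (\ref{Lie2fromCourant}) need checking, and none of that uses the Leibniz axiom (i) of Definition \ref{def:Courantdorfmanshort}; and you translate relations (\ref{Lie2relations5A}), (\ref{Lie2relations5}) and (\ref{Lie2relations7}) of Lemma \ref{prop:quadrubleLie2algebra} into the standard Courant identities $\rho\circ\mathcal{D}=0$, $X\circ\mathcal{D}f=\mathcal{D}(\rho(X)f)$ and the Jacobiator formula. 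Be aware, however, that the step you explicitly leave open --- confirming that the maps (\ref{Lie2fromCourant}), with their normalizations $\tfrac12$ and $\tfrac1{12}$, really correspond to Roytenberg--Weinstein's brackets under d\'ecalage --- is where essentially all of the bookkeeping resides: those constants and signs are exactly what make relations (\ref{Lie2relations6})--(\ref{Lie2relations8}) come out, and your treatments of (\ref{Lie2relations6}) and (\ref{Lie2relations8}) are likewise only gestural. This is not a gap relative to the paper, which is no more explicit, but a self-contained write-up would require either carrying out that matching or directly verifying the two remaining relations.
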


We call this symmetric  Lie $2$-algebra the symmetric Lie $2$-algebra \emph{associated} to the Courant algebroid $(E,\circ,\rho,\langle .,.\rangle )$.

Starting with a $(1,1)$-tensor on a Courant algebroid with vanishing Nijenhuis torsion we construct, in the next proposition, a Nijenhuis form for the Lie $2$-algebra associated to that Courant structure. First, we need the following lemma.
\begin{lem}\label{leml^N}
Let $ (E , \circ, \rho, \langle .,.\rangle ) $ be a pre-Courant algebroid with the associated symmetric pre-Lie $2$-algebra structure $\mu=l_1+l_2+l_3$, on the graded vector space $V=\mathcal{C}^{\infty}(M)\oplus \Gamma(E)$. Let $N$  be a $(1,1)$-tensor on $E$ such that
 \begin{equation*}
    N+N^*=\lambda \, {\rm Id}_{\Gamma(E)},
\end{equation*}
  with $\lambda$ a Casimir function. Then, the pre-Lie $2$-algebra structure associated to the pre-Courant algebroid $ (E, \circ^N, \rho^N, \langle . ,. \rangle ) $ is $[{\mathcal N},l_1+l_2+l_3]_{_{RN}}$, with ${\mathcal N}$  defined as follows:
  \begin{equation}\label{eq:defmathcalN}
{\mathcal N}|_{\Gamma(E)}= N \,\,\,\,\, \mbox{and} \,\,\,\,\,{\mathcal N}|_{\mathcal{C}^{\infty}(M)}=\lambda \, {\rm Id}_{\mathcal{C}^{\infty}(M)}.
\end{equation}
\end{lem}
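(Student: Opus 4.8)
The plan is to show that the symmetric vector valued form $[\mathcal{N},l_1+l_2+l_3]_{_{RN}}$ coincides, component by component, with the pre-Lie $2$-algebra structure attached to the deformed data $(E,\circ^N,\rho^N,\langle .,.\rangle)$ by the Roytenberg--Weinstein construction (\ref{Lie2fromCourant}). First I would record that, by Lemma~\ref{pre}, the hypothesis $N+N^*=\lambda\,\mathrm{Id}_{\Gamma(E)}$ with $\lambda$ Casimir guarantees that $(E,\circ^N,\rho^N,\langle .,.\rangle)$ is again a pre-Courant algebroid, so by Proposition~\ref{Roytenbergassociation} the formulas (\ref{Lie2fromCourant}) applied to it produce a genuine symmetric pre-Lie $2$-algebra structure $l_1^N+l_2^N+l_3^N$ on $V=\mathcal{C}^\infty(M)\oplus\Gamma(E)$, where $\circ$ and $\mathcal{D}$ are replaced by $\circ^N$ and $\mathcal{D}^N$. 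Since $\mathcal{N}$ is a vector valued $1$-form of degree zero, each $[\mathcal{N},l_i]_{_{RN}}$ is again an $i$-form of degree $1$; hence it suffices to match $[\mathcal{N},l_i]_{_{RN}}$ with $l_i^N$ for $i=1,2,3$ separately.

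The starting point for each comparison is the deformation formula coming from (\ref{equ:insertion}) and (\ref{RNbracket}): for a degree-zero $1$-form $\mathcal{N}$ and an $i$-form $L$,
\begin{equation*}
[\mathcal{N},L]_{_{RN}}(X_1,\dots,X_i)=\sum_{j=1}^{i}L(X_1,\dots,\mathcal{N}(X_j),\dots,X_i)-\mathcal{N}\big(L(X_1,\dots,X_i)\big).
\end{equation*}
For $i=1$, evaluating on $f\in\mathcal{C}^\infty(M)$ with $\mathcal{N}f=\lambda f$, the Casimir condition $\mathcal{D}\lambda=0$ gives $\mathcal{D}(\lambda f)=\lambda\mathcal{D}f$, so that $[\mathcal{N},l_1]_{_{RN}}f=(\lambda\,\mathrm{Id}-N)\mathcal{D}f=\mathcal{D}^N f=l_1^N f$ by (\ref{deformedD}). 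For $i=2$ on two sections $X,Y\in\Gamma(E)$, the formula reproduces exactly $\tfrac12(X\circ^N Y-Y\circ^N X)=l_2^N(X,Y)$; on a pair $(X,f)$ the Casimir property again makes the two $\lambda$-terms cancel, and the surviving term is rewritten through $N^*=\lambda\,\mathrm{Id}-N$ as $\tfrac12\langle X,\mathcal{D}^N f\rangle=l_2^N(X,f)$. None of these steps is delicate.

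The real work is the $i=3$ component, which I expect to be the main obstacle. Writing $[X,Y]:=\tfrac12(X\circ Y-Y\circ X)$ and using $X\circ Y-Y\circ X=2[X,Y]$, one has $l_3=\tfrac16\,\ell$ with $\ell(X,Y,Z)=\langle[X,Y],Z\rangle+\text{c.p.}$, and similarly $l_3^N=\tfrac16\sum_{\mathrm{c.p.}}\langle[X,Y]_N,Z\rangle$ with $[X,Y]_N=[NX,Y]+[X,NY]-N[X,Y]$. Expanding $[\mathcal{N},l_3]_{_{RN}}(X,Y,Z)=\tfrac16\big(\ell(NX,Y,Z)+\ell(X,NY,Z)+\ell(X,Y,NZ)-\lambda\,\ell(X,Y,Z)\big)$ produces nine inner-product terms in the first three summands; six of them (those in which $N$ lands on an inner \emph{bracket} slot) match six of the nine terms of $l_3^N$ at once, using only the total skew-symmetry of $\ell$ on $\Gamma(E)^{\otimes 3}$. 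The three remaining terms of the form $\langle[Y,Z],NX\rangle$, where $N$ sits in the second slot of the inner product, are the crux: moving $N$ across via $N^*=\lambda\,\mathrm{Id}-N$ splits each into a $\lambda$-term and a term $-\langle N[Y,Z],X\rangle$. The three $\lambda$-terms so produced cancel against the contribution of $-\lambda\,\ell(X,Y,Z)$, while the three terms $-\langle N[\cdot,\cdot],\cdot\rangle$ reproduce precisely the last three terms $-\sum_{\mathrm{c.p.}}\langle N[X,Y],Z\rangle$ of $l_3^N$. This bookkeeping, hinging on the single identity $N^*=\lambda\,\mathrm{Id}-N$ together with $\lambda$ being Casimir, closes the comparison and yields $[\mathcal{N},l_1+l_2+l_3]_{_{RN}}=l_1^N+l_2^N+l_3^N$, as claimed.
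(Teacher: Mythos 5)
Your proposal is correct and follows essentially the same route as the paper: a component-by-component verification that $[\mathcal{N},l_i]_{_{RN}}=l_i^N$ for $i=1,2,3$, driven by the identity $\mathcal{D}^N=(\lambda\,\mathrm{Id}_{\Gamma(E)}-N)\circ\mathcal{D}$, the Casimir property of $\lambda$, and the substitution $N^*=\lambda\,\mathrm{Id}_{\Gamma(E)}-N$ to move $N$ across the pairing in the cubic term. The only difference is cosmetic: the paper organizes the $l_3$ computation by expanding $l_3^N$ through $l_2^N$ and inserting $(N^*-\lambda\,\mathrm{Id}_{\Gamma(E)})$, whereas you expand the cyclic sum directly, but the cancellation of the $\lambda$-terms is identical.
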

\begin{proof}
 Let us denote the pre-Lie $2$-algebra associated to the pre-Courant algebroid $ (E , \circ^N, \rho^N, \langle . ,. \rangle ) $ by $l_1^{N}+l_2^{N}+l_3^{N}$. Using (\ref{deformedD}) and (\ref{Lie2fromCourant}) and taking into account the fact that $\mathcal{D}$ is a derivation, we have, for all $f\in\mathcal{C}^{\infty}(M) $ and for all $X,Y,Z \in \Gamma(E)$,
\begin{equation}\label{l1N}
l_1^{N}f=\mathcal{D}^Nf=\lambda \mathcal{D}f-N\mathcal{D}f=l_1(\mathcal {N}f)-\mathcal{N}l_1(f)= [\mathcal{N},l_1]_{_{RN}}(f),
\end{equation}
\begin{eqnarray}\label{l2N}
l_2^{N}(X,Y)&=&\frac{1}{2}(X\circ^N Y-Y\circ^N X)
            =l_2(NX,Y)+l_2(X,NY)-Nl_2(X,Y) \nonumber \\
            &=&[\mathcal{N},l_2]_{_{RN}}(X,Y),
\end{eqnarray}
\begin{eqnarray}\label{l22N}
l_2^{N}(X,f)&=&\frac{1}{2}\langle X, \mathcal{D}^N f\rangle=\frac{1}{2}\langle X,(- N+\lambda  \, {\rm Id}_{\Gamma(E)})\mathcal{D} f\rangle
            =\frac{1}{2}\langle X,N^* \mathcal{D} f\rangle \nonumber \\ &=&\frac{1}{2}\langle NX, \mathcal{D} f\rangle
            =l_2(NX,f)+\lambda l_2(X,f)-\lambda l_2(X,f) \nonumber \\
           & =&l_2(\mathcal{N}X,f)+l_2(X,\mathcal{N}f)-\mathcal{N}l_2(X,f)\nonumber \\
           & =& [\mathcal{N},l_2]_{_{RN}}(X,f)
\end{eqnarray}
and
\begin{eqnarray} \label{l3N}
& &l_3^{N}(X,Y,Z)=\frac{1}{12} \langle X\circ^N Y-Y\circ^N X, Z \rangle +c.p.(X,Y,Z)  \nonumber \\
&&\hspace{.2cm}=\frac{1}{6}\langle l_2^N(X,Y), Z\rangle+c.p.(X,Y,Z)  \nonumber \\
            &&\hspace{.2cm}=\frac{1}{6}(\langle l_2(NX,Y)+l_2(X,NY)+(N^*-\lambda Id_{\Gamma(E)})l_2(X,Y), Z\rangle)+c.p.(X,Y,Z)   \nonumber \\
            &&\hspace{.2cm}=\frac{1}{6}(\langle l_2(NX,Y),Z\rangle+\langle l_2(X,NY), Z \rangle+\langle l_2(X,Y), NZ\rangle-\lambda\langle l_2(X,Y), Z \rangle)  \nonumber \\
            & &\hspace{.4cm}+c.p.(X,Y,Z)  \nonumber \\
            &&\hspace{.2cm}=\frac{1}{6}(\langle l_2(NX,Y),Z\rangle +c.p.(NX,Y,Z) +\langle l_2(X,NY), Z\rangle+c.p.(X,NY,Z)  \nonumber \\
            & & \hspace{.4cm}+\langle l_2(X,Y),N Z\rangle+c.p.(X,Y,NZ)  - \lambda \langle l_2(X,Y), Z\rangle+c.p.(X,Y,Z))  \nonumber \\
            &&\hspace{.2cm}=l_3(\mathcal N X,Y,Z)+l_3(X,\mathcal N Y,Z)+l_3(X,Y,\mathcal N Z)-\mathcal N l_3(X,Y,Z)  \nonumber \\
            &&\hspace{.2cm}=[\mathcal{N},l_3]_{_{RN}}(X,Y,Z).
\end{eqnarray}
Equations (\ref{l1N}), (\ref{l2N}), (\ref{l22N}) and (\ref{l3N}) complete the proof.
\end{proof}

For the case of a Courant algebroid, we have the following result.

\begin{cor}\label{cornew}
Let $ (E , \circ, \rho, \langle .,.\rangle ) $ be a Courant algebroid with the associated symmetric Lie-$2$  algebra structure $\mu=l_1+l_2+l_3$, on the graded vector space $V=\mathcal{C}^{\infty}(M)\oplus \Gamma(E)$. Let $N$ be a  $(1,1)$-tensor on $E$ such that
 \begin{equation*}\label{con:CourantNijenhuisconditions}
  \begin{cases}
  N+N^*=\lambda \, {\rm Id}_{\Gamma(E)},\\
  (\Gamma(E),\circ^N) \, \, \mbox{\textrm is a Leibniz algebra,}
  \end{cases}
  \end{equation*}
  with $\lambda$ a Casimir function. Then, the Lie $2$-algebra structure associated to the Courant algebroid $ (E, \circ^N, \rho^N, \langle . ,. \rangle ) $ is $[{\mathcal N},l_1+l_2+l_3]$, with ${\mathcal N}$  given by
  (\ref{eq:defmathcalN}).
\end{cor}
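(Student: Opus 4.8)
The plan is to read off the result from the three preceding ingredients: Lemma~\ref{pre} produces the pre-Courant structure, the additional Leibniz hypothesis upgrades it to a Courant structure, and Lemma~\ref{leml^N} together with Proposition~\ref{Roytenbergassociation} identify its associated Lie $2$-algebra. Concretely, I would split the two hypotheses on $N$ according to the two defining features of a Courant algebroid. Because $N+N^*=\lambda\,{\rm Id}_{\Gamma(E)}$ with $\lambda$ a Casimir function, Lemma~\ref{pre} guarantees that the deformed quadruple $(E,\circ^N,\rho^N,\langle .,.\rangle)$ satisfies axioms (ii) and (iii) of Definition~\ref{def:Courantdorfmanshort}, i.e., it is a pre-Courant algebroid. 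The remaining axiom (i), namely that $(\Gamma(E),\circ^N)$ be a Leibniz algebra, is assumed outright; this is exactly the property one obtains for free when the Nijenhuis torsion of $N$ vanishes, which is why the statement is phrased with the Leibniz condition in place of vanishing torsion. Combining the two yields that $(E,\circ^N,\rho^N,\langle .,.\rangle)$ is a genuine Courant algebroid.

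Next I would invoke Lemma~\ref{leml^N}, whose hypotheses are met by the original (pre-)Courant algebroid: it computes the structure obtained by feeding the deformed quadruple $(E,\circ^N,\rho^N,\langle .,.\rangle)$ into the formulas~(\ref{Lie2fromCourant}) and shows that it equals $[{\mathcal N},l_1+l_2+l_3]_{_{RN}}$, with ${\mathcal N}$ as in~(\ref{eq:defmathcalN}). A priori Lemma~\ref{leml^N} only returns a pre-Lie $2$-algebra, but since we have just shown that $(E,\circ^N,\rho^N,\langle .,.\rangle)$ is an actual Courant algebroid, Proposition~\ref{Roytenbergassociation} promotes this to a bona fide symmetric Lie $2$-algebra. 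Putting these two observations together proves the claim.

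There is no substantive obstacle, as all the required identities are already established in Lemma~\ref{leml^N}; the only point worth emphasising is the bookkeeping of where each Courant axiom comes from --- the inner-product compatibility (axioms (ii)--(iii)) is forced by $N+N^*=\lambda\,{\rm Id}_{\Gamma(E)}$ through Lemma~\ref{pre}, whereas the Leibniz identity, the single ingredient separating a Lie $2$-algebra from a pre-Lie $2$-algebra in Proposition~\ref{Roytenbergassociation}, is supplied directly by hypothesis.
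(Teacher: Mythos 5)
Your proposal is correct and follows exactly the route the paper intends: the paper leaves Corollary \ref{cornew} without a written proof precisely because it is the combination you describe --- Lemma \ref{pre} gives the pre-Courant structure on $(E,\circ^N,\rho^N,\langle .,.\rangle)$, the assumed Leibniz property supplies the remaining Courant axiom, and Lemma \ref{leml^N} together with Proposition \ref{Roytenbergassociation} identify the associated structure as $[{\mathcal N},l_1+l_2+l_3]_{_{RN}}$ and promote it to a genuine Lie $2$-algebra. Your bookkeeping of which hypothesis yields which axiom matches the paper's logic (compare the proof of Theorem \ref{Theml^N}, where the Leibniz property is instead derived from vanishing Nijenhuis torsion).
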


\begin{prop}\label{prop:NijenhuisonCourant}
Let $ (E , \circ, \rho, \langle .,.\rangle ) $ be a Courant algebroid with the associated symmetric Lie $2$-algebra structure $\mu=l_1+l_2+l_3$, on the graded vector space $V=\mathcal{C}^{\infty}(M)\oplus \Gamma(E)$. Let $N$  be a  $(1,1)$-tensor on $E$ whose Nijenhuis torsion with respect to the bracket $\circ$ vanishes and
satisfies the following conditions
 \begin{equation*}\label{con:CourantNijenhuisconditions}
  \begin{cases}
  N+N^*=\lambda  \, {\rm Id}_{\Gamma(E)}\\
  N^2+(N^2)^*=\gamma  \, {\rm Id}_{\Gamma(E)},
  \end{cases}
  \end{equation*}
  with $\lambda$ and $\gamma$  Casimir functions. Define   ${\mathcal N}$ and ${\mathcal K}$ as
\begin{equation*}
{\mathcal N}|_{\Gamma(E)}= N \,\,\,\mbox{and}\,\,\,{\mathcal N}|_{\mathcal{C}^{\infty}(M)}=\lambda  \, {\rm Id}_{\mathcal{C}^{\infty}(M)},
\end{equation*}
\begin{equation*}\label{eq:defmathcalK}
{\mathcal K}|_{\Gamma(E)}= N^2 = \lambda N + \frac{\gamma - \lambda^2}{2}  \, {\rm Id}_{\Gamma(E)} \,\,\,\,\mbox{and}\,\,\,\,{\mathcal K}|_{\mathcal{C}^{\infty}(M)}=\gamma \, {\rm Id}_{\mathcal{C}^{\infty}(M)}.
\end{equation*}
Then, $\mathcal N$ is a Nijenhuis vector valued $1$-form with respect to $\mu$, with square $\mathcal K$.
\end{prop}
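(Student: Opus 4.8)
The plan is to read off both defining conditions of a Nijenhuis form (Definition \ref{def:Nijenhuis}) from the dictionary of Lemma \ref{leml^N}, which converts a deformation of a (pre-)Courant bracket by a tensor into a Richardson-Nijenhuis bracket with the corresponding $1$-form $\mathcal{N}$. The crux is that the vanishing of the Nijenhuis torsion of $N$ collapses two successive $N$-deformations into a single $N^2$-deformation, so that the iterated bracket $[\mathcal{N},[\mathcal{N},\mu]_{_{RN}}]_{_{RN}}$ is governed by $\circ^{N^2}$, which in turn corresponds to $[\mathcal{K},\mu]_{_{RN}}$.

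First I would record the consequences of $T_{\circ}N=0$. From the identity $T_{\circ}N=\frac{1}{2}(\circ^{N,N}-\circ^{N^2})$ noted above, vanishing torsion gives $\circ^{N,N}=\circ^{N^2}$, and since $\rho^N(X)=\rho(NX)$ one checks at once that $(\rho^N)^N=\rho^{N^2}$. Because $N+N^*=\lambda\,{\rm Id}_{\Gamma(E)}$ with $\lambda$ a Casimir, Lemma \ref{pre} ensures that $(E,\circ^N,\rho^N,\langle .,.\rangle)$ is again a pre-Courant algebroid, and $\lambda$ stays Casimir for it because $\rho^N(X)\lambda=\rho(NX)\lambda=0$. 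Hence Lemma \ref{leml^N} may be applied both to $N$ acting on $(E,\circ,\rho)$ and to $N$ acting on $(E,\circ^N,\rho^N)$, in each case with the same $\mathcal{N}$ given by (\ref{eq:defmathcalN}), since that definition depends only on $N$ and $\lambda$ and not on the bracket.

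Next I would iterate the lemma. A first application yields that the pre-Lie $2$-algebra associated to $(E,\circ^N,\rho^N)$ is $[\mathcal{N},\mu]_{_{RN}}$; applying the lemma once more, now to the pre-Courant algebroid $(E,\circ^N,\rho^N)$ and the tensor $N$, yields that the structure associated to $(E,(\circ^N)^N,(\rho^N)^N)$ is $[\mathcal{N},[\mathcal{N},\mu]_{_{RN}}]_{_{RN}}$. By the two identities of the previous step this is exactly the pre-Courant algebroid $(E,\circ^{N^2},\rho^{N^2})$. On the other hand, a direct application of Lemma \ref{leml^N} to $(E,\circ,\rho)$ and the tensor $N^2$ --- legitimate since $N^2+(N^2)^*=\gamma\,{\rm Id}_{\Gamma(E)}$ with $\gamma$ a Casimir --- identifies the structure associated to $(E,\circ^{N^2},\rho^{N^2})$ with $[\mathcal{K},\mu]_{_{RN}}$. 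Comparing the two expressions for one and the same associated structure gives $[\mathcal{N},[\mathcal{N},\mu]_{_{RN}}]_{_{RN}}=[\mathcal{K},\mu]_{_{RN}}$, which is the first condition of Definition \ref{def:Nijenhuis}.

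It remains to verify $[\mathcal{N},\mathcal{K}]_{_{RN}}=0$, which is immediate. Both $\mathcal{N}$ and $\mathcal{K}$ are degree-zero vector valued $1$-forms that preserve the grading of $V$, so by (\ref{RNbracket}) and the insertion formula (\ref{equ:insertion}) their bracket is just the commutator $\mathcal{K}\mathcal{N}-\mathcal{N}\mathcal{K}$, computed componentwise: on $\Gamma(E)$ it is $N^2 N-N N^2=0$, and on $\mathcal{C}^{\infty}(M)$ it is $(\gamma\lambda-\lambda\gamma)\,{\rm Id}_{\mathcal{C}^{\infty}(M)}=0$, since multiplication by functions commutes. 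The main work, and the only place where care is needed, is the bookkeeping of the third paragraph: confirming that Lemma \ref{leml^N} genuinely applies to the once-deformed structure (that it is pre-Courant, that $\lambda$ remains a Casimir for $\rho^N$, and that the associated $\mathcal{N}$ is unchanged) and that the identities $\circ^{N,N}=\circ^{N^2}$ and $(\rho^N)^N=\rho^{N^2}$ make the twice-deformed and the $N^2$-deformed pre-Courant algebroids literally coincide, so that their associated pre-Lie $2$-algebras are equal.
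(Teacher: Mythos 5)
Your proof is correct and is essentially the paper's own argument: both deform the associated structure twice by $\mathcal{N}$ and once by $\mathcal{K}$ through the deformation dictionary, identify the resulting algebroids using $T_{\circ}N=0$ (which gives $\circ^{N,N}=\circ^{N^2}$, while $(\rho^N)^N=\rho^{N^2}$ holds trivially), and verify $[\mathcal{N},\mathcal{K}]_{_{RN}}=0$ componentwise. The only difference is cosmetic: the paper invokes Corollary \ref{cornew} at the Courant level, which requires first citing \cite{CGM,CJP} for the fact that vanishing torsion makes $\circ^N$ and $\circ^{N^2}$ Leibniz brackets, whereas you stay at the pre-Courant level with Lemmas \ref{pre} and \ref{leml^N}, where no Leibniz property is needed and the torsion hypothesis enters only through $\circ^{N,N}=\circ^{N^2}$ --- the same bookkeeping the paper itself uses later in the proof of Corollary \ref{thewrongcorollary2}, so your route is sound and, if anything, slightly more economical.
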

\begin{proof}
Since the Nijenhuis torsion of $N$ vanishes, $(E, \circ^N)$ and $(E, \circ^{N^2})$ are Leibniz algebras \cite{CGM}, \cite{CJP}. Applying Corollary \ref{cornew} for the Courant algebroid $ (E , \circ, \rho, \langle .,.\rangle ) $, the $(1,1)$-tensor $N$ and the vector valued $1$-form $\mathcal{N}$, twice, we get
\begin{equation}\label{mah}
l_1^{N,N}+l_2^{N,N}+l_3^{N,N}=[\mathcal{N},[\mathcal{N},l_1+l_2+l_3]_{_{RN}}]_{_{RN}},
\end{equation}
where $l_1^{N,N}+l_2^{N,N}+l_3^{N,N}$ stands for the Lie $2$-algebra structure associated to the Courant algebroid $ (E , \circ^{N,N}, \rho^{N,N}, \langle .,.\rangle )$.
Applying again Corollary \ref{cornew} for the Courant algebroid $ (E , \circ, \rho, \langle .,.\rangle ) $, the $(1,1)$-tensor $N^2$ and the vector valued $1$-form $\mathcal{K}$, we get
\begin{equation}\label{khorshid}
l_1^{N^2}+l_2^{N^2}+l_3^{N^2}=[\mathcal{K},l_1+l_2+l_3]_{_{RN}},
\end{equation}
where $l_1^{N^2}+l_2^{N^2}+l_3^{N^2}$ stands for the Lie $2$-algebra structure associated to the Courant algebroid $ (E , \circ^{N^2}, \rho^{N^2}, \langle .,.\rangle )$.
On the other hand, since the Nijenhuis torsion of $N$ vanishes, the Courant algebroids $(E,\circ^{N,N},\rho^{N,N},\langle.,.\rangle)$ and $(E,\circ^{N^2},\rho^{N^2},\langle.,.\rangle)$ coincide. Therefore,  (\ref{mah}) and (\ref{khorshid}) imply that
\begin{equation*}
[\mathcal{N},[\mathcal{N},l_1+l_2+l_3]_{_{RN}}]_{_{RN}}=[\mathcal{K},l_1+l_2+l_3]_{_{RN}}.
\end{equation*}
Finally, an easy computation shows that $[\mathcal{N},\mathcal{K}]_{_{RN}}$ vanishes both on functions and on sections of $E$.
\end{proof}

Since there exists a Lie $2$-algebra associated to each Courant algebroid,
there was a hope that we could, given a Courant structure, find a Nijenhuis deformation
by a Nijenhuis tensor, which is the sum of a vector valued $1$-form and a vector valued $2$-form, of the corresponding Lie $2$-algebra structure,
and prove, eventually, that the Lie $2$-algebra structure obtained by this procedure comes from a Courant structure.
But this fails, at least when the anchor is not identically zero, as it is shown in the next theorem.
First, notice that every  $C^\infty(M)$-linear vector valued form of degree $0$ on $E_{-2}\oplus E_{-1}$, where $E_{-2}:=\mathcal{C}^{\infty}(M)$ and $E_{-1}:=\Gamma(E)$,
is the sum of a $2$-form $\alpha$, a $(1,1)$-tensor $N$ and an endomorphism of $C^\infty(M)$
of the form $f \mapsto \lambda f $ for some smooth function $\lambda$.
Hence, we denote a  $C^\infty(M)$-linear vector valued form of degree zero on $E_{-2}\oplus E_{-1}$ as a sum, $\lambda+ N + \alpha $.

\begin{thm} \label{them:NijenhuisCourant}
Let $(\circ, \rho,\langle.,.\rangle)$  be a Courant structure on a vector bundle $E\to M$ with the associated Lie $2$-algebra structure $l_1+l_2+l_3$ on the graded vector space $V=E_{-2}\oplus E_{-1}$, where $E_{-2}:=\mathcal{C}^{\infty}(M)$ and $E_{-1}:=\Gamma(E)$.
Let ${\mathcal N} = \lambda + N + \alpha$ be a $C^\infty(M)$-linear vector valued form of degree zero on $V$.
Assume also that $\rho$ is not equal to zero on a dense subset of the base manifold.
If $ [{\mathcal N}, l_1+l_2+l_3]_{_{RN}}$ is the Lie $2$-algebra associated to a Courant structure with the same scalar product $\langle.,.\rangle$, then
\begin{enumerate}
\item $\lambda$ is a Casimir,
\item $\alpha=0,$
\item $ N+N^* = \lambda Id_{\Gamma(E)}.$
\end{enumerate}
In this case, the Courant structure that $ [{\mathcal N}, l_1+l_2+l_3]_{_{RN}}$ is associated to, is $(\circ^N, \rho^N,\langle . ,. \rangle).$
\end{thm}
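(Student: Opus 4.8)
The plan is to compute the deformed Lie $2$-algebra $l_1'+l_2'+l_3':=[{\mathcal N},l_1+l_2+l_3]_{_{RN}}$ explicitly, to impose on it the structural identities that characterise those Lie $2$-algebras arising from a (pre-)Courant structure with a \emph{fixed} scalar product $\langle .,.\rangle$ via (\ref{Lie2fromCourant}), and to read off the three conclusions by separating orders of differentiation in a test function. Writing the $1$-form part of ${\mathcal N}$ as $\lambda\oplus N$ and its $2$-form part as $\alpha$, a direct application of the Richardson-Nijenhuis bracket gives, for $f\in\mathcal{C}^{\infty}(M)$ and $X,Y\in\Gamma(E)$,
\[
l_1'f = (\lambda\,{\rm Id}-N)\mathcal{D} f + f\,\mathcal{D}\lambda, \qquad l_2'(X,Y)=[X,Y]_N+\mathcal{D}(\alpha(X,Y)),
\]
where $[.,.]_N$ is the deformation by $N$ of the skew bracket (\ref{skewbracket}) and $\mathcal{D}$ is the operator (\ref{matcallD}). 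By hypothesis $l_1'+l_2'+l_3'$ is associated to a Courant structure $(\circ',\rho',\langle .,.\rangle)$, so $l_1'=\mathcal{D}'$ and $l_2'|_{\Gamma(E)\times\Gamma(E)}$ is the skew bracket of $\circ'$; in particular, by Lemma \ref{skewbracketlemma} (valid for pre-Courant algebroids by Remark \ref{skewbracketrem}), $l_2'$ must obey the Leibniz-type identity of that lemma, with anchor $\rho'$ and operator $\mathcal{D}'=l_1'$.

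The heart of the argument is to compute $l_2'(X,gY)$ in two ways. Expanding $[X,gY]_N$ through the Leibniz identity for the undeformed bracket and using the $\mathcal{C}^{\infty}(M)$-bilinearity of $\alpha$ yields one expression; the Leibniz identity for the new structure gives $l_2'(X,gY)=g\,l_2'(X,Y)+(\rho'(X)g)Y-\tfrac12\langle X,Y\rangle\mathcal{D}'g$. Substituting the formula for $l_1'=\mathcal{D}'$ and cancelling the common terms (in particular the two contributions in $N\mathcal{D} g$ cancel), the two computations agree if and only if the master identity
\[
c(X,Y)\,\mathcal{D} g + \big[(\rho^N-\rho')(X)g\big]\,Y + \tfrac{g}{2}\langle X,Y\rangle\,\mathcal{D}\lambda = 0
\]
holds for all $X,Y\in\Gamma(E)$ and $g\in\mathcal{C}^{\infty}(M)$, where $c(X,Y):=\alpha(X,Y)-\tfrac12\langle (N+N^*-\lambda\,{\rm Id})X,Y\rangle$.

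I would then exploit the freedom in $g$ to peel off the three statements one at a time. Choosing, at a point $p$, a function with $g(p)=1$ and $\mathrm{d}g(p)=0$ kills the first two terms and leaves $\tfrac12\langle X,Y\rangle(p)\,\mathcal{D}\lambda(p)=0$; non-degeneracy of $\langle .,.\rangle$ then forces $\mathcal{D}\lambda=0$, i.e. $\lambda$ is a Casimir, which is (1). With the last term gone, for fixed $X$ and $g$ the identity reads $c(X,Y)\mathcal{D} g=-[(\rho^N-\rho')(X)g]\,Y$; since the left side is a multiple of the fixed section $\mathcal{D} g$ while $Y$ is arbitrary, the coefficient of $Y$ must vanish wherever $\rank E\geq 2$, giving $\rho'=\rho^N$, and then choosing $g$ with $\mathcal{D} g(p)\neq 0$ (possible on the dense set where $\rho\neq 0$) forces $c(X,Y)=0$ pointwise, hence everywhere by continuity. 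Finally $c(X,Y)=0$ means $\alpha(X,Y)=\tfrac12\langle (N+N^*-\lambda\,{\rm Id})X,Y\rangle$; by (\ref{khouk}) the left-hand side is skew-symmetric in $X,Y$ whereas the right-hand side is symmetric, so both vanish, yielding $\alpha=0$, which is (2), and $N+N^*=\lambda\,{\rm Id}$, which is (3). The main obstacle is precisely this separation step: one must argue that the single tensorial identity, once tested against functions of prescribed $1$-jet, simultaneously isolates the Casimir condition, pins down the anchor, and triggers the symmetric-versus-skew dichotomy, and that the genericity assumptions (non-vanishing of $\rho$ on a dense set, together with $\rank E\geq 2$) suffice to globalise the pointwise conclusions by continuity.

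For the last assertion, having established (1)--(3), Lemma \ref{leml^N} applies with ${\mathcal N}|_{\Gamma(E)}=N$, ${\mathcal N}|_{\mathcal{C}^{\infty}(M)}=\lambda\,{\rm Id}$ and $\alpha=0$: it identifies $[{\mathcal N},l_1+l_2+l_3]_{_{RN}}$ with the pre-Lie $2$-algebra associated to $(E,\circ^N,\rho^N,\langle .,.\rangle)$, the formula (\ref{deformedD}) giving $\mathcal{D}^N=(-N+\lambda\,{\rm Id})\mathcal{D}$ matching the computed $l_1'$. Since a Courant structure is determined by its associated Lie $2$-algebra together with the scalar product --- the skew part of $\circ$ is recorded by $l_2$ and the symmetric part equals $\mathcal{D}\langle .,.\rangle$ by the Courant axioms --- the Courant structure to which $[{\mathcal N},l_1+l_2+l_3]_{_{RN}}$ is associated must coincide with $(\circ^N,\rho^N,\langle .,.\rangle)$, which in passing also confirms that $(\Gamma(E),\circ^N)$ is a Leibniz algebra.
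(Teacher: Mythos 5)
Your proposal is correct, and it rests on the same key tool as the paper's proof --- imposing the Leibniz identity of Lemma \ref{skewbracketlemma} (valid for pre-Courant algebroids by Remark \ref{skewbracketrem}) on the deformed structure, then exploiting the skew-versus-symmetric dichotomy together with density of $\{\rho\neq 0\}$ --- but the bookkeeping is organised in a genuinely different way. The paper eliminates the unknowns sequentially: it first notes that $l_1'=\mathcal{D}'$ must be a derivation of $\mathcal{C}^{\infty}(M)$, which isolates the term $f\,l_1(\lambda)$ and yields conclusion (1) at once; it then uses graded symmetry of the $2$-form component, $[\mathcal{N},\mu]^2_{_{RN}}(X,f)=[\mathcal{N},\mu]^2_{_{RN}}(f,X)$, to obtain $\alpha(X,l_1f)=0$, so that the anchor of the putative new Courant structure is never an unknown: it is read off as $\rho'(X)f=2[\mathcal{N},\mu]^2_{_{RN}}(X,f)=\langle NX,l_1f\rangle$, and the Leibniz identity then collapses directly to $\tfrac12\langle X,(N+N^*-\lambda\,{\rm Id}_{\Gamma(E)})Y\rangle\, l_1(f)=\alpha(X,Y)\,l_1(f)$. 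You instead carry $\rho'$ as an unknown, derive a single master identity, and separate it by testing against functions with prescribed $1$-jets; your derivation of (1) by evaluating at $g$ with $g(p)=1$, $\mathrm{d}g(p)=0$ is a legitimate alternative to the paper's derivation-property argument, and your master identity agrees with the one I recover by direct computation. The one point you should close is the proviso $\rank E\geq 2$, which is not among the theorem's hypotheses: it is in fact automatic, because every Courant algebroid satisfies $\rho\circ\mathcal{D}=0$ (a consequence of $\rho(X\circ Y)=[\rho(X),\rho(Y)]$ and $X\circ Y+Y\circ X=\mathcal{D}\langle X,Y\rangle$), and in rank one this forces $\rho=0$, which the density hypothesis excludes; so the linear-independence step is available wherever you need it, but this deduction should be stated rather than listed as an extra assumption. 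On the credit side, you actually prove the final assertion --- identifying the deformed structure with the Lie $2$-algebra of $(\circ^N,\rho^N,\langle.,.\rangle)$ via Lemma \ref{leml^N} and observing that the Lie $2$-algebra together with the pairing determines the Courant structure --- a step the paper's proof leaves implicit.
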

\begin{proof}
Set $\mu=l_1+l_2+l_3$ and denote the $i$-form component of $[\mathcal{N},\mu]_{_{RN}}$ by $[\mathcal{N},\mu]^i_{RN}$, $i=1,2$. Then, for all $X,Y \in \Gamma(E)$ and $f\in \mathcal{C}^{\infty}(M)$, we have
\begin{equation*}\label{equation1intheorem}
\begin{array}{rcl}
[\mathcal{N},\mu]^1_{RN}(f)&=&([\lambda,l_1]_{_{RN}}+[N,l_1]_{_{RN}})(f)\\
                           &=&l_1(\lambda f)-Nl_1(f)\\
                           &=&\lambda l_1( f)+fl_1(\lambda)-Nl_1(f).
\end{array}
\end{equation*}
   The first equation in (\ref{Lie2fromCourant}) implies that, if $[\mathcal{N},\mu]_{_{RN}}$ is a Lie 2-algebra associated to a Courant algebroid, then $[\mathcal{N},\mu]^1_{RN}$ has to be a derivation, and this happens if and only if $l_1(\lambda)=0$. So, we get that $\lambda$ is a Casimir and
\begin{equation}\label{equation1intheorem-1}
[\mathcal{N},\mu]^1_{RN}(f)=(\lambda Id_{\Gamma(E)}-N) l_1( f).
\end{equation}
On the other hand,
\begin{eqnarray}\label{equation2intheorem}
[\mathcal{N},\mu]_{_{RN}}^2(X,f)&=&([\lambda,l_2]_{_{RN}}+[N,l_2]_{_{RN}}+[\alpha,l_1]_{_{RN}})(X,f) \nonumber\\
                           &=&l_2(X,\lambda f)-\lambda l_2(X,f)+l_2(NX,f)-\alpha(X,l_1(f)) \nonumber \\
                           &=&\frac{1}{2}\lambda\langle X,l_1(f)\rangle-\frac{1}{2}\lambda\langle X,l_1(f)\rangle+\frac{1}{2}\langle NX,l_1(f)\rangle-\alpha(X,l_1(f)) \nonumber \\
                           &=&\frac{1}{2}\langle NX,l_1(f)\rangle-\alpha(X,l_1(f)),
\end{eqnarray}
and the same computations for $(f,X)$ instead of $(X,f)$ gives
\begin{equation}\label{eq:174A}
[\mathcal{N},\mu]_{_{RN}}^2(f,X)=\frac{1}{2}\langle NX,l_1(f)\rangle-\alpha(l_1(f),X).
\end{equation}
Since $[\mathcal{N},\mu]_{_{RN}}^2(X,f)=[\mathcal{N},\mu]_{_{RN}}^2(f,X)$, from (\ref{equation2intheorem}) and (\ref{eq:174A}) we get $\alpha(X,l_1(f))=0$, for all $X\in \Gamma(E)$ and $f\in \mathcal{C}^{\infty}(M)$; so,
\begin{equation}\label{eq:deformedon(X,f)}
[\mathcal{N},\mu]_{_{RN}}^2(X,f)=\frac{1}{2}\langle NX,l_1(f)\rangle.
\end{equation}
For any $X,Y\in \Gamma(E)$, we have
\begin{equation}\label{equation2intheorem-2}
\begin{array}{rcl}
[\mathcal{N},\mu]_{_{RN}}^2(X,Y)&=&([\lambda,l_2]_{_{RN}}+[N,l_2]_{_{RN}}+[\alpha,l_1]_{_{RN}})(X,Y)\\
                           &=&l_2(NX,Y)+l_2(X,NY)-Nl_2(X,Y)+l_1\alpha(X,Y).\\

\end{array}
\end{equation}
According to Lemma \ref{skewbracketlemma}, if $[\mathcal{N},\mu]_{_{RN}}$ is a Lie $2$-algebra associated to a Courant structure, then we must have:
\begin{equation}\label{mainbody}
[\mathcal{N},\mu]_{_{RN}}^2(X,fY)=f[\mathcal{N},\mu]_{_{RN}}^2(X,Y)+2[\mathcal{N},\mu]_{_{RN}}^2(X,f).Y-\frac{1}{2}\langle X,Y\rangle[\mathcal{N},\mu]^1_{RN}(f).
\end{equation}
Using (\ref{equation1intheorem-1}), (\ref{eq:deformedon(X,f)}) and (\ref{equation2intheorem-2}), we get
\begin{eqnarray}\label{eq:LHS}
                              &&[\mathcal{N},\mu]_{_{RN}}^2(X,fY)=l_2(NX,fY)+l_2(X,NfY)-Nl_2(X,fY)+l_1\alpha(X,fY) \nonumber\\
                              &&\hspace{.2cm}=fl_2(NX,Y)+2l_2(NX,f)Y-\frac{1}{2}\langle NX,Y\rangle l_1(f) \nonumber \\
                              &&\hspace{.4cm}+fl_2(X,NY)+2l_2(X,f)NY-\frac{1}{2}\langle X,NY\rangle l_1(f) \nonumber \\
                              &&\hspace{.4cm}-fNl_2(X,NY)-2l_2(X,f)NY+\frac{1}{2}\langle X,Y\rangle Nl_1(f) \nonumber \\
                              &&\hspace{.4cm}+fl_1\alpha(X,Y)+\alpha(X,Y)l_1(f) \nonumber \\
                              &&\hspace{.2cm}=f(l_2(NX,Y)+l_2(X,NY)-Nl_2(X,Y)+l_1\alpha(X,Y))+2l_2(NX,f)Y \nonumber \\
                              &&\hspace{.4cm}-\frac{1}{2}\langle X,(N+N^*)Y\rangle l_1(f)+\frac{1}{2}\langle X,Y\rangle Nl_1(f)+\alpha(X,Y)l_1(f)
\end{eqnarray}
and
\begin{eqnarray}\label{eq:RHS}
&&f[\mathcal{N},\mu]_{_{RN}}^2(X,Y)+2[\mathcal{N},\mu]_{_{RN}}^2(X,f).Y-\frac{1}{2}\langle X,Y\rangle[\mathcal{N},\mu]^1_{RN}(f) \nonumber \\
&& \hspace{.2cm} =f(l_2(NX,Y)+l_2(X,NY)-Nl_2(X,NY)+l_1\alpha(X,Y))+2l_2(NX,f).Y \nonumber\\
&& \hspace{.2cm}-\frac{1}{2}\langle X,Y\rangle(\lambda  \, {\rm Id}_{\Gamma(E)}-N)l_1(f).
\end{eqnarray}
Now, Equations (\ref{mainbody}), (\ref{eq:LHS}) and (\ref{eq:RHS}) show that
\begin{equation*}\label{akhari}
\frac{1}{2}\langle X,(N+N^*-\lambda  \, {\rm Id}_{\Gamma(E)})Y\rangle l_1(f)=\alpha(X,Y)l_1(f),
\end{equation*}
for all $X,Y\in \Gamma(E)$ and $f\in \mathcal{C}^{\infty}(M)$. Since $\alpha$ is skew-symmetric, $\langle . ,(N+N^*-\lambda \, {\rm Id}).\rangle$ is symmetric on $\Gamma(E) \times \Gamma(E)$ and the anchor is not zero everywhere, which implies that $l_1(f)$ is not always zero, we have $\alpha=0$ and $N+N^*-\lambda \, {\rm Id}_{\Gamma(E)}=0$.
\end{proof}

\begin{cor}\label{thewrongcorollary}
Let $(E,\circ, \rho,\langle.,.\rangle)$ be a Courant algebroid  with anchor $\rho$ being different from zero on a dense subset of $E$ and let $\mu$ be the associated Lie $2$-algebra structure on the graded vector space $\mathcal{C}^{\infty}(M)\oplus \Gamma(E)$. Then, there is a one to one correspondence between:
 \begin{enumerate}
 \item[(i)] quadruples $(N,K,\lambda,\gamma)$ with $N,K$ being $(1,1)$-tensors on $E$ and $\lambda,\gamma$ being Casimir functions satisfying the following conditions:
    $$ \begin{cases}
     \circ^{N,N}=\circ^K,\\
     N K - K N =0,\\
     N+N^*=\lambda   \, {\rm Id}_{\Gamma(E)} ,\\
     K+K^* = \gamma   \, {\rm Id}_{\Gamma(E)},\\
     (\Gamma(E), \circ^N)\,\,\, \mbox{and}\,\,\, (\Gamma(E), \circ^K) \,\,\,\mbox{are Leibniz algebras}.
     \end{cases}$$
 \item[(ii)] Nijenhuis vector valued forms ${\mathcal N} $ with respect to $\mu$, with square ${\mathcal K}$, such that the deformed brackets $[ {\mathcal N}, \mu]_{_{RN}} $ and $[ {\mathcal K}, \mu]_{_{RN}} $ are Lie $2$-algebras associated to  Courant structures with the same scalar product.
 \end{enumerate}
\end{cor}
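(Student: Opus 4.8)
The plan is to exhibit the bijection by writing down explicit maps in both directions and checking that they are mutually inverse; throughout, I take $\mathcal{N}$ and $\mathcal{K}$ to be $\mathcal{C}^{\infty}(M)$-linear forms, as in the setting of Theorem~\ref{them:NijenhuisCourant}. The one technical fact I would isolate first is a \emph{reconstruction lemma}: once the scalar product $\langle .,.\rangle$ is fixed, the assignment sending a (pre-)Courant structure to its associated (pre-)Lie $2$-algebra via (\ref{Lie2fromCourant}) is injective. Indeed, from $l_1$ and $l_2$ one recovers the Dorfman bracket through
\begin{equation*}
X \circ Y = l_2(X,Y) + \frac{1}{2}\, l_1(\langle X,Y\rangle), \qquad X,Y \in \Gamma(E),
\end{equation*}
because $l_2|_{\Gamma(E)\times\Gamma(E)}$ is the skew part of $\circ$ while the symmetric part satisfies $X\circ Y+Y\circ X=\mathcal{D}\langle X,Y\rangle=l_1(\langle X,Y\rangle)$ (an identity that follows from axiom (iii) and non-degeneracy, hence holds already for pre-Courant algebroids); the anchor is then forced by Corollary~\ref{cor:Anchoruniqe}. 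This injectivity is what turns equalities of Lie $2$-algebras into equalities of Courant brackets.

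For the direction (i)$\Rightarrow$(ii), given a quadruple $(N,K,\lambda,\gamma)$ I set $\mathcal{N}:=\lambda\,{\rm Id}_{\mathcal{C}^{\infty}(M)}+N$ and $\mathcal{K}:=\gamma\,{\rm Id}_{\mathcal{C}^{\infty}(M)}+K$ as in (\ref{eq:defmathcalN}). Since $N+N^*=\lambda\,{\rm Id}$ with $\lambda$ Casimir and $(\Gamma(E),\circ^N)$ is Leibniz, Corollary~\ref{cornew} gives that $[\mathcal{N},\mu]_{_{RN}}$ is the Lie $2$-algebra of $(E,\circ^N,\rho^N,\langle.,.\rangle)$, and likewise $[\mathcal{K},\mu]_{_{RN}}$ is that of $(E,\circ^K,\rho^K,\langle.,.\rangle)$, both with scalar product $\langle.,.\rangle$. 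To see that $\mathcal{N}$ is Nijenhuis with square $\mathcal{K}$, I apply Lemma~\ref{leml^N} to the pre-Courant algebroid $(E,\circ^N,\ldots)$ with the tensor $N$: this identifies $[\mathcal{N},[\mathcal{N},\mu]_{_{RN}}]_{_{RN}}$ with the pre-Lie $2$-algebra of $(E,\circ^{N,N},\ldots)$. The hypothesis $\circ^{N,N}=\circ^K$, together with Corollary~\ref{cor:Anchoruniqe} forcing $\rho^{N,N}=\rho^K$, makes the two structures coincide, whence $[\mathcal{N},[\mathcal{N},\mu]_{_{RN}}]_{_{RN}}=[\mathcal{K},\mu]_{_{RN}}$. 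The remaining condition $[\mathcal{N},\mathcal{K}]_{_{RN}}=0$ reduces, for vector valued $1$-forms, to $\mathcal{N}\mathcal{K}=\mathcal{K}\mathcal{N}$, which holds on $\Gamma(E)$ by $NK-KN=0$ and trivially on $\mathcal{C}^{\infty}(M)$.

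For the direction (ii)$\Rightarrow$(i), I start from a Nijenhuis form $\mathcal{N}$ with square $\mathcal{K}$ whose deformations $[\mathcal{N},\mu]_{_{RN}}$ and $[\mathcal{K},\mu]_{_{RN}}$ are Lie $2$-algebras of Courant structures sharing the scalar product $\langle.,.\rangle$. Writing $\mathcal{N}=\lambda+N+\alpha$ and $\mathcal{K}=\gamma+K+\beta$ and invoking Theorem~\ref{them:NijenhuisCourant} (whose density hypothesis on $\rho$ is exactly the one assumed here), I obtain that $\lambda,\gamma$ are Casimir, $\alpha=\beta=0$, $N+N^*=\lambda\,{\rm Id}$, $K+K^*=\gamma\,{\rm Id}$, and that the two Courant structures are $(\circ^N,\rho^N,\ldots)$ and $(\circ^K,\rho^K,\ldots)$; in particular $(\Gamma(E),\circ^N)$ and $(\Gamma(E),\circ^K)$ are Leibniz. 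The square relation equates the pre-Lie $2$-algebra of $(E,\circ^{N,N},\ldots)$ (Lemma~\ref{leml^N}) with the Lie $2$-algebra of $(E,\circ^K,\ldots)$, and the reconstruction lemma then yields $\circ^{N,N}=\circ^K$. Finally $[\mathcal{N},\mathcal{K}]_{_{RN}}=0$ gives $NK-KN=0$ on sections, so $(N,K,\lambda,\gamma)$ satisfies all five conditions of (i). As the two assignments plainly undo one another, this establishes the correspondence.

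The main obstacle I anticipate is exactly the passage from equalities of (pre-)Lie $2$-algebras to equalities of Dorfman brackets: the $l_2$ component records only the \emph{skew-symmetrised} bracket, so a direct comparison yields merely $\frac{1}{2}(X\circ^{N,N}Y-Y\circ^{N,N}X)=\frac{1}{2}(X\circ^{K}Y-Y\circ^{K}X)$ for $X,Y\in\Gamma(E)$. Overcoming this is the role of the reconstruction lemma, whose symmetric-part identity recovers the missing symmetric part of the bracket from $l_1$ and the fixed scalar product. A secondary point requiring care is that $[\mathcal{N},[\mathcal{N},\mu]_{_{RN}}]_{_{RN}}$ must be analysed through the pre-Courant statement of Lemma~\ref{leml^N} rather than through Corollary~\ref{cornew}, since a priori one does not know that $\circ^{N,N}$ is Leibniz; this is concluded only a posteriori from $\circ^{N,N}=\circ^K$.
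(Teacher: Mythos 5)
Your proposal is correct and follows the same overall route as the paper's own proof: both directions rest on exactly the same ingredients (Theorem \ref{them:NijenhuisCourant}, Corollary \ref{cornew}, the pre-Courant Lemma \ref{leml^N}, and Corollary \ref{cor:Anchoruniqe}), applied in the same order. The one genuine difference is your reconstruction lemma, and it is a worthwhile addition rather than a detour. In the direction (ii)$\Rightarrow$(i), the paper simply asserts that evaluating $[\mathcal{N},[\mathcal{N},\mu]_{_{RN}}]_{_{RN}}=[\mathcal{K},\mu]_{_{RN}}$ on a pair of sections yields $X\circ^{N,N}Y=X\circ^{K}Y$; but that evaluation literally produces only the skew-symmetrised equality $\tfrac{1}{2}(X\circ^{N,N}Y-Y\circ^{N,N}X)=\tfrac{1}{2}(X\circ^{K}Y-Y\circ^{K}X)$, since the binary component of an associated (pre-)Lie $2$-algebra records only the skew part of the Dorfman bracket. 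Your identity $X\circ Y=l_2(X,Y)+\tfrac{1}{2}\,l_1(\langle X,Y\rangle)$ --- valid already for pre-Courant algebroids, because the symmetric part $X\circ Y+Y\circ X=\mathcal{D}\langle X,Y\rangle$ uses only axiom (iii) and non-degeneracy --- recovers the missing symmetric part from the $l_1$-components (which agree), after which Corollary \ref{cor:Anchoruniqe} pins down the anchor; this is precisely what is needed to make the paper's terse step rigorous. Your two cautionary remarks are also on target and consistent with what the paper actually does: in the forward direction one must indeed handle $\circ^{N,N}$ through the pre-Courant Lemma \ref{leml^N} rather than Corollary \ref{cornew}, since its Leibniz property is known only a posteriori from $\circ^{N,N}=\circ^{K}$; and the $C^\infty(M)$-linearity of $\mathcal{N}$, which the statement of the corollary omits but Theorem \ref{them:NijenhuisCourant} requires, has to be assumed, exactly as the paper does implicitly (and as its companion Corollary \ref{thewrongcorollary2} makes explicit).
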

\begin{proof}
Given a quadruple $(N,K,\lambda,\gamma)$ satisfying  conditions in item $(i)$,  we define vector valued $1$-forms $\mathcal{N}$ and $\mathcal{K}$ on the graded vector space $\mathcal{C}^{\infty}(M)\oplus \Gamma(E)$ as $\mathcal{N}(f)=\lambda f$, $\mathcal{K}(f)=\gamma f$,  $\mathcal{N}(X)=NX$ and $\mathcal{K}(X)=KX$, for all $X\in \Gamma(E)$ and $f\in \mathcal{C}^{\infty}(M)$. We prove that $\mathcal{N}$ is a Nijenhuis vector valued form with respect to $\mu$, with square $\mathcal{K}$. First, notice that using Corollary \ref{cor:Anchoruniqe}, the assumption $\circ^{N,N}=\circ^{K}$ implies that $(E,\circ^{N,N}, \rho^{N,N}, \langle .,.\rangle)$ and $(E,\circ^{K}, \rho^{K}, \langle .,.\rangle)$ are the same pre-Courant algebroid, hence, they have the same associated pre-Lie $2$-algebras. On the other hand, using Lemma \ref{leml^N}, the pre-Lie $2$-algebra associated to the pre-Courant algebroid $(E,\circ^{N,N}, \rho^{N,N}, \langle .,.\rangle)$ is
$[\mathcal{N},[\mathcal{N},\mu]_{_{RN}}]_{_{RN}}$ and the pre-Lie $2$-algebra associated to the pre-Courant algebroid $(E,\circ^{K}, \rho^{K}, \langle .,.\rangle)$ is $[\mathcal{K},\mu]_{_{RN}}$. Hence,
\begin{equation}\label{injast}
[\mathcal{N},[\mathcal{N},\mu]_{_{RN}}]_{_{RN}}=[\mathcal{K},\mu]_{_{RN}}.
\end{equation}
Also, using the assumption $NK-KN=0$, we get
\begin{equation}\label{keiman}
[\mathcal{N},\mathcal{K}]_{_{RN}}=0.
\end{equation}
Equations (\ref{injast}) and (\ref{keiman}) show that $\mathcal{N}$ is a Nijenhuis vector valued $1$-form with respect to $\mu$, with square $\mathcal{K}$. By Corollary \ref{cornew}, $[\mathcal{N},\mu]_{_{RN}}$ is a Lie $2$-algebra associated to the Courant algebroid $(E,\circ^N, \rho, \langle .,.\rangle)$ and $[\mathcal{K},\mu]_{_{RN}}$ is a Lie $2$-algebra associated to the Courant algebroid $(E,\circ^K, \rho, \langle .,.\rangle)$.

Conversely, assume that $\mathcal{N}$ is a Nijenhuis vector valued form with respect to $\mu$, with square $\mathcal{K}$, such that $[\mathcal{N},\mu]_{_{RN}}$ and $[\mathcal{K},\mu]_{_{RN}}$ are Lie $2$-algebras associated to  Courant algebroids. Then, by Theorem \ref{them:NijenhuisCourant}, $\mathcal{N}$ is of the form $\lambda+N$ with $N+N^*=\lambda   \, {\rm Id}_{\Gamma(E)}$ and $\mathcal{K}$ is of the form $\gamma+K$, with $K+K^*=\gamma   \, {\rm Id}_{\Gamma(E)}$.
 Moreover, the Courant algebroid which is associated to the Lie $2$-algebra $[\mathcal{N}, \mu]_{_{RN}}$ (respectively, $[\mathcal{K}, \mu]_{_{RN}}$) is $(E,\circ^N,\rho^N,\langle .,.\rangle)$ (respectively, $(E,\circ^K,\rho^K,\langle .,.\rangle)$ ). From this, we get that  $(\Gamma(E), \circ^N)$ and $(\Gamma(E), \circ^K)$ are Leibniz algebras. Since $\mathcal{N}$ is a Nijenhuis vector valued form with respect to $\mu$, with square $\mathcal{K}$, we have
\begin{equation}\label{famus}
[\mathcal{N},[\mathcal{N},\mu]_{_{RN}}]_{_{RN}}=[\mathcal{K},\mu]_{_{RN}}
\end{equation}
and
\begin{equation}\label{famusmus}
[\mathcal{N},\mathcal{K}]_{_{RN}}=0.
\end{equation}
Applying both sides of Equation (\ref{famus}) on a pair of sections $X,Y\in \Gamma(E)$ we get $X\circ^{N,N}Y=X\circ^K Y$, which implies $\circ^{N,N}=\circ^K$. Lastly, Equation (\ref{famusmus}) implies $KN-NK=0$.
\end{proof}

Using Lemma \ref{skewbracketlemma} and Remark \ref{skewbracketrem}, and also taking into account the fact that the operator $\mathcal{D}$ associated to a pre-Courant algebroid $(E,\circ,\rho,\langle.,.\rangle)$, given by (\ref{matcallD}),  is a derivation, we may
 restate Theorem \ref{them:NijenhuisCourant}.

\begin{thm} \label{them:NijenhuisCourant2}
Let $ (\circ,\rho, \langle . ,. \rangle ) $ be a Courant structure on a vector bundle $E\to M$, with the associated symmetric Lie $2$-algebra structure $l_1+l_2+l_3$ on the graded vector space $V=E_{-2}\oplus E_{-1}$, where $E_{-2}:=\mathcal{C}^{\infty}(M)$ and $E_{-1}:=\Gamma(E)$.
Let ${\mathcal N} = \lambda + N + \alpha$ be a $C^\infty(M)$-linear vector valued form of degree zero on $V$.
Assume also that $\rho$ is not equal to zero on a dense subset of the base manifold.
If $ [{\mathcal N}, l_1+l_2+l_3]_{_{RN}}=l'_1+l'_2+l'_3 $, where the vector valued forms $l'_1,l'_2,l'_3$ are obtained from a pre-Courant algebroid, by the construction given in (\ref{Lie2fromCourant}), with the same scalar product, then
\begin{enumerate}
\item $\lambda$ is a Casimir,
\item $\alpha=0,$
\item $ N+N^* = \lambda   \, {\rm Id}_{\Gamma(E)}.$
\end{enumerate}
In this case, the Courant structure that $ [{\mathcal N}, l_1+l_2+l_3]_{_{RN}}$ is associated to, is $(\circ^N, \rho^N,\langle . ,. \rangle).$
\end{thm}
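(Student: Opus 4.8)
The plan is to re-run the proof of Theorem~\ref{them:NijenhuisCourant} essentially verbatim, since Theorem~\ref{them:NijenhuisCourant2} differs from it only in that the deformed structure $l'_1+l'_2+l'_3$ is now assumed to arise from a \emph{pre}-Courant algebroid rather than from a full Courant algebroid. The original argument used only two structural facts about the target: that its unary component $l'_1$ is the operator $\mathcal D'$ of the associated algebroid and is therefore a derivation, and that its skew-symmetric binary part obeys the Leibniz-type identity of Lemma~\ref{skewbracketlemma}. Both facts survive the passage to pre-Courant algebroids: the operator $\mathcal D$ attached to any pre-Courant algebroid by (\ref{matcallD}) is a derivation (this is immediate from the $\mathcal C^\infty(M)$-linearity and non-degeneracy of $\langle.,.\rangle$ together with $\rho'(X)$ being a derivation of functions), and Lemma~\ref{skewbracketlemma} remains valid in the pre-Courant setting by Remark~\ref{skewbracketrem}. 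So the real task is to verify that no intermediate computation invoked the Leibniz axiom~(i) of Definition~\ref{def:Courantdorfmanshort}.

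First I would compute the homogeneous components of $[\mathcal N,\mu]_{_{RN}}$ exactly as in (\ref{equation1intheorem})--(\ref{equation2intheorem-2}). These are purely algebraic consequences of the Richardson-Nijenhuis bracket and of the defining formulas (\ref{Lie2fromCourant}) for $\mu$, and hence are unaffected by dropping axiom~(i). The requirement that $[\mathcal N,\mu]^1_{RN}$ be the derivation $\mathcal D'$ of \emph{some} pre-Courant algebroid forces $l_1(\lambda)=\mathcal D\lambda=0$, which yields conclusion~(1) and the simplified expression (\ref{equation1intheorem-1}); the graded symmetry of the deformed vector valued $2$-form, namely $[\mathcal N,\mu]^2_{RN}(X,f)=[\mathcal N,\mu]^2_{RN}(f,X)$ (the Koszul sign being $+1$ on $E_{-1}\times E_{-2}$), forces $\alpha(X,l_1(f))=0$ and hence (\ref{eq:deformedon(X,f)}), exactly as in (\ref{equation2intheorem})--(\ref{eq:174A}).

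The decisive step is the Leibniz-type identity. Because the target is associated to a pre-Courant algebroid, Remark~\ref{skewbracketrem} guarantees that its skew bracket satisfies (\ref{mainbody}). Expanding both sides by means of (\ref{equation1intheorem-1}), (\ref{eq:deformedon(X,f)}) and (\ref{equation2intheorem-2})---applying the original Lemma~\ref{skewbracketlemma} to the undeformed pre-Courant structure on the left-hand side---reproduces (\ref{eq:LHS}) and (\ref{eq:RHS}) without change. Subtracting them leaves
\begin{equation*}
\tfrac{1}{2}\langle X,(N+N^{*}-\lambda\,{\rm Id}_{\Gamma(E)})Y\rangle\, l_1(f)=\alpha(X,Y)\, l_1(f),
\end{equation*}
for all $X,Y\in\Gamma(E)$ and $f\in\mathcal C^{\infty}(M)$.

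To finish I would invoke the parity argument of the original proof: the coefficient $\langle X,(N+N^{*}-\lambda\,{\rm Id}_{\Gamma(E)})Y\rangle$ is symmetric in $X,Y$ whereas $\alpha(X,Y)$ is skew-symmetric. Since $\rho$ is nonzero on a dense subset of the base, there are functions $f$ with $l_1(f)=\mathcal D f\neq 0$, so the displayed identity can hold only if both coefficients vanish, giving $\alpha=0$ (conclusion~(2)) and $N+N^{*}=\lambda\,{\rm Id}_{\Gamma(E)}$ (conclusion~(3)). With $\alpha=0$ and this trace condition in force, Lemma~\ref{leml^N} identifies $[\mathcal N,\mu]_{_{RN}}$ as the pre-Lie $2$-algebra of $(\circ^{N},\rho^{N},\langle.,.\rangle)$, which is the final assertion. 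I do not expect any genuine obstacle here: the only point requiring care is confirming that the derivation property of $\mathcal D$ and Lemma~\ref{skewbracketlemma} are the sole Courant-specific inputs of the original argument, which is exactly why this statement is offered as a restatement rather than as a new theorem.
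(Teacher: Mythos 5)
Your proposal is correct and matches the paper's own treatment exactly: the paper states Theorem \ref{them:NijenhuisCourant2} as a restatement of Theorem \ref{them:NijenhuisCourant}, justified precisely by the two observations you isolate, namely that the operator $\mathcal{D}$ of a pre-Courant algebroid given by (\ref{matcallD}) is a derivation and that Lemma \ref{skewbracketlemma} persists in the pre-Courant setting via Remark \ref{skewbracketrem}. Your write-up simply makes explicit the verbatim re-run of the proof of Theorem \ref{them:NijenhuisCourant} that the paper leaves implicit, including the concluding appeal to Lemma \ref{leml^N} for the identification of the deformed structure with that of $(\circ^N,\rho^N,\langle.,.\rangle)$.
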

And this leads to the next result:
\begin{cor}\label{thewrongcorollary2}
Let $(E,\circ, \rho,\langle.,.\rangle)$ be a Courant algebroid  with anchor $\rho$ being different from zero on a dense subset of $E$, with the associated Lie $2$-algebra structure $\mu=l_1+l_2+l_3$ on the graded vector space $\mathcal{C}^{\infty}(M)\oplus \Gamma(E)$. Then, there is a one to one correspondence between:
 \begin{enumerate}
 \item[(i)] quadruples $(N,K,\lambda,\gamma)$ with $N,K$ being $(1,1)$-tensors and $\lambda,\gamma$ being Casimir functions satisfying the following conditions:
    \begin{equation}\label{yekikamtar}
     \begin{cases}
     \circ^{N,N}=\circ^K,\\
     N K - K N =0,\\
     N+N^*=\lambda   \, {\rm Id}_{\Gamma(E)} ,\\
     K+K^* = \gamma   \, {\rm Id}_{\Gamma(E)}.\\
     \end{cases}
     \end{equation}
     \item[(ii)] $C^\infty(M)$-linear Nijenhuis vector valued forms ${\mathcal N} $ with respect to $\mu$, with square ${\mathcal K} $, such that the deformed bracket is of the form $[ {\mathcal N}, \mu]_{_{RN}}=l'_1+l'_2+l'_3 $ and $l'_1,l'_2,l'_3$ are constructed by the procedure in (\ref{Lie2fromCourant}) obtained from a pre-Courant algebroid, with the same scalar product.
 \end{enumerate}
 \end{cor}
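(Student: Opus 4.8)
The plan is to mirror the proof of Corollary \ref{thewrongcorollary}, systematically replacing every occurrence of ``Courant algebroid'' by ``pre-Courant algebroid'', so that the Leibniz hypotheses disappear and Corollary \ref{cornew} is replaced throughout by Lemma \ref{leml^N}, which holds verbatim in the pre-Courant setting. Accordingly the two tools driving the correspondence are Theorem \ref{them:NijenhuisCourant2} (the pre-Courant restatement of the obstruction theorem) and Lemma \ref{leml^N} (identifying a Nijenhuis deformation of the associated pre-Lie $2$-algebra with the pre-Lie $2$-algebra of the deformed pre-Courant structure).

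For the direction (i)$\Rightarrow$(ii), given a quadruple $(N,K,\lambda,\gamma)$ satisfying (\ref{yekikamtar}), I would set $\mathcal N := \lambda + N$ and $\mathcal K := \gamma + K$, meaning $\mathcal N(f)=\lambda f$, $\mathcal N(X)=NX$, and likewise for $\mathcal K$. Since $N+N^*=\lambda\,{\rm Id}_{\Gamma(E)}$ and $K+K^*=\gamma\,{\rm Id}_{\Gamma(E)}$ with $\lambda,\gamma$ Casimir, Lemma \ref{pre} guarantees that $(E,\circ^N,\rho^N,\langle .,.\rangle)$ and $(E,\circ^K,\rho^K,\langle .,.\rangle)$ are pre-Courant algebroids, and Lemma \ref{leml^N} identifies $[\mathcal N,\mu]_{_{RN}}$ and $[\mathcal K,\mu]_{_{RN}}$ with the pre-Lie $2$-algebras they define. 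Applying Lemma \ref{leml^N} once more to $(E,\circ^N,\rho^N,\langle .,.\rangle)$ with the tensor $N$ exhibits $[\mathcal N,[\mathcal N,\mu]_{_{RN}}]_{_{RN}}$ as the pre-Lie $2$-algebra of $(E,\circ^{N,N},\rho^{N,N},\langle .,.\rangle)$; by the hypothesis $\circ^{N,N}=\circ^K$ together with Corollary \ref{cor:Anchoruniqe} (which forces $\rho^{N,N}=\rho^K$), this pre-Courant algebroid coincides with $(E,\circ^K,\rho^K,\langle .,.\rangle)$, giving $[\mathcal N,[\mathcal N,\mu]_{_{RN}}]_{_{RN}}=[\mathcal K,\mu]_{_{RN}}$. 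The condition $NK-KN=0$ yields $[\mathcal N,\mathcal K]_{_{RN}}=0$ by a short computation on functions and sections, so $\mathcal N$ is Nijenhuis with square $\mathcal K$, and $[\mathcal N,\mu]_{_{RN}}$ has the required form by construction.

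For the converse (ii)$\Rightarrow$(i), I would start from a $C^\infty(M)$-linear Nijenhuis form $\mathcal N$ with square $\mathcal K$ whose deformation $[\mathcal N,\mu]_{_{RN}}$ arises from a pre-Courant structure. Theorem \ref{them:NijenhuisCourant2} immediately gives $\mathcal N=\lambda+N$ with $\alpha=0$, $\lambda$ a Casimir and $N+N^*=\lambda\,{\rm Id}_{\Gamma(E)}$, and identifies that pre-Courant structure as $(\circ^N,\rho^N,\langle .,.\rangle)$. The main obstacle is that (ii) constrains only $[\mathcal N,\mu]_{_{RN}}$, not $[\mathcal K,\mu]_{_{RN}}$, so I must show that $[\mathcal K,\mu]_{_{RN}}=[\mathcal N,[\mathcal N,\mu]_{_{RN}}]_{_{RN}}$ is automatically associated to a pre-Courant structure before Theorem \ref{them:NijenhuisCourant2} can be invoked for $\mathcal K$. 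This is exactly where the pre-Courant framework pays off: since the scalar product is unchanged, $N+N^*=\lambda\,{\rm Id}_{\Gamma(E)}$ still holds for $(E,\circ^N,\rho^N,\langle .,.\rangle)$, and $\lambda$ remains Casimir there because $\rho^N(X)\lambda=\rho(NX)\lambda=0$; applying Lemma \ref{leml^N} to this pre-Courant algebroid with the same tensor $N$ exhibits $[\mathcal N,[\mathcal N,\mu]_{_{RN}}]_{_{RN}}$ as the pre-Lie $2$-algebra of $(E,\circ^{N,N},\rho^{N,N},\langle .,.\rangle)$.

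With this in hand, Theorem \ref{them:NijenhuisCourant2} applies to $\mathcal K$ and forces $\mathcal K=\gamma+K$ with $\gamma$ a Casimir and $K+K^*=\gamma\,{\rm Id}_{\Gamma(E)}$. Finally, evaluating the Nijenhuis identities $[\mathcal N,[\mathcal N,\mu]_{_{RN}}]_{_{RN}}=[\mathcal K,\mu]_{_{RN}}$ and $[\mathcal N,\mathcal K]_{_{RN}}=0$ on pairs of sections of $E$ recovers $\circ^{N,N}=\circ^K$ and $NK-KN=0$, which together with the two trace conditions completes (\ref{yekikamtar}). The only genuinely new input relative to Corollary \ref{thewrongcorollary} is the self-contained argument of the previous paragraph showing that the hypothesis on $[\mathcal N,\mu]_{_{RN}}$ alone already propagates to $[\mathcal K,\mu]_{_{RN}}$; everything else is a faithful transcription of the earlier proof into the pre-Courant language.
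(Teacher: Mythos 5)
Your proposal is correct and follows essentially the same route as the paper's own proof: both directions rest on the same combination of Theorem \ref{them:NijenhuisCourant2}, Lemma \ref{pre}, Lemma \ref{leml^N} and Corollary \ref{cor:Anchoruniqe}, with the key observation in the direction (ii)$\Rightarrow$(i) being precisely the paper's step that $[\mathcal{K},\mu]_{_{RN}}=[\mathcal{N},[\mathcal{N},\mu]_{_{RN}}]_{_{RN}}$ is automatically the pre-Lie $2$-algebra of the pre-Courant algebroid $(E,\circ^{N,N},\rho^{N,N},\langle.,.\rangle)$, so that the obstruction theorem can be applied to $\mathcal{K}$. Your explicit check that $\lambda$ remains Casimir for the deformed anchor ($\rho^{N}(X)\lambda=\rho(NX)\lambda=0$) only makes explicit a point the paper leaves implicit.
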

\begin{proof}
Let $\mathcal{N}$ be a $C^\infty(M)$-linear Nijenhuis vector valued form with respect to the Lie $2$-algebra structure $\mu=l_1+l_2+l_3$, with square $\mathcal{K}$, and assume that $\left[\mathcal{N},\mu\right]_{_{RN}}$ is obtained from a pre-Courant algebroid. Let
\begin{equation*}
\mathcal{N}|_{\Gamma(E)}=N,\quad \mathcal{N}|_{\mathcal{C}^{\infty}(M)}=\lambda   \, {\rm Id}_{\mathcal{C}^{\infty}(M)},\quad \mathcal{K}|_{\Gamma(E)}=K \quad\mbox{and}\quad\mathcal{K}|_{\mathcal{C}^{\infty}(M)}=\gamma   \, {\rm Id}_{\mathcal{C}^{\infty}(M)}.
\end{equation*}
By Theorem \ref{them:NijenhuisCourant2}, $N+N^*=\lambda   \, {\rm Id}_{\Gamma(E)}$ and $(E,\circ^N, \rho^N,\langle.,.\rangle)$ is a pre-Courant algebroid (it is, in fact, the pre-Courant algebroid which $\left[\mathcal{N},\mu\right]_{_{RN}}$ is obtained from). Hence, by Lemma \ref{pre}, $(E,\circ^{N,N}, \rho^{N,N},\langle.,.\rangle)$ is a pre-Courant algebroid. Now, Lemma \ref{leml^N} implies that $\left[\mathcal{K},\mu\right]_{_{RN}}=\left[\mathcal{N},\left[\mathcal{N},\mu\right]_{_{RN}}\right]_{_{RN}}$ is obtained from the pre-Courant algebroid $(E,\circ^{N,N}, \rho^{N,N},\langle.,.\rangle)$, by the construction given in (\ref{Lie2fromCourant}). Therefore, by Theorem \ref{them:NijenhuisCourant2}, $K+K^*=\gamma   \, {\rm Id}_{\Gamma(E)}$. The assumption $\left[\mathcal{N},\mathcal{K}\right]_{_{RN}}=0$ implies that $NK-KN=0$, while $\left[\mathcal{N},\left[\mathcal{N},\mu\right]_{_{RN}}\right]_{_{RN}}=\left[\mathcal{K},\mu\right]_{_{RN}}$ implies that $\circ^{N,N}=\circ^{K}$.

Conversely, assume that we are given a quadruple $(N,K,\lambda,\gamma)$ satisfying the properties in (\ref{yekikamtar}). By Lemma \ref{pre}, $(E,\circ^{N},\rho^{N},\langle .,.\rangle)$ is a pre-Courant and by Lemma \ref{leml^N}, the pre-Lie $2$-algebra structure associated to the pre-Courant algebroid $(E,\circ^{N},\rho^{N},\langle .,.\rangle)$ is $\left[\mathcal{N},\mu\right]_{_{RN}}$. Similar arguments prove that the pre-Lie $2$-algebra structure associated to the pre-Courant algebroid $(E,\circ^{N,N},\rho^{N,N},\langle .,.\rangle)$ is $\left[\mathcal{N},\left[\mathcal{N},\mu\right]_{_{RN}}\right]_{_{RN}}$ and the pre-Lie $2$-algebra structure associated to the pre-Courant algebroid $(E,\circ^{K},\rho^{K},\langle .,.\rangle)$ is $\left[\mathcal{K},\mu\right]_{_{RN}}$. Now, the assumption $\circ^{N,N}=\circ^{K}$ and Lemma \ref{cor:Anchoruniqe} imply that $(E,\circ^{N,N},\rho^{N,N},\langle .,.\rangle)$ and  $(E,\circ^{K},\rho^{K},\langle .,.\rangle)$ are the same pre-Courant algebroid; therefore, we have $\left[\mathcal{N},\left[\mathcal{N},\mu\right]_{_{RN}}\right]_{_{RN}}=\left[\mathcal{K},\mu\right]_{_{RN}}$. It follows from the assumption $NK-KN=0$ that $\left[\mathcal{N},\mathcal{K}\right]=0$. Hence, $\mathcal{N}$ is a $C^\infty(M)$-linear Nijenhuis vector valued form with respect to the Lie $2$-algebra structure $\mu$, with square $\mathcal{K}$.
\end{proof}

\begin{rem}
The notion of weak Nijenhuis tensor on a Courant algebroid was introduced in \cite{CGM} (see also \cite{YKSBrazil}). A $(1,1)$-tensor $N$ on a Courant algebroid $(E,\circ, \rho,\langle.,.\rangle)$ is a \emph{weak Nijenhuis tensor} if its  Nijenhuis torsion is a Leibniz $2$-cocycle.
We may ask how weak Nijenhuis tensors on a Courant algebroid are related to weak Nijenhuis vector valued forms, with respect to the Lie $2$-algebra associate to the Courant algebroid.

Let $(E,\circ, \rho,\langle.,.\rangle)$ be a Courant algebroid, with associated Lie $2$-algebra $\mu$,
 and $N$ a $(1,1)$-tensor on $E$ which is weak Nijenhuis (in the sense of \cite{CGM}) and such that $N+N^*=\lambda   \, {\rm Id}_{\Gamma(E)}$, with $\lambda$ a Casimir function.
 Then, $(E,\circ^N, \rho^N,\langle.,.\rangle)$ is a Courant algebroid \cite{CGM}; let us denote by  $\mu^N$ its associated Lie $2$-algebra. By Corollary \ref{cornew}, $\mu^N= \left[\mathcal{N},\mu\right]_{_{RN}}$, with $\mathcal{N}$ given by (\ref{eq:defmathcalN}). But $\left[\mathcal{N},\mu\right]_{_{RN}}$ being a Lie $2$-algebra is equivalent to $\mathcal{N}$ being weak Nijenhuis with respect to $\mu$ (see Proposition \ref{prop:deformedisLinfty}).

 Summarizing, if $N$ is a weak Nijenhuis tensor on a Courant algebroid $E$ and $N+N^*=\lambda   \, {\rm Id}_{\Gamma(E)}$, with $\lambda$ a Casimir function, then, $\mathcal{N}$ given by
 \begin{equation*}
{\mathcal N}|_{\Gamma(E)}= N \,\,\,\,\, \mbox{and} \,\,\,\,\,{\mathcal N}|_{\mathcal{C}^{\infty}(M)}=\lambda \, {\rm Id}_{\mathcal{C}^{\infty}(M)}
\end{equation*}
 is a weak Nijenhuis vector valued form with respect to the Lie $2$-algebra associated to the Courant algebroid.
\end{rem}
\


\section{Multiplicative $L_{\infty}$-structures}
Adapting the notion of $P_\infty$-structure on a graded vector space \cite{Cattaneo-Felder} to the symmetric graded case, we define, in this section,  multiplicative $L_{\infty}$-structures. We classify all multiplicative $L_{\infty}$-structures on $\Gamma(\wedge A)[2]$,
for $A \to M $ an arbitrary vector bundle over a manifold $M$. When $A \to M $ is equipped with a Lie algebroid structure, given a $(1,1)$-tensor $N$ on $A$, we define the extension of $N$ by derivation, which is a symmetric vector valued $1$-form on $\Gamma(\wedge A)[2]$, of degree zero. For a $k$-form on the Lie algebroid, we also define its extension by derivation, yielding a symmetric vector valued form $k$-form of degree $k-2$. These multi-derivations will be used in the next section to construct examples of Nijenhuis forms.

There is an important graded Lie subalgebra of $(\tilde{S}(E^*) \otimes E,\,\left[.,.\right]_{{RN}})$, when there exists a graded
commutative associative algebra structure on $E[2]$, denoted by $\wedge$, that is, a bilinear operation such that for all $X \in E_i, Y \in E_j, Z\in E_k$
\begin{itemize}
 \item $  X\wedge Y \in E_{i+j+2}$,
 \item $(X\wedge Y) \wedge Z = X\wedge (Y \wedge Z)$,
 \item $ X \wedge Y = (-1)^{|X||Y|} Y\wedge X$,
\end{itemize}
 where $|X|=i+2$ and $|Y|=j+2$.

\begin{defn}  \label{defn:multiderivation}
Let $E$ be a graded vector space equipped with an associative graded commutative algebra structure, that is a graded symmetric bilinear map $\wedge$ of degree zero which is associative. An element $D\in S^{d}(E^*)\otimes E$ is called a {\em multi-derivation vector valued $d$-form}, if
\begin{eqnarray}\label{eqdef:multi-derivation}
&&D(X_1, \cdots,X_{i-1}, Y\wedge Z, X_{i+1},\cdots,X_d)\\
&& \hspace{.2cm}=(-1)^{|Z|(|X_{i+1}|+\cdots+|X_{d}|)}D(X_1, \cdots,X_{i-1}, Y, X_{i+1},\cdots,X_d)\wedge Z \nonumber \\
&&\hspace{.2cm} +(-1)^{|Y|(|X_{1}|+\cdots+|X_{i-1}|+\bar{D})}Y\wedge D(X_1, \cdots,X_{i-1}, Z, X_{i+1},\cdots,X_d), \nonumber
\end{eqnarray}
for all $X_1, \cdots,X_d,Y,Z \in E$, where $\bar{D}$ is the degree of $D$ as a graded map.
\end{defn}
\begin{rem}  \label{remark_multider}
The graded commutativity of the product $\wedge$ implies that Equation (\ref{eqdef:multi-derivation}) is equivalent to
\begin{eqnarray*}
&&D(X_1,\cdots,X_{d-1},Y\wedge Z)\\
&& \hspace{.2cm}=D(X_1, \cdots,X_{d-1},Y)\wedge Z +(-1)^{|Y||Z|} D(X_1,\cdots,X_{d-1},Z)\wedge Y.
\end{eqnarray*}
\end{rem}

 We denote the space of all multi-derivation vector valued forms by $MultiDer(E)$. Elements of $S^{1}(E^*)\otimes E$ are simply called \emph{derivations}. By definition, $E \subset MultiDer(E)$ and we have the following:
\begin{prop}\label{prop:Multider}
    $MultiDer(E)$ is a graded Lie subalgebra of  $(\tilde{S}(E^*) \otimes E,\, \left[.,.\right]_{_{RN}})$.
 \end{prop}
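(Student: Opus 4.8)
The plan is to establish two things: that $MultiDer(E)$ is a graded linear subspace of $\tilde{S}(E^*)\otimes E$, and that it is closed under the Richardson-Nijenhuis bracket. The first is immediate, since the defining relation (\ref{eqdef:multi-derivation}) is linear and homogeneous in $D$ and respects the decomposition by both form-arity and map-degree; hence $MultiDer(E)$ is the direct sum of its homogeneous components and inherits the grading from $\tilde{S}(E^*)\otimes E$. As the ambient space $(\tilde{S}(E^*)\otimes E,\,[\cdot,\cdot]_{_{RN}})$ is already a graded Lie algebra, it only remains to prove closure under the bracket.

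Let $D_1\in S^k(E^*)\otimes E$ and $D_2\in S^l(E^*)\otimes E$ be multi-derivations of map-degrees $\bar{D_1},\bar{D_2}$, and set $C=[D_1,D_2]_{_{RN}}=\iota_{D_1}D_2-(-1)^{\bar{D_1}\bar{D_2}}\iota_{D_2}D_1\in S^{k+l-1}(E^*)\otimes E$. By Remark \ref{remark_multider} it suffices to verify the Leibniz rule of Definition \ref{defn:multiderivation} in the \emph{last} slot, that is, to check that $C(X_1,\dots,X_{k+l-2},Y\wedge Z)$ equals $C(X_1,\dots,X_{k+l-2},Y)\wedge Z+(-1)^{|Y||Z|}C(X_1,\dots,X_{k+l-2},Z)\wedge Y$. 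I would compute this by expanding $\iota_{D_1}D_2(X_1,\dots,X_{k+l-2},Y\wedge Z)$ with the unshuffle sum (\ref{equ:insertion}) and splitting it according to the fate of the last argument: either $Y\wedge Z$ is left among the outer arguments of $D_2$, in which case the Leibniz rule for $D_2$ applies directly, or it is one of the $k$ arguments fed into $D_1$, in which case the Leibniz rule for $D_1$ applies (and, once the factored value reappears as an argument of $D_2$, a further application of the Leibniz rule for $D_2$ is needed). The first family reproduces, after reindexing the unshuffles, exactly the desired factored terms $(\,\cdot\,)\wedge Z$ and $(\,\cdot\,)\wedge Y$.

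The main obstacle, and the heart of the proof, is the remaining \emph{cross terms}, in which one of $D_1,D_2$ has differentiated one factor while the other form carries the second factor: these are not of the factored form and are precisely the obstruction to $\iota_{D_1}D_2$ being, by itself, a multi-derivation. I expect them to cancel, and this cancellation is exactly the role of the antisymmetrization in the Richardson-Nijenhuis bracket: each cross term produced by $\iota_{D_1}D_2$ matches, up to the sign $(-1)^{\bar{D_1}\bar{D_2}}$, a cross term of $\iota_{D_2}D_1$ with the roles of $D_1$ and $D_2$ interchanged, so they cancel in $C$. One already sees the mechanism in the arity-one case, where $\iota_{D_1}D_2(W)=D_2(D_1W)$ is a mere composition: its Leibniz expansion on $Y\wedge Z$ contains terms $(D_1Y)\wedge(D_2Z)$ whose Koszul prefactor is opposite to the matching term coming from $-(-1)^{\bar{D_1}\bar{D_2}}D_1(D_2(Y\wedge Z))$, so only the graded commutator survives as a derivation.

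The one genuinely delicate point in carrying this out for general arity is the sign bookkeeping: one must track together the Koszul signs $\epsilon(\sigma)$ of the unshuffles, the commutativity sign $(-1)^{|Y||Z|}$ of $\wedge$, and the map-degrees $\bar{D_1},\bar{D_2}$, and check that in every matched pair the prefactors differ exactly by $-(-1)^{\bar{D_1}\bar{D_2}}$. Once this is verified the cross terms vanish identically and what remains is the factored expression, proving that $C$ is a multi-derivation vector valued $(k+l-1)$-form and hence that $MultiDer(E)$ is closed under $[\cdot,\cdot]_{_{RN}}$. Conceptually, when $E[2]$ is a graded commutative algebra this is the statement that $MultiDer(E)$ — the formal multivector fields of that algebra — is stable under the Schouten--Nijenhuis bracket, to which the Richardson--Nijenhuis bracket restricts; the direct verification above is a self-contained avatar of that classical fact.
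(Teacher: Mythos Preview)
Your approach is correct but differs from the paper's. You propose a direct expansion of $\iota_{D_1}D_2(X_1,\dots,X_{k+l-2},Y\wedge Z)$ over unshuffles, splitting by the location of $Y\wedge Z$ and tracking the cross terms that cancel under the antisymmetrization $(-1)^{\bar{D_1}\bar{D_2}}$. The paper instead proceeds by induction on $n=k+l-1$: it first proves the arity-one case (the graded commutator of two derivations is a derivation), then establishes the reduction lemma that $D$ is a multi-derivation if and only if $[X,D]_{_{RN}}$ is one for every $X\in E$, and finally uses the Jacobi identity together with the formula $D(X_1,\dots,X_d)=[X_d,\dots,[X_1,D]_{_{RN}}\dots]_{_{RN}}$ to strip off one argument and invoke the inductive hypothesis. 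Your route is the classical head-on verification and makes the Schouten--Nijenhuis analogy transparent; its cost is exactly the sign bookkeeping you flag, which you leave unexecuted. The paper's inductive argument trades that bookkeeping for repeated use of Jacobi, and is cleaner to write out in full. The paper also records, in a remark, a third variant: viewing $\wedge$ as a degree-$2$ vector valued $2$-form and characterizing derivations by $[\wedge,D]_{_{RN}}=0$, then again reducing via the Jacobi identity.
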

 We shall use the following lemmas in the proof of Proposition \ref{prop:Multider}.
 \begin{lem}\label{lem:multiDer1}
 Let $D_1$ and $D_2$ be two derivations. Then, $[D_1,D_2]_{_{RN}}$ is also a derivation.
 \end{lem}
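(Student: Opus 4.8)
The plan is to reduce the Richardson--Nijenhuis bracket of the two derivations to an ordinary graded commutator of endomorphisms, and then to verify the Leibniz rule by a direct sign computation, exploiting the standard fact that the graded commutator of two graded derivations is again a graded derivation.

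First I would compute $[D_1,D_2]_{_{RN}}$ explicitly. Since $D_1,D_2 \in S^1(E^*)\otimes E$ are vector valued $1$-forms, the insertion operator (\ref{equ:insertion}) with $k=l=1$ involves the single $(1,0)$-unshuffle, the identity, so that $\iota_{D_1}D_2(X)=D_2(D_1(X))$ for all $X \in E$. By (\ref{RNbracket}) this gives
\begin{equation*}
[D_1,D_2]_{_{RN}}(X)=D_2(D_1(X))-(-1)^{\bar{D_1}\bar{D_2}}D_1(D_2(X)),
\end{equation*}
the graded commutator of $D_1$ and $D_2$ seen as graded endomorphisms of $E$. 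In particular $C:=[D_1,D_2]_{_{RN}}$ is again a vector valued $1$-form, of degree $\bar C=\bar{D_1}+\bar{D_2}$ as a graded map.

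Next I would check the derivation identity, that is, the $d=1$ case of (\ref{eqdef:multi-derivation}), for $C$. Writing $a=\bar{D_1}$, $b=\bar{D_2}$ and $y=|Y|$, I expand $D_2(D_1(Y\wedge Z))$ and $D_1(D_2(Y\wedge Z))$ by applying the Leibniz rule for $D_1$ and $D_2$ twice each. Each expansion produces four terms: two \emph{pure} terms in which both derivations land on the same factor, and two \emph{mixed} terms of the shape $D_i(Y)\wedge D_j(Z)$ with $i\neq j$. Forming $C(Y\wedge Z)=D_2D_1(Y\wedge Z)-(-1)^{ab}D_1D_2(Y\wedge Z)$, the pure terms reassemble as $C(Y)\wedge Z$ and $(-1)^{y(a+b)}\,Y\wedge C(Z)$.

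The crux --- and the only delicate point --- is the cancellation of the mixed terms. The coefficient of $D_1(Y)\wedge D_2(Z)$ equals $(-1)^{(y+a)b}-(-1)^{ab+yb}=0$, while the coefficient of $D_2(Y)\wedge D_1(Z)$ equals $(-1)^{ya}-(-1)^{ya+2ab}=0$; both vanish because the competing exponents differ by the even quantity $2ab$. After this cancellation one is left with
\begin{equation*}
C(Y\wedge Z)=C(Y)\wedge Z+(-1)^{|Y|\bar C}\,Y\wedge C(Z),
\end{equation*}
which is exactly the defining identity of a derivation with $\bar C=a+b$. This shows $[D_1,D_2]_{_{RN}}\in S^1(E^*)\otimes E$ is a derivation, completing the proof. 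The main obstacle is purely a matter of bookkeeping, namely arranging the Koszul signs so that the four mixed terms cancel in pairs; everything else is formal.
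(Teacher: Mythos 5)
Your proof is correct and follows essentially the same route as the paper: both begin by computing $[D_1,D_2]_{_{RN}}(X)=D_2(D_1(X))-(-1)^{\bar{D_1}\bar{D_2}}D_1(D_2(X))$, identifying the Richardson--Nijenhuis bracket of two vector valued $1$-forms with (a sign multiple of) their graded commutator. The paper then simply invokes the standard fact that the graded commutator of two graded derivations of a graded commutative associative algebra is again a graded derivation, whereas you verify that fact by the explicit Koszul-sign cancellation of the mixed terms --- a correct, self-contained expansion of the same argument.
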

 \begin{proof}
 We have
 \begin{equation*}
 \begin{array}{rcl}
 [D_1,D_2]_{_{RN}}&=& D_2\circ D_1-(-1)^{\bar{D_1}\bar{D_2}}D_1\circ D_2\\
                  &=&-(-1)^{\bar{D_1}\bar{D_2}}[D_1,D_2],
 \end{array}
 \end{equation*}
 where $\left[.,.\right]$ is the graded commutator on the space of derivations of the graded associative commutative algebra $(E,\, \wedge)$.  Since $[D_1,D_2]$ is a derivation, so is $[D_1,D_2]_{_{RN}}$.
\end{proof}

 \begin{lem}\label{lem:multiDer2}
 For $d \geq 2$, an element $D\in S^{d}(E^*)\otimes E $ is a multi-derivation vector valued $d$-form if and only if $[X,D]_{_{RN}}$ is a multi-derivation vector valued $(d-1)$-form, for all $X\in E$.
 \end{lem}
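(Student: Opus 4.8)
The plan is to reduce the whole statement to a single computational identity expressing the Richardson--Nijenhuis bracket with a zero-form as the operation ``freeze the first argument'', and then to compare the Leibniz conditions on $D$ and on $[X,D]_{_{RN}}$ via Remark~\ref{remark_multider}. First I would establish the key identity: for a vector valued zero-form $X\in E$ and $D\in S^{d}(E^*)\otimes E$,
$$[X,D]_{_{RN}}(X_1,\dots,X_{d-1}) = D(X,X_1,\dots,X_{d-1}),$$
for all $X_1,\dots,X_{d-1}\in E$. This follows from (\ref{RNbracket}) together with the conventions on insertion of a zero-form: since $\iota_D X=0$ and $\iota_X D(X_1,\dots,X_{d-1})=D(X,X_1,\dots,X_{d-1})$, the bracket $[X,D]_{_{RN}}=\iota_X D-(-1)^{\bar X\bar D}\iota_D X$ collapses to $\iota_X D$. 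In particular $[X,D]_{_{RN}}$ is a symmetric vector valued $(d-1)$-form, which is meaningful precisely because $d\geq 2$.

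Next I would invoke Remark~\ref{remark_multider}, by which being a multi-derivation is equivalent to the Leibniz rule \emph{in the last argument only}. Writing that condition for the $(d-1)$-form $[X,D]_{_{RN}}$ and substituting the key identity, the assertion ``$[X,D]_{_{RN}}$ is a multi-derivation'' becomes, for all $X_1,\dots,X_{d-2},Y,Z\in E$,
$$D(X,X_1,\dots,X_{d-2},Y\wedge Z)=D(X,X_1,\dots,X_{d-2},Y)\wedge Z+(-1)^{|Y||Z|}D(X,X_1,\dots,X_{d-2},Z)\wedge Y,$$
which is exactly the last-argument Leibniz rule of Remark~\ref{remark_multider} for $D$ itself, with the first $d-1$ slots occupied by $(X,X_1,\dots,X_{d-2})$.

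From this matching, both implications are immediate. For the forward direction, if $D$ is a multi-derivation then it satisfies the last-slot Leibniz rule of Remark~\ref{remark_multider}, so the displayed identity holds for every $X$, and hence each $[X,D]_{_{RN}}$ is a multi-derivation $(d-1)$-form. For the converse, if $[X,D]_{_{RN}}$ is a multi-derivation for every $X\in E$, then the displayed identity holds for \emph{all} $X,X_1,\dots,X_{d-2},Y,Z$; since the frozen argument $X$ ranges over all of $E$, this is precisely the last-slot Leibniz rule for $D$, and Remark~\ref{remark_multider} lets me conclude that $D$ is a multi-derivation.

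The only real obstacle is the bookkeeping of signs and degrees in the key identity: I must confirm that $\iota_D X$ genuinely vanishes under the stated zero-form conventions, and that the degree shift in the product $\wedge$ (where $|X|=i+2$ for $X\in E_i$) introduces no extra sign that would spoil the term-by-term matching of the two Leibniz conditions. Once the key identity is pinned down, the equivalence is a purely formal comparison requiring no further computation, and this lemma then feeds directly into the inductive proof of Proposition~\ref{prop:Multider} alongside Lemma~\ref{lem:multiDer1}.
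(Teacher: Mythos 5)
Your proposal is correct and is essentially the paper's own proof: the paper reduces the lemma to the single identity $[X,D]_{_{RN}}(X_1,\cdots,X_{d-2},Y\wedge Z)=D(X,X_1,\cdots,X_{d-2},Y\wedge Z)$, which is exactly your key identity (valid because $\iota_D X=0$ and $[X,D]_{_{RN}}=\iota_X D$ under the paper's zero-form conventions), and then the equivalence with the last-slot Leibniz rule of Remark~\ref{remark_multider} is immediate since the frozen argument $X$ ranges over all of $E$. Your extra care about signs is harmless but unnecessary, as both sides of the matched Leibniz conditions are literally the same expressions.
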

 \begin{proof}
 It is a direct consequence of
 \begin{equation*}
 [X,D]_{_{RN}}(X_1,\cdots,X_{d-2}, Y\wedge Z)=D(X,X_1,\cdots,X_{d-2}, Y\wedge Z),
 \end{equation*}
 which holds for all elements $Y,Z,X_1,\cdots,X_{d-2} \in E$.
 \end{proof}
 \begin{proof}(of Proposition \ref{prop:Multider})
 Let $D, D^{\prime}$ be two multi-derivation vector valued $d$-form and $d'$-form, respectively. We show that $[D,D^{\prime}]_{_{RN}}$ is a multi-derivation vector valued $(d+d'-1)$-form, using induction on the number $n=d+d'-1$. Lemmas \ref{lem:multiDer1} and \ref{lem:multiDer2} prove the case $n=1$. Assume, by induction, that $[D,D^{\prime}]_{_{RN}}$ is a multi-derivation vector valued $(d+d'-1)$-form and let $D_1$ and $D_2$ be two multi-derivation vector valued $d_1$- and $d_2$-forms respectively, such that $d_1+d_2-1=n+1$. From (\ref{kform}) we have
 \begin{equation*}\label{eq:firstinprop}
 [D_1,D_2]_{_{RN}}(X_1,\cdots,X_{d_1+d_2-2},Y\wedge Z)=[Y\wedge Z,[X_{d_1+d_2-2},\cdots,[X_1,[D_1,D_2]_{_{RN}}]_{_{RN}}\cdots]_{_{RN}}]_{_{RN}},
 \end{equation*}
 or, using the graded Jacobi identity of $[.,.]_{_{RN}}$,
 \begin{eqnarray*}
 &&[D_1,D_2]_{_{RN}}(X_1,\cdots,X_{d_1+d_2-2},Y\wedge Z)\\
 &&\hspace{.2cm} =[Y\wedge Z,[X_{d_1+d_2-2},\cdots,[[X_1,D_1]_{_{RN}},D_2]_{_{RN}}\cdots]_{_{RN}}]_{_{RN}}\\
 &&\hspace{.3cm} +(-1)^{\bar{D_1}\bar{X_1}}[Y\wedge Z,[X_{d_1+d_2-2},\cdots,[D_1,[X_1,D_2]_{_{RN}}]_{_{RN}}\cdots]_{_{RN}}]_{_{RN}},
 \end{eqnarray*}
 for all $X_1,\cdots,X_{d_1+d_2-2},Y, Z\in E$.
 By Lemma \ref{lem:multiDer2}, $[X_1,D_1]_{_{RN}}$ and $[X_1,D_2]_{_{RN}}$ are multi-derivation vector valued $(d_1-1)$- and $(d_2-1)$-forms respectively, and hence using the assumption of induction, $[[X_1,D_1]_{_{RN}},D_2]_{_{RN}}$ and $[D_1,[X_1,D_2]_{_{RN}}]_{_{RN}}$ are multi-derivation vector valued $n$-forms. Therefore,
 \begin{eqnarray*}
 &&[D_1,D_2]_{_{RN}}(X_1,\cdots,X_{d_1+d_2-2},Y\wedge Z)\\
 &&\hspace{.2cm} =[[X_1,D_1]_{_{RN}},D_2]_{_{RN}}(X_2,\cdots,X_{d_1+d_2-2},Y\wedge Z)\\
 &&\hspace{.3cm} +(-1)^{\bar{D_1}\bar{X_1}}[D_1,[X_1,D_2]_{_{RN}}]_{_{RN}}(X_2,\cdots,X_{d_1+d_2-2},Y\wedge Z)\\
 &&\hspace{.2cm} =[[X_1,D_1]_{_{RN}},D_2]_{_{RN}}(X_2,\cdots,X_{d_1+d_2-2},Y)\wedge Z\\
 &&\hspace{.3cm} +(-1)^{|Y||Z|}[[X_1,D_1]_{_{RN}},D_2]_{_{RN}}(X_2,\cdots,X_{d_1+d_2-2},Z)\wedge Y\\
 &&\hspace{.3cm} +(-1)^{\bar{D_1}\bar{X_1}}[D_1,[X_1,D_2]_{_{RN}}]_{_{RN}}(X_2,\cdots,X_{d_1+d_2-2},Y)\wedge Z\\
 &&\hspace{.3cm} +(-1)^{\bar{D_1}\bar{X_1}}(-1)^{|Y||Z|}[D_1,[X_1,D_2]_{_{RN}}]_{_{RN}}(X_2,\cdots,X_{d_1+d_2-2},Z)\wedge Y\\
 &&\hspace{.2cm} =[D_1,D_2]_{_{RN}}(X_1,\cdots,X_{d_1+d_2-2},Y)\wedge Z\\
 &&\hspace{.3cm} +(-1)^{|Y||Z|}[D_1,D_2]_{_{RN}}(X_1,\cdots,X_{d_1+d_2-2},Z)\wedge Y,
 \end{eqnarray*}
which completes the induction and also the proof (see Remark \ref{remark_multider}).
 \end{proof}

\begin{rem}
The graded symmetric bilinear map $\wedge$ of degree zero on $E[2]$, considered in Definition~\ref{defn:multiderivation}, can be viewed as a vector valued $2$-form of degree $2$ on $E$, and so we may compute $[\wedge, D]_{_{RN}}$ for any vector valued $d$-form $D$. An easy computation shows that
\begin{equation} \label{1_derivation}
\textrm{a vector valued} \,\,1\textrm{-form} \,\, D \,\, \textrm{is a derivation if and only if} \,\, [\wedge, D]_{_{RN}}=0.
\end{equation}
Now, if $D$ (resp. $D'$) is a multi-derivation vector valued $d$-form (resp. $d'$-form) and $X_1,\cdots X_{d+d'-1}$ are elements of $E$ then, by Lemma~\ref{lem:multiDer2},
${\mathcal D}:= [X_{d-1}, \cdots , [X_2, [X_1, D]_{_{RN}} ]_{_{RN}} \cdots ]_{_{RN}}$ (resp. ${\mathcal D'}:= [X_{d'-1}, \cdots , [X_2, [X_1, D']_{_{RN}} ]_{_{RN}} \cdots ]_{_{RN}}$) is a derivation vector-valued $1$-form. Hence, by (\ref{1_derivation}), we get
$
[\wedge, {\mathcal D}]_{_{RN}}=[\wedge, {\mathcal D'}]_{_{RN}}=0
$
which implies $[\wedge, {\mathcal D}_1]_{_{RN}}=0$, with ${\mathcal D}_1:= [X_{d+d'-2}, \cdots , [X_2, [X_1, [D,D']_{_{RN}}]_{_{RN}} ]_{_{RN}} \cdots ]_{_{RN}}$
(notice that we made use here of the Jacobi identity for the Richardson-Nijenhuis bracket). So, ${\mathcal D}_1$ is a derivation vector-valued $1$-form. Thus, by Lemma~\ref{lem:multiDer2}, $[D,D']_{_{RN}}$ is multi-derivation vector valued $(d+d'-1)$-form.

This observation gives an alternative proof of Proposition~\ref{prop:Multider}.
\end{rem}

Let us now define multiplicative $L_{\infty}$-algebra.
\begin{defn}
An $L_\infty$-structure $\mu=\sum_{i=1}^\infty l_i$ on a graded vector space $E$ equipped with
a graded commutative product $\wedge : E_i \times E_j \to E_{i+j}$
is called {\em multiplicative} if all the multi-linear brackets $l_i$
are multi-derivations.
\end{defn}

Next, we discuss the relation between multiplicative $L_\infty$-structures and Lie algebroids.

A \emph{pre-Lie algebroid} structure on a vector bundle $A\to M$ over a manifold $M$ is a pair $(\rho, \left[.,.\right])$, with $\rho: A \to TM $
a vector bundle morphism over the identity of $M$, called \emph{anchor map}, and $[ .,.] $
a skew-symmetric bilinear endomorphism of $\Gamma(A) $
subject to the so-called Leibniz identity:
   $$ [X,fY] = f[X,Y] + (\rho(X) f) \, Y,$$
   for all $X,Y \in \Gamma(A)$ and all $f \in C^\infty (M) $.
   When, moreover, $ \left[.,.\right]$ is a Lie algebra bracket, the pair $(\left[.,.\right],\, \rho)$ is called a
   \emph{Lie algebroid structure on $A\to M$}. We denote by $ \left[ ., . \right]_{_{SN}}$ the \emph{Schouten-Nijenhuis} bracket on the space of multivectors of the (pre-)Lie algebroid $A$ and by $\diff^A$ the (pre-)differential of $A$.

Let $(\left[.,.\right],\, \rho)$ be a pre-Lie algebroid structure on a vector bundle $A \to M$. Set $E_i:=\Gamma(\wedge^{i+1} A)$ and $ E=\oplus_{i\geq -1}E_i$ , with $E_{-1}=\Gamma(\wedge^0A)=\mathcal{C}^{\infty}(M)$.
The Schouten-Nijenhuis bracket is a graded skew-symmetric bracket of degree zero on $E=\oplus_{i\geq -1}E_i$ and it is known that a pre-Lie algebroid structure $(\rho,\left[.,.\right]) $ is a Lie algebroid structure on the vector bundle $A \to M,$ if and only if $\left[.,.\right]_{_{SN}}$ is a graded Lie algebra bracket on $E=\Gamma(\wedge A)[1]$. It is also well known that the pre-differential $\diff^A$ is a derivation of $\Gamma(\wedge A^*)$ and that $\diff^{A}$ squares to zero if and only if $(A,\,\left[.,.\right],\,\rho)$ is Lie algebroid.

    The discussion above leads to the conclusion that there are two ways to see Lie algebroids as $L_\infty$-structures: the first one will make it an $L_{\infty}$-structure on $\Gamma(\wedge A) $, and the second one will make it an $L_\infty$-structure on $\Gamma(\wedge A^*) $ \cite{YKS}. More precisely:

\begin{prop}\label{prop:algebroid}
   Let  $A \to M $ be a vector bundle and $A^* \to M $ its dual. There is a one to one correspondence between:
   \begin{enumerate}
   \item[(i)] pre-Lie algebroid structures $(\rho, \left[.,.\right]) $ on $A \to M$,
   \item[(ii)] binary multi-derivations of $\Gamma(\wedge A)[2] $ of degree $1$,
   \item[(iii)] unary multi-derivations of $\Gamma(\wedge A^*)[2] $ of degree $1$.
   \end{enumerate}
   The one to one correspondence above restricts to a one to one correspondence between:
   \begin{enumerate}
   \item[($i'$)] Lie algebroid structures $(\rho, \left[.,.\right]) $ on $A \to M$,
   \item[($ii'$)] multiplicative $L_{\infty}$-structures on $\Gamma(\wedge A)[2] $ given by a binary bracket,
   \item[($iii'$)] multiplicative $L_{\infty}$-structures on $\Gamma(\wedge A^*)[2] $ given by a unary bracket.
   \end{enumerate}
   \end{prop}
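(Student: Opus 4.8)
The plan is to establish the two correspondences by unwinding the definitions of multi-derivations and the Richardson-Nijenhuis bracket, and then invoking Theorem~\ref{th:linftyRN} together with the associative-commutative structure on $\Gamma(\wedge A)$ (respectively $\Gamma(\wedge A^*)$). The guiding principle throughout is the characterization from Proposition~\ref{prop:Multider}: multi-derivations form a graded Lie subalgebra of $(\tilde{S}(E^*)\otimes E, [.,.]_{_{RN}})$, so brackets of multi-derivations are again multi-derivations and the squaring condition $[\mu,\mu]_{_{RN}}=0$ makes sense within this subalgebra.

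First I would treat the correspondence (i)$\Leftrightarrow$(ii). A pre-Lie algebroid structure $(\rho,[.,.])$ consists of a bilinear bracket on $\Gamma(A)=E_0$ satisfying the Leibniz rule. On $E=\Gamma(\wedge A)[2]$ one extends $[.,.]$ by the Schouten-Nijenhuis formula to a bilinear operation $l_2$. The key check is that the Leibniz identity $[X,fY]=f[X,Y]+(\rho(X)f)Y$ is \emph{exactly} the statement that $l_2$ is a multi-derivation in each argument with respect to the wedge product, i.e. Equation~(\ref{eqdef:multi-derivation}) for $d=2$; the anchor $\rho$ is recovered from the failure of $l_2$ to be $C^\infty(M)$-linear in its second slot. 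Conversely, any binary multi-derivation $l_2$ of degree $1$ restricts to a skew-symmetric bracket on $\Gamma(A)$ whose derivation property in a function argument produces a well-defined anchor; degree reasons force $l_2$ to land correctly. For (i)$\Leftrightarrow$(iii), the dual picture uses that a degree-$1$ unary multi-derivation of $\Gamma(\wedge A^*)[2]$ is precisely a degree-$1$ derivation $\d^A$ of the graded algebra $\Gamma(\wedge A^*)$, and the classical fact (recalled in the text before the statement) that such pre-differentials on $\Gamma(\wedge A^*)$ are in bijection with pre-Lie algebroid structures on $A$ via the Koszul/Cartan formulas. I would cite this correspondence as the established dictionary between brackets and differentials.

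Next I would promote each of these bijections to the primed versions by adding the integrability (Jacobi) condition. For (ii)$\Leftrightarrow$(ii$'$): a binary multi-derivation $l_2$ gives a \emph{multiplicative} $L_\infty$-structure exactly when $\mu=l_2$ satisfies $[\mu,\mu]_{_{RN}}=0$ (Theorem~\ref{th:linftyRN}), and since $\mu$ has only its binary component, this is the graded-Jacobi/Poisson condition (\ref{gla}) for $l_2$; by the text surrounding Proposition~\ref{prop:algebroid}, $[l_2,l_2]_{_{RN}}=0$ is equivalent to $[.,.]_{_{SN}}$ being a graded Lie bracket, which holds if and only if $(\rho,[.,.])$ is a genuine Lie algebroid. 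For (iii)$\Leftrightarrow$(iii$'$): a unary multi-derivation $\d^A$ yields a multiplicative $L_\infty$-structure precisely when $[\d^A,\d^A]_{_{RN}}=0$, i.e. $(\d^A)^2=0$, which is again exactly the Lie algebroid integrability condition as recalled in the text. In both cases Proposition~\ref{prop:Multider} guarantees that the resulting object stays inside $MultiDer(E)$, so ``multiplicative'' is automatic.

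The main obstacle will be the bookkeeping in (i)$\Leftrightarrow$(ii): one must verify carefully that the degree-$1$ multi-derivation condition (\ref{eqdef:multi-derivation}), expanded on $\Gamma(\wedge A)[2]$ with the grading $E_i=\Gamma(\wedge^{i+1}A)$, reproduces the Leibniz rule with the \emph{correct} Koszul signs and correctly isolates the anchor, and that no extra components of $\mu$ are forced to be nonzero by degree considerations. Once this single translation is pinned down, the equivalence $[\mu,\mu]_{_{RN}}=0 \Leftrightarrow$ Jacobi is immediate from Theorem~\ref{th:linftyRN}, and the dual statements follow from the standard algebroid--differential dictionary, so the remainder is essentially formal.
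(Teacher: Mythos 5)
Your proposal is correct and follows essentially the same route as the paper, which offers no separate proof but derives the proposition directly from the discussion preceding it: the dictionary between pre-Lie algebroid structures, the Schouten--Nijenhuis bracket on $\Gamma(\wedge A)$ (a binary multi-derivation, with the Leibniz rule encoding the anchor), and the pre-differential $\diff^A$ on $\Gamma(\wedge A^*)$, together with the facts that the Jacobi identity, the graded Lie condition $[l_2,l_2]_{_{RN}}=0$, and $(\diff^A)^2=0$ are all equivalent to $(\rho,[.,.])$ being a Lie algebroid. Your invocation of Proposition~\ref{prop:Multider} to make ``multiplicative'' automatic is superfluous (the objects in (ii$'$) and (iii$'$) are multi-derivations by definition), but this is harmless and the rest of the argument matches the paper's intent.
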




 Given a  $(1,1)$-tensor $N$ on a Lie algebroid $(A,\left[.,.\right],\rho)$, we define a linear map $\underline{N}$ on the graded vector space $\Gamma(\wedge A)[2]$, by setting
  \begin{equation*}
  \underline{N}(f):=0,
  \end{equation*}
  for all $f\in \mathcal{C}^{\infty}(M)$, and
\begin{equation*}
\underline{N}(P):=\sum_{i=1}^{p}P_1\wedge \cdots \wedge P_{i-1}\wedge N(P_i)\wedge P_{i+1}\wedge \cdots \wedge P_p,
\end{equation*}
for all monomial multi-sections $P=P_1\wedge \cdots \wedge P_{p}\in \Gamma(\wedge A)[2]$. The map $\underline{N}$ is called the {\em extension of N by derivation} on the graded vector space $\Gamma(\wedge A)[2]$.
It is a  multi-derivation on the graded vector space $\Gamma(\wedge A)[2]$, hence a symmetric vector valued $1$-form on $\Gamma(\wedge A)[2]$, and has degree zero.

 For  a $k$-form on a Lie algebroid, we also consider its extension by derivation. More precisely, if $\kappa \in\Gamma(\wedge^kA^*)$, the extension of $\kappa$ by derivation is a $k$-linear map, denoted by $\underline{\kappa}$,  given by
\begin{equation*}
\underline{\kappa}(P_1,\cdots,P_k):=\sum_{i_1,\cdots, i_k=1}^{p_1,\cdots, p_k}(-1)^{\spadesuit}\kappa(P_{1,i_1},\cdots,P_{k,i_k})\widehat{P_{1,i_1}}\wedge\cdots\wedge\widehat{P_{k,i_k}},
\end{equation*}
for all homogeneous multi-sections $P_i=P_{i,1}\wedge\cdots\wedge P_{i,p_i} \in \Gamma(\wedge^{p_i} A)$, with $i=1,\cdots,k,$ where $1\leq i_j\leq p_j$ for all $1\leq j\leq k$,
\begin{equation*}
\widehat{P_{j,i_j}}=P_{j,1}\wedge\cdots\wedge P_{j,i_j-1}\wedge P_{j,i_j+1}\wedge\cdots\wedge P_{j,p_j}\in \Gamma(\wedge^{p_j-1} A)
\end{equation*}
and
\begin{equation*}
\spadesuit=2p_1+3p_2+\cdots+(k+1)p_k+i_1+\cdots +i_k+1.
\end{equation*}

It follows from its definition that $\underline{\kappa}$  is a multi-derivation on the graded vector space $\Gamma(\wedge A)[2]$  and that it is a symmetric vector valued $k$-form  of degree $k-2$ on  $\Gamma(\wedge A)[2]$.
\begin{lem}\label{underlinescommut}
Let $(A, \left[.,.\right], \rho)$ be a Lie algebroid, $\alpha \in \Gamma(\wedge^k A^*)$ be a $k$-form and $\beta \in \Gamma(\wedge^l A^*)$ be an $l$-form. Then,
\begin{equation*}
\left[\underline{\alpha},\underline{\beta}\right]_{_{RN}}=0.
\end{equation*}
\end{lem}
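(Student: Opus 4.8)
The plan is to exploit that both $\underline{\alpha}$ and $\underline{\beta}$ are multi-derivations, together with the fact that a multi-derivation is entirely determined by its values on algebra generators. First I would invoke Proposition~\ref{prop:Multider}: since $\underline{\alpha}$ and $\underline{\beta}$ are multi-derivation vector valued $k$- and $l$-forms, their Richardson-Nijenhuis bracket $C:=[\underline{\alpha},\underline{\beta}]_{_{RN}}$ is again a multi-derivation, namely a vector valued $(k+l-1)$-form of degree $k+l-4$. By the Leibniz rule (\ref{eqdef:multi-derivation}) applied iteratively in each slot (see Remark~\ref{remark_multider}), the value of any multi-derivation on an arbitrary tuple of multi-sections is a combination of its values on tuples whose entries are algebra generators, i.e. elements of $\mathcal{C}^{\infty}(M)=\Gamma(\wedge^0 A)$ or of $\Gamma(A)=\Gamma(\wedge^1 A)$. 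Hence it suffices to prove $C(X_1,\dots,X_{k+l-1})=0$ when each $X_i$ is either a function or a section, and I may assume $k,l\geq 1$.

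The two elementary facts I would record about the extension by derivation are read off directly from the defining formula for $\underline{\kappa}$. First, $\underline{\kappa}$ vanishes as soon as one of its arguments is a function: if some $P_j\in\Gamma(\wedge^0 A)$ then $p_j=0$, the summation index $i_j$ ranges over the empty set, and the whole expression is zero. Second, when all $k$ arguments are sections $X\in\Gamma(A)=\Gamma(\wedge^1 A)$, each factor $\widehat{P_{j,i_j}}$ is an empty wedge equal to $1$, so $\underline{\kappa}(X_1,\dots,X_k)=\pm\,\kappa(X_1,\dots,X_k)\in\mathcal{C}^{\infty}(M)$ is a function.

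With these in hand, I would expand $C=\iota_{\underline{\alpha}}\underline{\beta}-(-1)^{(k-2)(l-2)}\iota_{\underline{\beta}}\underline{\alpha}$ via (\ref{equ:insertion}) on a tuple of generators. If some argument is a function, then in every unshuffle term that function is fed either into the inner operator ($\underline{\alpha}$ or $\underline{\beta}$) or as an outer argument of the other; by the first fact the corresponding term vanishes, so $C=0$ in this case. If instead all the $X_i$ are sections, then in $\iota_{\underline{\alpha}}\underline{\beta}$ the inner expression $\underline{\alpha}(X_{\sigma(1)},\dots,X_{\sigma(k)})$ is a function by the second fact, whence $\underline{\beta}$ receives a function as one of its arguments and every term vanishes by the first fact; the symmetric computation disposes of $\iota_{\underline{\beta}}\underline{\alpha}$. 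Thus $C$ vanishes on all generator tuples, and therefore $C=0$.

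The only delicate point is the reduction to generators: one must be confident that the iterated Leibniz rule for a multi-derivation legitimately expresses its values on arbitrary multi-sections in terms of values on generators, and that the bracket $C$ genuinely inherits the multi-derivation property from Proposition~\ref{prop:Multider}. Once these are granted, the computation on generators is immediate, since no term survives the contraction-then-annihilation mechanism. Note that this argument uses nothing about the Lie algebroid bracket or the anchor beyond the graded commutative product $\wedge$, which is consistent with the purely algebraic, contraction-type nature of the operators $\underline{\alpha}$ and $\underline{\beta}$.
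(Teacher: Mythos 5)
Your proof is correct and is essentially the paper's own argument: both rest on Proposition~\ref{prop:Multider} to conclude that $[\underline{\alpha},\underline{\beta}]_{_{RN}}$ is a multi-derivation, hence determined by its values on generating tuples, and then show that those values vanish. The only difference is in how the vanishing is established: the paper uses a one-line degree count (on a tuple of sections the value would have degree $-3$, which is zero in $\Gamma(\wedge A)[2]$), whereas you obtain the same conclusion by explicitly evaluating the insertion operators---your version also treats tuples containing functions explicitly, a point the paper leaves implicit in the phrase ``uniquely determined on the space of sections.''
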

\begin{proof}
The fact that $\underline{\alpha}$ (respectively $\underline{\beta}$) is a vector valued $k$-form (respectively $l$-form) of degree $k-2$ (respectively $l-2$), imply that $\left[\underline{\alpha},\underline{\beta}\right]_{_{RN}}$ is a vector valued $(k+l-1)$-form of degree $k+l-4$ on the graded vector space $\Gamma(\wedge A)=\oplus_{i\geq 0}\Gamma(\wedge^i A)$. Therefore, for all $l,k\geq 0$ the restriction of $\left[\underline{\alpha},\underline{\beta}\right]_{_{RN}}$ to the space of sections is zero and hence we have $\left[\underline{\alpha},\underline{\beta}\right]_{_{RN}}=0$, because $\left[\underline{\alpha},\underline{\beta}\right]_{_{RN}}$ is a multi-derivation and it is uniquely determined on the space of sections.
\end{proof}

According to Proposition \ref{prop:algebroid}, for a given Lie algebroid $(A,\,\left[.,.\right],\rho)$, the bracket $l_2^{\left[.,.\right]}$ given by
\begin{equation}\label{multiplicativestructure}
l_2^{\left[.,.\right]}(P,Q)=(-1)^{p-1}[P,Q]_{_{SN}}, \,\,\, P\in \Gamma(\wedge^{p}A) , Q\in \Gamma(\wedge^{q}A),
\end{equation}
 defines a multiplicative graded Lie algebra structure on $\Gamma(\wedge A)[2]$.
When we deform the bracket $\left[.,.\right]$ by $N$ as
\begin{equation*}
\left[X,Y\right]_N=\left[NX,Y\right]+\left[X,NY\right]-N\left[X,Y\right],
\end{equation*}
for all $X,Y\in \Gamma(A)$, of course we may consider $l_2^{\left[.,.\right]_N}$ using Equation (\ref{multiplicativestructure}) and we may take the Schouten-Nijenhuis bracket $\left[.,.\right]^{N}_{_{SN}}$ corresponding to the deformed bracket $\left[.,.\right]_N$. Note that the bracket $l_2^{\left[.,.\right]_N}$ is not necessarily a multiplicative graded Lie algebra structure.  On the other hand, since $l_2^{\left[.,.\right]}$ is a symmetric vector valued $2$-form of degree $1$ and $\underline{N}$ is a (symmetric) vector valued $1$-form of degree zero, we can consider the deformation of $l_2^{\left[.,.\right]}$ by $\underline{N}$. The following lemma shows the relation between
$\left[\underline{N},l_2^{\left[.,.\right]}\right]_{_{RN}}$ and $l_2^{\left[.,.\right]_N}$.
\begin{lem}\label{lem:deformingOidsN}
 Let $N$ be a $(1,1)$-tensor on a Lie algebroid $(A,\left[.,.\right],\rho)$.
 Then, we have
  \begin{equation*}\label{eq:underlineN}
   \left[\underline N , l_2^{\left[.,.\right]} \right]_{_{RN}} = l_2^{\left[.,.\right]_N}.
   \end{equation*}
  \end{lem}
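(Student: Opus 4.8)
The plan is to prove the identity by recognizing that both sides are \emph{multi-derivation vector valued $2$-forms of degree $1$} on $\Gamma(\wedge A)[2]$, and then to exploit the fact that such a form is entirely determined by its values on the generators of the graded commutative algebra $\Gamma(\wedge A)$, namely functions $f\in C^\infty(M)=\Gamma(\wedge^0 A)$ and sections $X\in\Gamma(A)=\Gamma(\wedge^1 A)$. First I would check that the deformed bracket $[.,.]_N$ is again a pre-Lie algebroid structure, now with anchor $\rho^N=\rho\circ N$: skew-symmetry is immediate, and $C^\infty(M)$-linearity of $N$ gives the Leibniz identity $[X,fY]_N=f[X,Y]_N+(\rho(NX)f)\,Y$. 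Consequently, by the correspondence (i)$\Leftrightarrow$(ii) of Proposition~\ref{prop:algebroid} together with the formula (\ref{multiplicativestructure}), the right-hand side $l_2^{[.,.]_N}$ is a binary multi-derivation of degree $1$. On the other side, $\underline{N}$ and $l_2^{[.,.]}$ are both multi-derivations, so Proposition~\ref{prop:Multider} guarantees that $[\underline{N},l_2^{[.,.]}]_{_{RN}}$ is a multi-derivation too, and degree counting shows it is a vector valued $2$-form of degree $1$.

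Next I would compute the left-hand side on generators. Expanding (\ref{RNbracket}) and (\ref{equ:insertion}), using that $\bar{\underline{N}}=0$ and that $l_2^{[.,.]}$ is graded symmetric (which collapses the two shuffle contributions of $\iota_{\underline{N}}l_2^{[.,.]}$ into a single pair), I expect to obtain
\[
[\underline{N},l_2^{[.,.]}]_{_{RN}}(U,V)=l_2^{[.,.]}(\underline{N}U,V)+l_2^{[.,.]}(U,\underline{N}V)-\underline{N}\left(l_2^{[.,.]}(U,V)\right),
\]
for all $U,V\in\Gamma(\wedge A)[2]$. Evaluating on two sections $X,Y$ this becomes $[NX,Y]+[X,NY]-N[X,Y]=[X,Y]_N$; on a section $X$ and a function $f$ it becomes $\rho(NX)f=\rho^N(X)f$, since $\underline{N}$ vanishes on functions and $l_2^{[.,.]}(X,f)$ is a function; and on two functions it vanishes. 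By (\ref{multiplicativestructure}) and the definition of the Schouten-Nijenhuis bracket of $[.,.]_N$, these are exactly the values of $l_2^{[.,.]_N}$ on the same arguments.

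Finally, I would invoke the uniqueness principle: the Leibniz rule (\ref{eqdef:multi-derivation}) in each slot reduces $D(P,Q)$, for arbitrary monomial multi-sections $P,Q$, to a combination of values of $D$ on pairs in which each argument is a single section or a function; hence two multi-derivation vector valued $2$-forms that agree on all such pairs coincide. This is the same argument used at the end of the proof of Lemma~\ref{underlinescommut}. Since the two sides agree on every pair of generators, they are equal. The delicate point, and the step most prone to error, is the sign bookkeeping in the insertion operator that yields the symmetric deformation formula above: the graded symmetry of $l_2^{[.,.]}$ is precisely what turns $\iota_{\underline{N}}l_2^{[.,.]}$ into $l_2^{[.,.]}(\underline{N}\,\cdot,\cdot)+l_2^{[.,.]}(\cdot,\underline{N}\,\cdot)$, and a misplaced Koszul sign there would break the identification with the deformed bracket.
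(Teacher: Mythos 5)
Your proof is correct, but it takes a genuinely different route from the paper's. The paper's proof is a one-liner: it invokes the known formula from Kosmann-Schwarzbach--Magri (reference \cite{YKS}) stating that the Schouten--Nijenhuis bracket associated to the deformed bracket satisfies
$\left[P,Q\right]^{N}_{_{SN}}=\left[\underline{N}P,Q\right]_{_{SN}}+\left[P,\underline{N}Q\right]_{_{SN}}-\underline{N}\left[P,Q\right]_{_{SN}}$
for all $P,Q\in\Gamma(\wedge A)$, and then the lemma is immediate from the sign convention (\ref{multiplicativestructure}) and the expansion of the Richardson--Nijenhuis bracket. You instead prove this deformation formula from scratch, entirely inside the paper's own machinery: you identify both sides as binary multi-derivations of degree $1$ (Proposition \ref{prop:Multider} for the left-hand side, and the correspondence of Proposition \ref{prop:algebroid} applied to the pre-Lie algebroid $(\left[.,.\right]_N,\rho\circ N)$ for the right-hand side), check agreement on the algebra generators --- sections and functions --- and conclude by the uniqueness of multi-derivation extensions, the same argument the paper uses in Lemmas \ref{underlinescommut} and \ref{lem:commutatorAlgebroidContraction}. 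Your sign bookkeeping is right: since $\underline{N}$ has degree zero, the Koszul sign of the swapped unshuffle cancels exactly against the graded symmetry of $l_2^{\left[.,.\right]}$, yielding the symmetric deformation formula, and the evaluations on generators ($[X,Y]_N$, $\rho(NX)f$, and $0$) match those of $l_2^{\left[.,.\right]_N}$. What the two approaches buy: the paper's citation keeps the proof short but outsources the real content to \cite{YKS}; your argument is self-contained, in effect reproving the cited formula, and makes visible why the multi-derivation property (rather than any special feature of $N$) is what drives the identity.
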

\begin{proof}
The proof follows directly from the fact that the Schouten-Nijenhuis bracket on $\Gamma(\wedge A) $
    associated to the bracket $\left[.,.\right]_N$  is given by
 \begin{equation*}  \label{NSNbracket}
  \left[P,Q\right]^{N}_{_{SN}}=\left[\underline N P,Q\right]_{_{SN}} + \left[P, \underline N Q\right]_{_{SN}} - \underline N \left[P,Q\right]_{_{SN}},\end{equation*}
 for all $P,Q \in \Gamma(\wedge A)$, see \cite{YKS}.
\end{proof}
We will need the following lemma for our next purpose.
\begin{lem}\label{lem:commutatorAlgebroidContraction}
Let $(A,\left[.,.\right], \rho)$ be a Lie algebroid, with  differential $\diff^A$ and associated
multiplicative graded Lie algebra structure $l_2^{\left[.,.\right]}$ on  $\Gamma(\wedge A)[2]$. Then,
  \begin{equation*}
  \left[\underline{\alpha}, l_2^{\left[.,.\right]} \right]_{_{RN}}= \underline{\diff^A \alpha},
  \end{equation*}
  for all $\alpha \in \Gamma(\wedge^n A^*)$.
\end{lem}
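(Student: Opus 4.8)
The plan is to show that both sides are multi-derivation vector valued $(n+1)$-forms of degree $n-1$, and then to exploit the fact, already used in the proof of Lemma \ref{underlinescommut}, that such a multi-derivation is completely determined by its restriction to the space of sections $\Gamma(A)=\Gamma(\wedge^1 A)$, together with its values on function arguments. The left-hand side is a multi-derivation because $\underline{\alpha}$ and $l_2^{[.,.]}$ are multi-derivations (the latter by Proposition \ref{prop:algebroid}) and $MultiDer(\Gamma(\wedge A)[2])$ is closed under the Richardson-Nijenhuis bracket by Proposition \ref{prop:Multider}; a degree count gives that it is a vector valued $(n+1)$-form of degree $(n-2)+1=n-1$. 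The right-hand side $\underline{\diff^A\alpha}$ is a multi-derivation of the same type by the very construction of the extension by derivation, since $\diff^A\alpha\in\Gamma(\wedge^{n+1}A^*)$.

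First I would check that both sides vanish whenever one of the arguments is a function. For $\underline{\diff^A\alpha}$ this is immediate, since the defining sum is empty as soon as one $P_i$ has wedge-degree $0$. For $[\underline{\alpha},l_2^{[.,.]}]_{_{RN}}$ I would expand (\ref{RNbracket}) as $\iota_{\underline{\alpha}}l_2^{[.,.]}-(-1)^{n-2}\iota_{l_2^{[.,.]}}\underline{\alpha}$ (since $\bar{\underline{\alpha}}=n-2$ and $l_2^{[.,.]}$ has degree $1$) and feed it a function $f$ in one slot and sections in the others. In $\iota_{\underline{\alpha}}l_2^{[.,.]}$ the function is either inserted into $\underline{\alpha}$, which then vanishes, or it is the free argument of $l_2^{[.,.]}$, in which case the other argument of $l_2^{[.,.]}$ is the function $\underline{\alpha}(X_{\sigma(1)},\dots,X_{\sigma(n)})$ and $l_2^{[.,.]}$ of two functions is $\pm[\,\cdot\,,\,\cdot\,]_{_{SN}}$ of two functions, which is zero. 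In $\iota_{l_2^{[.,.]}}\underline{\alpha}$ the function is either one of the remaining arguments of $\underline{\alpha}$, killing that term, or it enters $l_2^{[.,.]}$, producing $\pm\rho(X)f$, again a function, which is then the first argument of $\underline{\alpha}$ and kills the term. Hence both sides vanish on function arguments, and it remains to compare them on tuples $X_0,\dots,X_n\in\Gamma(A)$.

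On such a tuple the computation is exactly the one that produced the Chevalley-Eilenberg formula (\ref{eq:CEilengerg}) in Section 4. I would evaluate $[\underline{\alpha},l_2^{[.,.]}]_{_{RN}}(X_0,\dots,X_n)$ via (\ref{equ:insertion}): in $\iota_{\underline{\alpha}}l_2^{[.,.]}$ each term is $l_2^{[.,.]}$ applied to the function $\underline{\alpha}(X_{\sigma(0)},\dots,X_{\sigma(n-1)})=\pm\alpha(X_{\sigma(0)},\dots,X_{\sigma(n-1)})$ and the leftover section $X_{\sigma(n)}$, and $l_2^{[.,.]}(g,X)=\pm[g,X]_{_{SN}}=\pm\rho(X)g$ reproduces the anchor terms $\rho(X_i)\bigl(\alpha(X_0,\dots,\widehat{X_i},\dots,X_n)\bigr)$; in $\iota_{l_2^{[.,.]}}\underline{\alpha}$ each term feeds $l_2^{[.,.]}(X_{\sigma(0)},X_{\sigma(1)})=[X_{\sigma(0)},X_{\sigma(1)}]$ and the remaining sections into $\underline{\alpha}$, reproducing the bracket terms $\alpha([X_i,X_j],X_0,\dots,\widehat{X_i},\dots,\widehat{X_j},\dots,X_n)$. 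Collecting these, the result is the Lie algebroid de Rham differential of $\alpha$ evaluated on $X_0,\dots,X_n$, which is precisely $\underline{\diff^A\alpha}(X_0,\dots,X_n)$.

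The main obstacle is purely the sign bookkeeping: one has to track the Koszul signs $\epsilon(\sigma)$ of the $(n,1)$- and $(2,n-1)$-unshuffles, the sign $(-1)^{\bar{K}\bar{L}}$ in (\ref{RNbracket}), the factor $(-1)^{p-1}$ appearing in the multiplicative structure (\ref{multiplicativestructure}), the shift by $[2]$ in the grading, and the sign $\spadesuit$ built into the extension by derivation, and to verify that all of them assemble into the standard alternating signs $(-1)^i$ and $(-1)^{i+j}$ of the de Rham/Chevalley-Eilenberg differential. Everything else is a formal consequence of the uniqueness of multi-derivations, once the section-level identity and the vanishing on function arguments are established.
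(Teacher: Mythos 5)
Your proposal is correct and follows essentially the same route as the paper's proof: both establish that $\left[\underline{\alpha}, l_2^{\left[.,.\right]}\right]_{_{RN}}$ is a multi-derivation via Proposition \ref{prop:Multider}, reduce the identity to values on generators, compute the restriction to sections of $A$ to recover $\diff^A\alpha$, and conclude by uniqueness of the extension by derivation. The only cosmetic differences are that the paper disposes of the arguments involving functions by a one-line degree count rather than your explicit expansion of the insertion operators, and writes out the computation only for $n=2$, remarking that the general case is identical.
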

\begin{proof}
We shall prove the statement for $n=2$. A similar proof can be done for any $n\geq 1$. First note that $\left[\underline{\alpha}, l_2^{\left[.,.\right]} \right]_{_{RN}}$ is a vector valued $3$-form of degree $1$ on the graded vector space $\Gamma(\wedge A)[2]$. This implies that the restriction of $\left[\underline{\alpha}, l_2^{\left[.,.\right]} \right]_{_{RN}}$ to $\Gamma(A)$ is of the form:
\begin{equation*}
\left[\underline{\alpha}, l_2^{\left[.,.\right]} \right]_{_{RN}}|_{\Gamma(A) \times \Gamma(A) \times \Gamma(A)}:\Gamma(A)\times \Gamma(A)\times \Gamma(A)\to {\mathcal C}^{\infty}(M)
\end{equation*}
and, by degree reasons, any other restriction of $\left[\underline{\alpha}, l_2^{\left[.,.\right]} \right]_{_{RN}}$ is zero. On the other hand, by Proposition \ref{prop:Multider}, $\left[\underline{\alpha}, l_2^{\left[.,.\right]} \right]_{_{RN}}$ is a multi-derivation, so that its restriction to the sections $\Gamma(A)$ is a ${\mathcal C}^{\infty}(M)$-linear map. Therefore,  $\left[\underline{\alpha}, l_2^{\left[.,.\right]} \right]_{_{RN}}\in \Gamma(\wedge^3A^*)$. Next, we show that
\begin{equation*}\label{restriction to sections}
\left[\underline{\alpha}, l_2^{\left[.,.\right]} \right]_{_{RN}}|_{\Gamma(A) \times \Gamma(A) \times \Gamma(A)}=\diff^A\alpha
\end{equation*}
and this together with the fact that $\left[\underline{\alpha}, l_2^{\left[.,.\right]} \right]_{_{RN}}$ is a multi-derivation will imply that
$\left[\underline{\alpha}, l_2^{\left[.,.\right]} \right]_{_{RN}}=\underline{\diff^A\alpha}$,
by the uniqueness of extension by derivation of $\diff^A\alpha$ to the graded vector space $\Gamma(\wedge A)[2]$. A direct computation shows that
\begin{eqnarray*}
\left[\underline{\alpha}, l_2^{\left[.,.\right]} \right]_{_{RN}}(X,Y,Z)&
=&\left[\alpha(X,Y),Z\right]_{_{SN}}-\alpha(\left[X,Y\right],Z)+c.p.\\
&=&\rho(Z)\alpha(X,Y)-\alpha(\left[X,Y\right],Z)+c.p.\\
&=&\diff\alpha^A(X,Y,Z)
\end{eqnarray*}
for all $X,Y,Z\in \Gamma(A)$.
This completes the proof.
\end{proof}

\section{Nijenhuis forms on multiplicative $L_{\infty}$-structures associated to Lie algebroids}

In this section we consider several structures defined by tensors and pairs of tensors on a Lie algebroid and, by using their extensions by derivations, we construct Nijenhuis forms (weak Nijenhuis and co-boundary Nijenhuis, in some cases) with respect to the graded Lie algebra associated to the Lie algebroid structure.

Let $(A,\left[.,.\right],\rho)$ be a Lie algebroid and $N:A \to A$ an endomorphim. Then, as in the case of Lie algebras, the Nijenhuis torsion of $N$ with respect to the Lie bracket $\left[.,.\right]$, denoted by $T_{\left[.,.\right]}N$, is defined by Equation (\ref{Torsion}) and again a direct computation shows that
\begin{equation*}
T_{\left[.,.\right]}N(X,Y)=\frac{1}{2}\left(\left[X,Y\right]_{N,N}-\left[X,Y\right]_{N^2}\right),
\end{equation*}
for all $X,Y \in \Gamma(A)$.
A $(1,1)$-tensor $N$ on a Lie algebroid $(A,\left[.,.\right],\rho)$ is said to be \emph{Nijenhuis} if the Nijenhuis torsion of $N$, with respect to the Lie algebroid bracket $\left[.,.\right]$, vanishes. As a consequence of Lemma \ref{lem:deformingOidsN}, we have the following proposition:
\begin{prop}\label{prop:NijenhuisAsNijenhuis}
For every Nijenhuis tensor  $N$ on a Lie algebroid $(A,\left[.,.\right],\rho)$,
the extension $\underline N $ of $N$ by derivation is a Nijenhuis vector valued $1$-form with respect to the multiplicative graded Lie algebra structure $l_2^{\left[.,.\right]}$ on the graded vector space $\Gamma(\wedge A)[2]$, with square ${\underline{(N^2)}}$.
\end{prop}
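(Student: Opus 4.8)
The plan is to translate the statement into the language of the Richardson--Nijenhuis bracket and then reduce the claim to an identity that has already been established, namely Lemma~\ref{lem:deformingOidsN}. Recall that $N$ being Nijenhuis on the Lie algebroid means $T_{[.,.]}N = 0$, which by the displayed formula is equivalent to $[.,.]_{N,N} = [.,.]_{N^2}$, i.e.\ deforming the bracket twice by $N$ equals deforming it once by $N^2$. The whole point is to lift this scalar-level equality to the level of the multiplicative graded Lie algebra $l_2^{[.,.]}$ on $\Gamma(\wedge A)[2]$ via the extension-by-derivation operator $\underline{(\cdot)}$.

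First I would apply Lemma~\ref{lem:deformingOidsN} once to get $[\underline{N}, l_2^{[.,.]}]_{_{RN}} = l_2^{[.,.]_N}$. Since the deformed bracket $[.,.]_N$ is again a skew-symmetric bilinear operation on $\Gamma(A)$ (it is an anchored bracket, though not necessarily Lie), the construction $l_2^{[.,.]_N}$ still makes sense and the lemma applies to it as well. Applying the lemma a second time, with $[.,.]_N$ in the role of the bracket, I would obtain
\begin{equation*}
[\underline{N}, [\underline{N}, l_2^{[.,.]}]_{_{RN}}]_{_{RN}} = [\underline{N}, l_2^{[.,.]_N}]_{_{RN}} = l_2^{[.,.]_{N,N}}.
\end{equation*}
Similarly, applying the lemma to the $(1,1)$-tensor $N^2$ yields $[\underline{(N^2)}, l_2^{[.,.]}]_{_{RN}} = l_2^{[.,.]_{N^2}}$. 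Now the hypothesis $T_{[.,.]}N = 0$, equivalently $[.,.]_{N,N} = [.,.]_{N^2}$, forces the two right-hand sides to coincide, giving precisely the first Nijenhuis condition of Definition~\ref{def:Nijenhuis}:
\begin{equation*}
[\underline{N}, [\underline{N}, l_2^{[.,.]}]_{_{RN}}]_{_{RN}} = [\underline{(N^2)}, l_2^{[.,.]}]_{_{RN}}.
\end{equation*}
This identifies $\underline{(N^2)}$ as the candidate square.

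It remains to verify the commutation condition $[\underline{N}, \underline{(N^2)}]_{_{RN}} = 0$. For this I would use that both $\underline{N}$ and $\underline{(N^2)}$ are derivation vector valued $1$-forms of degree zero, so by Lemma~\ref{lem:multiDer1} their bracket is again a derivation and, being uniquely determined by its restriction to $\Gamma(A)$, it suffices to check vanishing on sections. On $\Gamma(A)$ the extension operators act as $N$ and $N^2$ respectively, and since $N$ commutes with $N^2$ the graded commutator $[\underline{N}, \underline{(N^2)}]_{_{RN}}$ restricts to the ordinary commutator $N \circ N^2 - N^2 \circ N = 0$; uniqueness of the multi-derivation then gives $[\underline{N}, \underline{(N^2)}]_{_{RN}} = 0$ everywhere. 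I anticipate that the main subtlety is not in any single computation but in confirming that Lemma~\ref{lem:deformingOidsN} genuinely applies to the merely anchored (non-Lie) bracket $[.,.]_N$, so that the double application is legitimate; once that is granted, the proof is essentially the observation that the hierarchy machinery of the Richardson--Nijenhuis bracket converts the classical Nijenhuis torsion identity into the abstract Nijenhuis condition.
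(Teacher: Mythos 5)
Your proposal is correct and takes essentially the same route as the paper: both apply Lemma~\ref{lem:deformingOidsN} twice to obtain $l_2^{[.,.]_{N,N}}$, once to $N^2$ to obtain $l_2^{[.,.]_{N^2}}$, invoke the torsion identity $[.,.]_{N,N}=[.,.]_{N^2}$, and then observe that $[\underline{N},\underline{(N^2)}]_{_{RN}}=0$. Your extra care is welcome but not a divergence: the commutation check via Lemma~\ref{lem:multiDer1} and restriction to $\Gamma(A)$ fills in what the paper asserts in one line, and the subtlety you flag about applying the lemma to the merely pre-Lie bracket $[.,.]_N$ is equally implicit in the paper's own double application (and is harmless, since the extension-by-derivation formula for the deformed Schouten--Nijenhuis bracket is purely formal and does not use the Jacobi identity).
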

\begin{proof}
 Applying Lemma \ref{lem:deformingOidsN} twice, for the tensor $N$ and the bracket $l_2^{\left[.,.\right]}$, we get $\left[\underline N ,\left[\underline N , l_2^{\left[.,.\right]}\right]_{_{RN}}\right]_{_{RN}} = l_2^{\left[.,.\right]_{N,N}} $. The same lemma gives $\left[\underline {N^2} , l_2^{\left[.,.\right]}\right]_{_{RN}} = l_2^{\left[.,.\right]_{N^2}}$. Since $N$ is a Nijenhuis $(1,1)$-tensor on $A$, we have $l_2^{\left[.,.\right]_{N,N}}=l_2^{\left[.,.\right]_{N^2}}$, which implies that $\left[\underline N ,\left[\underline N , l_2^{\left[.,.\right]}\right]_{_{RN}}\right]_{_{RN}} = \left[\underline {N^2} , l_2^{\left[.,.\right]}\right]_{_{RN}}$.
Also, ${\underline{(N^2)}} $ and ${\underline{N}} $ commute with respect to the Richardson-Nijenhuis bracket.
\end{proof}
In the next proposition we obtain a Nijenhuis vector valued form which is the sum of a vector valued $1$-form with a vector valued $2$-form.
\begin{prop}
Let $(A, \left[.,.\right], \rho)$ be a Lie algebroid, with differential $\diff^A$ and associated
multiplicative graded Lie algebra structure $l_2^{\left[.,.\right]}$ on $\Gamma(\wedge A)[2]$. Then, for every section $\alpha \in \Gamma(\wedge^2 A^*)$,
$S+\underline{\alpha} $ is a Nijenhuis vector valued form with respect to $l_2^{\left[.,.\right]}$, with square $ S+2\,\underline{\alpha}$.
The deformed structure is $l_2^{\left[.,.\right]}+ \underline{\diff^A\alpha} $.
\end{prop}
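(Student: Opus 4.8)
The plan is to verify directly the two defining identities for $\mathcal{N} := S + \underline{\alpha}$ to be Nijenhuis with square $\mathcal{K} := S + 2\underline{\alpha}$ with respect to $\mu := l_2^{\left[.,.\right]}$, namely $[\mathcal{N}, [\mathcal{N}, \mu]_{_{RN}}]_{_{RN}} = [\mathcal{K}, \mu]_{_{RN}}$ and $[\mathcal{N}, \mathcal{K}]_{_{RN}} = 0$. All the work reduces to three facts established earlier: Lemma~\ref{Euler}, which gives $[S, \beta]_{_{RN}} = \bar\beta\, \beta$ for any symmetric vector valued form $\beta$; Lemma~\ref{lem:commutatorAlgebroidContraction}, which identifies $[\underline{\alpha}, l_2^{\left[.,.\right]}]_{_{RN}} = \underline{\diff^A\alpha}$; and Lemma~\ref{underlinescommut}, which says that extensions by derivation of forms commute, so $[\underline{\alpha}, \underline{\beta}]_{_{RN}} = 0$ for all forms. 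Throughout I keep careful track of degrees: $\underline{\alpha}$ is a vector valued $2$-form of degree $0$, while $\underline{\diff^A\alpha}$, being the extension of the $3$-form $\diff^A\alpha$, is a vector valued $3$-form of degree $1$.

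First I would compute the deformed structure. By bilinearity of the Richardson--Nijenhuis bracket, $[S + \underline{\alpha}, \mu]_{_{RN}} = [S, l_2]_{_{RN}} + [\underline{\alpha}, l_2]_{_{RN}}$. Lemma~\ref{Euler} applied to the degree $1$ form $l_2$ gives $[S, l_2]_{_{RN}} = l_2$, and Lemma~\ref{lem:commutatorAlgebroidContraction} gives $[\underline{\alpha}, l_2]_{_{RN}} = \underline{\diff^A\alpha}$; hence $[S + \underline{\alpha}, \mu]_{_{RN}} = l_2^{\left[.,.\right]} + \underline{\diff^A\alpha}$, which already yields the last sentence of the statement (once the Nijenhuis property is secured). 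Applying $[S + \underline{\alpha}, \cdot\,]_{_{RN}}$ once more and expanding, the four resulting terms are governed by the same lemmas: $[S, l_2]_{_{RN}} = l_2$, $[S, \underline{\diff^A\alpha}]_{_{RN}} = \underline{\diff^A\alpha}$ (here the coefficient is the degree $1$ of $\underline{\diff^A\alpha}$, which is exactly what produces the factor of two), $[\underline{\alpha}, l_2]_{_{RN}} = \underline{\diff^A\alpha}$, and the crucial cross term $[\underline{\alpha}, \underline{\diff^A\alpha}]_{_{RN}} = 0$ by Lemma~\ref{underlinescommut}. Summing gives $[\mathcal{N}, [\mathcal{N}, \mu]_{_{RN}}]_{_{RN}} = l_2 + 2\underline{\diff^A\alpha}$, which coincides with $[\mathcal{K}, \mu]_{_{RN}} = [S, l_2]_{_{RN}} + 2[\underline{\alpha}, l_2]_{_{RN}} = l_2 + 2\underline{\diff^A\alpha}$, establishing the first Nijenhuis identity.

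It remains to check that $\mathcal{N}$ and $\mathcal{K}$ commute. Expanding $[S + \underline{\alpha}, S + 2\underline{\alpha}]_{_{RN}}$ and using that $S$ has degree zero (so $[S, S]_{_{RN}} = 0$ and $[S, \underline{\alpha}]_{_{RN}} = \bar{\underline{\alpha}}\,\underline{\alpha} = 0$, since $\underline{\alpha}$ has degree $0$), together with $[\underline{\alpha}, \underline{\alpha}]_{_{RN}} = 0$ from Lemma~\ref{underlinescommut}, every term vanishes, giving the second identity. The argument is almost entirely mechanical, parallel to Theorem~\ref{thm:alphalpha} and Proposition~\ref{thm:NijenN}; the only point requiring genuine care --- the main obstacle such as it is --- is the degree bookkeeping, since the mechanism producing the required square $S + 2\underline{\alpha}$ depends precisely on $\underline{\diff^A\alpha}$ having degree $1$ (so that $S$ scales it by $1$) while $\underline{\alpha}$ has degree $0$ (so that its brackets with $S$ die), and on the vanishing of $[\underline{\alpha}, \underline{\diff^A\alpha}]_{_{RN}}$, which rests on $\diff^A\alpha$ being a genuine form to which Lemma~\ref{underlinescommut} applies.
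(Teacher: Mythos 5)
Your proof is correct and follows exactly the paper's route: compute $[S+\underline{\alpha},l_2^{[.,.]}]_{_{RN}}=l_2^{[.,.]}+\underline{\diff^A\alpha}$ via Lemma~\ref{lem:commutatorAlgebroidContraction} and Lemma~\ref{Euler}, iterate to get $l_2^{[.,.]}+2\,\underline{\diff^A\alpha}=[S+2\underline{\alpha},l_2^{[.,.]}]_{_{RN}}$, and check $[S+\underline{\alpha},S+2\underline{\alpha}]_{_{RN}}=0$. The only difference is that you spell out the degree bookkeeping and the appeal to Lemma~\ref{underlinescommut} that the paper compresses into ``a simple computation,'' which is a faithful filling-in rather than a different argument.
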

\begin{proof}
As a direct consequence of Lemma \ref{lem:commutatorAlgebroidContraction}, we have
\begin{equation*}\label{eq:deformedstructurealpha}
\left[S+\underline{\alpha},l_2^{\left[.,.\right]}\right]_{_{RN}} = l_2^{\left[.,.\right]}+ \underline{\diff^A\alpha}.
\end{equation*}
A simple computation gives
\begin{equation*}
\left[S+\underline{\alpha},\left[S+\underline{\alpha},l_2^{\left[.,.\right]}\right]_{_{RN}}\right]_{_{RN}}=l_2^{\left[.,.\right]}+ 2\,\underline{\diff^A\alpha}=\left[S+ 2 \,\underline{\alpha},l_2^{\left[.,.\right]}\right]_{_{RN}}
\end{equation*}
and the fact that $ \left[S+\underline{\alpha},S+2\underline{\alpha}\right]_{_{RN}}=0$ completes the proof.
\end{proof}

Our next purpose is to use well-known structures on a Lie algebroid defined by pairs of compatible tensors, such as $\Omega N$-, Poisson-Nijenhuis and $P \Omega$-structures \cite{KoRou,antunes, AntunesCosta}, to construct Nijenhuis forms on the multiplicative graded Lie algebra associated to the Lie algebroid. We start by recalling what an $\Omega N$-structure is.

\begin{defn} \cite{antunes, KoRou}
Let $(A,\left[.,.\right], \rho)$ be a Lie algebroid, with differential $\diff^A$, $N$ be  a $(1,1)$-tensor on $A$ and $\alpha \in \Gamma(\wedge^2 A^*)$ a $2$-form. Let $\alpha_{_N}:\Gamma(A)\times\Gamma(A)\to\Gamma(A)$ be a bilinear map, defined as
\begin{equation}  \label{omeganskew}
\alpha_{_N}(X,Y)=\alpha(NX,Y).
\end{equation}
 Then, the pair $(\alpha, N)$ is an $\Omega N$-{\em structure} on the Lie algebroid $A$ if $\alpha(NX,Y)=\alpha(X,NY)$ for all $X,Y \in \Gamma(A)$ (which amounts to  $\alpha_{_N}$ being skew-symmetric and therefore a $2$-form on $A$), and $\alpha$ and $\alpha_{_N}$ are $\diff^A$-closed.
\end{defn}

\begin{lem}\label{omega_N}
Let $(A,\left[.,.\right], \rho)$ be a Lie algebroid, with differential $\diff^A$ and with the associated multiplicative graded Lie algebra structure $l_2^{\left[.,.\right]}$ on the graded vector space $\Gamma(\wedge A)[2]$. Let $N$ be a  $(1,1)$-tensor on the Lie algebroid and $\alpha\in \Gamma(\wedge^2 A^*)$ be a $2$-form such that  $\alpha_N:\Gamma(A)\times\Gamma(A)\to\Gamma(A)$ given by (\ref{omeganskew})
is skew-symmetric and therefore a $2$-form on $A$. Then,
\begin{enumerate}
\item[i)] $\left[\underline{N},\underline{\alpha}\right]_{_{RN}}=2 \underline{\alpha_{_N}},$
\item[ii)] $\left[\underline{N}+\underline{\alpha},l_2^{\left[.,.\right]}\right]_{_{RN}}=l_2^{\left[.,.\right]_N}+\underline{\diff^A\alpha}$
\item[iii)] If $N$ is Nijenhuis, then $$\left[\underline{N}+\underline{\alpha},\left[\underline{N}+\underline{\alpha},
      l_2^{\left[.,.\right]}\right]_{_{RN}}\right]_{_{RN}}=
      \left[\underline{N^2},l_2^{\left[.,.\right]}\right]_{_{RN}}-2\,\underline{\diff^A\alpha_{_{N}}}+2\left[\underline{N},
      \underline{\diff^A\alpha}\right]_{_{RN}}.$$
\end{enumerate}
\end{lem}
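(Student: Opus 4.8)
The plan is to prove the three items in order, exploiting throughout that every form occurring is a multi-derivation on $\Gamma(\wedge A)[2]$: by Proposition~\ref{prop:Multider} the Richardson-Nijenhuis bracket of multi-derivations is again a multi-derivation, and a multi-derivation is uniquely determined by its restriction to $\Gamma(A)$ (as already used in Lemmas~\ref{underlinescommut} and \ref{lem:commutatorAlgebroidContraction}). For item~(i), since $\underline N$ and $\underline\alpha$ both have degree zero as graded maps, $[\underline N,\underline\alpha]_{_{RN}}=\iota_{\underline N}\underline\alpha-\iota_{\underline\alpha}\underline N$. The term $\iota_{\underline\alpha}\underline N$ vanishes because $\underline\alpha(X,Y)=\alpha(X,Y)$ is a function and $\underline N$ annihilates functions. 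Evaluating $\iota_{\underline N}\underline\alpha$ on sections $X,Y\in\Gamma(A)$ and summing over the two $(1,1)$-unshuffles gives $\alpha(NX,Y)-\alpha(NY,X)$, where the Koszul sign $(-1)^{|X||Y|}=-1$ of the transposition appears because $\Gamma(A)$ sits in degree $-1$ of $\Gamma(\wedge A)[2]$. The hypothesis that $\alpha_{_N}$ is skew-symmetric reads $\alpha(NY,X)=-\alpha(NX,Y)$, so this equals $2\alpha_{_N}(X,Y)$. As $[\underline N,\underline\alpha]_{_{RN}}$ and $2\underline{\alpha_{_N}}$ are multi-derivation vector valued $2$-forms of degree zero agreeing on sections, they coincide.

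Item~(ii) is then immediate from bilinearity of the bracket together with $[\underline N,l_2^{[.,.]}]_{_{RN}}=l_2^{[.,.]_N}$ (Lemma~\ref{lem:deformingOidsN}) and $[\underline\alpha,l_2^{[.,.]}]_{_{RN}}=\underline{\diff^A\alpha}$ (Lemma~\ref{lem:commutatorAlgebroidContraction} with $n=2$). For item~(iii), I set $\mathcal N=\underline N+\underline\alpha$, substitute item~(ii), and expand $[\mathcal N,[\mathcal N,l_2^{[.,.]}]_{_{RN}}]_{_{RN}}$ by bilinearity into the four terms $[\underline N,l_2^{[.,.]_N}]_{_{RN}}$, $[\underline\alpha,l_2^{[.,.]_N}]_{_{RN}}$, $[\underline N,\underline{\diff^A\alpha}]_{_{RN}}$ and $[\underline\alpha,\underline{\diff^A\alpha}]_{_{RN}}$. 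The fourth vanishes by Lemma~\ref{underlinescommut}, since $\alpha$ and $\diff^A\alpha$ are both forms on $A$. The first equals $[\underline N,[\underline N,l_2^{[.,.]}]_{_{RN}}]_{_{RN}}=[\underline{N^2},l_2^{[.,.]}]_{_{RN}}$; this is exactly the Nijenhuis property of $\underline N$ recorded in Proposition~\ref{prop:NijenhuisAsNijenhuis}, and it is the only place where the hypothesis that $N$ is Nijenhuis enters.

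The decisive term is the second one. Writing $l_2^{[.,.]_N}=[\underline N,l_2^{[.,.]}]_{_{RN}}$ and applying the graded Jacobi identity — all relevant graded-map degrees being zero, so no signs intervene — I get
\[
[\underline\alpha,[\underline N,l_2^{[.,.]}]_{_{RN}}]_{_{RN}}
=[[\underline\alpha,\underline N]_{_{RN}},l_2^{[.,.]}]_{_{RN}}
+[\underline N,[\underline\alpha,l_2^{[.,.]}]_{_{RN}}]_{_{RN}}.
\]
By item~(i), $[\underline\alpha,\underline N]_{_{RN}}=-2\underline{\alpha_{_N}}$, and by Lemma~\ref{lem:commutatorAlgebroidContraction} one has $[\underline{\alpha_{_N}},l_2^{[.,.]}]_{_{RN}}=\underline{\diff^A\alpha_{_N}}$ and $[\underline\alpha,l_2^{[.,.]}]_{_{RN}}=\underline{\diff^A\alpha}$; hence the second term equals $-2\underline{\diff^A\alpha_{_N}}+[\underline N,\underline{\diff^A\alpha}]_{_{RN}}$. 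Adding the four contributions gives $[\underline{N^2},l_2^{[.,.]}]_{_{RN}}-2\underline{\diff^A\alpha_{_N}}+2[\underline N,\underline{\diff^A\alpha}]_{_{RN}}$, which is the claimed identity. I expect the only genuinely delicate point to be the Koszul-sign computation in item~(i): everything after that is formal manipulation inside the graded Lie algebra $(MultiDer(\Gamma(\wedge A)[2]),[.,.]_{_{RN}})$, and I would take particular care to confirm that the degree $-1$ placement of $\Gamma(A)$ yields the sign that converts $\alpha(NX,Y)-\alpha(NY,X)$ into $2\alpha_{_N}(X,Y)$ rather than $0$.
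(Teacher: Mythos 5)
Your proof is correct and follows essentially the same route as the paper's: item (i) by evaluating on sections (with the Koszul sign coming from the degree $-1$ placement of $\Gamma(A)$) and invoking uniqueness of the multi-derivation extension, item (ii) directly from Lemmas~\ref{lem:deformingOidsN} and~\ref{lem:commutatorAlgebroidContraction}, and item (iii) by expanding into four terms, killing $\left[\underline{\alpha},\underline{\diff^A\alpha}\right]_{_{RN}}$ via Lemma~\ref{underlinescommut}, and handling $\left[\underline{\alpha},l_2^{\left[.,.\right]_N}\right]_{_{RN}}$ with the graded Jacobi identity together with item (i). The only cosmetic difference is that you cite Proposition~\ref{prop:NijenhuisAsNijenhuis} for $\left[\underline{N},\left[\underline{N},l_2^{\left[.,.\right]}\right]_{_{RN}}\right]_{_{RN}}=\left[\underline{N^2},l_2^{\left[.,.\right]}\right]_{_{RN}}$, whereas the paper reruns Lemma~\ref{lem:deformingOidsN} twice and then uses the Nijenhuis hypothesis, which is the same computation.
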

\begin{proof}
i) First notice that for all $X,Y\in \Gamma(A)$ we have $$\left[\underline{N},\underline{\alpha}\right]_{_{RN}}(X,Y)=\alpha(NX,Y)-\alpha(NY,X)=2\alpha_{_N}(X,Y).$$
 Since $\underline{N}$ and $\underline{\alpha}$ are both derivations, by Lemma \ref{lem:multiDer1} $\left[\underline{N},\underline{\alpha}\right]_{_{RN}}$ is a derivation and hence it is the unique extension of $2\alpha_{_N}$ by derivation.

ii) It is a direct consequence of Lemma \ref{lem:deformingOidsN} together with Lemma \ref{lem:commutatorAlgebroidContraction}.

iii) Using item (ii) and Lemma \ref{lem:deformingOidsN}, we have
\begin{equation*}
      \begin{array}{rcl}
      \left[\underline{N}+\underline{\alpha},\left[\underline{N}+\underline{\alpha},
            l_2^{\left[.,.\right]}\right]_{_{RN}}\right]_{_{RN}}
      &=&\left[\underline{N}+\underline{\alpha},l_2^{\left[.,.\right]_N}+\underline{\diff^A\alpha} \right]_{_{RN}}\\
      &=&l_2^{\left[.,.\right]_{N,N}}+\left[\underline{N},\underline{\diff^A\alpha}\right]_{_{RN}}+\left[\underline{\alpha},
      l_2^{\left[.,.\right]_{N}}\right]_{_{RN}}
      +\left[\underline{\alpha},\underline{\diff^A\alpha}\right]_{_{RN}}.
      \end{array}
\end{equation*}
 Lemma \ref{lem:deformingOidsN} and the graded Jacobi identity give
\begin{equation*}
\begin{array}{rcl}
\left[\underline{\alpha},l_2^{\left[.,.\right]_{N}}\right]_{_{RN}}&=&
\left[\underline{\alpha},\left[\underline{N},l_2^{\left[.,.\right]}\right]_{_{RN}}\right]_{_{RN}}\\
&=&\left[\left[\underline{\alpha},\underline{N}\right]_{_{RN}},l_2^{\left[.,.\right]}\right]_{_{RN}}+
\left[\underline{N},\left[\underline{\alpha},l_2^{\left[.,.\right]}\right]_{_{RN}}\right]_{_{RN}}\\
&=&\left[-2\underline{\alpha_{_N}},l_2^{\left[.,.\right]}\right]_{_{RN}}+\left[\underline{N},\underline{\diff^A\alpha}\right]_{_{RN}}
\end{array}
\end{equation*}
and, by Lemma \ref{underlinescommut}, $\left[\underline{\alpha},\underline{\diff^A\alpha}\right]_{_{RN}}=0.$ Hence, since $N$ is Nijenhuis, we get
\begin{equation*}
\begin{array}{rcl}
[\underline{N}+\underline{\alpha},[\underline{N}+\underline{\alpha},l_2^{\left[.,.\right]}]_{_{RN}}]_{_{RN}}&=& \left[\underline{N^2}-2\underline{\alpha_{_N}},l_2^{\left[.,.\right]}\right]_{_{RN}}+2\left[\underline{N},\underline{\diff^A\alpha}\right]_{_{RN}}\\
&=&\left[\underline{N^2},l_2^{\left[.,.\right]}\right]_{_{RN}}-2\,\underline{\diff^A\alpha_{_{N}}}+2\left[\underline{N},\underline{\diff^A\alpha}\right]_{_{RN}}.
\end{array}
\end{equation*}
\end{proof}

The next proposition is now immediate.

\begin{prop}\label{NijenhuisonOmegaNstructures}
Let $(A,\left[.,.\right], \rho)$ be a Lie algebroid, with  differential $\diff^A$ and with associated multiplicative graded Lie algebra structure $l_2^{\left[.,.\right]}$ on the graded vector space $\Gamma(\wedge A)[2]$. If $(\alpha, N)$ is an $\Omega N$-structure on the Lie algebroid $A$, then $\underline{N}+\underline{\alpha}$ is a Nijenhuis vector valued form, with respect to $l_2^{\left[.,.\right]}$, with square $\underline{N^2}+\underline{\alpha_{_{N}}}$.
\end{prop}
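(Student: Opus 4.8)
The plan is to verify the two defining conditions of a Nijenhuis form (Definition~\ref{def:Nijenhuis}) directly, for $\mathcal{N} := \underline{N} + \underline{\alpha}$ together with the announced square $\mathcal{K} := \underline{N^2} + \underline{\alpha_{_N}}$. I would first record the only features of an $\Omega N$-structure that enter: for such a structure the tensor $N$ is Nijenhuis (so that Lemma~\ref{omega_N}(iii) applies), $N$ is self-adjoint for $\alpha$, i.e. $\alpha(NX,Y)=\alpha(X,NY)$, and both $\alpha$ and $\alpha_{_N}$ are $\diff^A$-closed, $\diff^A\alpha = 0 = \diff^A\alpha_{_N}$.

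For the first Nijenhuis equation I would start from Lemma~\ref{omega_N}(iii),
$$\left[\underline{N}+\underline{\alpha},\left[\underline{N}+\underline{\alpha}, l_2^{\left[.,.\right]}\right]_{_{RN}}\right]_{_{RN}} = \left[\underline{N^2},l_2^{\left[.,.\right]}\right]_{_{RN}}-2\,\underline{\diff^A\alpha_{_N}}+2\left[\underline{N}, \underline{\diff^A\alpha}\right]_{_{RN}}.$$
Since $\diff^A\alpha = 0$ and $\diff^A\alpha_{_N}=0$, the last two summands vanish and the right-hand side reduces to $[\underline{N^2},l_2^{\left[.,.\right]}]_{_{RN}}$. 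To recognise this as $[\mathcal{K},l_2^{\left[.,.\right]}]_{_{RN}}$, I would then use Lemma~\ref{lem:commutatorAlgebroidContraction} to write $[\underline{\alpha_{_N}},l_2^{\left[.,.\right]}]_{_{RN}} = \underline{\diff^A\alpha_{_N}} = 0$, so that adjoining $\underline{\alpha_{_N}}$ to $\underline{N^2}$ leaves the deformation unchanged and hence $[\underline{N^2},l_2^{\left[.,.\right]}]_{_{RN}} = [\mathcal{K},l_2^{\left[.,.\right]}]_{_{RN}}$.

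It remains to check $[\mathcal{N},\mathcal{K}]_{_{RN}}=0$. Expanding by bilinearity produces four terms, $[\underline{N},\underline{N^2}]_{_{RN}}$, $[\underline{\alpha},\underline{\alpha_{_N}}]_{_{RN}}$, $[\underline{N},\underline{\alpha_{_N}}]_{_{RN}}$ and $[\underline{\alpha},\underline{N^2}]_{_{RN}}$. The first vanishes because $\underline{N}$ and $\underline{N^2}$ are derivation extensions of the commuting endomorphisms $N$ and $N^2$, hence commute for the Richardson-Nijenhuis bracket (as already noted in the proof of Proposition~\ref{prop:NijenhuisAsNijenhuis}); the second vanishes by Lemma~\ref{underlinescommut}, both $\alpha$ and $\alpha_{_N}$ being extensions of forms. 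For the two remaining cross terms I would apply Lemma~\ref{omega_N}(i): to the pair $(N,\alpha_{_N})$ it yields $[\underline{N},\underline{\alpha_{_N}}]_{_{RN}} = 2\,\underline{(\alpha_{_N})_{_N}}$, and to the pair $(N^2,\alpha)$, together with the graded antisymmetry of the degree-zero bracket, it yields $[\underline{\alpha},\underline{N^2}]_{_{RN}} = -2\,\underline{\alpha_{_{N^2}}}$, where $\alpha_{_{N^2}}(X,Y) := \alpha(N^2X,Y)$. Since $(\alpha_{_N})_{_N} = \alpha_{_{N^2}}$, the two cross terms cancel and $[\mathcal{N},\mathcal{K}]_{_{RN}}=0$.

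The main obstacle is precisely this cross-term cancellation, because it is the one place where the $\Omega N$ compatibility is essential: Lemma~\ref{omega_N}(i) requires the twisted forms $(\alpha_{_N})_{_N}$ and $\alpha_{_{N^2}}$ to be genuinely skew-symmetric before it can be invoked. This is where self-adjointness enters: from $\alpha(NX,Y)=\alpha(X,NY)$ one gets $\alpha(N^2X,Y)=\alpha(NX,NY)=\alpha(X,N^2Y)$, so $\alpha_{_{N^2}}$ is skew-symmetric and coincides with $(\alpha_{_N})_{_N}$, which makes the two applications of Lemma~\ref{omega_N}(i) legitimate and their outputs equal and opposite. Everything else is a formal consequence of the lemmas already established, which is why the statement is immediate once Lemma~\ref{omega_N} is in hand.
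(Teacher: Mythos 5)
Your proof is correct and takes essentially the same route as the paper's: the square condition comes from Lemma \ref{omega_N}(iii) combined with $\diff^A\alpha=\diff^A\alpha_{_N}=0$ and Lemma \ref{lem:commutatorAlgebroidContraction}, and the commutation $[\underline{N}+\underline{\alpha},\underline{N^2}+\underline{\alpha_{_N}}]_{_{RN}}=0$ follows from the cancellation $2\,\underline{(\alpha_{_N})_{_N}}-2\,\underline{\alpha_{_{N^2}}}=0$, exactly as in the paper. Your extra verifications (skew-symmetry of $\alpha_{_{N^2}}$ before invoking Lemma \ref{omega_N}(i), and the explicit vanishing of $[\underline{N},\underline{N^2}]_{_{RN}}$ and $[\underline{\alpha},\underline{\alpha_{_N}}]_{_{RN}}$) are details the paper leaves implicit, not a different argument.
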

\begin{proof}
Let $(\alpha, N)$ be an $\Omega N$-structure on the Lie algebroid $A$. Then, $\diff^A \alpha_N =0$ and, by Lemma~\ref{lem:commutatorAlgebroidContraction},  we have $\left[\underline{\alpha_{_{N}}},l_2^{\left[.,.\right]}\right]_{_{RN}}=0$. It follows from item (iii) in Lemma \ref{omega_N}, that
\begin{equation*}
\left[\underline{N}+\underline{\alpha},\left[\underline{N}+\underline{\alpha},
      l_2^{\left[.,.\right]}\right]_{_{RN}}\right]_{_{RN}}=
      \left[\underline{N^2}+\underline{\alpha_{_{N}}},l_2^{\left[.,.\right]}\right]_{_{RN}}.
\end{equation*}
Since
\begin{equation*}
\left[\underline{N}+\underline{\alpha},\underline{N^2}+\underline{\alpha_{_{N}}}\right]_{_{RN}}=\left[\underline{N},\underline{\alpha_{_N}}\right]_{_{RN}}+
\left[\underline{\alpha},\underline{N^2}\right]_{_{RN}}=2\underline{(\alpha_{_{N}})_{_N}}-2\underline{\alpha_{_{N^2}}}=0,
\end{equation*}
the proof is complete.
\end{proof}

We are now going to see how to include Poisson-Nijenhuis structures among our examples of Nijenhuis
structures on $L_\infty$-algebras.
Let us first fix and recall some notations and notions.

Let $(A,\mu=\left[.,.\right], \rho)$ be a Lie algebroid, $\pi\in \Gamma(\wedge^2A)$  a bivector and $N: A \to A$  a vector bundle morphism.
We denote by $N^*$ the morphism $N^*: A^*\to A^*$ given by $\langle N^*\alpha,X\rangle=\langle \alpha,NX\rangle$, for all $X,Y \in \Gamma(A)$. We consider the
morphism induced by $\pi$, $\pi^{\#}: A^* \to A$,  given by $\langle \beta,\pi^{\#}\alpha\rangle=\pi(\alpha, \beta)$,   and we denote by $\pi_{_{N}}$ the  bivector defined by \begin{equation}  \label{1formbracket}
\pi_{_{N}}(\alpha,\beta)=\langle \beta,N\pi^{\#}\alpha\rangle=\langle N^*\beta,\pi^{\#}\alpha\rangle,\end{equation}
 for all $\alpha,\beta \in  \Gamma(A^*)$.
  A bracket $\{\cdot,\cdot\}_{_{\pi}}^{\mu}$ can be defined on $\Gamma(A^*)$, the space of $1$-forms on the Lie algebroid $(A,\mu=\left[.,.\right], \rho)$, as follows:
\begin{equation*}
\{\alpha,\beta\}_{_{\pi}}^{\mu}=\mathcal{L}^A_{_{\pi^\#(\alpha)}}\beta -\mathcal{L}^A_{_{\pi^\#(\beta)}}\alpha-\diff^A(\pi(\alpha,\beta)),
\end{equation*}
for all $\alpha,\beta\in \Gamma(A^*)$. It is well known that if $\pi$ is a Poisson bivector on the Lie algebroid $(A,\mu=\left[.,.\right], \rho)$, that is $\left[\pi,\pi\right]_{_{SN}}=0$, then $(\Gamma(A^*), \{.,.\}_{_{\pi}}^{\mu})$ is a Lie algebra and if this is the case, then $\pi^{\#}$ is a Lie algebra morphism form the Lie algebra $(\Gamma(A^*), \{.,.\}_{_{\pi}}^{\mu})$ to the Lie algebra $(\Gamma(A),\mu)$.

For every Poisson structure $\pi$ on a Lie algebroid $A $,
the triple $(\Gamma(\wedge A)[1], \left[.,.\right]_{_{SN}},\left[\pi,.\right]_{_{SN}}) $ is a skew-symmetric differential graded Lie algebra,
so that the pair $(l_1^{\left[.,.\right],\pi},l_2^{\left[.,.\right]})$ given by
\begin{equation*}\label{l_1andl_2}
\begin{array}{rcl}
l_1^{\left[.,.\right],\pi}(P)=\left[\pi,P\right]_{_{SN}}&\mbox{and}&l_2^{\left[.,.\right]}(P,Q) := (-1)^{(p-1)} \left[P,Q\right]_{_{SN}},
\end{array}
 \end{equation*}
where $P\in \Gamma(\wedge^{p} A)$ and $Q\in \Gamma(\wedge^{q} A)$, is an $L_{\infty}$-structure on the graded vector space $\Gamma(\wedge A)[2]$, which is clearly multiplicative.
This $L_\infty$-structure is called the \emph{$L_\infty$-structure associated to the Poisson
structure $\pi$ and the Lie algebroid $A$}.

Now, we recall the notion of Poisson-Nijenhuis structure on a Lie algebroid.
\begin{defn} \cite{YKS}
Let $(A,\mu=\left[.,.\right], \rho)$ be a Lie algebroid, $\pi\in \Gamma(\wedge^2A)$  a bivector and $N$  a $(1,1)$-tensor on $A$. Then, the pair $(\pi, N)$ is a {\em Poisson-Nijenhuis structure} on the Lie algebroid $(A,\mu=\left[.,.\right], \rho)$ if
\begin{enumerate}
\item[i)] $N$ is a Nijenhuis $(1,1)$-tensor with respect to the Lie bracket $\mu$,
\item[ii)] $\pi$ is a Poisson bivector,
\item[iii)] $N \circ \pi^{\#}=\pi^{\#} \circ  N^*$,
\item[iv)] $(\{\alpha,\beta\}_{_{\pi}}^{\mu})_{_{N^*}}=\{\alpha, \beta\}_{_{\pi}}^{\mu^{N}}$,
\end{enumerate}
for all $\alpha,\beta\in \Gamma(A^*)$, where $(\{.,.\}_{_{\pi}}^{\mu})_{_{N^*}}$ is the deformation of the Lie bracket $\{\cdot,\cdot\}_{_{\pi}}^{\mu}$ by $N^*$ and $\{.,.\}_{_{\pi}}^{\mu^{N}}$ is the bracket determined by the pair $(\pi, \mu^{N}=\left[.,.\right]_{N})$ according to formula (\ref{1formbracket}).
\end{defn}
Notice  that
$
\pi_{_{N}}^{\#}=N\pi^{\#}=\pi^{\#}N^*
$
and hence,
\begin{equation} \label{npi}
\underline{N}(\pi)=2\pi_{_{N}}.
\end{equation}

 Recall from \cite{YKS} that if $(\pi, N)$ is a Poisson-Nijenhuis structure on a Lie algebroid $(A,\mu=\left[.,.\right], \rho)$, then $\left(A,\mu^{N}=\left[.,.\right]_{N}, \rho\circ N\right)$ and $\left(A^*, \{.,.\}_{\pi}^{\mu}, \rho\circ\pi^{\#}\right)$ are Lie algebroids. Also,
 $$\left(\left(\{.,.\}_{\pi}^{\mu}\right)_{N^*},\rho\circ\\\pi^{\#}\circ N^*\right), \, \left(\{.,.\}_{\pi}^{\mu^{N}}, \rho\circ N\circ\pi^{\#}\right)\,\, {\mbox{and}}\,\, \left(\{.,.\}_{\pi_{_{N}}}^{\mu}, \rho\circ\pi_{_{N}}^{\#}\right)$$
 define the same Lie algebroid structure on $A^*$. Moreover, identifying the graded vector spaces $\Gamma(\wedge A^{**})$ and $\Gamma(\wedge A)$, the  differential  $ \diff^{A^{*}}_{(\{.,.\}_{\pi}^{\mu})}$ coincide with the linear map $\left[\pi,.\right]_{_{SN}}$. Hence, we have
 $$\diff^{A^{*}}_{(\{.,.\}_{\pi}^{\mu^{N}})}=\diff^{A^{*}}_{(\{.,.\}_{\pi_{_{N}}}^{\mu})},$$ which  is equivalent to
 \begin{equation} \label{npi1}
 \left[\pi,.\right]^{N}_{_{SN}}=\left[\pi_{_{N}},.\right]_{_{SN}},
 \end{equation}  where $\left[.,.\right]^{N}_{_{SN}}$
 is the Schouten-Nijenhuis bracket with respect to the Lie bracket $\left[.,.\right]_N$.

\begin{lem}\label{lamejanjali}
Let $(\pi,N)$ be a Poisson-Nijenhuis structure on a Lie algebroid $(A,\left[.,.\right], \rho)$. Then,
\begin{equation*}
\left[\underline{N},l_1^{\left[.,.\right],\pi}\right]_{_{RN}}(P)= \left[\pi,{\underline{ N}} (P)\right]_{_{SN}}-\underline {N} \left[\pi, P \right]_{_{SN}}  = \left[-\pi_N , P\right]_{_{SN}},
\end{equation*}
for all $P \in \Gamma(\wedge A)$.
\end{lem}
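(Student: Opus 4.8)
The plan is to split the claimed double equality into two independent parts and treat each separately. The first equality, $[\underline N, l_1^{\left[.,.\right],\pi}]_{_{RN}}(P) = \left[\pi,\underline N(P)\right]_{_{SN}} - \underline N\left[\pi,P\right]_{_{SN}}$, is essentially the definition of the Richardson-Nijenhuis bracket restricted to vector valued $1$-forms. Both $\underline N$ and $l_1^{\left[.,.\right],\pi}$ lie in $S^1(E^*)\otimes E$ (with $\bar{\underline N}=0$ and $\overline{l_1^{\left[.,.\right],\pi}}=1$, so the Koszul sign in (\ref{RNbracket}) equals $(-1)^{\bar{\underline N}\,\overline{l_1^{\left[.,.\right],\pi}}}=1$), and for $1$-forms the insertion operator of (\ref{equ:insertion}) reduces to composition, $\iota_K L = L\circ K$. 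Hence I would simply compute
$$[\underline N, l_1^{\left[.,.\right],\pi}]_{_{RN}}(P) = \iota_{\underline N}l_1^{\left[.,.\right],\pi}(P) - \iota_{l_1^{\left[.,.\right],\pi}}\underline N(P) = l_1^{\left[.,.\right],\pi}(\underline N(P)) - \underline N(l_1^{\left[.,.\right],\pi}(P)),$$
and since $l_1^{\left[.,.\right],\pi}(P)=\left[\pi,P\right]_{_{SN}}$ this is exactly $\left[\pi,\underline N(P)\right]_{_{SN}} - \underline N\left[\pi,P\right]_{_{SN}}$.

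For the second equality I would invoke the deformation formula for the Schouten-Nijenhuis bracket recalled in the proof of Lemma~\ref{lem:deformingOidsN}, namely
$$\left[P,Q\right]^N_{_{SN}} = \left[\underline N P,Q\right]_{_{SN}} + \left[P,\underline N Q\right]_{_{SN}} - \underline N\left[P,Q\right]_{_{SN}},$$
specializing its first slot to the bivector $\pi$ and keeping the multivector $P$ of the statement in its second slot:
$$\left[\pi,P\right]^N_{_{SN}} = \left[\underline N\pi,P\right]_{_{SN}} + \left[\pi,\underline N P\right]_{_{SN}} - \underline N\left[\pi,P\right]_{_{SN}}.$$
Now I substitute the two identities furnished by the Poisson-Nijenhuis compatibility: equation (\ref{npi}), $\underline N(\pi)=2\pi_{_N}$, turns the first term on the right into $2\left[\pi_{_N},P\right]_{_{SN}}$, while equation (\ref{npi1}), $\left[\pi,.\right]^N_{_{SN}}=\left[\pi_{_N},.\right]_{_{SN}}$, rewrites the left-hand side as $\left[\pi_{_N},P\right]_{_{SN}}$. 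Solving the resulting relation for $\left[\pi,\underline N P\right]_{_{SN}} - \underline N\left[\pi,P\right]_{_{SN}}$ yields $\left[\pi_{_N},P\right]_{_{SN}} - 2\left[\pi_{_N},P\right]_{_{SN}} = -\left[\pi_{_N},P\right]_{_{SN}} = \left[-\pi_{_N},P\right]_{_{SN}}$, which is precisely the desired identity.

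The computation is short, so the only real care is bookkeeping, and the step I expect to matter most is the factor $2$ in (\ref{npi}): it is exactly this factor, combined with the identification (\ref{npi1}) of the deformed differential $\left[\pi,.\right]^N_{_{SN}}$ with $\left[\pi_{_N},.\right]_{_{SN}}$, that produces the cancellation $1-2=-1$ and hence the correct sign $-\pi_{_N}$ rather than a spurious vanishing or a $+\pi_{_N}$. I would also check that the two inputs (\ref{npi}) and (\ref{npi1}) are legitimately available here: both hold for a genuine Poisson-Nijenhuis pair $(\pi,N)$, which is exactly the hypothesis of the lemma, so no additional assumption is needed and the proof closes.
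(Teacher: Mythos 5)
Your proposal is correct and follows essentially the same route as the paper: the first equality by unwinding the Richardson--Nijenhuis bracket of vector valued $1$-forms, and the second by writing the deformation formula $\left[\pi,P\right]^{N}_{_{SN}}=\left[\underline{N}(\pi),P\right]_{_{SN}}+\left[\pi,\underline{N}(P)\right]_{_{SN}}-\underline{N}\left[\pi,P\right]_{_{SN}}$ and then substituting $\underline{N}(\pi)=2\pi_{_{N}}$ and $\left[\pi,.\right]^{N}_{_{SN}}=\left[\pi_{_{N}},.\right]_{_{SN}}$, so that the factor $2$ produces the sign $-\pi_{_{N}}$. The paper performs exactly this cancellation, merely grouping the terms slightly differently.
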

\begin{proof}
The first equality follows directly from the definition of $l_1^{\left[.,.\right],\pi}$. For the second equality, observe that for all $P \in \Gamma(\wedge A)$ we have
\begin{equation*}
\left[\pi,P\right]^{N}_{_{SN}}=\left[\underline{N}(\pi),P\right]_{_{SN}}+\left[\pi,\underline{N}(P)\right]_{_{SN}}-\underline{N}\left[\pi,P\right]_{_{SN}},
\end{equation*}
 where $\left[.,.\right]^{N}_{_{SN}}$ stands for the Schouten-Nijenhuis bracket with respect to the Lie bracket $\left[.,.\right]_{N}$.
Hence, using (\ref{npi})  and (\ref{npi1}), we have
\begin{equation*}
\begin{array}{rcl}
\left[\pi,{\underline{ N}} (P)\right]_{_{SN}}-\underline {N} \left[\pi, P \right]_{_{SN}}&=&\left[\pi,P\right]^{N}_{_{SN}}-\left[\underline{N}(\pi),P\right]_{_{SN}}
               =\left[\pi,P\right]^{N}_{_{SN}}-2\left[\pi_{_{N}},P\right]_{_{SN}}\\
               &=&\left(\left[\pi,P\right]^{N}_{_{SN}}-\left[\pi_{_{N}},P\right]_{_{SN}}\right)-\left[\pi_{_{N}},P\right]_{_{SN}}\\
               &=&-\left[\pi_{_{N}},P\right]_{_{SN}}.
\end{array}
\end{equation*}
\end{proof}

\begin{prop}\label{NijenhuisonPoissonNijenhuisalgeroid}
Let $(\pi,N)$ be a Poisson-Nijenhuis structure on a Lie algebroid $(A,\left[.,.\right],\rho)$.
Then, the derivation $\underline N $ is a weak Nijenhuis tensor
for the $L_\infty$-structure associated to the Poisson
structure $\pi$ and the Lie algebroid $(A,\left[.,.\right],\rho)$.

In this case, the deformed structure $[ \underline N,l_1^{\left[.,.\right],\pi} + l_2^{\left[.,.\right]}]_{_{RN}}$ is the $L_\infty$-structure associated to
the Poisson structure $-\pi_N $ and the Lie algebroid $(A, \left[.,.\right]_N, \rho \circ N)$.
\end{prop}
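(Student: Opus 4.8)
The plan is to deduce everything from Proposition~\ref{prop:deformedisLinfty}. Since $\mu=l_1^{\left[.,.\right],\pi}+l_2^{\left[.,.\right]}$ is the $L_\infty$-structure associated to the Poisson structure $\pi$ and the Lie algebroid $(A,\left[.,.\right],\rho)$, it is in particular an $L_\infty$-structure; hence $\underline N$ is weak Nijenhuis with respect to $\mu$ precisely when the deformed object $[\underline N,\mu]_{_{RN}}$ is again an $L_\infty$-structure. Thus the whole statement will follow once I compute $[\underline N,\mu]_{_{RN}}$ explicitly and recognize it as the $L_\infty$-structure associated to a Poisson bivector on a Lie algebroid, because any such structure automatically squares to zero under the Richardson--Nijenhuis bracket.

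First I would split the deformation into its two pieces, $[\underline N,\mu]_{_{RN}}=[\underline N,l_1^{\left[.,.\right],\pi}]_{_{RN}}+[\underline N,l_2^{\left[.,.\right]}]_{_{RN}}$. The binary piece is handled by Lemma~\ref{lem:deformingOidsN}, which gives $[\underline N,l_2^{\left[.,.\right]}]_{_{RN}}=l_2^{\left[.,.\right]_N}$, the multiplicative bracket of the deformed Lie algebroid; here I use that, $N$ being Nijenhuis, $(A,\left[.,.\right]_N,\rho\circ N)$ is once more a Lie algebroid. The unary piece is exactly Lemma~\ref{lamejanjali}, giving $[\underline N,l_1^{\left[.,.\right],\pi}]_{_{RN}}(P)=\left[-\pi_N,P\right]_{_{SN}}$, expressed through the Schouten--Nijenhuis bracket of the \emph{original} algebroid.

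The substantive step is to recognize the resulting pair as the $L_\infty$-structure associated to $-\pi_N$ and the Lie algebroid $(A,\left[.,.\right]_N,\rho\circ N)$. The $l_2$-components already coincide. For the $l_1$-components I would invoke relation~(\ref{npi1}), namely $\left[\pi,.\right]^N_{_{SN}}=\left[\pi_N,.\right]_{_{SN}}$, which is precisely the identity in which the Poisson--Nijenhuis compatibility conditions (iii)--(iv) are encoded through the coincidence of the two differentials on $A^*$. This lets me rewrite the computed unary bracket $\left[-\pi_N,\cdot\right]_{_{SN}}$ in terms of the deformed Schouten--Nijenhuis bracket $\left[.,.\right]^N_{_{SN}}$, matching the unary bracket of the associated structure on the deformed algebroid; relation~(\ref{npi}), $\underline N(\pi)=2\pi_N$, and the Schouten deformation formula already used in Lemma~\ref{lamejanjali} are the inputs. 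Once this is done, the pair is a genuine $L_\infty$-structure attached to a Poisson bivector on a Lie algebroid, so $[\underline N,\mu]_{_{RN}}$ is an $L_\infty$-structure and Proposition~\ref{prop:deformedisLinfty} yields that $\underline N$ is weak Nijenhuis, with deformed structure exactly as announced.

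I expect the main obstacle to be the careful bookkeeping in this last identification: one must track at each stage which Schouten--Nijenhuis bracket ($\left[.,.\right]_{_{SN}}$ or $\left[.,.\right]^N_{_{SN}}$) and which Poisson bivector ($\pi$ or $\pi_N$) appear, and it is relation~(\ref{npi1}) that reconciles the expression coming from Lemma~\ref{lamejanjali} with the unary bracket of the associated $L_\infty$-structure on the deformed algebroid. Verifying that the target data — a Poisson bivector together with the deformed Lie algebroid $(A,\left[.,.\right]_N,\rho\circ N)$ — really form a legitimate pair whose associated structure is $L_\infty$ (equivalently, that the relevant bivector is Poisson for $\left[.,.\right]_N$) is the remaining point needing attention, and it is again~(\ref{npi1}) that converts the available Poisson condition for $\pi$ into the form required.
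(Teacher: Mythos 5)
Your proposal is correct, but it takes a genuinely different route from the paper's proof. The paper never invokes Proposition \ref{prop:deformedisLinfty} here: it verifies the weak Nijenhuis identity $\left[\mu,\left[\underline{N},\left[\underline{N},\mu\right]_{_{RN}}\right]_{_{RN}}\right]_{_{RN}}=0$ directly, by computing the \emph{second} deformation $\left[\underline{N},\left[\underline{N},\mu\right]_{_{RN}}\right]_{_{RN}}=l_1^{\left[.,.\right],\pi_{N^2}}+l_2^{\left[.,.\right]_{N^2}}$ (which needs Lemma \ref{lamejanjali} applied to $N^2$, hence the hierarchy facts $\pi_{N,N}=\pi_{N^2}$ and $\left[.,.\right]_{N,N}=\left[.,.\right]_{N^2}$), rewriting it as $\left[\underline{N^2},\mu\right]_{_{RN}}-2\left[\underline{N^2},l_1^{\left[.,.\right],\pi}\right]_{_{RN}}$, and killing the triple bracket using that $\pi_{N^2}$ is Poisson for $\left[.,.\right]$ and that $\left[\pi,\pi_{N^2}\right]_{_{SN}}=0$. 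You instead stop at the \emph{first} deformation, recognize $\left[\underline{N},\mu\right]_{_{RN}}=-l_1^{\left[.,.\right],\pi_N}+l_2^{\left[.,.\right]_N}$ as the $L_\infty$-structure attached to a Poisson bivector on the Lie algebroid $(A,\left[.,.\right]_N,\rho\circ N)$, and then invoke the equivalence of Proposition \ref{prop:deformedisLinfty}. Your route proves both claims of the statement in one stroke and trades the $N^2$-hierarchy input for a single compatibility fact about $\pi$ and $\pi_N$; the paper's route is a self-contained Richardson--Nijenhuis computation that in addition displays the second deformation explicitly.

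Two details of your plan need care, though neither invalidates it. First, the Poisson condition on the deformed algebroid, $\left[\pi,\pi\right]^N_{_{SN}}=0$, does not follow from (\ref{npi1}) alone: (\ref{npi1}) only converts it into $\left[\pi_N,\pi\right]_{_{SN}}=0$, the compatibility of $\pi$ and $\pi_N$, which is not literally the hypothesis $\left[\pi,\pi\right]_{_{SN}}=0$. You can close this gap in two lines by combining (\ref{npi1}) with (\ref{npi}) and the formula for $\left[.,.\right]^N_{_{SN}}$ recalled in the proof of Lemma \ref{lem:deformingOidsN}: that formula gives $\left[\pi,\pi\right]^N_{_{SN}}=2\left[\underline{N}(\pi),\pi\right]_{_{SN}}-\underline{N}\left[\pi,\pi\right]_{_{SN}}=4\left[\pi_N,\pi\right]_{_{SN}}$, while (\ref{npi1}) gives $\left[\pi,\pi\right]^N_{_{SN}}=\left[\pi_N,\pi\right]_{_{SN}}$, forcing $\left[\pi_N,\pi\right]_{_{SN}}=0$. (This is the exact analogue of the fact $\left[\pi,\pi_{N^2}\right]_{_{SN}}=0$ that the paper itself uses without proof.) Second, after rewriting via (\ref{npi1}) the unary bracket is $\left[-\pi,\cdot\right]^N_{_{SN}}$, so in the strict sense of the paper's definition your identification exhibits the deformed structure as the one associated to the Poisson bivector $-\pi$ on $(A,\left[.,.\right]_N,\rho\circ N)$; it matches the ``$-\pi_N$'' of the statement only through the identification $\left[-\pi_N,\cdot\right]_{_{SN}}=\left[-\pi,\cdot\right]^N_{_{SN}}$ --- a conflation already present in the paper's own formulation, since its computed unary part is $-l_1^{\left[.,.\right],\pi_N}$, taken with the original Schouten bracket.
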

\begin{proof}
Lemmas \ref{lamejanjali} and \ref{lem:deformingOidsN} imply that
\begin{equation*}
\left[\underline{N},l_1^{\left[.,.\right],\pi}+l_2^{\left[.,.\right]}\right]_{_{RN}}=-l_1^{\left[.,.\right],\pi_{_{N}}}+l_2^{\left[.,.\right]_{_{N}}}.
\end{equation*}
Hence,
\begin{equation}\label{avalinesh}
\begin{array}{rcl}
\left[\underline{N},\left[\underline{N},l_1^{\left[.,.\right],\pi}+l_2^{\left[.,.\right]}\right]_{_{RN}}\right]_{{RN}}
&=&l_1^{\left[.,.\right],\pi_{_{N,N}}}+l_2^{\left[.,.\right]_{_{N,N}}}
=l_1^{\left[.,.\right],\pi_{_{N^2}}}+l_2^{\left[.,.\right]_{_{N^2}}}\\
&=&\left[\underline{N^2}\,,-l_1^{\left[.,.\right],\pi}+l_2^{\left[.,.\right]}\right]_{_{RN}}\\
&=&\left[\underline{N^2}\,,l_1^{\left[.,.\right],\pi}+l_2^{\left[.,.\right]}\right]_{_{RN}}-
2\left[\underline{N^2}\,,l_1^{\left[.,.\right],\pi}\right]_{_{RN}}.
\end{array}
\end{equation}
Denoting $\mu=l_1^{\left[.,.\right],\pi}+l_2^{\left[.,.\right]}$ and using the fact that $\pi_{_{N^{2}}}$ is a Poisson bivector on the Lie algebroid $(A,\left[.,.\right],\rho)$ and hence $(\Gamma(\wedge A)[2],l_1^{\left[.,.\right],\pi_{_{N^2}}}+l_2^{\left[.,.\right]})$ is a symmetric differential graded Lie algebra, we have
\begin{equation}\label{dovominesh}
\begin{array}{rcl}
\left[\mu,\left[\underline{N},\left[\underline{N},\mu\right]_{_{RN}}\right]_{_{RN}}\right]_{_{RN}}
&=&\left[\mu,\left[\underline{N^2}\,,\mu\right]_{_{RN}}\right]_{_{RN}}
-2\left[\mu,\left[\underline{N^2}\,,l_1^{\left[.,.\right],\pi}\right]_{_{RN}}\right]_{_{RN}}\\
&=&-2\left[\mu,\left[\underline{N^2}\,,l_1^{\left[.,.\right],\pi}\right]_{_{RN}}\right]_{_{RN}}
=2\left[\mu,l_1^{\left[.,.\right],\pi_{_{N^2}}}\right]_{_{RN}}\\
&=&2\left[l_1^{\left[.,.\right],\pi},l_1^{\left[.,.\right],\pi_{_{N^2}}}\right]_{_{RN}}+
2\left[l_2^{\left[.,.\right]},l_1^{\left[.,.\right],\pi_{_{N^2}}}\right]_{_{RN}}\\
&=&2\left[l_1^{\left[.,.\right],\pi},l_1^{\left[.,.\right],\pi_{_{N^2}}}\right]_{_{RN}}
\end{array}
\end{equation}
and
\begin{equation}\label{sevominesh}
\begin{array}{rcl}
\left[l_1^{\left[.,.\right],\pi},l_1^{\left[.,.\right],\pi_{_{N^2}}}\right]_{_{RN}}(P)&=&
l_1^{\left[.,.\right],\pi_{_{N^2}}}(l_1^{\left[.,.\right],\pi}(P))+
l_1^{\left[.,.\right],\pi}(l_1^{\left[.,.\right],\pi_{_{N^2}}}(P))\\
&=&\left[\pi_{_{N^2}},\left[\pi,P\right]_{_{SN}}\right]_{_{SN}}+\left[\pi,\left[\pi_{_{N^2}},P\right]_{_{SN}}\right]_{_{SN}}\\
&=&\left[\left[\pi,\pi_{_{N^2}}\right]_{_{SN}},P\right]_{_{SN}}\\
&=&0,
\end{array}
\end{equation}
for all $P \in \Gamma(\wedge A)$.
Therefore $\left[\mu,\left[\underline{N},\left[\underline{N},\mu\right]_{_{RN}}\right]_{_{RN}}\right]_{_{RN}}=0$, which means that $\underline{N}$ is a weak Nijenhuis vector valued form with respect to the symmetric differential graded Lie algebra structure $\mu=l_1^{\left[.,.\right],\pi}+l_2^{\left[.,.\right]}$ on the graded vector space
$\Gamma(\wedge A)[2]$.
\end{proof}
There is a second manner to see Poisson-Nijenhuis structures on a Lie algebroid as a Nijenhuis
form.
\begin{prop}
Let $(\pi,N)$ be a Poisson-Nijenhuis structure on a Lie algebroid $(A,\left[.,.\right],\rho)$. Then
 $\underline N + \pi $ is a weak Nijenhuis vector valued form with curvature, with respect to the multiplicative differential graded Lie algebra structure $l_1^{\left[.,.\right],\pi}+l_2^{\left[.,.\right]}$ on the graded vector space $\Gamma(\wedge A)[2]$.
\end{prop}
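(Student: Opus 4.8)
The plan is to deduce the statement from the weak Nijenhuis property of $\underline N$ proved in Proposition~\ref{NijenhuisonPoissonNijenhuisalgeroid}, by checking that the curvature term $\pi$ makes no contribution to the inner double bracket. Writing $\mu=l_1^{\left[.,.\right],\pi}+l_2^{\left[.,.\right]}$ and $\mathcal N=\underline N+\pi$ (note that $\pi\in\Gamma(\wedge^2A)$ is a vector valued $0$-form of degree zero in $\Gamma(\wedge A)[2]$, so $\mathcal N$ is indeed a degree zero vector valued form with curvature), the goal is to establish
\[
\left[\mathcal N,\left[\mathcal N,\mu\right]_{_{RN}}\right]_{_{RN}}=\left[\underline N,\left[\underline N,\mu\right]_{_{RN}}\right]_{_{RN}}.
\]
Once this is known, applying $\left[\mu,\,\cdot\,\right]_{_{RN}}$ to both sides and invoking Proposition~\ref{NijenhuisonPoissonNijenhuisalgeroid} gives $\left[\mu,\left[\mathcal N,\left[\mathcal N,\mu\right]_{_{RN}}\right]_{_{RN}}\right]_{_{RN}}=0$, which is precisely the weak Nijenhuis condition for $\mathcal N$.

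The first step I would carry out is the computation of $\left[\pi,\mu\right]_{_{RN}}$. Because $\pi$ is a vector valued $0$-form, the insertion conventions give $\iota_{l_i}\pi=0$, so $\left[\pi,\mu\right]_{_{RN}}=\iota_\pi\mu$. On the $1$-form part one has $\iota_\pi l_1^{\left[.,.\right],\pi}=l_1^{\left[.,.\right],\pi}(\pi)=\left[\pi,\pi\right]_{_{SN}}=0$ since $\pi$ is Poisson, while on the $2$-form part $\iota_\pi l_2^{\left[.,.\right]}(X)=l_2^{\left[.,.\right]}(\pi,X)=-\left[\pi,X\right]_{_{SN}}=-l_1^{\left[.,.\right],\pi}(X)$. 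Hence $\left[\pi,\mu\right]_{_{RN}}=-l_1^{\left[.,.\right],\pi}$.

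Next I would expand $\left[\mathcal N,\left[\mathcal N,\mu\right]_{_{RN}}\right]_{_{RN}}$ bilinearly into four terms. The pure term $\left[\underline N,\left[\underline N,\mu\right]_{_{RN}}\right]_{_{RN}}$ equals $l_1^{\left[.,.\right],\pi_{_{N^2}}}+l_2^{\left[.,.\right]_{_{N^2}}}$ by Equation~(\ref{avalinesh}). For the two cross terms I would use the previous step together with Lemma~\ref{lamejanjali} to obtain $\left[\underline N,\left[\pi,\mu\right]_{_{RN}}\right]_{_{RN}}=l_1^{\left[.,.\right],\pi_{_N}}$; and, starting from $\left[\underline N,\mu\right]_{_{RN}}=-l_1^{\left[.,.\right],\pi_{_N}}+l_2^{\left[.,.\right]_{_N}}$ (from the proof of Proposition~\ref{NijenhuisonPoissonNijenhuisalgeroid}) and using both $l_1^{\left[.,.\right],\pi_{_N}}(\pi)=\left[\pi_{_N},\pi\right]_{_{SN}}=0$ and $\iota_\pi l_2^{\left[.,.\right]_{_N}}=-l_1^{\left[.,.\right],\pi_{_N}}$ (this last identity coming from $l_2^{\left[.,.\right]_{_N}}(\pi,X)=-\left[\pi,X\right]^{N}_{_{SN}}=-\left[\pi_{_N},X\right]_{_{SN}}$ via Equation~(\ref{npi1})), I would obtain $\left[\pi,\left[\underline N,\mu\right]_{_{RN}}\right]_{_{RN}}=-l_1^{\left[.,.\right],\pi_{_N}}$. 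The two cross terms thus cancel. Finally the pure curvature term vanishes, $\left[\pi,\left[\pi,\mu\right]_{_{RN}}\right]_{_{RN}}=-\iota_\pi l_1^{\left[.,.\right],\pi}=-\left[\pi,\pi\right]_{_{SN}}=0$. Collecting the four contributions proves the displayed identity, and hence the proposition.

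The main obstacle is the bookkeeping of the two cross terms, which is exactly where the Poisson--Nijenhuis axioms enter: the compatibility $\left[\pi,\pi_{_N}\right]_{_{SN}}=0$ of the bivectors of the hierarchy and the deformed Schouten identity~(\ref{npi1}), $\left[\pi,\cdot\right]^{N}_{_{SN}}=\left[\pi_{_N},\cdot\right]_{_{SN}}$, are both needed to make the two $l_1^{\left[.,.\right],\pi_{_N}}$ contributions equal and opposite; without the first one a nonzero vector valued $0$-form $-\left[\pi_{_N},\pi\right]_{_{SN}}$ would survive. A secondary technical point is keeping the curvature conventions for the $0$-form $\pi$ straight, namely $\iota_{l_i}\pi=0$ while $\iota_\pi l_i$ is evaluation of $l_i$ with $\pi$ inserted in the first argument, since the whole reduction rests on these.
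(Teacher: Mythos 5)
Your proof is correct and takes essentially the same route as the paper's: the same six terms arise in the expansion of $\left[\underline{N}+\pi,\left[\underline{N}+\pi,\mu\right]_{_{RN}}\right]_{_{RN}}$, killed by the same three facts ($\left[\pi,\pi\right]_{_{SN}}=0$, $\left[\pi_{_{N}},\pi\right]_{_{SN}}=0$ and $\left[\pi,.\right]^{N}_{_{SN}}=\left[\pi_{_{N}},.\right]_{_{SN}}$), via the same Lemmas \ref{lem:deformingOidsN} and \ref{lamejanjali}. The only difference is organizational: you group the expansion into pure, cross and curvature terms so that the double bracket visibly equals $\left[\underline{N},\left[\underline{N},\mu\right]_{_{RN}}\right]_{_{RN}}$ and then quote Proposition~\ref{NijenhuisonPoissonNijenhuisalgeroid}, whereas the paper identifies the double bracket as $l_1^{\left[.,.\right],\pi_{_{N,N}}}+l_2^{\left[.,.\right]_{_{N,N}}}$ and re-runs the computations (\ref{avalinesh})--(\ref{sevominesh}) from that proposition's proof.
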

\begin{proof}
It follows from Lemma \ref{lem:deformingOidsN} that
\begin{equation*} \label{ELONE}
\left[\underline{N}+\pi,l_1^{\left[.,.\right],\pi}\right]_{_{RN}}=
-l_1^{\left[.,.\right],\pi_{_{N}}}+\left[\pi,\pi\right]_{_{SN}}=-l_1^{\left[.,.\right],\pi_{_{N}}},
\end{equation*}
while Lemma \ref{lamejanjali} implies that
\begin{equation*} \label{ELTWO}
\left[\underline{N}+\pi,l_2^{\left[.,.\right]}\right]_{_{RN}}=
l_2^{\left[.,.\right]_N}+l_2^{\left[.,.\right]}(\pi,.)=l_2^{\left[.,.\right]_N}-l_1^{\left[.,.\right],\pi}.
\end{equation*}
Hence, we have
\begin{equation}\label{divane}
\begin{array}{rcl}
&&\left[\underline{N}+\pi,\left[\underline{N}+\pi,l_1^{\left[.,.\right],\pi}+l_2^{\left[.,.\right]}\right]_{_{RN}}\right]_{_{RN}}
=\left[\underline{N}+\pi,-l_1^{\left[.,.\right],\pi_{_{N}}}+l_2^{\left[.,.\right]_N}-l_1^{\left[.,.\right],\pi}\right]_{_{RN}}\\
& &=l_1^{\left[.,.\right],\pi_{_{N,N}}}+l_1^{\left[.,.\right],\pi_{_{N}}}+l_2^{\left[.,.\right]_{N,N}}-l_1^{\left[.,.\right],\pi_{_{N}}}(\pi)
-l_1^{\left[.,.\right],\pi}(\pi)+l_2^{\left[.,.\right]_N}(\pi,.).\\
\end{array}
\end{equation}
But $l_1^{\left[.,.\right],\pi}(\pi)=\left[\pi,\pi\right]_{_{SN}}=0$, $l_1^{\left[.,.\right],\pi_{_{N}}}(\pi)=\left[\pi_{_{N}},\pi\right]_{_{SN}}=0$ and
$
l_1^{\left[.,.\right],\pi_{_{N}}}(P)+l_2^{\left[.,.\right]_N}(\pi,P)=\left[\pi_{_{N}},P\right]_{_{SN}}-\left[\pi,P\right]^{N}_{_{SN}}=0,
$
for all $P\in \Gamma(\wedge A)$, where $\left[.,.\right]^{N}_{_{SN}}$ is the Schouten-Nijenhuis bracket associated to the Lie bracket $\left[.,.\right]_N$.
Hence, (\ref{divane}) can be written as
\begin{equation*}
\left[\underline{N}+\pi,\left[\underline{N}+\pi,l_1^{\left[.,.\right],\pi}+l_2^{\left[.,.\right]}\right]_{_{RN}}\right]_{_{RN}}
=l_1^{\left[.,.\right],\pi_{_{N,N}}}+l_2^{\left[.,.\right]_{N,N}}.
\end{equation*}
Similar computations as in (\ref{avalinesh}), (\ref{dovominesh}) and (\ref{sevominesh}) show that $\left[\mu,\left[\underline{N},\left[\underline{N},\mu\right]_{_{RN}}\right]_{_{RN}}\right]_{_{RN}}=0,$ which means that $\underline{N}$ is weak Nijenhuis vector valued form with respect to the symmetric differential graded Lie algebra structure $\mu=l_1^{\left[.,.\right],\pi}+l_2^{\left[.,.\right]}$ on the graded vector space
$\Gamma(\wedge A)[2]$.
\end{proof}

The next proposition establishes a relation between Poisson-Nijenhuis structures and co-boundary Nijenhuis tensors on a Lie algebroid.

\begin{prop}\label{Poissonifandonlyif}
Let $(A,\left[.,.\right],\rho)$ be a Lie algebroid , $\pi\in \Gamma(\wedge^2 A)$  a bivector and $N$  a $(1,1)$-tensor on $A$  such that
\begin{equation*}\label{costezan}
N \circ \pi^{\#}=\pi^{\#} \circ N^*.
\end{equation*}
Then,
 $\underline{N} + \pi $ is a co-boundary Nijenhuis vector valued form with curvature, with respect to the multiplicative graded Lie algebra structure $l_2^{\left[.,.\right]}$ on the graded vector space $\Gamma(\wedge A)[2]$,
with square ${\underline{N^2}} $, if and only if $(\pi, N)$ is a Poisson-Nijenhuis structure on the Lie algebroid $(A,\left[.,.\right],\rho)$.
The deformed structure $[ \underline N, l_2^{\left[.,.\right]}]_{_{RN}}$ is the $L_\infty$-structure (indeed a differential graded Lie algebra structure) associated to
the Poisson structure $\pi $ on the Lie algebroid $(A,\left[.,.\right]_N,\rho \circ N) $.
\end{prop}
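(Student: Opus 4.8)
The plan is to reduce the statement to a form-degree-by-form-degree comparison of two explicit symmetric vector valued forms. Throughout I work with the multiplicative graded Lie algebra $\mu = l_2^{\left[.,.\right]}$ on $\Gamma(\wedge A)[2]$ and with the degree-zero form $\mathcal{N} = \underline{N} + \pi$, whose only components are the derivation $\underline{N}$ (a vector valued $1$-form) and the bivector $\pi\in\Gamma(\wedge^2 A)$ (a vector valued $0$-form of degree $0$, which plays the role of the curvature). The key preliminary remark is that the standing hypothesis $N\pi^{\#}=\pi^{\#}N^{*}$ is exactly axiom (iii) of a Poisson–Nijenhuis structure, so by (\ref{npi}) I may use $\underline{N}(\pi)=2\pi_{_{N}}$ freely, while assuming no other Poisson–Nijenhuis axiom at this stage. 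Since $\mathcal{N}$ has degree zero, being a co-boundary Nijenhuis form with square $\underline{N^{2}}$ means precisely the single identity $[\mathcal{N},[\mathcal{N},\mu]_{_{RN}}]_{_{RN}}=[\underline{N^{2}},\mu]_{_{RN}}$, and by Lemma~\ref{lem:deformingOidsN} the right-hand side equals $l_2^{\left[.,.\right]_{N^{2}}}$.

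First I would compute the single deformation $[\mathcal{N},\mu]_{_{RN}}$. By Lemma~\ref{lem:deformingOidsN}, $[\underline{N},l_2^{\left[.,.\right]}]_{_{RN}}=l_2^{\left[.,.\right]_N}$, whereas inserting the $0$-form $\pi$ gives $[\pi,l_2^{\left[.,.\right]}]_{_{RN}}(P)=l_2^{\left[.,.\right]}(\pi,P)=-\left[\pi,P\right]_{_{SN}}$, i.e. $[\pi,l_2^{\left[.,.\right]}]_{_{RN}}=-l_1^{\left[.,.\right],\pi}$. Hence $[\mathcal{N},\mu]_{_{RN}}=l_2^{\left[.,.\right]_N}-l_1^{\left[.,.\right],\pi}$. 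Applying $[\mathcal{N},\,\cdot\,]_{_{RN}}$ once more and expanding by bilinearity, I would evaluate the four resulting brackets: $[\underline{N},l_2^{\left[.,.\right]_N}]_{_{RN}}=l_2^{\left[.,.\right]_{N,N}}$ and $[\pi,l_2^{\left[.,.\right]_N}]_{_{RN}}=-l_1^{\left[.,.\right]_N,\pi}$ (the same computations with $\left[.,.\right]$ replaced by $\left[.,.\right]_N$); then $[\underline{N},l_1^{\left[.,.\right],\pi}]_{_{RN}}=l_1^{\left[.,.\right]_N,\pi}-2\,l_1^{\left[.,.\right],\pi_{_{N}}}$, which follows from the derivation formula $\left[\pi,P\right]^N_{_{SN}}=\left[\underline{N}(\pi),P\right]_{_{SN}}+\left[\pi,\underline{N}(P)\right]_{_{SN}}-\underline{N}\left[\pi,P\right]_{_{SN}}$ of \cite{YKS} together with $\underline{N}(\pi)=2\pi_{_{N}}$; and finally $[\pi,l_1^{\left[.,.\right],\pi}]_{_{RN}}=l_1^{\left[.,.\right],\pi}(\pi)=\left[\pi,\pi\right]_{_{SN}}$, again by insertion of the $0$-form $\pi$. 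Here $l_1^{\left[.,.\right]_N,\pi}$ and $l_1^{\left[.,.\right],\pi_{_{N}}}$ denote the operators $P\mapsto\left[\pi,P\right]^N_{_{SN}}$ and $P\mapsto\left[\pi_{_{N}},P\right]_{_{SN}}$, respectively. Collecting the four terms yields
\begin{equation*}
[\mathcal{N},[\mathcal{N},\mu]_{_{RN}}]_{_{RN}}=l_2^{\left[.,.\right]_{N,N}}-2\,l_1^{\left[.,.\right]_N,\pi}+2\,l_1^{\left[.,.\right],\pi_{_{N}}}-\left[\pi,\pi\right]_{_{SN}},
\end{equation*}
an identity that uses only hypothesis (iii).

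Next I would compare this expression with $l_2^{\left[.,.\right]_{N^{2}}}$ component by component, since its three summands sit in distinct form-degrees ($2$, $1$ and $0$) and so must match separately. The $2$-form part gives $l_2^{\left[.,.\right]_{N,N}}=l_2^{\left[.,.\right]_{N^{2}}}$, equivalent to $\left[.,.\right]_{N,N}=\left[.,.\right]_{N^{2}}$, i.e. to the vanishing of $T_{\left[.,.\right]}N$: axiom (i). The $0$-form part gives $\left[\pi,\pi\right]_{_{SN}}=0$: axiom (ii). The $1$-form part gives $l_1^{\left[.,.\right]_N,\pi}=l_1^{\left[.,.\right],\pi_{_{N}}}$, that is $\left[\pi,\,\cdot\,\right]^N_{_{SN}}=\left[\pi_{_{N}},\,\cdot\,\right]_{_{SN}}$, which is exactly relation (\ref{npi1}). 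Thus $\mathcal{N}$ is co-boundary Nijenhuis with square $\underline{N^{2}}$ if and only if (i), (ii), (iii) and (\ref{npi1}) all hold.

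The genuinely non-formal point, and the main obstacle, is matching (\ref{npi1}) with axiom (iv); everything preceding is Richardson–Nijenhuis bookkeeping. Identifying, as in the discussion preceding Lemma~\ref{lamejanjali}, the differential $\diff^{A^{*}}$ on $\Gamma(\wedge A)$ with $\left[\pi,\,\cdot\,\right]_{_{SN}}$, relation (\ref{npi1}) asserts that the structures on $A^{*}$ given by $\{.,.\}_{_{\pi}}^{\mu^{N}}$ and by $\{.,.\}_{_{\pi_{_{N}}}}^{\mu}$ have equal differentials, hence coincide; via the equivalence established in \cite{YKS} between these bracket identities and the defining equation $(\{.,.\}_{_{\pi}}^{\mu})_{_{N^{*}}}=\{.,.\}_{_{\pi}}^{\mu^{N}}$, this is precisely axiom (iv), once (i)–(iii) are known. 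Combining the four equivalences gives the asserted iff, with (iii) as a standing hypothesis. Finally, the statement about the deformed structure follows by substituting the already computed $[\mathcal{N},\mu]_{_{RN}}=l_2^{\left[.,.\right]_N}-l_1^{\left[.,.\right],\pi}$ and rewriting it, through (\ref{npi1}), as the $L_{\infty}$-structure attached to $\pi$ and the Lie algebroid $(A,\left[.,.\right]_N,\rho\circ N)$, exactly as in the identification carried out in the proof of Proposition~\ref{NijenhuisonPoissonNijenhuisalgeroid}.
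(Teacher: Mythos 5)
Your proposal is correct and follows essentially the same route as the paper's proof: the same first deformation $[\underline{N}+\pi,l_2^{\left[.,.\right]}]_{_{RN}}=l_2^{\left[.,.\right]_N}-l_1^{\left[.,.\right],\pi}$, the same second deformation computed via Lemma~\ref{lem:deformingOidsN}, the derivation formula for $\left[.,.\right]^{N}_{_{SN}}$ and $\underline{N}(\pi)=2\pi_{_{N}}$, the same separation into form-degree components (yielding the vanishing of the torsion, $\left[\pi,\pi\right]_{_{SN}}=0$, and (\ref{npi1})), and the same appeal to the identification of (\ref{npi1}) with axiom (iv) coming from \cite{YKS}. The only difference is organizational: you perform a single computation under hypothesis (iii) alone and read off both implications simultaneously from the componentwise comparison, whereas the paper runs the two directions of the equivalence separately; this is the same argument in slightly tighter bookkeeping, and you are, if anything, more explicit than the paper about the non-formal step (\ref{npi1})$\Leftrightarrow$(iv).
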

\begin{proof}
Assume that $(\pi,N)$ is a Poisson-Nijenhuis structure on the Lie algebroid $(A,\left[.,.\right],\rho)$. Then,
\begin{equation*}
\left[\underline{N}+\pi,l_2^{\left[.,.\right]}\right]_{_{RN}}=l_2^{\left[.,.\right]_{N}}-l_1^{\left[.,.\right],\pi}
\end{equation*}
and, by  (\ref{npi1}), we get
\begin{equation*}
\begin{array}{rcl}
\left[\underline{N}+\pi\left[\underline{N}+\pi,l_2^{\left[.,.\right]}\right]_{_{RN}}\right]_{_{RN}}
&=&l_2^{\left[.,.\right]_{N,N}}+l_1^{\left[.,.\right],\pi_{_{N}}}-l_1^{\left[.,.\right]_N,\pi}\\
&=&l_2^{\left[.,.\right]_{N,N}}=\left[\underline{N^2},l_2^{\left[.,.\right]}\right]_{_{RN}},
\end{array}
\end{equation*}
which means that $\underline{N}+\pi$ is a co-boundary Nijenhuis with respect to the multiplicative graded Lie algebra structure $l_2^{\left[.,.\right]}$ on the graded vector space
$\Gamma(\wedge A)[2]$, with square ${\underline{N^2}} $.

Conversely, assume that $\underline{N}+\pi$ be a co-boundary Nijenhuis with respect to the multiplicative graded Lie algebra structure $l_2^{\left[.,.\right]}$ on the graded vector space $\Gamma(\wedge A)[2]$, with square ${\underline{N^2}} $, that is,
\begin{equation}  \label{NpiCobound}
\left[\underline{N}+\pi,\left[\underline{N}+\pi,l_2^{\left[.,.\right]}\right]_{_{RN}}\right]_{_{RN}}=\left[\underline{N^2},l_2^{\left[.,.\right]}\right]_{_{RN}}.
\end{equation}
Decomposing by homogeneous components, we get
\begin{equation}  \label{NpiCobound1}
\left[\underline{N}+\pi,\left[\underline{N}+\pi,l_2^{\left[.,.\right]}\right]_{_{RN}}\right]_{_{RN}}=
l_2^{\left[.,.\right]_{N,N}}+
\left(\left[\underline{N},l_2^{\left[.,.\right]}(\pi, .) \right]_{_{RN}}+l_2^{\left[.,.\right]_{N}}(\pi,.)\right)+l_2^{\left[.,.\right]}(\pi,\pi).
\end{equation}
From (\ref{NpiCobound}) and (\ref{NpiCobound1}), we get
\begin{equation} \label{NpiCobound2}
\left[\underline{N^2},l_2^{\left[.,.\right]}\right]_{_{RN}}=l_2^{\left[.,.\right]_{N,N}},
\end{equation}
\begin{equation} \label{NpiCobound3}
 \left(\left[\underline{N},l_2^{\left[.,.\right]}(\pi, .) \right]_{_{RN}}+l_2^{\left[.,.\right]_{N}}(\pi,.)\right)=0
\end{equation}
 and \begin{equation} \label{NpiCobound4}
 l_2^{\left[.,.\right]}(\pi,\pi)=0.
 \end{equation}
 Equation (\ref{NpiCobound2}) is equivalent to $l_2^{\left[.,.\right]_{N,N}}=l_2^{\left[.,.\right]_{N^2}}$, or to $\left[.,.\right]_{N,N}=\left[.,.\right]_{N^2}$, which means that $N$ is a Nijenhuis tensor on $A$. Equation (\ref{NpiCobound4}) means that $\pi$ is Poisson, while Equation (\ref{NpiCobound3}) gives
 \begin{equation*}\label{PoissonshodeNijenhuisham}
\left(\left[\underline{N},l_2^{\left[.,.\right]}(\pi, .)\right]_{_{RN}}+l_2^{\left[.,.\right]_{N}}(\pi,.)\right)(P)=0,
\end{equation*}
or
\begin{equation}\label{NpiCobound5}
\left[\underline{N},l_2^{\left[.,.\right]}(\pi, .)\right]_{_{RN}}(P)=\left[\pi,P\right]^{N}_{_{SN}},
\end{equation}
for all $P\in \Gamma(\wedge A)$.
The definition of  $\left[.,.\right]^{N}_{_{SN}}$ gives
\begin{eqnarray} \label{NpiCobound6}
\left[\pi,P\right]^{N}_{_{SN}}&=& \left[\underline{N}(\pi), P\right]_{_{SN}} + \left[\pi, \underline{N}(P)\right]_{_{SN}} - \underline{N}\left[\pi, P\right]_{_{SN}}\\
&=& 2 \left[\pi_N, P \right]_{_{SN}}+ \left[\underline{N},l_1^{\left[.,.\right],\pi} \right]_{_{RN}}(P)  \nonumber\\
&=& 2 \left[\pi_N, P \right]_{_{SN}} - \left[\underline{N},l_2^{\left[.,.\right]}(\pi, .)\right]_{_{RN}}(P), \nonumber
\end{eqnarray}
where in the second equality we used $\underline{N}(\pi)=2\pi_N$ and the definition of the Richardson-Nijenhuis bracket. From (\ref{NpiCobound5}) and (\ref{NpiCobound6}), we get $$\left[\pi,P\right]^{N}_{_{SN}}=\left[\pi_N, P \right]_{_{SN}}.$$
and this completes the proof that $(\pi, N)$ is a Poisson-Nijenhuis structure on the Lie algebroid $(A,\left[.,.\right],\rho)$.
\end{proof}
Last, we shall say a few words about the so-called $P\Omega$-structures \cite{antunes, KoRou}.
Recall that a \emph{$P \Omega$-structure} on a Lie algebroid $(A,\rho,\left[.,.\right])$
is a pair $(\pi,\omega)$ where $\pi \in \Gamma(\wedge^2 A)$ is a Poisson element
and $\omega \in \Gamma(\wedge^2 A^*)$ is a $2$-form, with $\diff^A \alpha =0$. The $2$-form $\omega \in \Gamma(\wedge^2 A^*)$ determines a morphism $\omega^\flat:A \to A^*$, given by $ \langle Y, \omega^\flat (X) \rangle= \omega (X,Y)$.
Defining a $(1,1)$ tensor $N := \pi^{\#} \circ \omega^\flat$, it is known that $(\pi,N) $ is a Poisson-Nijenhuis structure
while $(\omega, N) $ is an $\Omega N$-structure.

\begin{prop}\label{NijenhuisonPoissonOmega}
Let $(\pi,\omega)$ be a  $P \Omega$-structure on a Lie algebroid $(A,\left[.,.\right],\rho)$. Then,
 ${\mathcal N}=\underline{\omega} + \pi $ is a co-boundary Nijenhuis form, with curvature, with respect to the multiplicative graded Lie algebra structure $l_2^{\left[.,.\right]}$ on the graded vector space $\Gamma(\wedge A)[2]$,
with square $\underline{N} $, where $N = \pi^{\#} \circ \omega^\flat$.
The deformed structure is $-l_1^{\left[.,.\right],\pi}$.
\end{prop}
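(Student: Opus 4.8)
The plan is to verify directly the defining identity of a co-boundary Nijenhuis form from Definition~\ref{def:Nijenhuis}: I must produce a degree-zero form $\mathcal K$ satisfying $[\mathcal N,[\mathcal N,l_2^{\left[.,.\right]}]_{_{RN}}]_{_{RN}}=[\mathcal K,l_2^{\left[.,.\right]}]_{_{RN}}$, and I claim $\mathcal K=\underline N$ works, where $N=\pi^{\#}\circ\omega^\flat$. Since $\mathcal N=\underline\omega+\pi$ has the vector valued $0$-form $\pi$ as a component, it will automatically be \emph{with curvature}. First I would compute the single deformation. By bilinearity, $[\mathcal N,l_2^{\left[.,.\right]}]_{_{RN}}=[\underline\omega,l_2^{\left[.,.\right]}]_{_{RN}}+[\pi,l_2^{\left[.,.\right]}]_{_{RN}}$. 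By Lemma~\ref{lem:commutatorAlgebroidContraction} the first term equals $\underline{\diff^A\omega}$, which vanishes because $\omega$ is $\diff^A$-closed in a $P\Omega$-structure. For the second term, $\pi$ is a vector valued $0$-form, so $[\pi,l_2^{\left[.,.\right]}]_{_{RN}}=\iota_\pi l_2^{\left[.,.\right]}=l_2^{\left[.,.\right]}(\pi,\,.\,)$, and by (\ref{multiplicativestructure}) this equals $-\left[\pi,\,.\,\right]_{_{SN}}=-l_1^{\left[.,.\right],\pi}$. Hence $[\mathcal N,l_2^{\left[.,.\right]}]_{_{RN}}=-l_1^{\left[.,.\right],\pi}$, which already establishes the last assertion about the deformed structure.

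Next I would apply $[\mathcal N,\,.\,]_{_{RN}}$ once more and split: $[\mathcal N,-l_1^{\left[.,.\right],\pi}]_{_{RN}}=-[\underline\omega,l_1^{\left[.,.\right],\pi}]_{_{RN}}-[\pi,l_1^{\left[.,.\right],\pi}]_{_{RN}}$. The last term is $l_1^{\left[.,.\right],\pi}(\pi)=\left[\pi,\pi\right]_{_{SN}}=0$ since $\pi$ is Poisson. For the remaining term I would use the relation $l_1^{\left[.,.\right],\pi}=-[\pi,l_2^{\left[.,.\right]}]_{_{RN}}$ found above, together with the graded Jacobi identity, to write
\begin{equation*}
[\underline\omega,l_1^{\left[.,.\right],\pi}]_{_{RN}}=-\big[[\underline\omega,\pi]_{_{RN}},l_2^{\left[.,.\right]}\big]_{_{RN}}-\big[\pi,[\underline\omega,l_2^{\left[.,.\right]}]_{_{RN}}\big]_{_{RN}}.
\end{equation*}
The second summand vanishes because $[\underline\omega,l_2^{\left[.,.\right]}]_{_{RN}}=\underline{\diff^A\omega}=0$, so everything collapses onto the bracket $[\underline\omega,\pi]_{_{RN}}$.

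Thus the whole argument reduces to the single key identity $[\underline\omega,\pi]_{_{RN}}=\underline N$, which I expect to be the main obstacle, since this is the only point where the geometric definition $N=\pi^{\#}\circ\omega^\flat$ actually enters. Because $\pi$ is a $0$-form, $[\underline\omega,\pi]_{_{RN}}=-\iota_\pi\underline\omega$, and by Lemma~\ref{lem:multiDer2} the latter is a derivation vector valued $1$-form, hence uniquely determined by its restriction to $\Gamma(A)$. Expanding $\underline\omega(\pi,X)$ from the definition of extension by derivation of a $2$-form and substituting the definitions of $\pi^{\#}$, $\omega^\flat$ and $N$, a direct computation yields $\iota_\pi\underline\omega(X)=\underline\omega(\pi,X)=-NX$ for all $X\in\Gamma(A)$; by uniqueness of the derivation extension this gives $\iota_\pi\underline\omega=-\underline N$, i.e.\ $[\underline\omega,\pi]_{_{RN}}=\underline N$. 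Feeding this back and invoking Lemma~\ref{lem:deformingOidsN}, I obtain $[\underline\omega,l_1^{\left[.,.\right],\pi}]_{_{RN}}=-[\underline N,l_2^{\left[.,.\right]}]_{_{RN}}=-l_2^{\left[.,.\right]_N}$, whence
\begin{equation*}
[\mathcal N,[\mathcal N,l_2^{\left[.,.\right]}]_{_{RN}}]_{_{RN}}=l_2^{\left[.,.\right]_N}=[\underline N,l_2^{\left[.,.\right]}]_{_{RN}}.
\end{equation*}
This is precisely the co-boundary Nijenhuis condition with square $\underline N$, and the presence of the $0$-form $\pi$ makes it with curvature, completing the proof.
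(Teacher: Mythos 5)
Your proof is correct and follows essentially the same route as the paper's: a single deformation gives $\left[\mathcal{N},l_2^{\left[.,.\right]}\right]_{_{RN}}=-l_1^{\left[.,.\right],\pi}$ using $\diff^A\omega=0$, the second deformation collapses via the graded Jacobi identity onto $\left[\left[\underline{\omega},\pi\right]_{_{RN}},l_2^{\left[.,.\right]}\right]_{_{RN}}$, and the square is identified with $\underline{N}$ by a direct computation. You even make explicit two points the paper leaves implicit, namely the use of $\left[\underline{\omega},l_2^{\left[.,.\right]}\right]_{_{RN}}=\underline{\diff^A\omega}=0$ inside the Jacobi identity, and the verification $\left[\underline{\omega},\pi\right]_{_{RN}}=\underline{N}$ via uniqueness of derivation extensions (which the paper states, with a harmless transposition of the arguments, as $\left[\pi,\underline{\omega}\right]_{_{RN}}=\underline{N}$).
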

\begin{proof}
Observe that
\begin{equation*}
l_1^{\left[.,.\right],\pi}(P)=\left[\pi,P\right]_{_{SN}}=-l_2^{\left[.,.\right]}(\pi,P)=-\left[\pi,l_2^{\left[.,.\right]}\right]_{_{RN}}(P)
\end{equation*}
for all $P\in \Gamma(\wedge^2 A)$. This means that
\begin{equation}\label{chetori}
l_1^{\left[.,.\right],\pi}=-\left[\pi,l_2^{\left[.,.\right]}\right]_{_{RN}}.
\end{equation}
Hence,
\begin{equation} \label{samandoon}
\left[{\mathcal N},l_2^{\left[.,.\right]}\right]_{_{RN}} = -l_1^{\left[.,.\right],\pi}+ \underline{ \diff^A \omega} = -l_1^{\left[.,.\right],\pi},
\end{equation}
which proves the last claim (and proves that ${\mathcal N}$ is weak Nijenhuis vector valued form with respect to $l_2^{\left[.,.\right]}$, since
 $ l_1^{\left[.,.\right],\pi}$ is an $L_{\infty}$-structure on $\Gamma(\wedge A)[2]$).
Equations (\ref{samandoon}) and (\ref{chetori}) imply that
\begin{equation*}
\begin{array}{rcl}
\left[\mathcal{N},\left[\mathcal{N},l_2^{\left[.,.\right]}\right]_{_{RN}}\right]_{_{RN}}
&=&-\left[\mathcal{N},l_1^{\left[.,.\right],\pi}\right]_{_{RN}}
=-\left[\underline{\omega},l_1^{\left[.,.\right],\pi}\right]_{_{RN}}-\left[\pi,\pi\right]_{_{SN}}\\
&=&\left[\underline{\omega},\left[\pi,l_2^{\left[.,.\right]}\right]_{_{RN}}\right]_{_{RN}}
=\left[\left[\underline{\omega},\pi\right]_{_{RN}},l_2^{\left[.,.\right]}\right]_{_{RN}}.
\end{array}
\end{equation*}
This shows that $\mathcal{N}$ is a co-boundary Nijenhuis vector valued form with respect to the graded Lie algebra structure $l_2^{\left[.,.\right]}$,
 on the graded vector space $\Gamma(\wedge A)[2]$, with square $\left[\underline{\omega},\pi\right]_{_{RN}}.$
A direct computation shows that  $[\pi,\underline{\omega}]_{_{RN}}= \underline{N}$ and completes the proof.
\end{proof}


\noindent {\bf Acknowledgments.} M. J. Azimi and J. M. Nunes da Costa acknowledge the support of CMUC-FCT (Portugal) and PEst-C/MAT/UI0324/2011
 through European program COMPETE/FEDER. M. J. Azimi acknowledges the support of FCT grant BD33810/2009.

 \end{document}